\documentclass[10pt]{amsart}
\usepackage{amsmath, amssymb, amsthm, latexsym, amscd, enumerate, MnSymbol}
\usepackage{nicefrac,xspace,tikz,multicol}
\usepackage[all]{xy}
\usepackage{graphicx}
\usepackage{caption}
\usepackage[labelfont={sf},font={small},
  labelsep=space]{caption}
\setcounter{tocdepth}{1}

 \newlength{\baseunit}               % the basic unit length
         % width of the picture
          % depth of the picture
                % with between left margin and picture
 \newcount{\numlines}                % depth of picture (in number of lines)
 \setlength{\baseunit}{0.05ex}

%  \newcommand{\getfig}[2] { %           \setlength{\unitlength}{#2\baseunit} %            \input #1.tex }
                         % A macro to input the picture.  Use
                         % \getfig{name}{scale}

% Begin template %\documentclass[12pt]{amsart} %\usepackage{epic,eepic} %\parskip=12pt

%Check lambda and mu in HCH
%Check H versus A
%Reverse the qh order

\newtheorem{theorem}{Theorem}

\newtheorem{lemma}[theorem]{Lemma}
\newtheorem{remark}[theorem]{Remark}
\newtheorem{prop}[theorem]{Proposition}
\newtheorem{corollary}[theorem]{Corollary}
\newtheorem{ex}[theorem]{Example}

\newtheorem{conjecture}[theorem]{Conjecture}
\newtheorem{definition}[theorem]{Definition}
\newtheorem{define}[theorem]{Definition}
\newtheorem{THEOREM}{Theorem}

\newcommand{\cA}{{\mathcal A}}

\newcommand{\cL}{{\mathcal L}}

\newcommand{\cO}{{\mathcal O}}

\newcommand{\cU}{{\mathcal U}}

\def\down{\vee}
\def\up{\wedge}

\newcommand{\mg}{\mathfrak{g}}
\newcommand{\mh}{\mathfrak{h}}
\newcommand{\mb}{\mathfrak{b}}

\newcommand{\mC}{\mathbb{C}}

\newcommand{\mZ}{\mathbb{Z}}

\newcommand{\la}{\lambda}

\newcommand{\HOM}{\operatorname{Hom}}

\newcommand{\op}{\operatorname}

\newcommand{\Kar}{\operatorname{Kar}}
\newcommand{\wt}{\operatorname{wt}}

\newcommand{\DEG}{\operatorname{deg}}

\newcommand{\surj}{\mbox{$\rightarrow\!\!\!\!\!\rightarrow$}}

\DeclareMathOperator{\Hom}{Hom}   
\DeclareMathOperator{\End}{End}   \DeclareMathOperator{\Id}{Id}
   \DeclareMathOperator{\Mod}{mod}
 \DeclareMathOperator{\Ext}{Ext}  
\DeclareMathOperator{\gmod}{gmod}   

\DeclareMathOperator{\Hstar}{H^*}
\DeclareMathOperator{\Gr}{Gr}

%\theoremstyle{remark}

% useful commands

\newcommand{\C} {\mathbb{C}}

\begin{document}
\pagestyle{plain}
\title{A Lie theoretic categorification of the coloured Jones polynomial}
\author{Catharina Stroppel}
\author{Joshua Sussan}
%\date{\today}
\begin{abstract}
We use the machinery of categorified Jones-Wenzl projectors to construct a categorification of a type A Reshetikhin-Turaev invariant of oriented framed tangles where each strand is labeled by an arbitrary finite-dimensional representation.   As a special case, we obtain a categorification of the coloured Jones polynomial of links.
%In the special case of a link, {\it i.e.} a $(0,0)$-tangle, we get a categorification of the the coloured Jones polynomial. 
\end{abstract}

\maketitle \tableofcontents

\section{Introduction} 
The discovery of the quantum group in the 1980's led to remarkable applications in topology, giving rise to invariants of tangles and 3-manifolds.  While this is still an active area of study, a different direction in the application of quantum groups to topology was proposed by Crane and Frenkel \cite{CF}.  The philosophy of categorifcation is that algebraic structures giving rise to topological invariants should be replaced by structures of a higher categorical level and these higher structures should give rise to topological invariants of one dimension higher.

The first concrete realization of this philosophy was constructed by Khovanov  who assigned a complex of graded vector spaces to an oriented link embedded in $ S^3. $  As a result, one gets a homological invariant of links whose graded Euler characteristic is the Jones polynomial. It was later proved by Khovanov \cite{Khov3} and independently by Jacobsson \cite{Jac} and then Lie theoretically in \cite{StrTQFT} that a surface bounded by two links induces a chain map of the Khovanov complexes which in turn becomes a topological invariant of the surface, up to signs.  The sign inconsistency was subsequently fixed in various ways: Blanchet \cite{Bla} via singular foams, Clark-Morrison-Walker \cite{CMW} via cobordisms with disorientation lines, and Ehrig-Stroppel-Tubbenhauer \cite{ESTarc} via a sign modified arc algebra.
Seidel and Smith \cite{SeidelSmith} reconstructed Khovanov homology using symplectic geometry. Using the geometry of the affine Grassmanian \cite{CK}, Cautis and Kamnitzer gave an algebraic-geometric formulation of Khovanov homology.

Another approach to the categorification of the Jones polynomial was developed by Bernstein, Frenkel, and Khovanov \cite{BFK}.
They proposed a categorification of the tangle invariant constructed by Reshetikhin and Turaev where each strand of the tangle is labeled by the two-dimensional irreducible representation $ V_1 $ of $ \mathcal{U}_q(\mathfrak{sl}_2).$
The main construction of their work is a category whose complexified Grothendieck group is isomorphic to a tensor power of the standard two-dimensional representation of $\mathcal{U}(\mathfrak{sl}_2)$ along with functors on this category which induce an action of the enveloping algebra and the Temperley-Lieb algebra on the Grothendieck group. Relations in these algebras are upgraded to isomorphisms of functors.
The categories involved are categories of highest weight representations of the Lie algebra $ \mathfrak{gl}_n$, the so-called category $ \mathcal{O}$. Their conjectures were proved and categorification of the tangle invariant was completed by the first author \cite{StrDuke}.  The Koszul grading that these highest weight categories possess was used to construct a categorification of the action of the quantum group and of the Temperley-Lieb algebra (such that the generic parameter $q$ corresponds to a shift in the grading).  A proof that the functor associated to a given tangle diagram is invariant under the Reidemeister moves was given,
and hence a categorification of the Reshetikhin-Turaev tangle invariant established.
In \cite{StrSpringer} a functor to Khovanov's construction was established. 

%Using the geometry of the affine Grassmannian \cite{CK}, Cautis and Kamnitzer constructed another categorification of the Reshetikhin-Turaev tangle invariant. 
%Although this knot invariant coincides with Khovanov homology, the categorical connection is not completely clear, see \cite{StW}.
%see \cite{StW} for a conjectural connection to the previous constructions.

The next development in the categorification of the representation theory of $ \mathcal{U}_q(\mathfrak{sl}_2) $ was a categorification of tensor products of arbitrary finite-dimensional representations of $ \mathcal{U}_q(\mathfrak{sl}_2)$ \cite{FKS}. This construction uses categories of Harish-Chandra bimodules.  This program was continued in \cite{FSS1} where a categorification of the Jones-Wenzl projector and $3j$-symbols was given.

In the present paper we use the machinery developed in \cite{FSS1} to construct a categorification of the Reshetikhin-Turaev invariant of oriented framed tangles where each strand is labeled by an arbitrary finite-dimensional representation.  In the special case of a $ (0,0)$-tangle, we get a categorification of the the coloured Jones polynomial.
The proof that we get a tangle invariant relies on the main theorem of \cite{StrDuke}, a study of twisting functors on subcategories of $\mathcal{O}$ given by projectively presented subcategories, along with a categorical characterization of the Jones-Wenzl projector provided in \cite{FSS1}. We assign functors $\hat {\Phi}_{col}(D)$ to each oriented framed tangle diagram $D$ and establish the following functor valued invariant $\hat {\Phi}_{col}(_-)\langle 3\gamma(\op{cab}(_-)) \rangle$:

\begin{THEOREM}
\label{main}
Let $ D_1 $ and $ D_2 $ be two diagrams for an oriented, framed, coloured tangle $ T $ from points coloured by $ \bf d
$ to points coloured by $ \bf e $.  Then
$$ \hat {\Phi}_{col}(D_1)\langle 3\gamma(\op{cab}(D_1)) \rangle \cong
\hat{\Phi}_{col}(D_2)\langle 3\gamma(\op{cab}(D_2)) \rangle. $$
The induced morphism on the Grothendieck group is the morphism of modules for the quantum group $\mathcal{U}_q(\mathfrak{sl}_2)$ associated to $T$ by  Reshetikhin and Turaev.
\end{THEOREM}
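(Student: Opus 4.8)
The plan is to reduce the coloured statement to the uncoloured one proved in \cite{StrDuke} by means of \emph{cabling}. Recall that a diagram $D$ of a $(\mathbf{d},\mathbf{e})$-coloured tangle $T$ yields, after replacing each strand labelled $V_d$ by $d$ parallel strands labelled $V_1$, an ordinary tangle diagram $\op{cab}(D)$; by construction $\hat{\Phi}_{col}(D)$ is obtained from the uncoloured functor $\hat{\Phi}(\op{cab}(D))$ of \cite{StrDuke} by pre- and post-composing with the inclusion and projection functors that realize the categorified Jones--Wenzl projectors $P_{d_i}$ and $P_{e_j}$ onto the relevant projectively presented subcategories of $\mathcal{O}$. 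So the first task is to show that this data --- $\hat{\Phi}(\op{cab}(D))$ together with the projector insertions --- depends, up to the grading shift $\langle 3\gamma(\op{cab}(D))\rangle$, only on the coloured framed tangle $T$.

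To do this I would fix a finite set of local moves generating the relation ``same coloured framed tangle'' on diagrams and split them into two families. The first family consists of the Reidemeister moves (together with ambient and planar isotopy) taking place among the strands of a single cable: applying $\op{cab}$ turns such a move into a sequence of ordinary Reidemeister moves among $V_1$-strands, and $\hat{\Phi}(\op{cab}(-))$ is invariant under these up to a grading shift which, by the main theorem of \cite{StrDuke}, is exactly the change of $\langle 3\gamma(\op{cab}(-))\rangle$ --- in particular the curl move $RI$ at the cable level, under which $\hat{\Phi}$ changes by a shift that $\langle 3\gamma\rangle$ cancels (the factor $3$ being the accumulated Kauffman-bracket curl anomaly), so that the cable may be manipulated freely. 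The second family consists of the moves that genuinely involve the colours: absorbing a turnback coloured $V_d$, passing a $V_d/V_{d'}$-crossing through a Jones--Wenzl box, and the fork/pitchfork slides forced by the endpoints and by cups and caps. For these I would use the categorical characterization of $P_n$ from \cite{FSS1} --- $P_n$ has the identity diagram in homological degree $0$ and all its other terms in the two-sided ideal generated by turnbacks, $P_n\circ P_n\simeq P_n$, and $P_n$ absorbs turnbacks --- together with the description of how the twisting functors, which categorify the $R$-matrix, act on the projectively presented subcategories. Each such move then becomes an isomorphism of functors between those subcategories, and $\gamma(\op{cab}(-))$ does not change.

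I expect the main obstacle to be precisely this last compatibility: the crossing functors of \cite{StrDuke} were built on the whole parabolic category $\mathcal{O}$ attached to a tensor power of $V_1$, whereas the Jones--Wenzl projectors are idempotents cutting out projectively presented sub- and quotient categories, so one must show that the twisting functors descend to these subcategories and continue to satisfy the braid relations and the absorption property there. This is exactly why the argument needs a separate analysis of twisting functors on projectively presented subcategories of $\mathcal{O}$; granting it, the move-by-move verification is bookkeeping built on \cite{StrDuke} and \cite{FSS1}.

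Finally, for the Grothendieck-group statement I would pass to the (complexified) Grothendieck groups of the graded categories involved. By \cite{FKS} and \cite{FSS1} these are the tensor products $V_{\mathbf{d}}=V_{d_1}\otimes\cdots$ and $V_{\mathbf{e}}$ of the corresponding $\mathcal{U}_q(\mathfrak{sl}_2)$-modules, the class of $P_{d_i}$ is the classical Jones--Wenzl idempotent, the twisting (crossing) functors decategorify to the braiding $R$-matrices, the cup and cap functors decategorify to the (co)evaluation morphisms, and the shift $\langle 3\gamma(\op{cab}(D))\rangle$ decategorifies to multiplication by $q^{3\gamma(\op{cab}(D))}$, which is precisely the framing (ribbon) correction in the Reshetikhin--Turaev normalization. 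Assembling these identifies the class of $\hat{\Phi}_{col}(D)\langle 3\gamma(\op{cab}(D))\rangle$ with the Reshetikhin--Turaev morphism $V_{\mathbf{d}}\to V_{\mathbf{e}}$ attached to $T$; since the left-hand side was shown above to be independent of the diagram $D$, this completes the proof.
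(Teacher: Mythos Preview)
Your proposal is correct and follows essentially the same route as the paper: for each coloured Reidemeister move one slides all interior categorified Jones--Wenzl projectors to the boundary using the crossing-slide (Corollary~\ref{crossingslide}) and cup/cap-slide (Theorem~\ref{cupslide}) results, where they collapse to identities by Theorem~\ref{catJW}, leaving only the uncoloured cabled diagram to which Theorem~\ref{functorisos} applies. Your identification of the ``main obstacle'' --- showing that the twisting functors $\mathbb{L}\hat{T}_{w_{p,q}}$ and $\mathbb{R}\hat{J}_{w_{p,q}^{-1}}$ descend to the projectively presented subcategories --- is exactly the content of Section~\ref{twistprojpres} (Corollary~\ref{maintwistingtheorem}), and your decategorification argument for the Grothendieck-group statement is the intended one.

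One small organizational remark: your division into ``two families'' of moves is not how the paper structures the argument. Rather than having some moves handled purely by uncoloured Reidemeister invariance and others purely by projector properties, every coloured Reidemeister move requires both ingredients simultaneously --- the functor $\hat{\Phi}_{col}(D)$ inserts a projector between \emph{every} pair of consecutive elementary pieces of $D$, so even a single coloured crossing or cap already has projectors woven through its cable, and these must be slid out before the uncoloured theorem can be invoked. The detailed example following the proof of Theorem~\ref{maintheorem} (for the move~\eqref{cr1}) illustrates this interleaving explicitly.
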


The first categorification of the coloured Jones polynomial was constructed by Khovanov \cite{Khovcolor}.  His construction is based upon his earlier categorification of the Jones polynomial and a certain cabling procedure which differs from the one given here.
%since the Jones-Wenzl projector is absent from his main construction.  
His invariant is different from ours since coloured Khovanov homology is non-zero in finitely many degrees whereas our construction lies in an unbounded derived category and a computation for the unknot shows that there is in general non-zero homology in infinitely many degrees.  On the level of graded Euler characteristics, the constructions of course coincide.
Beliakova and Wehrli \cite{BW}, extended \cite{Khovcolor} using Bar-Natan's formulation, and also constructed a Lee deformation \cite{Lee}.

 Webster \cite{Webcombined} constructed a categorification of the Reshetikhin-Turaev invariant for quantum groups attached to arbitrary simple complex Lie algebras and in particular for $\mathcal{U}_q(\mathfrak{sl}_2)$. It appears via a chain of non-trivial equivalences to be equivalent to the construction here. Webster's work is based upon diagrammatically defined algebras generalizing cyclotomic quotients of algebras constructed by Khovanov and Lauda, and independently Rouquier \cite{KL}, \cite{L}, \cite{Rou2}. For the connection between KLR-algebras, Khovanov's algebra and category $\mathcal{O}$ see \cite{BK}, \cite{BS3}, \cite{StrSpringer}.

Cooper and Krushkal \cite{CoKr} categorified the Jones-Wenzl projector and the coloured Jones polynomial using  Bar-Natan's formulation of Khovanov homology \cite{BN}. Their categorification of the Jones-Wenzl projector agrees with ours and the earlier construction in \cite{FKS} up to Koszul duality, \cite{SS}. Rozansky also constructed a categorification of the projector coming from the knot homology of torus braids \cite{Roz}.  This construction was generalized by Cautis \cite{Ca}.

%\begin{remark}{\rm 
%With the categorification of finite tensor products of arbitrary irreducible finite dimensional representation for $\mathfrak{sl}_k$ \cite{SaStr} and the results in \cite{MSslk}, our proofs show that Theorem~\ref{main} also holds for $\mathfrak{sl}_n$ with $n$ arbitrary. We stick however completely to $n=2$ here and also omit to explicitly mention other categorifications of tangle invariants for quantum $\mathfrak{sl}_n$ in case $n>2$. Accompanied by explicit formulas for $n=2$ we present the general arguments and toolkits of the construction which directly generalise to any $n$. We however do not know how to compute explicit examples as presented at the end of the article in case $n>2$.}
%\end{remark}
If we allow only fundamental representations as colours then our construction gives an invariant of oriented tangles \cite{StrDuke}. The proof of Theorem~\ref{main} and its straightforward $\mathfrak{sl}_k$ generalisation rely substantially on two ideas: i) Every irreducible representation is the quotient of some tensor product of fundamental representation. ii) The projection operator onto this summand slides through the braiding maps and through evaluation and coevaluation maps (corresponding to cup and cap tangles). We categorify these properties. They reduce the check of Reidemeister moves to the case of fundamental colours, except for the first Reidemeiser move. 
Here, only the weaker version for framed \emph{framed} oriented tangles can be shown. 

The coloured Jones polynomial plays an important role in the construction of the Reshetikhin-Turaev 3-manifold invariant \cite{RT}.  One fixes a 3-manifold and a framed link such that surgery on the link gives the 3-manifold. A summation over all colours assigned to the link components of the corresponding coloured Jones polynomials is invariant under the Kirby moves, so it is in fact a 3-manifold invariant, \cite{RT}.  In order to avoid an infinite summation, Reshetikhin and Turaev consider however a quantum group at a root of unity, where there are only finitely many finite-dimensional irreducible representations.   Thus a categorification of this invariant, as well as the Turaev-Viro invariant, requires a categorification of the Jones-Wenzl projector at a root of unity.  Here, together with \cite{FSS1}, we establish the basics for a categorification of the representation theory for generic $q$.

\subsection*{Overview of the paper}

In Section ~\ref{U2basics}, we review the representation theory of $ \mathcal{U}_q(\mathfrak{sl}_2) $ and show how one can construct tangle invariants and the coloured Jones polynomial from it.  In Section ~\ref{catproducts}, we summarize the Bernstein-Frenkel-Khovanov-Stroppel categorification of tensor products of arbitrary finite-dimensional representation of $ \mathcal{U}_q(\mathfrak{sl}_2) $.
The Bernstein-Gelfand functors between category $ \mathcal{O} $ and categories of Harish-Chandra bimodules are reviewed and used to categorify the inclusion and projection morphisms of tensor products of representations.
We give a new description of the categorical Jones-Wenzl projector in Section \ref{secJWcomplex}.  This formulation utilizes Lauda's $\mathfrak{sl}_2$ category as well as some facts about the cohomology of Grassmannians.
In Section ~\ref{catuncoloredrt} we review the categorification of the Jones polynomial conjectured in \cite{BFK} and proved in \cite{StrDuke}.
A study of twisting functors and their adjoints acting on subcategories of $ \mathcal{O} $ is given in Section ~\ref{twistprojpres}.  The main result of this section is that certain compositions of twisting functors map a derived category of projectively presented objects to another such category and consequently a categorification of the action of the $R$-matrix.
The main theorem and its proof are included in Section ~\ref{reidemeistersection}.   We use Sections ~\ref{catuncoloredrt} and ~\ref{twistprojpres} to show that if two diagrams are related by a Reidemeister move, then the assigned functors are isomorphic.
%Finally, in Section ~\ref{unknot}, we compute our invariant on the unknot colored by an arbitrary representation.

\subsection*{Acknowledgements}
The authors are very grateful to Sabin Cautis, Mikhail Khovanov, and You Qi for helpful conversations.

J.S. is partially supported by the NSF grant DMS-1807161 and PSC CUNY Award 64012-00 52. 
J.S. gratefully acknowledges support from the Simons Center for Geometry and Physics, Stony Brook University at which some of the research for this paper was performed.
He would also like to thank the Max Planck Institute for Mathematics in Bonn and the Hausdorff Center of Mathematics for its hospitality during the early stages of this project.

\section{Representation theory of $ \mathcal{U}_q(\mathfrak{sl}_2) $}
\label{U2basics}
In this section we recall basic structures on the representation theory of $\mathcal{U}_q(\mathfrak{sl}_2)$ from \cite{Kassel} and \cite{KaLi} and review the corresponding Reshetikhin-Turaev invariant for oriented framed tangles. All this will later be categorified. 

\subsection{Representations}
Let $\mC(q)$ be the field of complex rational functions in an indeterminate $q$. Later on we will also work with the ring of integral formal power series of finite order in $q$ which we denote by $\mZ((q))$. An element of $\mZ((q))$ is a formal Laurent series, $\sum_{i\in\mathbb{Z}}a_iq^i$, in $q$ with coefficients $a_i\in\mathbb{Z}$ with $a_i=0$ for almost all $i<0$.
\begin{define} \label{defofsl2}
Let $\mathcal{U}_q=\mathcal{U}_q(\mathfrak{sl}_2) $ be the associative algebra over $\mathbb{C}(q)$ generated by $ E, F, K, K^{-1} $ satisfying the
relations:
\vspace{-3mm}
\begin{multicols}{2}
\begin{enumerate}[(i)]
\item $ KK^{-1} = K^{-1}K=1$, \item $ KE = q^2 EK $, \item $ KF = q^{-2} FK $, \item $ EF-FE = \frac{K-K^{-1}}{q-q^{-1}} $.
\end{enumerate}
\end{multicols}
\end{define}

Let $ [k]=\sum_{j=0}^{k-1} q^{k-2j-1}$ and $ {n \brack {k}} = \frac{[n]!}{[k]![n-k]!}. $ Let $ \bar{V}_n $ be the unique (up to isomorphism) irreducible module for $\mathfrak{sl}_2$ of dimension $ n+1$. Denote by $V_n $ its quantum analogue (of type I), that is
the irreducible $ \mathcal{U}_q(\mathfrak{sl}_2)$-module with basis $ \lbrace v_0, v_1, \ldots, v_{n} \rbrace $ such that
\begin{equation}
\label{irreddef}
K^{\pm 1} v_i= q^{\pm (2i-n)} v_i\quad\quad Ev_i =[i+1] v_{i+1}\quad\quad F v_i = [n-i+1] v_{i-1}.
\end{equation}

There is a unique bilinear form $\langle\quad,\quad\rangle' \;\colon\; V_n \times V_n \rightarrow \mathbb{C}(q)$
which satisfies
\begin{equation}
\label{scalarprod} \langle v_k, v_l \rangle' = \delta_{k,l}q^{k(n-k)}{n \brack k}.
\end{equation}
The vectors $\lbrace v^0, \ldots, v^n \rbrace $ where  $v^i =\frac{1}{{n \brack i}} v_i$ form {\it the dual standard basis} characterised by $ \langle v_i, v^i \rangle' = q^{i(n-i)}$. Recall that $\mathcal{U}_q$ is a Hopf algebra with comultiplication
\begin{equation}
\label{comult}
%\begin{split}
\triangle(E)=1\otimes E + E\otimes K^{-1},\quad\quad\quad\triangle(F)=K\otimes F+F\otimes 1,\quad\quad
\triangle(K^{\mp 1})=K^{\mp1}\otimes K^{\mp1}.
%\end{split}
\end{equation}
and antipode $S$ defined as $S(K)=K^{-1}$, $S(E)=-EK$ and $S(F)=-K^{-1}F$. Thus, the tensor product $V_{\bf d}:=V_{d_1} \otimes \cdots\otimes V_{d_r} $ has the structure of a $\mathcal{U}_q$-module with {\it standard basis} $\lbrace v_{\bf a}=v_{a_1} \otimes \cdots \otimes v_{a_r} \rbrace $ where $ 0 \leq a_j \leq d_j $ for $ 1 \leq j \leq r$. Denote by $v^{\bf a}=v^{a_1} \otimes \cdots \otimes v^{a_r}$ the corresponding tensor products of dual standard basis elements.

\begin{remark}{\rm 
We made a specific convenient choice of comultiplication.  Others might be taken as well.  For instance, if we set $\widetilde{E}=KE$, 
$\widetilde{F}=FK$, 
$\widetilde{K}=K$ and $\widetilde{v}_0=v_0$,  $\widetilde{v}_1 = qv_1$ in $V_1$, then $\widetilde{E}, \widetilde{F}, \widetilde{K}$ satisfy the relations from Definition \ref{defofsl2} and the structure coefficients from \eqref{irreddef} hold for the tilde-version.  Formulas for the comultiplication of these new generators are given by:
$\triangle(\widetilde{E})=\widetilde{E} \otimes 1 + K \otimes \widetilde{E} $ and
$\triangle(\widetilde{F})=1 \otimes \widetilde{F} + \widetilde{F} \otimes K^{-1}$. 
When we categorify these structures later, the functors corresponding to $\widetilde{E}$ and $\widetilde{F}$ agree with those for $E$ and $F$ up to a shift (encoded by $K$).
}
\end{remark}

\begin{figure}[t]
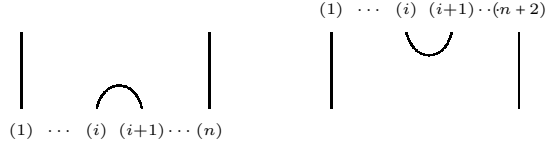

\begin{equation*}
 \xy (0,0);(0,10)*{}**\dir{-};(0,10); (10,0)*{};(16,0)*{}**\crv{(11,4)&(15,4)}; (25,0)*{};(25,10)*{}**\dir{-};(25,10)*{};
(0,-3)*{\mbox{\tiny ($1$)}};(5,-3)*{\mbox{\tiny $\ldots$}};(10,-3)*{\mbox{\tiny ($i$)}};(16,-3)*{\mbox{\tiny
($i$+$1$)}};(21,-3)*{\mbox{\tiny$\ldots$}};(25,-3)*{\mbox{\tiny ($n$)}};
\endxy
\hspace{0.5in} \xy (0,0)*{};(0,10)*{}**\dir{-};(0,10)*{}; (10,10)*{};(16,10)*{}**\crv{(11,6)&(15,6)};(10,10)*{};(16,10)*{};
(25,0)*{};(25,10)*{}**\dir{-};(25,10)*{};
(0,13)*{\mbox{\tiny ($1$)}};(5,13)*{\mbox{\tiny $\ldots$}};(10,13)*{\mbox{\tiny
($i$)}};(16,13)*{\mbox{\tiny ($i$+$1$)}};(21,13)*{\mbox{\tiny$\ldots$}};(25,13)*{\mbox{\tiny ($n+2$)}};
\endxy
\end{equation*}
\vspace{-2mm}
\caption{{\small The intertwiners $\cap_{i,n}$ and $\cup_{i,n}$.}}
\label{fig:capcup}
\vspace{-2mm}
\end{figure}
\vspace{-2mm}
\subsection{Jones-Wenzl projector and intertwiners}
Next we define morphisms between various tensor powers of $V_1$ which intertwine the action of the quantum group, namely $\cup\colon \mathbb{C}(q) \rightarrow V_1^{\otimes 2}$ and $ \cap \colon V_1^{\otimes 2} \rightarrow \mathbb{C}(q)$ which are given on the standard basis by
\begin{eqnarray}
\label{defcupcap}
\cup(1)=v_1\otimes v_0 -qv_0 \otimes v_1,
&&
 \cap(v_i \otimes v_j)=
 \begin{cases}
 0 &\text{if $i=j$}\\
 1 &\text{if $i=0, j=1$}\\
 -q^{-1}&\text{if $i=1, j=0$.}
 \end{cases}
\end{eqnarray}

We define $\cap_{i,n}=\Id^{\otimes (i-1)} \otimes \cap \otimes \Id^{\otimes
    (n-i-1)}$ and $\cup_{i,n}=\Id^{\otimes (i-1)} \otimes \cup \otimes \Id^{\otimes (n-i+1)}$  as $\cU_q$-morphisms from $V_1^{\otimes n}$ to $V_1^{\otimes (n-2)}$, respectively  $V_1^{\otimes (n+2)}$.
Let $C:=\cup\circ\cap$ be their composition and $C_i:=C_{i,n}:= \cup_{i,n-2}\circ\cap_{i,n}$.
We depict the cap and cup intertwiners graphically in Figure \ref{fig:capcup} (reading the diagram from bottom to top), so that $\cap\circ\cup$ is just a circle. In fact, finite compositions of these elementary morphisms generate the $\mathbb{C}(q)$-vector space of all intertwiners, see e.g. \cite[Section 2]{FK}.

If we encode a basis vector $v_{\bf d}$ of $V_1^{\otimes n}$ as a sequence of $\up$'s and $\down$'s according to
the entries of ${\bf d}$, where $0$ is turned into $\down$ and $1$ is turned into $\up$, then the formulas in \eqref{defcupcap} can be symbolized by\\

\begin{picture}(45,28)
\put(9.8,0){$\scriptstyle\up$}
\put(32.7,0){$\scriptstyle\up$}
\put(35,14){\line(0,-1){11.5}}
\put(12,14){\line(0,-1){11.5}}
\put(23.5,14){\oval(23,23)[t]}
\put(41,12){$=0=$}\put(101,12){$,$}
\put(69.6,2){$\scriptstyle\down$}
\put(92.6,2){$\scriptstyle\down$}
\put(95,14){\line(0,-1){11.5}}
\put(72,14){\line(0,-1){11.5}}
\put(83.5,14){\oval(23,23)[t]}
\put(109.8,2){$\scriptstyle\down$}
\put(132.8,0){$\scriptstyle\up$}
\put(135,14){\line(0,-1){11.5}}
\put(112,14){\line(0,-1){11.5}}
\put(123.5,14){\oval(23,23)[t]}
\put(141,12){$=1$,\quad}

\put(169.7,0){$\scriptstyle\up$}
\put(192.8,2){$\scriptstyle\down$}
\put(195.0,14){\line(0,-1){11.5}}
\put(171.9,14){\line(0,-1){11.5}}
\put(183.5,14){\oval(23,23)[t]}
\put(201,12){$=-q^{-1}$,\quad}

\put(243.5,14){$\bigcup(1)=$}
\put(279.8,22){$\scriptstyle\up$}
%\put(280,22){$\scriptstyle\up$}
\put(292.6,25){$\scriptstyle\down$}
\put(295,6){\line(0,1){19.5}}
\put(282,6){\line(0,1){19.5}}
\put(301,12){$-\;q$}
\put(319.6,25){$\scriptstyle\down$}
\put(332.8,22){$\scriptstyle\up$}
\put(335,6){\line(0,1){19.5}}
\put(322,6){\line(0,1){19.5}} 
\put(342,12){$.$}
\end{picture} 
\vspace{4mm}

The symmetric group $\mathbb{S}_n$ acts transitively on the set of $n$-tuples with $i$ ones and $n-i$ zeroes. The stabilizer of ${\bf d}_{\op{dom}}:=(\underbrace{1, \ldots, 1}_{i},
\underbrace{0, \ldots, 0}_{n-i})$ is $\mathbb{S}_{i}\times \mathbb{S}_{n-i}$. By sending the identity element $e$ to ${\bf d}_{\op{dom}}$ we fix for the rest of the paper a bijection between shortest coset representatives in $\mathbb{S}_n/\mathbb{S}_{i}\times \mathbb{S}_{n-i}$ and these tuples ${\bf d}$. We denote by $w_0$ the longest element of $\mathbb{S}_n$ and by $w_0^i$ the longest element in $\mathbb{S}_{i}\times \mathbb{S}_{n-i}$. By abuse of language we denote by $l(\bf{d})$ the (Coxeter) {\it length} of $\bf{d}$ meaning the Coxeter length of the corresponding element in $\mathbb{S}_n$. We denote by $|{\bf d}|$ the numbers of ones in ${\bf d}$.
\begin{define}{\rm
For $ {\bf a}=(a_1, \ldots, a_n)\in\{0,1\}^n$ let $ v_{\bf a}=v_{a_1} \otimes \cdots \otimes v_{a_n}\in V_1^{\otimes n}$ be the
corresponding basis vector.\nopagebreak
\begin{itemize}
\item Let $ \pi_n \colon
V_1^{\otimes n} \rightarrow V_n $ be given by the formula
\begin{eqnarray}
\label{pn} \pi_n(v_{\bf a}) = q^{-l({\bf a})}\frac{1}{{n \brack |{\bf a}|}} v_{|{\bf a}|}=q^{-l({\bf a})}v^{|{\bf a}|}
\end{eqnarray}
where $l({\bf a})$ is equal to the number of pairs $(i,j)$ with $i<j$ and $a_i<a_j.$ 
This gives the {\it projection} $ \pi_{i_1} \otimes
\cdots \otimes \pi_{i_r} \colon V_1^{\otimes(i_1+\cdots+i_r)} \rightarrow V_{i_1} \otimes \cdots \otimes V_{i_r}. $
\item We denote by $ \iota_n \colon V_n \rightarrow V_1^{\otimes n} $ the intertwining map
\begin{eqnarray}
\label{in}
 v_k \mapsto \sum_{|{\bf a}|=k} q^{b({\bf a})} v_{\bf a}
\end{eqnarray}
where $b({\bf a})=|{\bf a}|(n-|{\bf a}|)-l({\bf a})$, i.e. the number of pairs $ (i,j) $ with $ i < j $ and $ a_i > a_j$.
Define the {\it inclusion} $ \iota_{i_1} \otimes \cdots \otimes
\iota_{i_r} \colon V_{i_1} \otimes \cdots \otimes V_{i_r} \rightarrow V_1^{\otimes (i_1+\cdots+i_r)}.
$
\end{itemize}
}
\end{define}
The composite $p_n=\iota_n\circ\pi_n$ is the {\it Jones-Wenzl projector}. We symbolize the projection, inclusion and the projector as follows
\begin{equation*}
  \begin{aligned}
\begin{tikzpicture}
\draw (0,0) -- (2,0)[thick];
\draw (0,0) -- (.25,.25)[thick];
\draw (2,0) -- (1.75, .25)[thick];
\draw (.25, .25) -- (1.75, .25)[thick];

\draw (3.25,0) -- (4.75,0)[thick];
\draw (3.25,0) -- (3,.25)[thick];
\draw (4.75,0) -- (5,.25)[thick];
\draw (3,.25) -- (5,.25)[thick];

\draw (6,0) -- (8,0)[thick];
\draw (6,0) -- (6,.25)[thick];
\draw (8,0) -- (8, .25)[thick];
\draw (6,.25) -- (8,.25)[thick];

\draw (1, -.3) node{$\pi_n$};
\draw (4, -.3) node{$ \iota_n$};
\draw (7, -.3) node{$ p_n$};
\end{tikzpicture}
\end{aligned}
\vspace{-3mm}
\end{equation*}

\begin{remark}
\label{powerseries}
{\rm
Note that the expression
$\frac{1}{{n \brack |{\bf a}|}}$ can be written in a unique way as $q^{-m}H$, where $m\in\mZ_{\geq 0}$ and $H$
is a formal power series in $q$ with integral coefficients, hence an element in $\mZ((q))$. For instance,
\begin{equation}
\label{quantum2}
\frac{1}{{2 \brack 1}}=\frac{1}{q^{-1}+q}=q^{-1}\frac{1}{1+q^{2}}=q^{-1}(1-q^2+q^4-q^6+\quad\cdots\quad)\in\mZ((q)).
\end{equation}
}
\end{remark}

\begin{ex}
\label{JWex}
{\rm For $n=2$ we have $\iota_2(v_0)=v_0\otimes v_0$,
$\iota_2(v_1)=qv_1\otimes v_0+v_0\otimes v_1$, $\iota_2(v_2)=v_1\otimes v_1$,
and $\pi_2(v_0\otimes v_0)=v_0$, $\pi_2(v_1\otimes v_0)=v^1=[2]^{-1}v_1$,
$\pi_2(v_0\otimes v_1)=q^{-1}v^1=q^{-1}[2]^{-1}v_1$,  $\pi_2(v_1\otimes
v_1)=v_2$. Using formula \eqref{quantum2} we may view these as morphisms of representations defined over $\mC((q))$.
}
\end{ex}

\begin{prop}
\label{charJW}
The endomorphism $p_n$ of $V_1^{\otimes n}$ is the unique $\cU_q(\mathfrak{sl}_2)$-morphism which satisfies {\rm (for $1 \leq i \leq n-1 $):}
\vspace{-3mm}
\begin{multicols}{3}
\begin{enumerate}[(i)]
\item $ p_n \circ p_n = p_n $ \item $C_{i,n} \circ p_n = 0 $ \item $ p_n \circ C_{i,n} = 0$.
\end{enumerate}
\end{multicols}
\end{prop}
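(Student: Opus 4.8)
The plan is to prove existence and uniqueness separately. For \emph{existence}, I would check that $p_n = \iota_n \circ \pi_n$ satisfies (i)--(iii). Property (i) follows once we know $\pi_n \circ \iota_n = \Id_{V_n}$; this is a direct computation from formulas \eqref{pn} and \eqref{in}: applying $\pi_n$ to $\iota_n(v_k) = \sum_{|{\bf a}|=k} q^{b({\bf a})} v_{\bf a}$ gives $\sum_{|{\bf a}|=k} q^{b({\bf a})} q^{-l({\bf a})} v^k = \big(\sum_{|{\bf a}|=k} q^{|{\bf a}|(n-|{\bf a}|)}\big) v^k$, and after recalling $\langle v_k,v^k\rangle' = q^{k(n-k)}$ together with \eqref{scalarprod} one identifies the scalar as $1$ (the count of ${\bf a}$ with $|{\bf a}|=k$, weighted appropriately, reproduces ${n\brack k}$ times $q^{k(n-k)}$ divided by itself). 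Then $p_n \circ p_n = \iota_n \pi_n \iota_n \pi_n = \iota_n \pi_n = p_n$. For (ii) and (iii), the key point is that $\pi_n \circ C_{i,n} = 0$ and $C_{i,n}\circ \iota_n = 0$. This is where the structural input enters: $C_{i,n}$ factors through $V_1^{\otimes(n-2)}$ via $\cap_{i,n}$, and $\iota_n$ lands in the image of the (quantum) symmetrizer, on which every $\cap_{i,n}$ vanishes --- equivalently, $V_n$ is the common kernel of all the $\cap_{i,n}$ inside $V_1^{\otimes n}$. One can see $C_{i,n}\circ\iota_n = 0$ directly: $\cap_{i,n}$ applied to $\iota_n(v_k)$ pairs up, for each term $v_{\bf a}$, the $i$-th and $(i{+}1)$-st tensor factors, and the contributions of ${\bf a}$ and of the tuple ${\bf a}'$ obtained by swapping positions $i,i{+}1$ cancel because the ratio of their $q$-weights $q^{b({\bf a})}/q^{b({\bf a}')} = q^{\pm 1}$ matches the ratio $1 : -q^{-1}$ (resp. $-q : 1$) prescribed by $\cap$ in \eqref{defcupcap}. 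Dually (or by the same cancellation using $l$ in place of $b$), $\pi_n \circ C_{i,n} = 0$, giving (ii) and (iii).

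For \emph{uniqueness}, suppose $f$ is a $\cU_q(\SL)$-endomorphism of $V_1^{\otimes n}$ satisfying (i)--(iii). The idea is to use the decomposition of $V_1^{\otimes n}$ into isotypic components and the fact that the $C_{i,n}$ detect all the summands except the top one $V_n$. More precisely, the image of $\bigoplus_i C_{i,n}$ (or the span of the images of the $\cup_{i,n-2}$) is exactly the sum of all isotypic components $V_m$ with $m < n$; equivalently $V_n \subseteq V_1^{\otimes n}$ is characterized as $\bigcap_i \ker \cap_{i,n}$, and it is complemented by $\sum_i \IM \cup_{i,n-2}$. Condition (iii), $f\circ C_{i,n}=0$, forces $f$ to kill $\sum_i \IM \cup_{i,n-2}$, hence to factor through the projection onto the $V_n$-component. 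Condition (ii), $C_{i,n}\circ f = 0$, forces the image of $f$ to lie in $\bigcap_i \ker\cap_{i,n} = V_n$. So $f$ is a scalar multiple of the idempotent $p_n$ (since $\END_{\cU_q}(V_n) = \mC(q)$), and condition (i), $f^2 = f$, forces that scalar to be $0$ or $1$. Finally $f \neq 0$: if $f = 0$ it would not satisfy\ldots actually we must be slightly careful, since the zero map \emph{does} satisfy all three conditions --- so the statement as phrased must implicitly exclude it, or "the endomorphism $p_n$" already names a specific nonzero idempotent and the claim is that it is the unique nonzero one. I would add the word \emph{nonzero} (or note that $p_n$ is by construction nonzero and the other solution of (i) with the factorization properties is $0$), and then uniqueness is complete.

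The main obstacle is the representation-theoretic fact underpinning both halves: that the operators $\{C_{i,n}\}_{1\le i\le n-1}$ "see" precisely the non-top isotypic components, i.e. $\sum_i \IM\cup_{i,n-2} = \bigoplus_{m<n, m\equiv n} V_m^{\oplus(\cdots)}$ and $V_n = \bigcap_i \ker\cap_{i,n}$. This is classical for the Temperley--Lieb category (the $V_n$ summand is the Jones--Wenzl idempotent's image, and the ideal of "non-through-strand" diagrams is exactly the span of the cups/caps), so I would either cite \cite[Section 2]{FK} or the standard Temperley--Lieb references, or give a short induction on $n$: assume the claim for $n-1$, write $V_1^{\otimes n} = (V_1^{\otimes(n-1)})\otimes V_1$, use $V_{n-1}\otimes V_1 \cong V_n \oplus V_{n-2}$, and track how the $\cap_{i,n}$ restrict. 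The remaining steps --- the two explicit weight-cancellation computations for (ii) and (iii), and the identity $\pi_n\iota_n = \Id$ --- are routine given the formulas \eqref{defcupcap}, \eqref{pn}, \eqref{in}.
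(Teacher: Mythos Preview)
The paper does not actually prove Proposition~\ref{charJW}; it is stated without proof as a classical characterisation of the Jones--Wenzl projector (see e.g.\ \cite{FK}, \cite{KaLi}), and the paper only \emph{uses} it later, in Theorem~\ref{JWaltformthm}, to identify $p_n$ with $E^{(k)}F^{(k)}/{n\brack k}$. So there is no ``paper's own proof'' to compare against.

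Your argument is the standard one and is correct in outline. The existence computations (that $\pi_n\iota_n=\Id_{V_n}$, and the pairwise cancellation showing $\cap_{i,n}\circ\iota_n=0$ and $\pi_n\circ\cup_{i,n-2}=0$) are routine from \eqref{pn}, \eqref{in}, \eqref{defcupcap}; your verification of $\pi_n\iota_n=\Id$ is slightly garbled in the write-up but the underlying identity $\sum_{|{\bf a}|=k} q^{k(n-k)-2l({\bf a})}={n\brack k}$ is the standard one. For uniqueness, the decomposition $V_1^{\otimes n}=V_n\oplus\sum_i\IM(\cup_{i,n-2})$ together with Schur's lemma is exactly the right tool, and your observation that the zero map also satisfies (i)--(iii) is correct: the proposition should be read as ``unique nonzero'' (or equivalently, $p_n$ and $0$ are the only solutions, and $p_n$ is by construction the nonzero one). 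This is a genuine imprecision in the statement as written, not a gap in your proof.
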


\subsection{Reshetikhin-Turaev invariant}
In this section we recall the Resh\-etik\-hin-Turaev-Jones (coloured) tangle invariants. First note that the universal $R$-matrix induces a representation of the braid group with $n$ strands on $V_1^{\otimes n}$. We first define the morphisms $ \Pi_i $ and $ \Omega_i $ corresponding to unoriented elementary braids.
\begin{definition}
Define $ \Pi \colon V_1^{\otimes 2} \rightarrow V_1^{\otimes 2} $ by $ \Pi = -q^{-1} C - q^{-2} \Id=-q^{-1}(C+q^{-1} \Id)$ and then $ \Pi_i =\Id^{\otimes i-1} \otimes \Pi \otimes \Id^{\otimes n-i-1}$.
Define $ \Omega \colon V_1^{\otimes 2} \rightarrow V_1^{\otimes 2} $ by $ \Omega = -q^{} C - q^{2} \Id= -q^{}(C+q^{})\Id$ and then $\Omega_i = \Id^{\otimes i-1} \otimes \Omega \otimes \Id^{\otimes n-i-1}.$

\begin{equation}
\label{crosses} 
\xy (0,0);(0,10)*{}**\dir{-};(0,10); (10,0)*{};(16,10)*{}**\dir{-}; (16,0)*{};(13.6,4)*{}**\dir{-};
(12.4,6)*{};(10,10)*{}**\dir{-}; (25,0)*{};(25,10)*{}**\dir{-};(25,10)*{}; (0,-3)*{\mbox{\tiny (1)}};(5,-3)*{\mbox{\tiny
$\ldots$}};(10,-3)*{\mbox{\tiny $(i)$}};(16,-3)*{\mbox{\tiny ($i$+$1$)}};(21,-3)*{\mbox{\tiny$\ldots$}};(25,-3)*{\mbox{\tiny$(n)$}};
(-8,5)*{\mbox{$\Pi_{i}:$}};
\endxy
\hspace{0.5in} \xy (0,0)*{};(0,10)*{}**\dir{-};(0,10)*{}; (10,10)*{};(16,0)*{}**\dir{-}; (10,0)*{};(12.4,4)*{}**\dir{-};
(13.6,6)*{};(16,10)*{}**\dir{-}; (25,0)*{};(25,10)*{}**\dir{-}; (0,-3)*{\mbox{\tiny (1)}};(5,-3)*{\mbox{\tiny
$\ldots$}};(10,-3)*{\mbox{\tiny ($i$)}};(16,-3)*{\mbox{\tiny ($i$+$1$)}};(21,-3)*{\mbox{\tiny$\ldots$}};(25,-3)*{\mbox{\tiny$(n)$}};
(-8,5)*{\mbox{$\Omega_{i}:$}};
\endxy
%\caption{Intertwiners $\Pi_{i}$ and $\Omega_{i}$}
\end{equation}
\end{definition}
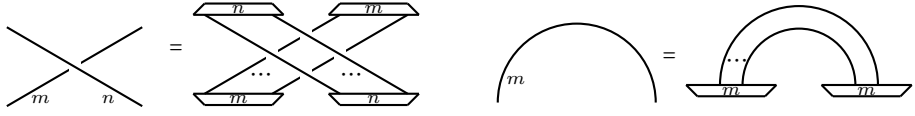
\begin{figure}[t]
\begin{minipage}{.9\textwidth}
\begin{minipage}{0.5\textwidth}
\begin{equation*}
\hspace{-.4cm}
\begin{tikzpicture} [scale=.60]
\draw (.25,-1.75) -- (3.25,0)[thick];
  \path [fill=white] (1.625,-1) rectangle (1.875,-.75);
  \draw  [shift={(-3,-2)}] (4, .4) node{$\scriptstyle{m}$};
  \draw  [shift={(0,-2)}] (2.5, .4) node{$\scriptstyle{n}$};
\draw (3.25,-1.75) -- (.25,0)[thick];
  \draw  [shift={(1.5,-1)}] (2.5, .5) node{$ =$};
\end{tikzpicture}
\begin{tikzpicture} [scale=.60]
\draw (0,0) -- (2,0)[thick];
\draw (0,0) -- (.25,.25)[thick];
\draw (2,0) -- (1.75, .25)[thick];
\draw (.25, .25) -- (1.75, .25)[thick];
\draw (1, .125) node{$\scriptstyle{n}$};
\draw [shift={(-3,-2)}]  (3.25,0) -- (4.75,0)[thick];
\draw [shift={(-3,-2)}] (3.25,0) -- (3,.25)[thick];
\draw [shift={(-3,-2)}](4.75,0) -- (5,.25)[thick];
\draw [shift={(-3,-2)}] (3,.25) -- (5,.25)[thick];
\draw  [shift={(-3,-2)}] (4, .125) node{$\scriptstyle{m}$};
\draw  [shift={(-2.5,-1.5)}] (4, .125) node{$ \cdots$};
\draw  [shift={(-.5,-1.5)}] (4, .125) node{$ \cdots$};
\draw [shift={(0,-2)}]  (3.25,0) -- (4.75,0)[thick];
\draw [shift={(0,-2)}] (3.25,0) -- (3,.25)[thick];
\draw [shift={(0,-2)}](4.75,0) -- (5,.25)[thick];
\draw [shift={(0,-2)}] (3,.25) -- (5,.25)[thick];
\draw  [shift={(0,-2)}] (4, .125) node{$\scriptstyle{n}$};
\draw [shift={(3,0)}](0,0) -- (2,0)[thick];
\draw [shift={(3,0)}](0,0) -- (.25,.25)[thick];
\draw [shift={(3,0)}](2,0) -- (1.75, .25)[thick];
\draw [shift={(3,0)}](.25, .25) -- (1.75, .25)[thick];
\draw [shift={(3,0)}] (1, .125) node{$\scriptstyle{m}$};
\draw (.25,-1.75) -- (3.25,0)[thick];
\path [fill=white] (1.625,-1) rectangle (1.875,-.75);
\draw (1.75,-1.75) -- (4.75,0)[thick];
\path [fill=white] (3.125,-1) rectangle (3.375,-.75);
\path [fill=white] (2.375,-.3125) rectangle (2.625,-.5625);
\path [fill=white] (2.375,-1.1875) rectangle (2.625,-1.4375);
\draw (4.75,-1.75) -- (1.75,0)[thick];
\draw (3.25,-1.75) -- (.25,0)[thick];
\end{tikzpicture}
\end{equation*}
\end{minipage}
\begin{minipage}{0.6\textwidth}
\begin{equation*}
\begin{tikzpicture} [scale=.60]
\draw (-.5,-1) arc (180:0:1.75) [thick];
\draw (-.1,-.5) node{$\scriptstyle{m}$};
\!
 \draw  [shift={(1.5,-1)}] (1.8, 1) node{$ =$};
 \!
\end{tikzpicture}
\begin{tikzpicture} [scale=.60] 
%\draw (0,0) -- (2,0)[thick];
%\draw (0,0) -- (.25,.25)[thick];
%\draw (2,0) -- (1.75, .25)[thick];
%\draw (.25, .25) -- (1.75, .25)[thick];
%[shift={(-.5,0)}]
\draw [shift={(-.75,0)}](0,.25) -- (2,.25)[thick];
\draw [shift={(-.75,0)}](0,.25) -- (.25,0)[thick];
\draw [shift={(-.75,0)}](2,.25) -- (1.75, 0)[thick];
\draw [shift={(-.75,0)}](.25, 0) -- (1.75, 0)[thick];
\draw [shift={(-.75,0)}][shift={(3,0)}](0,.25) -- (2,.25)[thick];
\draw [shift={(-.75,0)}][shift={(3,0)}](0,.25) -- (.25,0)[thick];
\draw [shift={(-.75,0)}][shift={(3,0)}](2,.25) -- (1.75, 0)[thick];
\draw [shift={(-.75,0)}][shift={(3,0)}](.25, 0) -- (1.75, 0)[thick];
\draw [shift={(-.75,0)}](1, .125) node{$\scriptstyle{m}$};
%\draw [shift={(3,0)}](0,0) -- (2,0)[thick];
%\draw [shift={(3,0)}](0,0) -- (.25,.25)[thick];
%\draw [shift={(3,0)}](2,0) -- (1.75, .25)[thick];
%\draw [shift={(3,0)}](.25, .25) -- (1.75, .25)[thick];
\draw [shift={(-.75,0)}][shift={(3,0)}] (1, .125) node{$\scriptstyle{m}$};
\draw [shift={(-.75,0)}](.75,.25) arc (180:0:1.75) [thick];
\draw [shift={(-.75,0)}](1.25,.25) arc (180:0:1.25) [thick];
\draw [shift={(-.75,0)}][shift={(0,0)}] (1.125, .75) node{$\cdots$};
\end{tikzpicture}
\end{equation*}
\end{minipage}
\end{minipage}
\caption{{\small Coloured crossings and caps (similarly cups) via cabling.}}
\label{coloured}
\end{figure}

For $D$ an oriented tangle diagram, let $ \gamma(D) $ be the number of crossings of the types shown in \eqref{orientedcrossings1} minus the number of crossings of the types shown in \eqref{orientedcrossings2}.
\begin{minipage}{7cm}
\begin{equation}
\label{orientedcrossings1} \xy {\ar (10,0)*{};(16,10)*{}**\dir{-}}; {(16,0)*{};(13.6,4)*{}**\dir{-}}; {\ar
(12.4,6)*{};(10,10)*{}**\dir{-}};
\endxy
\hspace{0.5in} \xy {\ar (16,10)*{};(10,0)*{}**\dir{-}}; {\ar (13.6,4)*{};(16,0)*{}**\dir{-}}; (12.4,6)*{};(10,10)*{}**\dir{-};
\endxy
\end{equation}
\end{minipage}
\begin{minipage}{5cm}
\begin{equation}
\label{orientedcrossings2} \xy {(10,0)*{};(12.4,4)*{}**\dir{-}}; {\ar (13.6,6)*{};(16,10)*{}**\dir{-}}; {\ar
(16,0)*{};(10,10)*{}**\dir{-}};
\endxy
\hspace{0.5in} \xy {\ar (12.4,4)*{};(10,0)*{}**\dir{-}}; {(13.6,6)*{};(16,10)*{}**\dir{-}}; {\ar (10,10)*{};(16,0)*{}**\dir{-}};
\endxy
%\caption{Oriented crossings $2$}
\end{equation}
\end{minipage}

Given a (generic) tangle diagram $D(T)$ of a tangle $T$ from $n$ points to $m$ points, one can write $D(T)$ as a finite horizontal and vertical composition of
elementary cup, cap, and braid tangle diagrams from above. Taking the corresponding tensor products and compositions of intertwiners, we may assign an
intertwiner $\Phi(D(T))$ from $ V_1^{\otimes n} $ to $ V_1^{\otimes m}.$ In case $T$ is additionally oriented we can associate the intertwiner $q^{3\gamma(D(T))} \Phi(D(T))$ where  $\Phi(D(T)):=\Phi(D(T)')$ and $D(T)'$ is obtained from $D(T)$ by forgetting the orientation.

\begin{theorem}
\label{jonesinvariant} \rm{(\cite[Theorem 5.1]{RT1})} Let $ T $ be an oriented tangle from $ n $ points to $ m $ points.  Let $ D_1 $ and
$ D_2 $ be two of its planar projections, written as a composition of elementary tangle diagrams. Then $ q^{3\gamma(D_1)} \Phi(D_1) =
q^{3\gamma(D_2)}\Phi(D_2) \colon V_1^{\otimes n} \rightarrow V_1^{\otimes m}.$ In particular, $T\mapsto q^{3\gamma(D(T))}\Phi(D(T))$ is
independent of the choice and presentation of $D(T)$, hence defines an invariant of tangles.
\end{theorem}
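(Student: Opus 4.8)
The statement is classical; the plan is the usual Reidemeister-calculus argument, which I now sketch. First I would invoke the oriented Reidemeister theorem for tangles: two generic planar diagrams $D_1,D_2$ of the same oriented tangle $T$ with the same boundary are connected by a finite sequence of planar isotopies together with oriented Reidemeister moves of types I, II and III. Since $\Phi(D)$ is assembled from tensor products and composites of the explicit $\cU_q$-morphisms $\Id,\cup,\cap,\Pi,\Omega$, it then suffices to verify, for each such move, an identity among these morphisms on the standard basis, and to track the behaviour of $\gamma$.

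For a \emph{planar isotopy} the integer $\gamma$ is unchanged, so I must show $\Phi$ itself is invariant. This reduces to the zig-zag identities $(\cap\otimes\Id)\circ(\Id\otimes\cup)=\Id=(\Id\otimes\cap)\circ(\cup\otimes\Id)$ on $V_1$, immediate from \eqref{defcupcap}; to the identities that let a crossing slide past a cup or a cap, e.g.\ $(\Id\otimes\cap)\circ(\Pi\otimes\Id)=(\cap\otimes\Id)\circ(\Id\otimes\Omega)$ and the analogous variants, checked on the basis using $C=\cup\circ\cap$; and to functoriality of $\otimes$, which handles distant pieces and is automatic. In categorical terms these say precisely that the target category is pivotal (indeed ribbon) with braiding $\Pi,\Omega$, and that $V_1$ is self-dual; I only need the handful of explicit instances, and the self-duality $V_1\cong V_1^{\ast}$ is what matches the various oriented flavours of cup, cap and crossing to $\cup,\cap,\Pi,\Omega$.

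An oriented \emph{R\,II} move removes two crossings of opposite sign, leaving $\gamma$ fixed, so here invariance of $\Phi$ is the relation $\Pi\circ\Omega=\Id=\Omega\circ\Pi$. I would first note $\cap\circ\cup=-(q+q^{-1})\,\Id_{\mC(q)}$, hence $C^2=-(q+q^{-1})C$, and then read off $\Pi\Omega=C^2+(q+q^{-1})C+\Id=\Id$ directly from $\Pi=-q^{-1}C-q^{-2}\Id$ and $\Omega=-qC-q^{2}\Id$; the mixed-orientation forms of R\,II follow from this together with the zig-zag identities. An oriented \emph{R\,III} move permutes three crossings without altering their signs, so again $\gamma$ is fixed, and invariance of $\Phi$ is the Yang--Baxter equation $\Pi_1\Pi_2\Pi_1=\Pi_2\Pi_1\Pi_2$ on $V_1^{\otimes 3}$; I would deduce this either from the fact that $\Pi$ is induced by the universal $R$-matrix of $\cU_q$, or, more elementarily, from the Temperley--Lieb relations $C_iC_{i\pm 1}C_i=C_i$, $C_i^2=-(q+q^{-1})C_i$ and $C_iC_j=C_jC_i$ for $|i-j|\ge 2$. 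Other orientations of R\,III reduce to this one via R\,II and planar isotopy.

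The step I expect to carry the real weight is \emph{R\,I}, which is exactly why the correction factor $q^{3\gamma}$ is needed. Writing a positive kink on a strand as the composite $(\cap\otimes\Id)\circ(\Id\otimes\Pi)\circ(\cup\otimes\Id)\colon V_1\to V_1$, a short computation on $\{v_0,v_1\}$ gives this composite $=q^{-3}\,\Id$; the oppositely curled kink (built from $\Omega$ in place of $\Pi$) gives $q^{3}\,\Id$, and the orientation-reversed kinks give the same scalars by the pivotal identities above. On the other hand, inserting such a kink changes $\gamma(D)$ by exactly $\pm 1$, so $q^{3\gamma(D)}$ is rescaled by $q^{\pm 3}$, which cancels the scalar picked up by $\Phi(D)$; hence $q^{3\gamma(D)}\Phi(D)$ is R\,I-invariant as well. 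Combining the four cases, $q^{3\gamma(D)}\Phi(D)$ depends only on $T$, and unwinding the construction shows it is the Reshetikhin--Turaev intertwiner of $T$. The genuinely non-formal inputs are only the three identities $C^2=-(q+q^{-1})C$, the Yang--Baxter equation and the kink scalar $q^{\pm 3}$; the hard part is the bookkeeping --- reducing an arbitrary change of diagram to a fixed finite list of oriented moves, and keeping the powers of $q$ and the signs consistent across all orientation variants and across the two duality maps packaged into $\cup$ and $\cap$. This is also the point that, once one passes to the categorified setting, forces the \emph{framed} hypothesis in Theorem~\ref{main}.
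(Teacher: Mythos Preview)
Your proposal is correct and follows exactly the approach the paper indicates: the paper does not give its own detailed proof of this classical result but simply cites \cite[Lemma~5.3]{RT1} and remarks that it suffices to check the Reidemeister moves depicted in \eqref{cr1}--\eqref{cr6} (with labels removed), which is precisely the verification you outline. Your write-up supplies more explicit detail on the individual checks (zig-zag, $\Pi\Omega=\Id$, Yang--Baxter, the kink scalar $q^{\pm3}$) than the paper does, but the strategy is the same.
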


This invariant is the well-known {\it Reshetikhin-Turaev-Witten invariant} of tangles, \cite{RT1}. In the special case of a tangle $L$ from 0 points
to 0 points, the tangle becomes a link and $q^{3\gamma(L)}\Phi(L) \colon\mathbb{C}(q) \rightarrow \mathbb{C}(q) $ is the  (depending on the definition,
renormalised) Jones polynomial. For a proof of the theorem it suffices to check that the morphisms satisfy the so-called Reidemeister
moves (see e.g. \cite[Lemma 5.3]{RT1}) depicted in \eqref{cr1}-\eqref{cr6} with all labels removed.
Let $ E $ be an elementary, oriented, framed tangle from $r$ (ordered) points to $s$ (ordered) points such that each strand is labeled by a natural
number.  This naturally induces a colouring $(d_1, \ldots, d_r)$ and $(e_1, \ldots, e_s)$ on the $r$ respectively $s$ points. 
\vspace{-2mm}
\begin{multicols}{2}
Given a coloured, oriented tangle diagram $ D, $ one may form its {\it cable} $\op{cab}(D) $ which is obtained from $ D $ by drawing $ l $
parallel copies of an uncoloured strand for each strand coloured by $l$, \emph{e.g.}
\begin{center}
$\xy {(10,0)*{};(12.4,4)*{}**\dir{-}}; {\ar (13.6,6)*{};(16,10)*{}**\dir{-}}; {\ar (16,0)*{};(10,10)*{}**\dir{-}};
{(22,0)*{};(22,10)*{}**\dir{-}}; {(10,10)*{};(10,18)*{}**\dir{-}}; (16,10)*{};(22,10)*{}**\crv{(17,14)&(21,14)}; {(12,10)*{1}};
{(20,5)*{2}};
\endxy
\hspace{.1in} \xy (10,0)*{}; {\ar@2{~>} (10,10)*{}; (15,10)*{}};
\endxy
\hspace{0.1in} \xy {(10,0)*{};(12.4,4)*{}**\dir{-}}; {\ar (13.6,6)*{};(16,10)*{}**\dir{-}}; {\ar (16,0)*{};(10,10)*{}**\dir{-}};
{(22,0)*{};(22,10)*{}**\dir{-}}; {(10,10)*{};(10,18)*{}**\dir{-}}; {(14.2,2)*{};(13,0)*{}**\dir{-}}; {(14.8,3)*{};(16,5)*{}**\dir{-}};
{(16,10)*{};(22,10)*{}**\crv{(17,14)&(21,14)}}; {\ar (19,5)*{};(19,0)*{}**\dir{-}}; {(16,5)*{};(19,5)*{}**\crv{(16.5,7)&(18.5,7)}};
\endxy$.
\end{center}
\end{multicols}
\vspace{-2mm}

%%%%%%%

To $E$ one associates the morphism $\Phi_{\op{col}}(E)\colon V_{d_1} \otimes \cdots \otimes V_{d_r}\longrightarrow V_{e_1} \otimes \cdots
\otimes V_{e_s}$ defined as $\Phi_{\op{col}}(E)=(\pi_{e_1} \otimes \cdots \otimes \pi_{e_s}) \circ (\Phi(\op{cab}(E))) \circ (\iota_{d_1} \otimes
\cdots \otimes \iota_{d_r})$. For an arbitrary tangle $ T $ with diagram $D(T) = E_{\alpha_n} \circ \cdots \circ E_{\alpha_1}, $ we define
the intertwiner $\Phi_{\op{col}}(D) = \Phi_{\op{col}}(E_{\alpha_n}) \circ \cdots \circ \Phi_{\op{col}}(E_{\alpha_1})$. Up to some normalisation, this is
well-defined.

\begin{theorem}{\rm (\cite{RT1} Theorem 5.1)}
\label{coloredRTthm}
Let $ T $ be an oriented, coloured, framed tangle from the coloured points $ (d_1, \ldots, d_r) $ to the coloured points $ (e_1, \ldots,
e_s). $  Let $ D_1 $ and $ D_2 $ be two of its planar projections.  Then$$ q^{3\gamma(\op{cab}(D_1))} \Phi_{col}(D_1) =
q^{3\gamma(\op{cab}(D_2))}\Phi_{\op{col}}(D_2) \colon \quad V_{d_1} \otimes \cdots \otimes V_{d_r} \rightarrow V_{e_1} \otimes \cdots \otimes V_{e_s}. $$
In particular, $T\mapsto q^{3\gamma(\op{cab}(D))} \Phi_{col}(D(T))$ is independent of the choice and presentation of $D(T)$ and defines an
invariant of oriented coloured framed tangles.
\end{theorem}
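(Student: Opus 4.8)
The plan is to reduce the statement to the uncoloured case, Theorem~\ref{jonesinvariant}, using the cabling definition of $\Phi_{\op{col}}$ together with the characterisation of the Jones--Wenzl projector in Proposition~\ref{charJW}. The first observation is that in $\Phi_{\op{col}}(D)=\Phi_{\op{col}}(E_{\alpha_n})\circ\cdots\circ\Phi_{\op{col}}(E_{\alpha_1})$ each inclusion $\iota_{d_j}$ produced by one elementary piece is composed with the projection $\pi_{d_j}$ of the next, contributing a factor $\iota_{d_j}\circ\pi_{d_j}=p_{d_j}$ on every internal horizontal level of a strand coloured $d_j$. Hence $\Phi_{\op{col}}(D)$ is the intertwiner obtained from the uncoloured cabled diagram $\op{cab}(D)$ by inserting one copy of $p_d$ on every arc of $D$ coloured $d$, pre- and post-composed with the maps $\iota_{d_i}$ at the $r$ bottom points and $\pi_{e_j}$ at the $s$ top points. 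Since any two planar projections $D_1,D_2$ of the same framed coloured tangle differ by a finite sequence of moves --- planar isotopy (in particular the zig-zag/snake moves), the second and third Reidemeister moves, and the \emph{framed} first Reidemeister move (a kink may be slid along a strand, but a single kink is not removable) --- it suffices to prove that this decorated-cable intertwiner, multiplied by $q^{3\gamma(\op{cab}(D))}$, is invariant under each such move.

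The manipulation of the inserted projectors rests on three facts, all consequences of Proposition~\ref{charJW}. (a) Because $V_d$ appears in $V_1^{\otimes d}$ with multiplicity one, $p_d$ is central in $\End_{\mathcal{U}_q}(V_1^{\otimes d})$; in particular it commutes with $\Phi(\sigma)$ for any braid $\sigma$ supported on the $d$ parallel cables of a single $d$-coloured strand. (b) From Proposition~\ref{charJW}(i), together with the injectivity of $\iota_d$ and surjectivity of $\pi_d$, one gets $\pi_d\circ\iota_d=\Id_{V_d}$; hence $p_d$ is a genuine idempotent with image $V_d$, two copies of $p_d$ on parallel cable-blocks joined through a cabled cup or cap fuse to one, and by parts (ii)--(iii) any cabled intertwiner on the $d$ cables that factors through a turnback $C_{i,d}$ is annihilated by $p_d$. (c) Combining (a) and (b), a copy of $p_d$ slides freely through any cabled crossing, cup or cap built on the strands of an arc coloured $d$. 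Facts (a)--(c) also show that the decorated-cable description of $\Phi_{\op{col}}(D)$ does not depend on the chosen decomposition of the \emph{fixed} diagram $D$ into elementary pieces, and they allow the decorations to be moved to convenient positions when two diagrams are compared.

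The moves are then checked one at a time. The cable of a coloured second (resp. third) Reidemeister move is a stack of $d_id_j$ (resp. $d_id_jd_k$) uncoloured crossings, and Theorem~\ref{jonesinvariant} applied repeatedly inside this cable converts one side into the other; the writhe $\gamma(\op{cab})$ agrees on the two sides, and by (c) the decorating projectors on the strands involved reappear in matching positions, so the normalised intertwiners coincide. The snake move is handled likewise, with (b) used to absorb the projector that meets the cabled cup--cap pair. For the framed first Reidemeister move, the cable is a cabled kink slid along a $d$-coloured strand; Theorem~\ref{jonesinvariant} gives the equality of uncoloured cabled intertwiners, the self-writhe of the $d$-strand cable contributes to $\gamma(\op{cab})$ and is cancelled by the normalisation, and the projector is carried along by (c). (The unframed first Reidemeister move genuinely fails, because the twist scalar acting on the irreducible summand $V_d\subseteq V_1^{\otimes d}$ is not $q^{3}$ raised to the self-writhe of the $d$-strand cable; this is exactly why the hypothesis restricts to \emph{framed} tangles.) The ``in particular'' part of the statement is then immediate.

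The main obstacle is the bookkeeping. One must verify that, after decorating the cable of each elementary coloured tangle with Jones--Wenzl projectors, the uncoloured Reidemeister moves can be applied so that the projectors re-emerge in exactly the right places --- this is where the idempotency $p_d\circ p_d=p_d$ and the absorption property (b) do the real work --- and that the framing correction $q^{3\gamma(\op{cab}(\cdot))}$ transforms precisely as required, the only genuine failure being the unframed first Reidemeister move. A more technical subtlety, already visible for $d=2$ in Example~\ref{JWex} and in the formulas~\eqref{defcupcap}, is that cups and caps join strands of opposite orientation, so cabled cups and caps are nested rather than parallel; one must check that the absorption in (b) is compatible with this nesting, which again follows from Proposition~\ref{charJW} applied to the $d$ cables of the relevant strand.
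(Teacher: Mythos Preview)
Your proposal is correct and follows essentially the same approach as the paper, which merely states that it suffices to check the coloured Reidemeister moves \eqref{cr1}--\eqref{cr6} and that this ``can be done easily using Theorem~\ref{jonesinvariant} and \cite{FK}.'' You have supplied precisely the missing details: the observation that the intermediate $\iota\circ\pi$'s assemble into Jones--Wenzl projectors on each internal arc, and the three sliding/absorption facts (a)--(c) derived from Proposition~\ref{charJW} that allow one to push the projectors out of the way and reduce each move to a stack of uncoloured moves handled by Theorem~\ref{jonesinvariant}. The paper itself later elaborates the cup-sliding ingredient (your fact~(c) for cups) via Lemmas~\ref{Fslidelemma}, \ref{Eslidelemma} and Proposition~\ref{decatJWcupslideaux}, confirming that this is indeed the intended mechanism.
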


%%%%%%

For the proof it suffices to check that the morphisms satisfy the Reidemeister moves given in ~\eqref{cr1}, ~\eqref{cr2}, ~\eqref{cr3},
~\eqref{cr5}, ~\eqref{cr4} and \eqref{cr6}. This can be done easily using Theorem ~\ref{jonesinvariant} and \cite{FK}.
%\cite[1.28, 3.6]{FK}.
See \cite{RT1} for more details.  In the special case of a framed coloured link, the invariant above becomes the {\it coloured Jones polynomial}.

Using Remark \ref{powerseries}, all the constructions and representations and intertwiners considered so far make sense and can be defined over the base ring $\mC((q))$ instead of $\mC(q)$ which we will do from now on. \\
Hence from now on all the representations of the quantum group as well as the quantum group itself is considered over the base ring $\mC((q))$.

\subsection{An alternate form of Jones-Wenzl projectors}
In this section we give a description of the Jones-Wenzl projector purely in terms of the quantum group.
This formulation was used in \cite{QS2} to investigate the projector on a certain weight space when the quantum parameter is a root of unity.
Define the divided power elements of $\mathcal{U}_q(\mathfrak{sl}_2)$ by
\begin{equation*}
E^{(k)}=\frac{E^k}{[k]!} \hspace{.5in} F^{(k)}=\frac{F^k}{[k]!}.
\end{equation*}
The following identities are well known and can easily proved by induction:

%\begin{align*}
$$\Delta(E^{(k)}) = \sum_{i=0}^k q^{-i(k-i)} E^{(k-i)} \otimes E^{(i)} K^{-k+i},\quad
\Delta(F^{(k)}) = \sum_{i=0}^k q^{i(k-i)} K^i F^{(k-i)} \otimes F^{(i)}.$$
%\end{align*}

Let $1_{-n+2k} \colon V_1^{\otimes n} \rightarrow V_1^{\otimes n} $
denote the map projecting $V_1^{\otimes n}$ onto the weight space where $K$ acts by the scalar $q^{-n+2k}$.

\begin{theorem}
\label{JWaltformthm}
Restricting to the weight space of $V_1^{\otimes n}$ where $K$ acts as the scalar $q^{2k-n}$, the Jones-Wenzl projector may be expressed as
\begin{equation}
\label{notea}
p_n = \frac{E^{(k)}F^{(k)}}{{n \brack  k}}:=p_{k,n}.
\end{equation}

\end{theorem}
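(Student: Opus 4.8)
\textbf{Proof strategy for Theorem \ref{JWaltformthm}.}
The plan is to verify that the operator $p_{k,n}:=\dfrac{E^{(k)}F^{(k)}}{{n \brack k}}$, restricted to the weight space $W_k:=1_{2k-n}V_1^{\otimes n}$, satisfies the three defining properties from Proposition \ref{charJW}, namely idempotency, and annihilation by $C_{i,n}$ on both sides. Since Proposition \ref{charJW} characterizes $p_n$ uniquely among $\mathcal{U}_q$-morphisms with these properties, and since $E^{(k)}F^{(k)}$ commutes with the $\mathcal{U}_q$-action only after we observe that it is central-like on the relevant weight space, the first order of business is to confirm that $p_{k,n}$ is indeed a $\mathcal{U}_q$-morphism. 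For this I would note that $1_{2k-n}V_1^{\otimes n}$ is the lowest piece reachable, and more usefully argue via $\mathfrak{sl}_2$-representation theory: on any finite-dimensional $\mathcal{U}_q$-module, the element $E^{(k)}F^{(k)}$ acts on the weight-$(2k-n)$ space in a way governed entirely by the decomposition into irreducibles, so it suffices to compute its action on each $V_m$ summand.

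The key computation is therefore the following: decompose $V_1^{\otimes n}=\bigoplus_{m} V_m^{\oplus c_{m}}$ (with $c_m$ the appropriate multiplicity, $m\equiv n \bmod 2$, $m\geq |2k-n|$), and evaluate $E^{(k)}F^{(k)}$ on the weight vector of weight $2k-n$ inside a single copy of $V_m$. Using the formulas \eqref{irreddef} for the action of $E$ and $F$ on the standard basis of $V_m$ — iterated $k$ times and divided by $[k]!$ — one gets that $F^{(k)}$ followed by $E^{(k)}$ multiplies such a vector by an explicit product of quantum integers, which I expect to simplify to ${m \brack k'}$-type quantum binomial expressions, and crucially, when $m=n$ (the unique top summand, i.e. the image of the genuine Jones–Wenzl inclusion $\iota_n$) this scalar equals exactly ${n \brack k}$, whereas for $m<n$ one does \emph{not} get ${n \brack k}$. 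Wait — that cannot be right if $p_{k,n}$ is to be idempotent with the stated normalization; instead the correct statement must be that $E^{(k)}F^{(k)}$ acts as zero on every summand $V_m$ with $m<n$ on the weight-$(2k-n)$ space, and as the scalar ${n\brack k}$ on the $V_n$-summand. Let me reconsider: $F^{(k)}$ maps the weight-$(2k-n)$ space into the weight-$(-n)$ space (the lowest weight line of the whole tensor product, one-dimensional, lying entirely in $V_n$), and then $E^{(k)}$ maps back; so the composite automatically factors through the lowest-weight line, which is precisely the bottom of the $V_n$ summand — this is exactly why $E^{(k)}F^{(k)}$ kills all $V_m$ with $m<n$. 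This is the heart of the argument and the step I expect to be the main obstacle: making precise that $F^{(k)}$ lands in the one-dimensional lowest weight space and computing the return scalar.

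With that established, idempotency follows because on $V_n$ we are dividing the scalar ${n\brack k}^2$ (from applying the operator twice, each time picking up ${n\brack k}$ once we normalize — here one must be slightly careful that after the first application we land back in the weight-$(2k-n)$ space of the $V_n$-summand and the operator picks up the same scalar) by ${n\brack k}^2$; properties (ii) and (iii) of Proposition \ref{charJW} follow because $C_{i,n}=\cup_{i,n-2}\circ\cap_{i,n}$ factors through $V_1^{\otimes(n-2)}$, whose summands are all $V_m$ with $m\le n-2<n$, so $C_{i,n}\circ p_{k,n}$ and $p_{k,n}\circ C_{i,n}$ both vanish by the same lowest-weight-line reasoning. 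Finally I would double-check the base case against Example \ref{JWex}: for $n=2$, $k=1$ one has ${2\brack 1}=[2]=q+q^{-1}$, and $E^{(1)}F^{(1)}=EF$ acts on $v_1\otimes v_0$ and $v_0\otimes v_1$, and one verifies directly that $\frac{EF}{[2]}$ reproduces $p_2=\iota_2\circ\pi_2$ on that weight space, over the base ring $\mC((q))$ as in Remark \ref{powerseries}. An alternative, cleaner route that avoids the explicit quantum-binomial bookkeeping is to invoke the comultiplication formulas for $\Delta(E^{(k)})$ and $\Delta(F^{(k)})$ displayed just before the theorem together with the known behaviour of divided powers on $V_1$ to build $p_{k,n}$ inductively from $p_{k,n-1}$ or $p_{k-1,n-1}$; I would keep this in reserve in case the direct summand computation becomes unwieldy.
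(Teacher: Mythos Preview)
Your proposal is correct. The key observation you identify—that $F^{(k)}$ sends the weight-$(2k-n)$ space onto the one-dimensional lowest weight line $v_0^{\otimes n}\subset V_n$, so that $E^{(k)}F^{(k)}$ factors through the $V_n$ summand—is exactly right, and the scalar computation on $V_n$ (namely $F^{(k)}v_k={n\brack k}v_0$ and $E^{(k)}v_0=v_k$) confirms the normalization. In fact your argument shows slightly more than you claim: once you know $p_{k,n}$ is precisely the isotypic projection onto $V_n$, you do not even need to invoke Proposition~\ref{charJW}, since $p_n=\iota_n\circ\pi_n$ \emph{is} that projection by definition.

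The paper takes a different, more algebraic route. Rather than decomposing $V_1^{\otimes n}$ into irreducibles, it verifies the intertwining property $Ep_{k,n}=p_{k+1,n}E$ by a direct computation, and establishes idempotency via Lusztig's divided-power commutation identity $E^{(k)}F^{(k)}E^{(k)}F^{(k)}={n\brack k}E^{(k)}F^{(k)}$ on the relevant weight space (most terms in the identity vanish for weight reasons). For the $C_{i,n}$ annihilation, the paper uses the same lowest-weight-space observation you do, but argues directly that $C_{i,n}$ kills $v_0^{\otimes n}$ rather than going through the isotypic decomposition. Your approach is more conceptual and gets the intertwiner and idempotent properties for free; the paper's approach avoids any appeal to semisimplicity and stays closer to the algebraic manipulations of divided powers that are categorified in Section~\ref{secJWcomplex}, which is presumably why the authors chose it.
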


\begin{proof}
We will show that the operator is an intertwiner and satisfies the following properties. By the uniqueness result from Proposition ~\ref{charJW}, \eqref{notea} will then follow.
\vspace{-2mm}
\begin{multicols}{2}
\begin{enumerate}[(i)]
\item $ \frac{E^{(k)}F^{(k)}}{{n \brack  k}} \circ \frac{E^{(k)}F^{(k)}}{{n \brack  k}} = \frac{E^{(k)}F^{(k)}}{{n \brack  k}} $, \item $C_{i,n} \circ \frac{E^{(k)}F^{(k)}}{{n \brack  k}} = 0 $, \item $ \frac{E^{(k)}F^{(k)}}{{n \brack  k}} \circ C_{i,n} = 0$.
\end{enumerate}
\end{multicols}
First we check that $p_{k,n}$ is an intertwiner.
We must show that $E p_{k,n}= p_{k+1,n} E$.
It is easy to see that $E p_{k,n}=\frac{[k+1]E^{(k+1)} F^{(k)}}{{n \brack k}}$.
We calculate
%\begin{align*}
$$p_{k+1,n}E =  \frac{1}{{n \brack k+1}} E^{(k+1)}F^{(k+1)}E 
= \frac{[n-k] }{{n \brack k+1}} E^{(k+1)} F^{(k)} 
= \frac{[k+1]E^{(k+1)} F^{(k)}}{{n \brack k}} 
= E p_{k,n},$$
%\end{align*}
where we used \cite[(2.11)]{L} for the second equality.  
Note that while there are two terms in formula \cite[(2.11)]{L}, one of them vanishes due to weight space considerations.
Similarly one can show $F p_{k,n} = p_{k-1,n} F$.
It is obvious that $p_{k,n}$ commutes with $K^{\pm 1}$. To prove that the operator $\frac{E^{(k)}F^{(k)}}{{n \brack  k}} $ is idempotent, we calculate
\begin{equation*}
E^{(k)}F^{(k)} E^{(k)}F^{(k)} = {n \brack k} E^{(k)} F^{(k)}.
\end{equation*}
using \cite[(2.11)]{L}. Note that all but one term in that formula vanish due to weight space considerations.
It now follows immediately that $\frac{E^{(k)}F^{(k)}}{{n \brack  k}} $ is an idempotent.

By construction the operator $F^{(k)}$ maps the weight space which consists of eigenvectors for the eigenvalue $q^{2k-n}$ to the lowest weight space.  On the lowest weight space $C_{i,n}$ acts as zero.   Thus
\begin{equation*}
C_{i,n} \circ \frac{E^{(k)}F^{(k)}}{{n \brack  k}} = 0 = \frac{E^{(k)}F^{(k)}}{{n \brack  k}} \circ C_{i,n}.\qedhere
\end{equation*}
\end{proof}

Thus we may write the Jones-Wenzl projector as
\begin{equation*}
p_n = \sum_{k=0}^n \frac{E^{(k)} F^{(k)}}{{n \brack k}} 1_{-n+2k}.
\end{equation*}

We now give an independent proof that the Jones-Wenzl projectors defined in terms of the quantum group slide along cups.  While this fact is already guaranteed by Theorem ~\ref{coloredRTthm} and Theorem ~\ref{JWaltformthm}, this proof of projector sliding will be useful later when trying to prove the categorified statement.

Let $C_n \colon \mathbb{C} \rightarrow V_1^{\otimes n} \otimes V_1^{\otimes n}$ be the intertwiner associated to $n$ nested cups.  
%That is
%\begin{equation*}
%C_n \colon \mathbb{C} \rightarrow V_1^{\otimes n} \otimes V_1^{\otimes n}.
%\end{equation*}

\begin{lemma}
\label{Fslidelemma}
For $ k \leq n$ there is an equality
\begin{equation*}
(F^{(k)} \otimes 1) \circ C_n = (-1)^k q^{k(k-1)} (K^k \otimes F^{(k)}) \circ C_n.
\end{equation*}
\end{lemma}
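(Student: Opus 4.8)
The plan is to prove the identity by induction on $k$, with the base case $k=0$ being trivial (both sides equal $C_n$) and $k=1$ being the essential step. For $k=1$ I would work directly from the defining formula $C_n = \cup_{1,\ldots}$-type expression, or rather use the two-strand case and nest. Concretely, on a single pair of adjacent strands the coevaluation is $\cup(1) = v_1 \otimes v_0 - q\, v_0 \otimes v_1$. Applying $F \otimes 1$ uses the formulas from \eqref{irreddef} with $n=1$: $F v_1 = [1] v_0 = v_0$ and $F v_0 = [2] v_{-1} = 0$, so $(F \otimes 1)\cup(1) = v_0 \otimes v_0$. On the other side, $K v_1 = q^{-1} v_1$, $K v_0 = q v_0$, and $(K \otimes F)\cup(1) = -q \cdot q \cdot (v_0 \otimes F v_1) = -q^2 (v_0 \otimes v_0)$; with the prefactor $(-1)^1 q^{1\cdot 0} = -1$ we get $-1 \cdot (-q^2)(v_0\otimes v_0)$... so I will need to be careful with the exact normalization of $\cup$ versus $C_n$ and the comultiplication sign conventions, recomputing against \eqref{comult} and \eqref{defcupcap}. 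The point is that the $k=1$ identity $(F\otimes 1)\circ C_n = -(K\otimes F)\circ C_n$ (up to the precise scalar) is a finite check on basis vectors.

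For the inductive step I would pass from $F^{(k)} = F^k/[k]!$ to $F^{(k+1)}$ by noting $F^{k+1} = F \cdot F^k$, so $(F^{k+1}\otimes 1)\circ C_n = (F\otimes 1)\circ (F^k \otimes 1)\circ C_n$. Apply the induction hypothesis to rewrite $(F^k\otimes 1)\circ C_n$ in terms of $(K^k \otimes F^k)\circ C_n$ (up to the scalar $(-1)^k q^{k(k-1)}/\text{stuff}$), then apply $(F\otimes 1)$ and use the commutation relation $FK = q^{-2}KF$ from Definition \ref{defofsl2} to push the extra $F$ through $K^k$, producing $K^k F \otimes F^k$; then apply the $k=1$ case again to the leading tensor factor to convert $F\otimes 1$ into $K\otimes F$ acting on what remains, obtaining $K^{k+1}\otimes F^{k+1}$. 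Collecting all the $q$-powers from the commutation relations ($q^{-2k}$ from moving $F$ past $K^k$), the sign from each application of the base case, and dividing by $[k+1]!$ versus $[k]!$, I expect the exponents to assemble into $(-1)^{k+1} q^{(k+1)k}$ exactly; the divided-power normalization should be automatically consistent because $[k+1]! = [k+1]\cdot[k]!$ and no quantum integers other than factorials enter. A clean alternative, which I might prefer to present, is to avoid induction entirely: expand $F^{(k)} = F^k/[k]!$, and observe that $C_n$ is (up to scalar) a tensor product/composition of the elementary $\cup$'s, so it suffices to prove the two-strand statement $(F^{(k)}\otimes 1)\circ \cup = (-1)^k q^{k(k-1)}(K^k\otimes F^{(k)})\circ\cup$ on $V_1^{\otimes 2}$ — but since $V_1^{\otimes 2}$ only has weights making $F^{(k)}$ vanish for $k\ge 2$ on the relevant vectors, the nesting structure of $C_n$ is genuinely needed, so induction on the number of nested cups combined with induction on $k$ is the honest route.

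The main obstacle I anticipate is bookkeeping: getting every $q$-power and sign to land correctly, since there are three sources of $q$'s — the comultiplication formula \eqref{comult} for $\Delta(F^{(k)})$ (which the excerpt records as $\Delta(F^{(k)}) = \sum_{i=0}^k q^{i(k-i)} K^i F^{(k-i)} \otimes F^{(i)}$), the $FK = q^{-2}KF$ relation, and the definition of $\cup$ with its single $-q$. In fact the slickest proof is probably to apply $\Delta(F^{(k)})$ directly to $C_n$: since $C_n$ is a coevaluation, $(F^{(k)}\otimes \text{id})$-type manipulations relate to $\Delta(F^{(k)})$ acting on the "cup = identity-reversed" picture, and only the extreme term $i=k$ (or $i=0$) survives because all intermediate terms hit weight spaces where the relevant $F^{(j)}$ annihilates the cup vector. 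That would reduce the whole lemma to identifying the surviving term's coefficient, which is exactly $q^{k(k-1)}$ up to sign from $\Delta(F^{(k)})$ — making the lemma essentially a one-line consequence of the coproduct formula plus a weight-space vanishing argument, parallel to how vanishing-by-weight was used in the proof of Theorem \ref{JWaltformthm}.
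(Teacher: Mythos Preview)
Your inductive scheme --- peel off one $F$, apply the hypothesis, commute past $K^{k-1}$, then apply the $k=1$ case --- is exactly the paper's proof. The real difference is in the base case. The paper never touches basis vectors: it uses that $C_n$ is an intertwiner from the trivial representation, so its image lies in the invariant subspace and hence $\Delta(F)\circ C_n = F\circ C_n = 0$. Rearranging $\Delta(F)=K\otimes F + F\otimes 1$ gives $(F\otimes 1)\circ C_n = -(K\otimes F)\circ C_n$ immediately, for \emph{all} $n$ at once. Your basis computation on a single $\cup$ only treats $n=1$ (and has the $K$-eigenvalues swapped: $Kv_0=q^{-1}v_0$, $Kv_1=qv_1$); extending it to nested cups would require tracking the iterated coproduct, which is exactly the work the invariance argument avoids. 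You gesture at this invariance idea in your final paragraph, so you are close to the paper's argument.

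One genuine gap: your ``slickest proof'' claim that only the extreme term of $\Delta(F^{(k)})=\sum_i q^{i(k-i)}K^iF^{(k-i)}\otimes F^{(i)}$ survives on $C_n$ is false. What vanishes is the \emph{sum} $\Delta(F^{(k)})\circ C_n=0$, not the individual cross terms; for instance when $k=2$ the term $(KF\otimes F)\circ C_n$ is nonzero. To extract the relation between $(F^{(k)}\otimes 1)\circ C_n$ and $(K^k\otimes F^{(k)})\circ C_n$ from this vanishing sum you must already control the intermediate terms, and doing so recursively is precisely the induction. So the ``one-line'' route collapses back into the inductive proof; there is no shortcut here. (Minor bookkeeping: $KF=q^{-2}FK$ gives $FK^{k-1}=q^{2(k-1)}K^{k-1}F$, not $q^{-2(k-1)}$.)
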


\begin{proof}
This is proved by induction on $k$.
For the base case $k=1$ first recall that
$\Delta(F)=K \otimes F +  F \otimes 1$.
Then $
(F \otimes 1) \circ C_n = F \circ C_n - (K \otimes F) \circ C_n 
= -(K \otimes F) \circ C_n$,
where the second equality follows from the fact that $C_n$ maps $\mathbb{C}$ into the invariant subspace of $V_1^{\otimes (2n)}$, which is annihilated by $F$.

Now let us assume that the statement is proved for $j < k$.  Then we have
\begin{align*}
(F^{(k)} \otimes 1) \circ C_n &= \frac{(F \otimes 1)}{[k]}(F^{(k-1)} \otimes 1) \circ C_n \\
&= \frac{(F \otimes 1)}{[k]} (-1)^{k-1} q^{(k-1)(k-2)} (K^{k-1} \otimes F^{(k-1)}) \circ C_n \\
&= \frac{(-1)^{k-1}}{[k]} q^{(k-1)(k-2)} q^{2k-2} (K^{k-1} \otimes F^{(k-1)})(F \otimes 1) \circ C_n \\
&= (-1)^k q^{k(k-1)} (K^k \otimes F^{(k)}) \circ C_n
\end{align*}
where the second equality holds by induction, the third equality follows from the fact that $F$ and $K^{k-1}$ commute up to a power of $q$, and the fourth equality follows from the base case of the induction.
\end{proof}

\begin{lemma}
\label{Eslidelemma}
For $k \leq n$ there is an equality
\begin{equation*}
(E^{(k)} \otimes 1) \circ C_n = (-1)^k q^{-k(k-1)} (1 \otimes K^k)(1 \otimes E^{(k)}) \circ C_n.
\end{equation*}
\end{lemma}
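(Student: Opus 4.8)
The plan is to mirror the proof of Lemma~\ref{Fslidelemma} almost verbatim, replacing the coproduct $\Delta(F)=K\otimes F+F\otimes 1$ by $\Delta(E)=1\otimes E+E\otimes K^{-1}$, and again inducting on $k$. Throughout, $\mathcal{U}_q$ acts on $V_1^{\otimes 2n}$ via the iterated coproduct, and we regard this space as $V_1^{\otimes n}\otimes V_1^{\otimes n}$ so that a single $\Delta$ splits an operator across the two blocks; since $C_n$ is an intertwiner out of the trivial module, every positive-degree element of $\mathcal{U}_q$ annihilates its image.

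For the base case $k=1$: from $E\circ C_n=0$ and $\Delta(E)=1\otimes E+E\otimes K^{-1}$ we get $0=(1\otimes E)\circ C_n+(E\otimes K^{-1})\circ C_n$. Since $E\otimes 1$ and $1\otimes K^{-1}$ act on disjoint tensor factors, $(E\otimes K^{-1})=(1\otimes K^{-1})(E\otimes 1)$, whence $(E\otimes 1)\circ C_n=-(1\otimes K)(1\otimes E)\circ C_n$, which is exactly the asserted identity for $k=1$ (here $(-1)^1q^{-1(1-1)}=-1$). In contrast to the $F$-case, the grouplike correction now sits on the \emph{second} tensor factor, which is why the powers of $K$ in the statement accumulate on the right.

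For the inductive step, assume the identity for $k-1$. Writing $E^{(k)}=\frac{1}{[k]}E\,E^{(k-1)}$, we have
\[
(E^{(k)}\otimes 1)\circ C_n=\frac{1}{[k]}(E\otimes 1)(E^{(k-1)}\otimes 1)\circ C_n.
\]
Apply the inductive hypothesis to $(E^{(k-1)}\otimes 1)\circ C_n$, commute $(E\otimes 1)$ past $(1\otimes K^{k-1})$ and $(1\otimes E^{(k-1)})$ (disjoint factors), and then rewrite $(E\otimes 1)\circ C_n$ using the base case; this leaves $\frac{(-1)^k}{[k]}q^{-(k-1)(k-2)}(1\otimes K^{k-1}E^{(k-1)}KE)\circ C_n$. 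Now use $KE=q^2EK$, which gives $E^{(k-1)}K=q^{-2(k-1)}KE^{(k-1)}$, together with $E^{(k-1)}E=[k]E^{(k)}$, so that $K^{k-1}E^{(k-1)}KE=q^{-2(k-1)}[k]\,K^kE^{(k)}$. Collecting: the sign is $(-1)^{k-1}\cdot(-1)=(-1)^k$, the factors of $[k]$ cancel, and the exponent of $q$ is $-(k-1)(k-2)-2(k-1)=-k(k-1)$, yielding $(E^{(k)}\otimes 1)\circ C_n=(-1)^kq^{-k(k-1)}(1\otimes K^k)(1\otimes E^{(k)})\circ C_n$, as desired.

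There is no genuine obstacle: the only thing requiring care is the $q$-bookkeeping — in particular tracking the single factor $q^{-2(k-1)}$ produced by moving $E^{(k-1)}$ past $K$, and checking $-(k-1)(k-2)-2(k-1)=-k(k-1)$ — along with remembering that $\Delta(E)$ places $K^{-1}$ on the right tensor factor (whereas $\Delta(F)$ places $K$ on the left), so that the accumulated $K$-powers land on the second block exactly as the statement records.
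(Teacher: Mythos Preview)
Your proof is correct and follows essentially the same approach as the paper: induction on $k$, using $\Delta(E)=1\otimes E+E\otimes K^{-1}$ together with the fact that the image of $C_n$ is annihilated by $E$, plus the commutation relation $KE=q^2EK$ to collect the $q$-powers. The only cosmetic difference is that the paper first establishes the auxiliary identity $(E^{(k)}\otimes K^{-k})\circ C_n=(-1)^kq^{-k(k-1)}(1\otimes E^{(k)})\circ C_n$ by induction and then multiplies by $1\otimes K^k$ at the end, whereas you carry the factor $1\otimes K^k$ through the induction directly; the arithmetic is identical.
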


\begin{proof}
We will prove by induction on $k$
\begin{equation}
\label{Eslideaux}
(E^{(k)} \otimes K^{-k}) \circ C_n = (-1)^k q^{-k(k-1)}(1 \otimes E^{(k)}) \circ C_n.
\end{equation}
The lemma then follows by multiplying ~\eqref{Eslideaux} on the left by 
$ (1 \otimes K^k)$. In order to prove the base case of ~\eqref{Eslideaux} recall that
$ \Delta(E) = 1 \otimes E + E \otimes K^{-1}$.
Then
$ (E \otimes K^{-1}) \circ C_n  = E \circ C_n -  (E \otimes K^{-1}) = -(1 \otimes E) \circ C_n$,
where the second equality follows from the fact that $C_n$ maps $\mathbb{C}$ into the invariant subspace of $V_1^{\otimes (2n)}$ which is annihilated by $E$.

Now let us assume that the statement is proved for $j < k$.  Then we have
\begin{align*}
(E^{(k)} \otimes K^{-k}) \otimes C_n &= \frac{(E \otimes K^{-1})}{[k]}(E^{(k-1)} \otimes K^{1-k}) \circ C_n \\
&= \frac{(E \otimes K^{-1})}{[k]}(-1)^{k-1} q^{-(k-1)(k-2)} (1 \otimes E^{(k-1)}) \circ C_n \\
&= \frac{(-1)^{k-1} q^{-(k-1)(k-2)}q^{-2k+2}}{[k]}(1 \otimes E^{(k-1)})(E \otimes K^{-1}) \circ C_n \\
&= (-1)^k q^{-k(k-1)}(1 \otimes E^{(k)}) \circ C_n
\end{align*}
where the second equality is the induction hypothesis, the third equality follows from the fact that $K^{-1}$ and $E^{(k-1)}$ commute up to a power of $q$, and the fourth equality follows from the base case.
\end{proof}

\begin{prop}
\label{decatJWcupslideaux} 
We have the equality
\begin{equation*}
(E^{(n)} F^{(n)} \otimes 1) \circ C_n =  (1 \otimes F^{(n)} E^{(n)})(K^n \otimes K^n) \circ C_n.
\end{equation*}
\end{prop}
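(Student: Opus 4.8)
The plan is to reduce the identity entirely to the two sliding Lemmas \ref{Fslidelemma} and \ref{Eslidelemma} applied with $k=n$, together with the commutation relations $KE=q^2EK$ and $KF=q^{-2}FK$ from Definition \ref{defofsl2}. No new idea is required beyond careful bookkeeping of powers of $q$; the proof is a short chain of rewritings.

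Concretely, I would proceed as follows. Write $(E^{(n)}F^{(n)}\otimes 1)\circ C_n=(E^{(n)}\otimes 1)(F^{(n)}\otimes 1)\circ C_n$ and apply Lemma \ref{Fslidelemma} with $k=n$ to the factor $(F^{(n)}\otimes 1)\circ C_n$, obtaining $(-1)^n q^{n(n-1)}(E^{(n)}K^n\otimes F^{(n)})\circ C_n$. Next, move $E^{(n)}$ past $K^n$ inside the first tensor leg: iterating $KE=q^2EK$ gives $E^{(n)}K^n=q^{-2n^2}K^nE^{(n)}$, so the expression becomes $(-1)^n q^{n(n-1)-2n^2}(K^n\otimes 1)(1\otimes F^{(n)})(E^{(n)}\otimes 1)\circ C_n$. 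Now apply Lemma \ref{Eslidelemma} with $k=n$ to $(E^{(n)}\otimes 1)\circ C_n$, which contributes the factor $(-1)^n q^{-n(n-1)}$ and replaces $E^{(n)}\otimes 1$ by $(1\otimes K^n)(1\otimes E^{(n)})$. At this point the two signs cancel, the accumulated power of $q$ is $q^{-2n^2}$, and the operator part is $(K^n\otimes 1)(1\otimes F^{(n)}K^nE^{(n)})\circ C_n$.

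Finally, I would move $K^n$ past $F^{(n)}$ in the second leg using $KF=q^{-2}FK$, i.e. $F^{(n)}K^n=q^{2n^2}K^nF^{(n)}$; this exactly cancels the remaining $q^{-2n^2}$ and yields $(K^n\otimes 1)(1\otimes K^nF^{(n)}E^{(n)})\circ C_n=(K^n\otimes K^n)(1\otimes F^{(n)}E^{(n)})\circ C_n$. To conclude, observe that $K^n$ commutes with $F^{(n)}E^{(n)}$, since $F^{(n)}E^{(n)}$ has weight $0$: explicitly $KF^{(n)}E^{(n)}=q^{-2n}F^{(n)}KE^{(n)}=q^{-2n}q^{2n}F^{(n)}E^{(n)}K=F^{(n)}E^{(n)}K$, whence $(K^n\otimes K^n)(1\otimes F^{(n)}E^{(n)})=(1\otimes F^{(n)}E^{(n)})(K^n\otimes K^n)$. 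This is the claimed identity.

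The only delicate point — and the closest thing to an obstacle — is the arithmetic of the $q$-exponents: the four contributions $q^{n(n-1)}$ (from the $F$-slide), $q^{-2n^2}$ (from moving $E^{(n)}$ past $K^n$), $q^{-n(n-1)}$ (from the $E$-slide), and $q^{2n^2}$ (from moving $K^n$ past $F^{(n)}$) must cancel in pairs, which they do, and the two factors $(-1)^n$ from the two lemmas must multiply to $1$, which they do. There is no conceptual difficulty once Lemmas \ref{Fslidelemma} and \ref{Eslidelemma} are in hand.
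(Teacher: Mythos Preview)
Your proof is correct and follows essentially the same route as the paper: factor $(E^{(n)}F^{(n)}\otimes 1)$, apply Lemma~\ref{Fslidelemma}, commute $E^{(n)}$ past $K^n$, apply Lemma~\ref{Eslidelemma}, then commute $K^n$ past $F^{(n)}$, with the $q$-powers and signs cancelling exactly as you track them. The only addition you make is the explicit final observation that $K^n$ commutes with the weight-zero operator $F^{(n)}E^{(n)}$, which the paper leaves implicit.
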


\begin{proof}
We calculate that $(E^{(n)} F^{(n)} \otimes 1) \circ C_n$ equals
\begin{align*}
(E^{(n)} \otimes 1)(F^{(n)} \otimes 1) \circ C_n 
&= (-1)^n  q^{n(n-1)} (E^{(n)} \otimes 1)(K^n \otimes F^{(n)}) \otimes C_n \\
&= (-1)^n q^{n(n-1)} q^{(-2n^2)} (K^n \otimes F^{(n)})(E^{(n)} \otimes 1) \circ C_n \\
&= q^{(-2n^2)} (K^n \otimes F^{(n)})(1 \otimes K^n)(1 \otimes E^{(n)}) \circ C_n  \\
&= (1 \otimes F^{(n)} E^{(n)})(K^n \otimes K^n) \circ C_n 
\end{align*}
using Lemmas \ref{Fslidelemma} and \ref{Eslidelemma} in the second last equalities respectively.
\end{proof}

The next proposition says that Jones-Wenzl projectors slide along cup diagrams.  This is crucial in proving that the Reshetikhin-Turaev invariant is indeed an invariant of coloured framed tangles.  The important ingredients in proving this proposition are the formulas for comultiplication of divided powers along with the fact that nested cups map the trivial representation into an invariant subspace of $V_1^{\otimes (2n)}$.

\begin{prop}
There is an equality of intertwiners
\begin{equation*}
(\sum_{k=0}^n \frac{E^{(k)} F^{(k)}}{{n \brack k}} 1_{-n+2k} \otimes 1) \circ C_n
=
(1 \otimes \sum_{k=0}^n \frac{E^{(k)} F^{(k)}}{{n \brack k}} 1_{-n+2k}) \circ C_n.
\end{equation*}
\end{prop}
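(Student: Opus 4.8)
The plan is to decompose everything according to the weight grading and then to apply, in each weight space, the evident generalization of Proposition~\ref{decatJWcupslideaux}. Since $C_n$ is an intertwiner out of the trivial module, its image has weight $0$, so I would write $C_n=\sum_{k=0}^{n}C_n^{(k)}$, where $C_n^{(k)}$ is the part landing in $(V_1^{\otimes n})_{-n+2k}\otimes(V_1^{\otimes n})_{n-2k}$; equivalently $C_n^{(k)}=(1_{-n+2k}\otimes 1)\circ C_n=(1\otimes 1_{n-2k})\circ C_n$. Because $E^{(k)}F^{(k)}$ is weight preserving, $p_{j,n}:=\tfrac1{{n\brack j}}E^{(j)}F^{(j)}1_{-n+2j}$ is supported in the weight-$(-n+2j)$ space, so, writing $p_n=\sum_j p_{j,n}$ as before, $(p_n\otimes 1)\circ C_n=\sum_k(p_{k,n}\otimes 1)\circ C_n^{(k)}$ and $(1\otimes p_n)\circ C_n=\sum_k(1\otimes p_{n-k,n})\circ C_n^{(k)}$ (in the second sum $1\otimes p_{j,n}$ sees $C_n^{(k)}$ only when $2j-n=n-2k$). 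Hence it suffices to prove, for each $k$,
$$(p_{k,n}\otimes 1)\circ C_n^{(k)}=(1\otimes p_{n-k,n})\circ C_n^{(k)}.$$

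For the main input I would note that the computation in the proof of Proposition~\ref{decatJWcupslideaux} applies verbatim with $n$ replaced by any $k\le n$: Lemmas~\ref{Fslidelemma} and~\ref{Eslidelemma} are stated for all exponents $\le n$, and $E^{(k)}K^k$, $F^{(k)}K^k$ commute with the same powers of $q$. This gives $(E^{(k)}F^{(k)}\otimes 1)\circ C_n=(1\otimes F^{(k)}E^{(k)})(K^k\otimes K^k)\circ C_n$, and since $C_n$ has weight $0$ the operator $K^k\otimes K^k$ acts on it as the identity, so $(E^{(k)}F^{(k)}\otimes 1)\circ C_n=(1\otimes F^{(k)}E^{(k)})\circ C_n$. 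As $E^{(k)}F^{(k)}$ and $F^{(k)}E^{(k)}$ are weight preserving, extracting the part living in $(V_1^{\otimes n})_{-n+2k}\otimes(V_1^{\otimes n})_{n-2k}$ gives
$$(E^{(k)}F^{(k)}\otimes 1)\circ C_n^{(k)}=(1\otimes F^{(k)}E^{(k)})\circ C_n^{(k)}.$$

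Finally I would rewrite the two sides using Theorem~\ref{JWaltformthm}. On the weight-$(-n+2k)$ space it gives $E^{(k)}F^{(k)}1_{-n+2k}={n\brack k}\,p_{k,n}$; applying $p_n=\sum_j\tfrac{E^{(j)}F^{(j)}}{{n\brack j}}1_{-n+2j}$ to the weight-$(n-2k)$ space gives $E^{(n-k)}F^{(n-k)}1_{n-2k}={n\brack k}\,p_{n-k,n}$, and the elementary identity $F^{(k)}E^{(k)}1_{n-2k}=E^{(n-k)}F^{(n-k)}1_{n-2k}$ (checked on each irreducible summand of $V_1^{\otimes n}$, where both sides equal the scalar ${n\brack k}$ on the top summand and vanish on the others) yields $F^{(k)}E^{(k)}1_{n-2k}={n\brack k}\,p_{n-k,n}$. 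Since $C_n^{(k)}$ is fixed by $1_{-n+2k}\otimes 1$ and by $1\otimes 1_{n-2k}$, the displayed identity of the previous paragraph reads ${n\brack k}(p_{k,n}\otimes 1)\circ C_n^{(k)}={n\brack k}(1\otimes p_{n-k,n})\circ C_n^{(k)}$; dividing by ${n\brack k}$ and summing over $k$ proves the proposition.

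The only real difficulty is bookkeeping: tracking which weight components survive each operator, and recording the $E\leftrightarrow F$ symmetric form of Theorem~\ref{JWaltformthm}. One could instead argue softly, since $p_n$ is the idempotent projector onto a submodule of $V_1^{\otimes n}$ isomorphic to $V_n$, $C_n$ lands in the one-dimensional space of $\mathcal{U}_q$-invariants of $V_1^{\otimes 2n}$, and $\dim\Hom_{\mathcal{U}_q}(\mathbb{C},V_n\otimes V_1^{\otimes n})=\dim\Hom_{\mathcal{U}_q}(\mathbb{C},V_n\otimes V_n)=1$, which forces $(p_n\otimes 1)\circ C_n\in\IM(p_n\otimes p_n)$ and hence $(p_n\otimes 1)\circ C_n=(p_n\otimes p_n)\circ C_n=(1\otimes p_n)\circ C_n$; that argument, however, does not obviously lift to the categorified setting, which is presumably why the explicit route is preferred.
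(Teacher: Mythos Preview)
Your argument is correct and follows essentially the same route as the paper: both use the obvious extension of Proposition~\ref{decatJWcupslideaux} to all $k\le n$ (noting that $K^k\otimes K^k$ acts trivially on the weight-zero image of $C_n$) and then rewrite the resulting $F^{(k)}E^{(k)}$ on the right tensor factor. The only cosmetic difference is in that last rewriting: you verify $F^{(k)}E^{(k)}1_{n-2k}=E^{(n-k)}F^{(n-k)}1_{n-2k}$ by direct inspection on the irreducible summands of $V_1^{\otimes n}$ and then reindex, whereas the paper keeps the same index $k$ and appeals to Lusztig's commutation formula \cite[(2.11)]{L} (with all but one term vanishing by weight considerations) to pass from $F^{(k)}E^{(k)}$ to $E^{(k)}F^{(k)}$.
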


\begin{proof}
The intertwiner $C_n$ maps $\mathbb{C}$ into the zero weight space of $V_1^{\otimes (2n)}$, thus
\begin{equation}
\label{projectionslide}
(1_{-n+2k} \otimes 1) \circ C_n = (1 \otimes 1_{n-2k}) \circ C_n.
\end{equation}
Then by 
%~\eqref{projectionslide} and 
Proposition ~\ref{decatJWcupslideaux} and \cite[(2.11)]{L}, we have
\begin{align*}
(\sum_{k=0}^n \frac{E^{(k)} F^{(k)}}{{n \brack k}} 1_{-n+2k} \otimes 1) \circ C_n &=
(1 \otimes \sum_{k=0}^n \frac{F^{(k)} E^{(k)}}{{n \brack k}} 1_{-n+2k}) \circ C_n \\
&= (1 \otimes \sum_{k=0}^n \frac{E^{(k)} F^{(k)}}{{n \brack k}} 1_{-n+2k}) \circ C_n
\end{align*}
noting again that all terms from \cite[(2.11)]{L}  vanish except one.
\end{proof}

\section{Categorification of tensor products}
\label{catproducts}
For an abelian (or triangulated) category $\cA$ we denote by $[\cA]$ the {\it Grothendieck group} of $\cA$ which is by definition the free abelian group generated by the isomorphism classes $[M]$ of objects $M$ in $\cA$  modulo the relation $[C]=[A]+[B]$ whenever there is a
short exact sequence (or distinguished triangle) of the form $A\rightarrow C\rightarrow B$. When $ \cA $ is a triangulated category,
denote $n$ compositions of the shift functor by $ \lsem n\rsem$. In the following $\cA$ will always be a (derived) category of $\mZ$-graded modules over some finite-dimensional algebra $A$. Hence the category has an internal $\mZ$-grading (and a homological grading) and so $[\cA]$ has a natural $\mZ[q,q^{-1}]$-module structure where $q$ acts by shifting the internal grading up by $1$. We denote by $\langle i\rangle$ the functor which shifts the internal grading up by $i$, so that if $ M $ is concentrated in internal degree zero, then $ M \langle i \rangle $ is concentrated in internal degree $ i$.
Similarly, if $ M $ is concentrated in homological degree zero, then $ M \lsem i \rsem $ is concentrated in homological degree $ i$.
In the following we will mostly work with $\mC((q))\otimes_{\mZ[q,q^{-1}]}[\cA]$ and call it the {\it Grothendieck space}.

The categorification of both, the $3j$-symbol \cite{FSS1} and the coloured Jones polynomial is based on a categorification of the representation ${V}_1^{\otimes n}$ and the Jones-Wenzl projector. By this we roughly mean that we want to upgrade each weight space into a $\mZ$-graded abelian category with the action of $E$, $F$ and $K$, $K^{-1}$ via exact functors (see below for more precise statements). Such categorifications were first constructed in \cite{BFK, FKS} via graded versions of the category $ \mathcal{O}$ for $ \mathfrak{gl}_n $ and various functors acting on this category. We refer to \cite[Section 6]{FSS1} for the example for $n=2$.

\subsection{Categorification of $ V_1^{\otimes n}$}
We start by recalling the Lie theoretic categorification of $\overline{V}_1^{\otimes n}$.
Let $n$ be a non-negative integer. Let $\mathfrak{g}=\mathfrak{gl}_n$ be the Lie algebra of complex $n\times n$-matrices. We fix the standard Cartan subalgebra $ \mathfrak{h} \subset \mathfrak{g} $ with its basis given by diagonal matrices: $\lbrace E_{1,1}, \ldots, E_{n,n} \rbrace $ for $ i=1,\ldots, n. $  The dual space $ \mathfrak{h}^* $ comes with the dual basis $ \lbrace e_i \mid i=1,
\ldots, n \rbrace $ with $e_i(E_{j,j})=\delta_{i,j}. $ The nilpotent subalgebra of strictly upper diagonal matrices spanned by $ \lbrace E_{i,j} \mid i<j \rbrace$ is denoted $\mathfrak{n}^{+}$. Similarly, let $ \mathfrak{n}^{-} $ be the subalgebra consisting of lower triangular matrices.  We fix the standard Borel subalgebra $\mathfrak{b}=\mathfrak{h} \bigoplus \mathfrak{n}^{+}$.  For any Lie algebra $L$ we denote by $\mathcal{U}(L)$ its universal enveloping algebra, so $L$-modules are the same as (ordinary) modules over the ring $\mathcal{U}(L)$.

Let $W=\mathbb{S}_n$ denote the Weyl group of $ \mathfrak{gl}_n $ generated by simple reflections (=simple transpositions) $ \lbrace s_i, 1 \leq i \leq n-1 \rbrace$. For $w\in W $ and $ \lambda \in \mathfrak{h}^*$, let $w\cdot\lambda = w(\lambda+\rho_n)-\rho_n$, where $ \rho_n= \frac{n-1}{2}e_1 + \cdots + \frac{1-n}{2}e_n$. In the following we will always consider this action. For $\la\in\mh^*$ we denote by  $W_{\lambda} $ the stabilizer of $\lambda \in \mathfrak{h}^*$.

\begin{define}
\label{defO}
{\rm
We denote by $ \mathcal{O} = \mathcal{O}(\mathfrak{gl}_n) $ the full subcategory of finitely-generated $\mathcal{U}(\mathfrak{g})$-modules $M$ which decompose into finite-dimensional weight spaces $M_\la$ for $\mathfrak{h}$ and are $\mathcal{U}(\mathfrak{b})$-finite (i.e. each vector lies in a finite-dimensional subspace stable under $\mb$). For $n=0$ we denote by $\cO$ the category of finite-dimensional complex vector spaces.
}
\end{define}
This category was introduced in \cite{BGG}. 
In addition to all finite-dimensional modules, the category also contains
the {\it Verma modules} $M(\la)=\mathcal{U}\otimes_{\mathcal{U}(\mb)}\mC_\la$ for any $1$-dimensional $\mh$-module $\mC_\la$.
(Here $\la\in\mh^*$ and  $\mC_\la$  is the module where $h\in\mh$ acts by multiplication with $\la(h)$ and $\mathfrak{n}^+$ acts trivially). We call $\la$ the {\it highest weight} of $M(\la)$. The category is closed under taking submodules and quotients, and under tensoring with finite-dimensional $\mg$-modules. In fact it is the smallest abelian category containing all Verma modules and closed under tensoring with finite-dimensional modules. For details and standard facts we refer to \cite{Hu}.

Every Verma module $M(\la)$ has a unique simple quotient which we denote by $L(\la)$. The latter form precisely the isomorphism classes of simple objects in $\cO$. Moreover, the category has enough projectives. Let $P(\la)$ be the projective cover of $L(\la)$ in $\cO$.
The category $\cO$ decomposes into indecomposable summands $\cO_\la$, called {\it blocks}, under the action of the centre of $\cU(\mg)$. $L(\la)$ and $L(\mu)$ are then in the same block if their highest weights are in the same $W$-orbit (as defined above). Hence the blocks are labeled by the maximal representatives of the $W$-orbits, maximal with respect to the Bruhat ordering on $\mh^*$. We call these representatives $\la$ {\it dominant weights} since they are in the dominant chamber with respect to our $W$-action. Then the $L(w\cdot\la)$, $w\in W/W_\la$ are precisely the simple objects in $\cO_\la$.
Weight spaces of $\overline{V}_1^{\otimes n}$ will be categorified using the blocks $\mathcal{O}_{\la} (\mathfrak{gl}_n)$, which by abuse of notation we will call $\mathcal{O}_k(\mathfrak{gl}_n)$, corresponding to the integral dominant weights $e_1+\cdots+e_k-\rho_n$ for $0\leq k\leq n$. When $ k=0$, the corresponding $ \la $ is $ -\rho$. To make calculations easier denote also by $ M(a_1, \ldots, a_n) $ the Verma module with highest weight $ a_1 e_1 + \cdots + a_n e_n - \rho_n$ with simple quotient $L(a_1, \ldots, a_n) $ and projective cover $ P(a_1, \ldots, a_n) $ in $ \mathcal{O}(\mathfrak{gl}_n).$  They are all in the same block and belong to $ \mathcal{O}_k(\mathfrak{gl}_n) $ if and only if $k$ of the $a_j$'s are $1$ and $n-k$ of them are $0$.
%1^k 0^{n-k}) $ and $ P(0^{n-k} 1^k) $ be the dominant and anti-dominant projective objects respectively.
%In this case we can identify the isomorphism class $[M(a_1, \ldots, a_n)]$ of $M(a_1, \ldots, a_n)$ with the vector in %$(V_1^{\otimes n})_k$ via
%\begin{eqnarray}
%\label{isoK0}
%[M(a_1, \ldots, a_n)]&\mapsto&v_{a_1}\otimes v_{a_2}\otimes \cdots\otimes v_{a_n}
%\end{eqnarray}

Each block of $\cO$ is equivalent to a category of modules over a finite-dimensional algebra, although this algebra is very difficult to describe in general, see \cite{Strquiv} for small examples. Therefore, our arguments will be Lie theoretic in general, but keeping in mind that in principle everything could be formulated in terms of modules over certain graded finite-dimensional algebras.
%Then \eqref{isoK0} can be formulated (using e.g. \cite[Theorems 3.10, 3.11]{Hu}) as

%\begin{lemma}{\rm (\cite{BFK})}
%\label{Grothungraded} The map \eqref{isoK0} defines an isomorphism of vector spaces:
%$$ \mathbb{C} \otimes_{\mathbb{Z}} [\bigoplus_{k=0}^{n} \mathcal{O}_k(\mathfrak{gl}_n)] \cong \bar{V}_1^{\otimes n}. $$
%\end{lemma}

In the following we will focus on the blocks $\cO_k$ and formulate the statements for them only, although most of them are true in general. Each block can be enriched (in a non-trivial way) with the structure of a $\mZ$-grading as follows.  Let $A_{k,1^n} $ denote the endomorphism algebra of a minimal projective generator $P_k$ of $ \mathcal{O}_k$ (for explicit examples see again \cite{Strquiv}). We identify $\cO_k$ with the category of right $A_{k,1^n}$-modules via the functor $\HOM_\mg(P_k,_-)$. Then it is a non-trivial fact that the algebra $A_{k,1^n}$ can be equipped with a non-negative grading, actually a Koszul grading, see \cite{BGS}.

Hence we may consider a graded version of the category $\cO_k$, that is the category $\gmod-A_{k,1^n}$ of finite-dimensional, graded right modules over $A_{k,1^n}$ and try to lift all the representation theory to the graded setting. This has been worked out in \cite{Strquiv}, so we just recall the results. Each object $M$ specifically mentioned earlier in category $\mathcal{O}$ has a graded lift $\hat{M}$ in the category $\gmod-A_{k,1^n}$. These lifts are in fact unique up to isomorphism and shift in the grading. We pick such lifts $\hat{P}(a_1, \ldots, a_n), \hat{M}(a_1, \ldots, a_n) $ and $ \hat{L}(a_1, \ldots, a_n) $ by requiring that their heads are concentrated in degree zero. (The choice is then unique up to isomorphism).

Category $\cO$ has a duality $ \op{d} $ which just amounts to an identification of  $A_{k,1^n}$ with its opposite algebra by taking the usual vector space dual. In particular we can also work with the category $A_{k,1^n}-\gmod$ of graded left modules. Let $\hat{\op{d}}=\Hom_{A_{k,1^n}}(_-,\mC)$ denote the graded lift of this duality normalized such that it preserves the $ \hat{L}(a_1, \ldots, a_n)$'s. Let $\hat{\nabla}(a_1, \ldots, a_n)=\hat{\op{d}}(\hat{M}(a_1, \ldots, a_n))$ and  then let ${\nabla}(a_1, \ldots, a_n)$ be the module after forgetting the grading. Similarly $\hat{I}(a_1, \ldots, a_n)=\hat{\op{d}}(\hat{P}(a_1, \ldots, a_n))$ is the injective hull of $\hat{L}(a_1, \ldots, a_n)$
and ${I}(a_1, \ldots, a_n)$ denotes the module when forgetting the grading. One can work out these modules for the explicit small examples in \cite{Strquiv} in terms of representations of quivers.

Let $ \langle r \rangle \:\colon\: \gmod-A_{k,1^n} \rightarrow \gmod-A_{k,1^n} $ be the functor of shifting the grading up by $r$ (such that if $M=M_i$ is concentrated in degree $i$ then $M\langle r\rangle=M\langle k\rangle_{i+r}$ is concentrated in degree $i+r$). The
additional grading turns the Grothendieck group into a $\mZ[q,q^{-1}]$-module, the shift functor $\langle r\rangle$ induces the multiplication by $q^r$. 

\begin{prop}{\rm \cite[Theorem 4.1, Theorem 5.3]{FKS}}
\label{Grothgraded} There is an isomorphism 
\begin{eqnarray}
\label{iso26}
\Phi_n:\; \mathbb{C}((q))\otimes_{\mathbb{Z}[q,q^{-1}]} [\bigoplus_{k=0}^{n} \gmod-A_{k,1^n}] &\cong&V_1^{\otimes n}.\\
\left[\hat{M}(a_1, \ldots, a_n)\right]&\mapsto&v_{a_1}\otimes v_{a_2}\otimes \cdots\otimes v_{a_n}\nonumber
\end{eqnarray}
of $\mathbb{C}((q))$-modules. The images 
of the isomorphism classes $[\hat{L}(a_1, \ldots, a_n)]$ of simple modules
are Lusztig's {\it dual canonical basis} elements $v^{a_1}\heartsuit v^{a_2}\heartsuit \cdots \heartsuit v^{a_n}$.
\end{prop}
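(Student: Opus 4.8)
The plan is to reduce the statement to a comparison of two triangular change-of-basis matrices, quoting the relevant structural results on each side. Fix $0\le k\le n$ and work in the summand $\gmod-A_{k,1^n}$. Its graded Grothendieck group is free over $\mZ[q,q^{-1}]$ on the simple classes $[\hat L({\bf a})]$ with $|{\bf a}|=k$, and equally well on the Verma classes $[\hat M({\bf a})]$, the change of basis being unitriangular with all diagonal entries $1$ since the chosen lift $\hat M({\bf a})$ has its head in degree zero. On the other side, $V_1^{\otimes n}$ is the sum of its $K$-weight spaces, the one where $K$ acts by $q^{2k-n}$ having $\mC(q)$-basis $\{v_{\bf a}:|{\bf a}|=k\}$ (see \eqref{irreddef}), and the dual canonical vectors $v^{a_1}\heartsuit\cdots\heartsuit v^{a_n}$ in this weight space are again unitriangular to the $v_{\bf a}$. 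After extending scalars to $\mC((q))$ one simply \emph{defines} $\Phi_n$ summand by summand by $[\hat M({\bf a})]\mapsto v_{\bf a}$ and extends $\mC((q))$-linearly; this is tautologically an isomorphism of $\mC((q))$-modules matching the two ``standard'' bases. (That the translation functors through walls categorify $E$ and $F$ with the coefficients of \eqref{irreddef}, and grading shift categorifies $K$ as in \eqref{comult}, upgrades $\Phi_n$ to a $\cU_q$-module isomorphism and justifies calling the target $V_1^{\otimes n}$; only the vector-space structure and the two distinguished bases enter the bare statement, but this is all part of \cite{FKS}.)

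The content left is the image of $[\hat L({\bf a})]$, and the quickest proof computes both change-of-basis matrices explicitly. By the Kazhdan--Lusztig conjecture for $\mathfrak{gl}_n$ (Beilinson--Bernstein, Brylinski--Kashiwara), its graded refinement coming from Koszulity of $A_{k,1^n}$ \cite{BGS}, and Deodhar's parabolic formalism, the graded decomposition numbers $[\hat M({\bf a}):\hat L({\bf b})]_q$ are the parabolic Kazhdan--Lusztig polynomials for $(\mathbb{S}_n,\mathbb{S}_k\times\mathbb{S}_{n-k})$; inverting this system, the expansion of $[\hat L({\bf a})]$ in the Verma classes has coefficients the inverse parabolic polynomials, still supported off the diagonal in $q\mZ[q]$ by Koszulity. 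On the quantum side, Frenkel--Khovanov's computation expresses $v^{a_1}\heartsuit\cdots\heartsuit v^{a_n}$ in the basis $\{v_{\bf b}\}$ with exactly these same polynomials as coefficients \cite{FK}. Comparing the two triangular systems term by term yields $\Phi_n([\hat L({\bf a})])=v^{a_1}\heartsuit\cdots\heartsuit v^{a_n}$.

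A more conceptual variant, avoiding the explicit formulas, uses the characterization of the dual canonical basis as the unique $\psi$-invariant basis of $V_1^{\otimes n}$ (for Lusztig's bar involution $\psi$, assembled from the bar involutions of $\cU_q$ and of the $V_1$'s and twisted by the quasi-$R$-matrix) that is unitriangular to the standard basis with off-diagonal coefficients in $q\mZ[q]$. One checks that $\Phi_n$ intertwines $\psi$ with the graded duality $\hat{\op{d}}$ on $\bigoplus_k\gmod-A_{k,1^n}$: both are semilinear involutions sending $q$ to $q^{-1}$; both fix the distinguished generator, since for $k=0$ the block is the category of vector spaces with $\hat M(0,\dots,0)=\hat L(0,\dots,0)$ fixed by $\hat{\op{d}}$ and sent to the $\psi$-fixed vector $v_0^{\otimes n}$; and both are compatible with the $\cU_q$-action in the manner characterizing Lusztig's bar involution, on the category side because the translation functors categorifying $E$ and $F$ are biadjoint up to shift and commute with $\hat{\op{d}}$ up to the matching shift. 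Since $v_0^{\otimes n}$ generates $V_1^{\otimes n}$ over $\cU_q$, this pins the two involutions to each other. Then $\Phi_n([\hat L({\bf a})])$ is $\psi$-invariant (as $\hat{\op{d}}$ fixes $[\hat L({\bf a})]$) and unitriangular to $\{v_{\bf b}\}$ with off-diagonal entries in $q\mZ[q]$ (Koszulity), hence is the dual canonical basis vector by uniqueness.

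The real difficulty lies in neither computation but in coordinating normalizations: the comultiplication of \eqref{comult}, the choice of grading lifts with head in degree zero, and the quasi-$R$-matrix twist of $\psi$ must all be aligned so that $\hat{\op{d}}$ corresponds to $\psi$ on the nose, rather than to $\psi$ composed with a weight-dependent power of $q$ or with the diagram-flip ${\bf a}\mapsto w_0{\bf a}$. Once those conventions are fixed as in \cite{FKS}, the comparison of the two triangular systems — and hence the identification of $[\hat L({\bf a})]$ with the dual canonical basis vector — is formal.
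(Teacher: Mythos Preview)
The paper does not give a proof of this proposition at all: it is stated with a bare citation to \cite[Theorem~4.1, Theorem~5.3]{FKS}, and the surrounding text only sets up notation. So there is no ``paper's own proof'' to compare against; your proposal is effectively a summary of what is contained in the cited reference.

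That said, your sketch is sound and captures the essential content of the FKS argument. The first route --- identifying the graded decomposition numbers of $\gmod-A_{k,1^n}$ with (inverse) Kazhdan--Lusztig polynomials via BGS Koszulity and the Kazhdan--Lusztig theorem, and matching these against the Frenkel--Khovanov formulas for the dual canonical basis of $V_1^{\otimes n}$ --- is precisely the mechanism behind \cite[Theorem~5.3]{FKS}. One small terminological point: the block $\cO_k$ is \emph{singular} (stabiliser $\mathbb{S}_k\times\mathbb{S}_{n-k}$), not parabolic, so the polynomials that appear are the Kazhdan--Lusztig polynomials for the cosets $\mathbb{S}_n/(\mathbb{S}_k\times\mathbb{S}_{n-k})$ arising from the singular block rather than ``parabolic KL polynomials'' in the usual sense; in type~$A$ the two families coincide up to the Koszul/Ringel duality symmetry, so the comparison still goes through, but it is worth being precise about which side of that duality you are on.

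Your second, bar-involution route is also correct and is arguably closer in spirit to how canonical bases are characterised; the caveat you flag about aligning the comultiplication \eqref{comult}, the quasi-$R$-matrix twist, and the graded-lift normalisations is exactly the delicate part, and FKS spend some effort on this. Either way, once the conventions are fixed the argument is, as you say, formal.
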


The following theorem categorifies the $\cU_q$-action. The generator $E$ acts just by a graded lift $\hat{\mathcal{E}}$ of tensoring with
the natural representation and then projecting onto the corresponding block (and $F$ with the adjoint), whereas $K$ acts by an appropriate grading shift on each block. We state the theorem abstractly below and refer to \cite{FKS} for details.

\begin{theorem}[{\rm \cite[Theorem 4.1]{FKS}}]
\label{projfunc} There are exact functors of graded categories
\begin{eqnarray*}
&\hat{\mathcal{E}}_k \colon\quad \gmod-A_{k,1^n} \rightarrow \gmod-A_{k+1,1^n},\quad
\hat{\mathcal{F}}_k \colon\quad \gmod-A_{k,1^n} \rightarrow \gmod-A_{k-1,1^n},&\\
&\hat{\mathcal{K}}_k^{\pm 1} \colon\quad \gmod-A_{k,1^n}
\rightarrow \gmod-A_{k,1^n}&
\end{eqnarray*}
such that
\begin{align*}
&{\hat{\mathcal{K}}_{i+1}} \hat{\mathcal{E}}_i \cong {\hat{\mathcal{E}}_i} {\hat{\mathcal{K}}}_i \langle 2
    \rangle, \quad
    {\hat{\mathcal{K}}}_{i-1}\hat{\mathcal{F}}_i \cong {\hat{\mathcal{F}}_i} {\hat{\mathcal{K}}}_i \langle -2 \rangle, \quad
    {\hat{\mathcal{K}}}_i {\hat{\mathcal{K}}}_i^{-1} \cong {\Id}
    \cong {\hat{\mathcal{K}}}_i^{-1} {\hat{\mathcal{K}}}_i^1, \\
    &\displaystyle \hat{\mathcal{E}}_{i-1}
    \hat{\mathcal{F}}_i \bigoplus
    \bigoplus_{j=0}^{n-i-1} {\Id} \langle n-2i-1-2j \rangle \cong \hat{\mathcal{F}}_{i+1} \hat{\mathcal{E}}_i
    \bigoplus
    \bigoplus_{j=0}^{i-1}
    {\Id} \langle 2i-n-1-2j \rangle,
\end{align*}
%\begin{eqnarray*}
%&{\hat{\mathcal{K}}_{i+1}} \hat{\mathcal{E}}_i \cong {\hat{\mathcal{E}}_i} {\hat{\mathcal{K}}}_i \langle 2
  %  \rangle&\\
   % &{\hat{\mathcal{K}}}_{i-1}\hat{\mathcal{F}}_i \cong {\hat{\mathcal{F}}_i} {\hat{\mathcal{K}}}_i \langle -2 \rangle& \\
  %  &{\hat{\mathcal{K}}}_i {\hat{\mathcal{K}}}_i^{-1} \cong {\Id}
  %  \cong {\hat{\mathcal{K}}}_i^{-1} {\hat{\mathcal{K}}}_i^1&\\
  %  &\displaystyle \hat{\mathcal{E}}_{i-1}
   % \hat{\mathcal{F}}_i \bigoplus
   % \bigoplus_{j=0}^{n-i-1} {\Id} \langle n-2i-1-2j \rangle \cong \hat{\mathcal{F}}_{i+1} \hat{\mathcal{E}}_i
   % \bigoplus
  %  \bigoplus_{j=0}^{i-1}
   % {\Id} \langle 2i-n-1-2j \rangle. &
%\end{eqnarray*}
and the isomorphism \eqref{iso26} becomes an isomorphism of $\mathcal{U}_q(\mathfrak{sl}_2)$-modules.
\end{theorem}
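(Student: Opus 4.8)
The plan is to construct $\hat{\mathcal{E}}_k,\hat{\mathcal{F}}_k,\hat{\mathcal{K}}_k^{\pm1}$ as graded lifts of classical translation (projective) functors on $\cO$, and to reduce every stated isomorphism either to a formal statement about grading shifts or to a known decomposition of projective functors into indecomposables. On the ungraded level I would set $\mathcal{E}_k\colon\cO_k(\mg)\to\cO_{k+1}(\mg)$, $M\mapsto\operatorname{pr}_{k+1}(\mC^n\otimes M)$, and $\mathcal{F}_k\colon\cO_k(\mg)\to\cO_{k-1}(\mg)$, $M\mapsto\operatorname{pr}_{k-1}((\mC^n)^*\otimes M)$, where $\operatorname{pr}_j$ is the projection onto the block $\cO_j$ and $\mC^n$ is the natural $\mg$-module; these are exact, biadjoint up to reindexing, and are the Bernstein--Frenkel--Khovanov functors categorifying the $\SL_2$-action on $\overline{V}_1^{\otimes n}$. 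The functor $\hat{\mathcal{K}}_k$ I would simply \emph{define} to be the grading shift $\langle 2k-n\rangle$ on $\gmod-A_{k,1^n}$; since any graded functor commutes strictly with grading shifts, the relations $\hat{\mathcal{K}}_{i+1}\hat{\mathcal{E}}_i\cong\hat{\mathcal{E}}_i\hat{\mathcal{K}}_i\langle2\rangle$, $\hat{\mathcal{K}}_{i-1}\hat{\mathcal{F}}_i\cong\hat{\mathcal{F}}_i\hat{\mathcal{K}}_i\langle-2\rangle$ and $\hat{\mathcal{K}}_i\hat{\mathcal{K}}_i^{-1}\cong\Id\cong\hat{\mathcal{K}}_i^{-1}\hat{\mathcal{K}}_i$ are then tautological once $\hat{\mathcal{E}}_i,\hat{\mathcal{F}}_i$ are produced as graded functors.

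Next I would invoke the Koszul grading on $A_{k,1^n}$ from \cite{BGS} and the theory of graded lifts of translation functors from \cite{Strquiv} to produce exact graded functors $\hat{\mathcal{E}}_k,\hat{\mathcal{F}}_k$ between $\gmod-A_{k,1^n}$ and $\gmod-A_{k\pm1,1^n}$ lifting $\mathcal{E}_k,\mathcal{F}_k$. Such lifts exist and are unique up to an overall shift $\langle r\rangle$; I would pin down the shift by demanding that the induced operators on the Grothendieck group are exactly $E$ and $F$ (and not $q^{\pm1}E,q^{\pm1}F$) for the chosen normalisation of $\hat M(\mathbf a),\hat L(\mathbf a)$, which amounts to decreeing in which degree the graded Verma filtration of $\hat{\mathcal{E}}_k\hat M(\mathbf a)$ sits.

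The essential point is the $EF$--$FE$ relation. On the ungraded level $\mathcal{E}_{i-1}\mathcal{F}_i$ and $\mathcal{F}_{i+1}\mathcal{E}_i$ both arise by tensoring with $\mC^n\otimes(\mC^n)^*$ and projecting, hence differ only by direct summands isomorphic to $\Id$, and the classical fact is $\mathcal{E}_{i-1}\mathcal{F}_i\oplus\Id^{\oplus(n-i)}\cong\mathcal{F}_{i+1}\mathcal{E}_i\oplus\Id^{\oplus i}$. The task is to refine this to a \emph{graded} isomorphism with the stated shifts $\langle n-2i-1-2j\rangle$ ($0\le j\le n-i-1$) on the left and $\langle 2i-n-1-2j\rangle$ ($0\le j\le i-1$) on the right; these are exactly the graded multiplicities produced by the graded self-intersection formula $\hat\theta_s\hat\theta_s\cong\hat\theta_s\langle1\rangle\oplus\hat\theta_s\langle-1\rangle$ for translation through a wall, combined with graded translation on and off the walls $\mathbb{S}_k\times\mathbb{S}_{n-k}$ and an induction on $n$. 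Applying $[\,\cdot\,]$ to this graded functor isomorphism yields, on the block $\cO_i$,
\[
[\hat{\mathcal{E}}][\hat{\mathcal{F}}]-[\hat{\mathcal{F}}][\hat{\mathcal{E}}]=\sum_{j=0}^{i-1}q^{2i-n-1-2j}-\sum_{j=0}^{n-i-1}q^{n-2i-1-2j}=\frac{q^{2i-n}-q^{n-2i}}{q-q^{-1}},
\]
which together with the (tautological) $K$-relations is relation (iv) of Definition~\ref{defofsl2}.

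Finally, to see that $\Phi_n$ is $\mathcal{U}_q$-equivariant it remains to check that $[\hat{\mathcal{E}}],[\hat{\mathcal{F}}],[\hat{\mathcal{K}}^{\pm1}]$ act on the basis $v_{\mathbf a}=\Phi_n([\hat M(\mathbf a)])$ by the explicit coproduct formulas for $E,F,K^{\pm1}$ on $V_1^{\otimes n}$. This is a direct computation: $\hat{\mathcal{E}}_k$ and $\hat{\mathcal{F}}_k$ applied to a Verma have a graded Verma filtration with multiplicity-one sections indexed by the positions at which an entry of $\mathbf a$ changes, and the grading shifts of these sections are governed by the elementary ``add/remove a box'' combinatorics, which reproduces the coefficients in $\triangle(E),\triangle(F)$ and hence the matrix entries in \eqref{irreddef}. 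The main obstacle is the grading bookkeeping in the $EF$--$FE$ step: obtaining the graded functor isomorphism with \emph{precisely} the claimed shifts genuinely requires Koszulity and the graded behaviour of translation functors on singular blocks, and it requires the normalising shifts for all $k$ to be fixed coherently before comparing against the self-intersection formula.
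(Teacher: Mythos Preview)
The paper does not give its own proof of this theorem: it is stated as a citation of \cite[Theorem 4.1]{FKS}, and the surrounding text merely indicates that $E$ is categorified by a graded lift of tensoring with the natural representation followed by block projection, $F$ by its adjoint, and $K$ by a grading shift. Your proposal is precisely this construction fleshed out, and it matches the strategy of \cite{FKS}: define $\mathcal{E}_k,\mathcal{F}_k$ via projective functors, take $\hat{\mathcal{K}}_k=\langle 2k-n\rangle$, lift to the Koszul-graded setting via \cite{Strquiv}, and verify the commutator relation by refining the ungraded decomposition $\mathcal{E}_{i-1}\mathcal{F}_i\oplus\Id^{\oplus(n-i)}\cong\mathcal{F}_{i+1}\mathcal{E}_i\oplus\Id^{\oplus i}$ to a graded isomorphism with the stated shifts. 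Your identification of the graded $EF$--$FE$ relation as the only nontrivial step, and of the coherent normalisation of lifts across all $k$ as the main bookkeeping issue, is accurate. There is nothing to compare here beyond noting that your sketch is consistent with both the paper's one-sentence description and the argument in the cited reference.
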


\subsection{Categorification of the Jones-Wenzl projector}
\label{sec:JW}
In order to categorify more general tensor products, we introduce the category $ {}_{k} \mathcal{H}_{{\bf d}}^{1}(\mathfrak{gl}_n)$ of certain Harish-Chandra bimodules. For details on these specific categories we refer to \cite{MS2}. For basics on the category of (generalised) Harish-Chandra bimodules see \cite{BG}, \cite[Kapitel 6]{Ja}, and for its description in terms of Soergel bimodules see \cite{Soergel-HC}, \cite{StGolod}.

\begin{define}{\rm
Let $\mg=\mathfrak{gl}_n$ and define for $\mu$, $\la$ dominant integral $ {}_{\lambda} \mathcal{H}_{\mu}^1(\mg) $ to be the full subcategory of $\mathcal{U}(\mg)$-bimodules of finite length with objects $M$ satisfying the following conditions
\begin{enumerate}[(i)]
\item $M$ is finitely-generated as a $ \mathcal{U}(\mg)$-bimodule,
\item every element $m\in  M$ is contained in a finite-dimensional vector space stable under the adjoint action $x.m=xm-mx$ of $\mg$ (where $x\in\mg$, $m\in M$),
\item for any $m\in M$ we have $m I_\mu=0$ and there is some $n\in\mZ_{>0}$ such that $(I_\lambda)^nm=0$, where $I_\mu$, respectively $I_\la$ is the maximal ideal of the centre of $\cU(\mg)$ corresponding to $\mu$ and $\la$ under the Harish-Chandra isomorphism. (One usually says {\it $M$ has generalised central character $\chi_\la$ from the left and central character $\chi_\mu$ from the right}).
\end{enumerate}
}
\end{define}
We call the objects in these categories {\it Harish-Chandra bimodules}. Here is the construction of the simple objects of these categories. Given two $\mg$-modules $M$ and $N$ we can form the space $\HOM_\mC(M,N)$ which is naturally a $\mg$-bimodule, but very large. We denote by $\cL(M,N)$ the ad-finite part, that is the subspace of all vectors lying in a finite-dimensional vector space invariant under the adjoint action $X.f:=Xf-fX$ for $X\in \mg$ and $f\in\HOM_\mC(M,N)$. This is a Harish-Chandra bimodule and the simple objects in ${}_{\la} \mathcal{H}_{\mu}^1(\mg)$ are precisely the bimodules of the form $\cL(M(\mu), L(w.\la))$, where $w$ is a longest element representative in the double coset space $\mathbb{S}_{\mu}\backslash \mathbb{S}_n / \mathbb{S}_{\lambda}$ where $ \mathbb{S}_{\mu}, \mathbb{S}_{\lambda}$ are the stabilizers of $\mu$ respectively $\la$ (see \cite[6.23]{Ja} for details). The Bernstein-Gelfand-Joseph-Zelevinsky inclusion functor realizes the category of Harish-Chandra bimodules as a subcategory of $\mathcal{O}$, see \cite{BG}, \cite{Ja}.

\begin{define}{\rm
Let $\la, \mu$ be integral dominant weights. Let $ \mathcal{O}_{\lambda, \mu}(\mathfrak{gl}_n) $ be the full subcategory of $ \mathcal{O}_{\lambda}(\mathfrak{gl}_n) $ consisting of
modules $ M $ with projective presentations $ P_2 \rightarrow P_1 \rightarrow M \rightarrow 0 $ where $ P_1 $ and $ P_2 $ are direct sums
of projective objects of the form $ P(x.\lambda) $ where $ x $ is a longest element representative in the double coset space $ \mathbb{S}_{\mu}
\backslash \mathbb{S}_n / \mathbb{S}_{\lambda}. $
Then there are functors
\begin{eqnarray}
{}_{\lambda} \bar{\pi}_{\mu} \colon \mathcal{O}_{\lambda}(\mathfrak{gl}_n) \rightarrow {}_{\lambda}
    \mathcal{H}_{\mu}^1(\mathfrak{gl}_n),&& {}_{\lambda} \bar{\pi}_{\mu}(X) =\cL(M(\mu),X),\label{Lietheoretic1}\\
{}_{\lambda} \bar{\iota}_{\mu} \colon {}_{\lambda}\mathcal{H}_{\mu}^1(\mathfrak{gl}_n) \rightarrow \mathcal{O}_{\lambda}(\mathfrak{gl}_n), &&{}_{\lambda} \bar{\iota}_{\mu}(M) = M \otimes_{\mathcal{U}(\mathfrak{gl}_n)} M(\mu).\label{Lietheoretic2}
\end{eqnarray}
}
\end{define}

\begin{theorem}{\rm (\cite{BG})}
\label{BG} The functors $ {}_{\lambda} \bar{\iota}_{\mu} $ and $ {}_{\lambda} \bar{\pi}_{\mu} $ provide inverse equivalences of categories
between $ \mathcal{O}_{\lambda, \mu}(\mathfrak{gl}_n) $ and $ {}_{\lambda} \mathcal{H}_{\mu}^1(\mathfrak{gl}_n). $
\end{theorem}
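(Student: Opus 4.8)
The plan is to exhibit $({}_{\la}\bar\iota_{\mu},{}_{\la}\bar\pi_{\mu})$ as an adjoint pair, reduce the claim to a statement about the indecomposable projectives on both sides, and appeal to the Bernstein--Gelfand theory of projective functors for the one step that carries the content. Abbreviate $\iota:={}_{\la}\bar\iota_{\mu}$ and $\pi:={}_{\la}\bar\pi_{\mu}$.

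\emph{Step 1 (the adjunction).} Tensor--hom adjunction gives, naturally in $M$ and $X$,
\[
\HOM_{\mathcal{U}(\mathfrak{gl}_n)}\!\bigl(M\otimes_{\mathcal{U}(\mathfrak{gl}_n)}M(\mu),X\bigr)\;\cong\;\HOM_{\mathcal{U}(\mathfrak{gl}_n)\text{-bimod}}\!\bigl(M,\HOM_{\mC}(M(\mu),X)\bigr),
\]
and since every $M$ in ${}_{\la}\cH^1_{\mu}(\mathfrak{gl}_n)$ is ad-finite, the image of any bimodule map out of $M$ lies in the ad-finite subbimodule $\cL(M(\mu),X)=\pi(X)$; so the right-hand side equals $\HOM_{{}_{\la}\cH^1_{\mu}}(M,\pi(X))$. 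Next I would record the (routine) checks that the functors land in the asserted categories: $\pi(X)=\cL(M(\mu),X)$ has central character $\chi_{\mu}$ from the right (since $M(\mu)$ does), generalised central character $\chi_{\la}$ from the left when $X\in\cO_{\la}(\mathfrak{gl}_n)$, is ad-finite by construction, and has finite length because $X$ does and $\cL(M(\mu),L(w.\la))$ is simple when $w$ is longest in its double coset and zero otherwise (cf.\ \cite[Kapitel 6]{Ja}); and $\iota(M)=M\otimes_{\mathcal{U}(\mathfrak{gl}_n)}M(\mu)$ lies in $\cO_{\la}(\mathfrak{gl}_n)$, being finitely generated, of generalised central character $\chi_{\la}$, and $\mathfrak{b}$-locally finite with finite-dimensional weight spaces, using ad-finiteness of $M$ and the weight decomposition of $M(\mu)$. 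Hence $\iota$ is left adjoint to $\pi$; write $\eta\colon\Id\Rightarrow\pi\iota$ for the unit, given by coevaluation $m\mapsto(m'\mapsto m\otimes m')$, and $\varepsilon\colon\iota\pi\Rightarrow\Id$ for the counit, given by evaluation $f\otimes m\mapsto f(m)$.

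\emph{Step 2 (reduction to projectives).} It suffices to prove that $\eta_M$ is an isomorphism for all $M$ in ${}_{\la}\cH^1_{\mu}(\mathfrak{gl}_n)$ and $\varepsilon_X$ is an isomorphism for all $X$ in $\cO_{\la,\mu}(\mathfrak{gl}_n)$; then $\iota$ is fully faithful with essential image $\cO_{\la,\mu}(\mathfrak{gl}_n)$, and $\pi$ is a quasi-inverse. By definition $\cO_{\la,\mu}(\mathfrak{gl}_n)$ is the subcategory of modules presented by $\op{add}(P)$, $P:=\bigoplus_x P(x.\la)$ with $x$ running over longest double-coset representatives in $\mathbb{S}_{\mu}\backslash\mathbb{S}_n/\mathbb{S}_{\la}$, so $\HOM_{\cO_{\la}}(P,-)$ identifies it with $\op{mod}\text{-}\END_{\cO_{\la}}(P)$; likewise ${}_{\la}\cH^1_{\mu}(\mathfrak{gl}_n)$ is a finite-length category with enough projectives, so it is equivalent to modules over the endomorphism algebra of its projective generator. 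Using right exactness of $\iota$ and exactness of $\pi$ on $\cO_{\la}(\mathfrak{gl}_n)$ (a standard but not entirely trivial fact, \cite[Kapitel 6]{Ja}), and chasing these presentations, everything reduces to: (i) the indecomposable projectives of ${}_{\la}\cH^1_{\mu}(\mathfrak{gl}_n)$ are exactly the $\cL(M(\mu),P(x.\la))$ with $x$ longest in its double coset; and (ii) for such $x$ the counit $\cL(M(\mu),P(x.\la))\otimes_{\mathcal{U}(\mathfrak{gl}_n)}M(\mu)\to P(x.\la)$ and the corresponding unit map are isomorphisms.

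\emph{Step 3 (the crux).} Statements (i) and (ii) are where the theorem really lives, and I would derive them from the Bernstein--Gelfand description of ${}_{\la}\cH^1_{\mu}(\mathfrak{gl}_n)$ in terms of projective functors (\cite{BG}; see also \cite[Kapitel 6]{Ja}, and \cite{Soergel-HC} for the Soergel-bimodule version). Since $\mu$ is dominant, $M(\mu)=P(\mu)$ is projective in $\cO_{\mu}(\mathfrak{gl}_n)$, and the modules $P(x.\la)$ with $x$ longest in a double coset are precisely those obtained from $M(\mu)$ by applying a projective functor $\theta\colon\cO_{\mu}(\mathfrak{gl}_n)\to\cO_{\la}(\mathfrak{gl}_n)$. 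The assignment $M\mapsto M\otimes_{\mathcal{U}(\mathfrak{gl}_n)}(-)$ is an equivalence from ${}_{\la}\cH^1_{\mu}(\mathfrak{gl}_n)$ onto the category of such projective functors; following \cite{BG} one checks that $\cL(M(\mu),P(x.\la))$ is the indecomposable projective bimodule corresponding to $\theta$, that $\cL(M(\mu),P(x.\la))\otimes_{\mathcal{U}(\mathfrak{gl}_n)}M(\mu)\cong P(x.\la)$, and that $\varepsilon$ and $\eta$ become the canonical identifications. In practice I would first treat the elementary translation functors on and off walls, where $\cL(M(\mu),M(\mu))$ is the ``standard'' bimodule and $\cL(M(\mu),M(\mu))\otimes_{\mathcal{U}(\mathfrak{gl}_n)}M(\mu)\cong M(\mu)$ is immediate, and then propagate through compositions using the compatibility of $\cL(M(\mu),-)$ with applying a projective functor in the second variable. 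Reproving this part of \cite{BG} is the main obstacle. Granting Step 3, Steps 1 and 2 show $\iota$ and $\pi$ restrict to mutually inverse equivalences; as a consistency check, the equivalence carries $\cL(M(\mu),L(w.\la))$, $w$ longest in its double coset, to the simple Harish-Chandra bimodules.
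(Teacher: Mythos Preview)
The paper does not give its own proof of this theorem: it is stated with the citation \cite{BG} and taken as a known result from the Bernstein--Gelfand paper (with the standard treatment in \cite[Kapitel 6]{Ja}). So there is no ``paper's proof'' to compare against beyond the bare citation.

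Your outline is a reasonable reconstruction of the classical argument. The adjunction in Step~1 is set up correctly, and the reduction in Step~2 to the behaviour on indecomposable projectives is the right move; your parenthetical that exactness of $\pi=\cL(M(\mu),-)$ on $\cO_\la$ is ``standard but not entirely trivial'' is accurate---it ultimately rests on $M(\mu)$ being projective when $\mu$ is dominant, together with the identification of $\cL(M(\mu),-)$ with a projective functor. You correctly locate the real content in Step~3: the Bernstein--Gelfand classification of projective functors and the resulting identification of the indecomposable projective Harish-Chandra bimodules with the $\cL(M(\mu),P(x.\la))$ for $x$ longest in its double coset. You are honest that this is exactly the part of \cite{BG} one would be reproving, and you do not actually carry it out; that is fine for a proof \emph{plan}, but be aware that a self-contained argument would require substantially more work here (the compatibility of $\cL(M(\mu),-)$ with projective functors in the second variable, and the bijection between indecomposable projective functors and longest double-coset representatives). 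One small slip: in the last sentence you say the equivalence ``carries $\cL(M(\mu),L(w.\la))$ \ldots\ to the simple Harish-Chandra bimodules''---these \emph{are} the simple Harish-Chandra bimodules already; presumably you meant that the inverse equivalence $\iota$ carries them to the simple objects of $\cO_{\la,\mu}$, or that $\pi$ carries $L(w.\la)$ to them.
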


If the stabilizer of $ \lambda $ is equal $ \mathbb{S}_k \times \mathbb{S}_{n-k} $ and the stabilizer of $ \mu $ is equal to $ \mathbb{S}_{\bf d}=\mathbb{S}_{d_1} \times \cdots \times \mathbb{S}_{d_r} $, then we denote the categories in Theorem ~\ref{BG} by
$ \mathcal{O}_{k, {\bf d}}(\mathfrak{gl}_n) $ and $ {}_{k} \mathcal{H}_{\bf d}^1(\mathfrak{gl}_n) $ respectively. The following might be viewed, together with Proposition \ref{Grothgraded}, as a categorical version of \cite[Theorem 1.11]{FK}.
\begin{prop}
\label{simples}
\begin{enumerate}[(i)]
\item$ {}_{\lambda} \bar{\pi}_{\mu} $ maps simple objects to simple objects or zero. All simple Harish-Chandra bimodules are obtained in this way.
\item The simple objects in ${}_{\lambda} \mathcal{H}_{\mu}^1(\mathfrak{gl}_n)={}_{k} \mathcal{H}_{\bf d}^1(\mathfrak{gl}_n)$  are precisely the $\cL(M(\mu), L(x.\lambda))$ where $ x $ is a longest element representative in the double coset space $ \mathbb{S}_{\mu}
\backslash \mathbb{S}_n / \mathbb{S}_{\lambda}$.
Denote the simple modules in ${}_{k} \mathcal{H}_{\bf {d}}^1(\mathfrak{gl}_n)$ by:
$$L\left(k_1,d_1|k_2,d_2|\cdots |k_r,d_r\right):=\cL(M(\mu), L(\underbrace{0, \ldots, 0,}_{d_1-k_1}
\underbrace{1, \ldots, 1}_{k_1},\cdots,
\underbrace{0, \ldots, 0}_{d_r-k_r}, \underbrace{1, \ldots, 1}_{k_r})).$$
\end{enumerate}
\end{prop}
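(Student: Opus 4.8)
The plan is to deduce both parts from Theorem~\ref{BG} together with the classification of simple Harish--Chandra bimodules recalled above from \cite[6.23]{Ja}, and then to carry out a combinatorial translation into the $0/1$-labelling of blocks fixed in Section~\ref{catproducts}.

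First I would observe that, as $\mu$ is dominant, the Verma module $M(\mu)$ is projective in $\mathcal{O}_{\mu}(\mathfrak{gl}_n)$, so the functor $X\mapsto\cL(M(\mu),X)$ is exact; together with Theorem~\ref{BG} (which gives that $ {}_{\lambda}\bar{\iota}_{\mu}$ is fully faithful with essential image $\mathcal{O}_{\lambda,\mu}(\mathfrak{gl}_n)$ and that $ {}_{\lambda}\bar{\pi}_{\mu}\circ {}_{\lambda}\bar{\iota}_{\mu}\cong\operatorname{Id}$) this realises $ {}_{\lambda}\bar{\pi}_{\mu}\colon\mathcal{O}_{\lambda}(\mathfrak{gl}_n)\to {}_{\lambda}\mathcal{H}_{\mu}^1(\mathfrak{gl}_n)$ as (naturally isomorphic to) a Serre quotient functor, as in \cite{BG}. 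A Serre quotient functor sends each simple object either to a simple object or to $0$, and induces a bijection between those simple objects of $\mathcal{O}_{\lambda}(\mathfrak{gl}_n)$ it does not annihilate and the simple objects of $ {}_{\lambda}\mathcal{H}_{\mu}^1(\mathfrak{gl}_n)$. Since the simple objects of $\mathcal{O}_{\lambda}(\mathfrak{gl}_n)$ are the $L(w.\lambda)$ with $w\in\mathbb{S}_n/\mathbb{S}_{\lambda}$, this already yields part~(i). By \cite[6.23]{Ja} (recalled above), $ {}_{\lambda}\bar{\pi}_{\mu}(L(w.\lambda))=\cL(M(\mu),L(w.\lambda))$ is nonzero --- hence simple --- precisely when $w$ is the longest element of the double coset $\mathbb{S}_{\mu}w\mathbb{S}_{\lambda}$, and distinct double cosets give non-isomorphic simples; this is the first assertion of part~(ii).

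It then remains to match this parametrisation with the $0/1$-notation. Here $\mathbb{S}_{\lambda}=\mathbb{S}_k\times\mathbb{S}_{n-k}$, so cosets in $\mathbb{S}_n/\mathbb{S}_{\lambda}$ are exactly the $0/1$-tuples with $k$ ones, as in Section~\ref{catproducts}; and $\mathbb{S}_{\mu}=\mathbb{S}_{\bf d}$ acts on these tuples by permuting coordinates inside the blocks of sizes $d_1,\dots,d_r$, so a double coset is recorded faithfully by the tuple $(k_1,\dots,k_r)$ of block-wise numbers of ones, with $0\le k_i\le d_i$ and $\sum_i k_i=k$. Using that the Coxeter length of a tuple equals its number of $01$-patterns, a short computation shows that the longest element $x$ of the double coset indexed by $(k_1,\dots,k_r)$ satisfies that $x.\lambda$ is the tuple $(\underbrace{0,\dots,0}_{d_1-k_1},\underbrace{1,\dots,1}_{k_1}\mid\cdots\mid\underbrace{0,\dots,0}_{d_r-k_r},\underbrace{1,\dots,1}_{k_r})$, i.e.\ the one obtained by pushing the ones to the right within each block; unwinding the definitions, $\cL(M(\mu),L(x.\lambda))$ is exactly $L(k_1,d_1\mid\cdots\mid k_r,d_r)$, and letting $(k_1,\dots,k_r)$ vary exhausts all simples without repetition. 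The one genuinely non-formal point --- and the main thing to get right --- is this last step: keeping the $\rho$-shifted dot action, the choice of dominant chamber, and the longest-versus-shortest conventions coherent so that ``longest double coset representative'' really produces the tuple displayed in the statement; everything else is formal given \cite{BG} and \cite[6.23]{Ja}.
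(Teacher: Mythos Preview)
Your proof is correct and takes essentially the same approach as the paper, which simply says the result is ``clear from the definition of the functors and the classification of simple Harish-Chandra bimodules, see \cite{Ja}.'' You have spelled out exactly what this means---identifying ${}_{\lambda}\bar{\pi}_{\mu}$ as a Serre quotient via Theorem~\ref{BG}, invoking \cite[6.23]{Ja} for the non-vanishing criterion, and carrying out the combinatorial translation to the $0/1$-labelling---so your argument is a faithful elaboration of the paper's one-line proof.
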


\begin{proof}
This is clear from the definition of the functors and the classification of simple Harish-Chandra bimodules, see \cite{Ja}.
\end{proof}

\begin{definition}
\label{propstandard}
{\rm The {\it proper standard module} labeled by $(k_1,d_1|k_2,d_2|\cdots |k_r,d_r)$ is defined to be
\begin{eqnarray*}
&\blacktriangle\left(k_1,d_1|k_2,d_2|\cdots |k_r,d_r\right)&\\
&:=\cL(M(\mu), M(\underbrace{0, \ldots, 0,}_{d_1-k_1}
\underbrace{1, \ldots, 1,}_{k_1}\cdots
\underbrace{0, \ldots, 0,}_{d_r-k_r} \underbrace{1, \ldots, 1}_{k_r}))\in {}_{k} \mathcal{H}_{\bf {d}}^1(\mathfrak{gl}_n).&
\end{eqnarray*}
The name comes from the fact that this family of modules form the proper standard objects in a fully stratified structure (see Lemma \ref{propstrat} and \cite{BSsemi, MS2} for details.)
}
\end{definition}

\begin{definition}
\label{standardsinduced}
The standard objects are defined as parabolically induced `big projectives' in category $\cO$ for the Lie algebra $\mathfrak{gl}_{\bf d}:=\mathfrak{gl}_{d_1}\oplus\mathfrak{gl}_{d_2}\oplus\cdots\oplus\mathfrak{gl}_{d_r}$. In formulas:
\begin{eqnarray*}
{\Delta}\left(k_1,d_1|k_2,d_2|\cdots |k_r,d_r\right):=\cU(\mathfrak{gl_n})\otimes_{\cU(\mathfrak{p})}\left(P(0^{d_1-k_1}1^{k_1})\boxtimes \cdots\boxtimes P(0^{d_r-k_r}1^{k_r})\right),
\end{eqnarray*}
where the $\mathfrak{gl}_{\bf d}$-action is extended by zero to $\mathfrak{p}=\mathfrak{gl}_{\bf d}+\mathfrak{n}^+$,
see \cite[Proposition 2.9]{MS2}.
\end{definition}

Let $ A_{k,{\bf d}} $ be the endomorphism algebra of a minimal projective generator of $ \mathcal{O}_{k,{\bf d}}(\mathfrak{gl}_n) $.  This algebra naturally inherits a grading from $ A_{k,1^n} $ and so via the Bernstein-Gelfand
equivalence, we may consider the graded category of Harish-Chandra right modules $ \gmod-A_{k,{\bf d}} $.
%It follows from the constructions (see \cite[Section 3]{MS2} for more details) that the category $A_{k,{\bf d}}-\Mod$ can be identified with $\mathcal{O}_{e_1+\cdots+e_k-\rho,\;\mu}(\mathfrak{gl}_n)$, where $\mu$ is a dominant integral weight with stabilizer $\mathbb{S}_{\bf d}$.  In combination with Theorem \ref{BG} we obtain an equivalence of categories
%$$A_{k,{\bf d}}{\text -}\Mod\cong {}_{k} \mathcal{H}_{\mu}^1(\mathfrak{gl}_n)$$ under which (after forgetting the grading) the algebraically defined functors \eqref{algebraic1}, \eqref{algebraic2} turn into the Lie theoretically defined functors \eqref{Lietheoretic1}, \eqref{Lietheoretic2}. Hence $A_{k,{\bf d}}-\gmod$ is a graded version of Harish-Chandra bimodules.
Let
\begin{eqnarray*}
\hat{L}\left(k_1,d_1|k_2,d_2|\cdots |k_r,d_r\right),& \hat{\blacktriangle}\left(k_1,d_1|k_2,d_2|\cdots |k_r,d_r\right), &\hat{\Delta}\left(k_1,d_1|k_2,d_2|\cdots |k_r,d_r\right)
\end{eqnarray*}
be the standard graded lifts with head concentrated in degree zero for the first two and $ -\prod_{i=1}^rk_i(d_i-k_i) $ for the last one of the corresponding Harish-Chandra bimodules.

\begin{lemma}[{\cite[Lemma 44]{FSS1}}].
There are graded lifts of the BG-functors
\begin{eqnarray}
{}_k\hat\pi_{\bf d} \colon\quad \gmod-A_{k, 1^n} \rightarrow \gmod-A_{k, {\bf d}} \label{algebraic1}\\
{}_k\hat\iota_{\bf d} \colon\quad \gmod-A_{k, {\bf d}} \rightarrow \gmod-A_{k, 1^n},\label{algebraic2}
\end{eqnarray}
which naturally commute with the $\mathcal{U}_q(\mathfrak{sl}_2)$-action from Theorem \ref{projfunc}.
%The composition equals $\tilde{p}_{k,\bf d}$, hence is an idempotent.
\end{lemma}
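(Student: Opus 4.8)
The goal is to promote the Bernstein--Gelfand equivalence functors ${}_{\lambda}\bar\iota_{\mu}$ and ${}_{\lambda}\bar\pi_{\mu}$ of Theorem~\ref{BG} to the graded setting and to check that the graded lifts intertwine the categorified $\mathcal{U}_q(\mathfrak{sl}_2)$-action. The plan is to proceed in three stages: first construct the graded lifts, then check they are still adjoint (indeed inverse) equivalences in the graded world, and finally verify compatibility with $\hat{\mathcal{E}}$, $\hat{\mathcal{F}}$, $\hat{\mathcal{K}}^{\pm1}$.

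First I would produce the graded lift of ${}_{k}\bar\iota_{\bf d}$. Since $\mathcal{O}_{k,{\bf d}}(\mathfrak{gl}_n)$ is by definition the subcategory of projectively presented objects using the big projectives $P(x.\lambda)$ with $x$ a longest double-coset representative, it has enough projectives, and a minimal projective generator whose endomorphism ring is $A_{k,{\bf d}}$; the grading on $A_{k,{\bf d}}$ is the one induced from the Koszul grading on $A_{k,1^n}$ (the idempotent picking out the relevant summand is homogeneous, so this is automatic). The functor $\bar\iota_{\bf d}$ is given by $M\otimes_{\mathcal U(\mathfrak{gl}_n)}M(\mu)$, equivalently by projectivization it is a ``multiplication/$\Hom$ with a bimodule'' functor; the key point is that the object $\bar\iota_{\bf d}(P)$ for $P$ the projective generator of $\gmod\text{-}A_{k,{\bf d}}$ is a graded-liftable projective object of $\mathcal{O}_k$ (a sum of $P(x.\lambda)\langle\text{shifts}\rangle$), and the induced map on endomorphism rings is a graded algebra homomorphism $A_{k,{\bf d}}\to A_{k,1^n}^{\mathrm{op}}\text{-mod}$ realizing $A_{k,{\bf d}}$ as $e A_{k,1^n} e$ for a homogeneous idempotent $e$. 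This exhibits ${}_k\hat\iota_{\bf d}$ as $-\otimes_{A_{k,{\bf d}}}(eA_{k,1^n})$ and ${}_k\hat\pi_{\bf d}$ as $-\otimes_{A_{k,1^n}}(A_{k,1^n}e)$ (equivalently $\Hom_{A_{k,1^n}}(eA_{k,1^n},-)$), which are manifestly graded functors; the normalisation of the grading shift is fixed by the requirement on heads/degrees of the proper standard and standard objects recorded just before the lemma. Uniqueness of graded lifts of indecomposable objects up to shift then forces these to be the honest lifts of $\bar\iota_{\bf d},\bar\pi_{\bf d}$.

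Next, that ${}_k\hat\iota_{\bf d}$ and ${}_k\hat\pi_{\bf d}$ are inverse equivalences is automatic once one has the idempotent description $A_{k,{\bf d}}\cong eA_{k,1^n}e$ together with the ungraded statement of Theorem~\ref{BG}: the ungraded equivalence says precisely that $A_{k,1^n}e A_{k,1^n}$ acts as the whole algebra on the relevant subcategory, i.e.\ the recollement/idempotent truncation restricts to an equivalence on $\mathcal{O}_{k,{\bf d}}$, and this argument is grading-insensitive. So the graded version follows formally. Finally, for compatibility with the $\mathcal{U}_q(\mathfrak{sl}_2)$-action: the functors $\hat{\mathcal{E}},\hat{\mathcal{F}}$ are graded lifts of projective functors (tensoring with the natural module composed with a block projection), and $\hat{\mathcal{K}}^{\pm1}$ are pure grading shifts. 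The key compatibility ${}_k\bar\pi_{\bf d}\circ\theta \cong \theta'\circ{}_k\bar\pi_{\bf d}$ for the ungraded projective functors is known (it is essentially the statement that $\cL(M(\mu),-)$ commutes with tensoring by finite-dimensional modules, since the ad-finite $\Hom$ construction is compatible with $-\otimes_{\mathbb C}V$ on the second argument up to natural isomorphism --- this is standard from \cite{BG},\cite{Ja},\cite{MS2}). It then remains only to match the grading shifts on both sides, and this is pinned down by evaluating on a single object, e.g.\ a graded lift of a Verma or the big projective, using the known formulas in Theorem~\ref{projfunc}; since both functors and $\theta$ have graded lifts unique up to shift, matching on one object suffices.

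The main obstacle is the bookkeeping of grading shifts: one must check that the normalisation chosen for the graded lifts of the proper standard modules $\hat{\blacktriangle}$ and the standard modules $\hat{\Delta}$ (head in degree zero, resp.\ degree $-\prod_i k_i(d_i-k_i)$) is simultaneously compatible with the graded BG-functors \emph{and} with the commutation isomorphisms of Theorem~\ref{projfunc}, so that no stray shift $\langle c\rangle$ appears in the claimed commutativity with the $\mathcal{U}_q(\mathfrak{sl}_2)$-action. Everything else --- existence of the graded lifts, the idempotent-truncation description, the equivalence --- is formal given the ungraded Theorem~\ref{BG}, the Koszulity of $A_{k,1^n}$ (\cite{BGS}) and the uniqueness of graded lifts up to shift; the reference \cite[Lemma 44]{FSS1} presumably carries out exactly this shift computation, so I would reproduce its normalisation conventions verbatim.
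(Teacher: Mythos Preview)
The paper does not supply its own proof of this lemma: it is stated with a bare citation to \cite[Lemma 44]{FSS1} and no argument is given in the text. So there is no in-paper proof to compare your proposal against; the authors defer entirely to the reference.

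That said, your sketch is the natural argument and is essentially what one expects the cited proof to contain. The idempotent-truncation description $A_{k,{\bf d}}\cong eA_{k,1^n}e$ for a homogeneous idempotent $e$ immediately gives graded lifts ${}_k\hat\pi_{\bf d}=\Hom_{A_{k,1^n}}(eA_{k,1^n},-)$ and ${}_k\hat\iota_{\bf d}=-\otimes_{A_{k,{\bf d}}}eA_{k,1^n}$, and compatibility with the projective functors $\hat{\mathcal E},\hat{\mathcal F}$ follows from the standard fact that $\cL(M(\mu),-)$ commutes with tensoring by finite-dimensional modules, lifted to the graded setting by uniqueness of lifts up to shift.

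One imprecision: you write that ${}_k\hat\iota_{\bf d}$ and ${}_k\hat\pi_{\bf d}$ are ``inverse equivalences'', but as functors between $\gmod\text{-}A_{k,1^n}$ and $\gmod\text{-}A_{k,{\bf d}}$ they are not: $\hat\pi$ is a Serre quotient functor and $\hat\iota$ is only right exact (the paper emphasizes this just after the lemma, and the composite $\hat\iota\hat\pi$ is the categorified Jones--Wenzl projector, not the identity). What Theorem~\ref{BG} gives is that they restrict to inverse equivalences on the projectively presented subcategory $\mathcal{O}_{k,{\bf d}}$. You do acknowledge this restriction later in the same paragraph, so this is a matter of phrasing rather than a genuine gap; but since the lemma itself does not assert any equivalence, that paragraph is in any case not needed for the statement being proved.
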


The $\mathcal{U}_q$-action from Theorem ~\ref{projfunc} by exact functors restricts to the subcategories $ \mathcal{O}_{k, {\bf d}}(\mathfrak{gl}_n) $, hence defines also an action on $ {}_{k} \mathcal{H}_{{\bf d}}^1(\mathfrak{gl}_n)$. It induces the following isomorphism.

\begin{theorem}[Arbitary tensor products and its integral structure]\hfill\\
\label{cattensor}
There is an isomorphism of  $\mathcal{U}_q(\mathfrak{sl}_2)$-modules
\begin{eqnarray}
\Phi_{\bf d}:\; \displaystyle \mC((q)) \otimes_{\mathbb{Z}[q,q^{-1}]} \left[\bigoplus_{k=0}^{n} \gmod-A_{k, {\bf
d}} \right] &\cong& V_{d_1} \otimes \cdots \otimes V_{d_r} \nonumber\\
\left[ \hat{L} \left(k_1,d_1| k_2,d_2 |\cdots | k_r,d_r \right) \right]
%\langle -\prod_{i=1}^rk_i(d_i-k_i)\rangle
&\longmapsto&v^{k_1}\heartsuit v^{k_2}\heartsuit \cdots \heartsuit v^{k_r},\label{basissimples}
\end{eqnarray}
This isomorphism sends proper standard modules to the dual standard basis:
\begin{eqnarray}
\label{basisstandards}
[\hat{\blacktriangle}\left(k_1,d_1|k_2,d_2|\cdots |k_r,d_r\right)]
%\langle-\prod_{i=1}^rk_i(d_i-k_i)\rangle
&\longmapsto&v^{k_1}\otimes v^{k_2}\otimes \cdots \otimes v^{k_r}.
\end{eqnarray}
\end{theorem}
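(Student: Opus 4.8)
\textbf{Proof plan for Theorem \ref{cattensor}.}
The plan is to reduce the statement to the already-established isomorphism $\Phi_n$ of Proposition \ref{Grothgraded} together with the graded BG-functors from the preceding lemma. First I would observe that the functors ${}_k\hat\iota_{\bf d}$ and ${}_k\hat\pi_{\bf d}$ are exact (or can be derived) and hence induce maps on Grothendieck spaces; since ${}_{\lambda}\bar\iota_\mu$ and ${}_{\lambda}\bar\pi_\mu$ are inverse equivalences between $\mathcal{O}_{k,{\bf d}}(\mathfrak{gl}_n)$ and ${}_k\mathcal{H}_{\bf d}^1(\mathfrak{gl}_n)$ by Theorem \ref{BG}, the induced map $[{}_k\hat\iota_{\bf d}]$ realizes $\mC((q))\otimes[\gmod\text{-}A_{k,{\bf d}}]$ as a $\mathcal{U}_q$-stable $\mC((q))$-submodule of $\mC((q))\otimes[\gmod\text{-}A_{k,1^n}]$, which under $\Phi_n$ is a $\mathcal{U}_q$-submodule of $V_1^{\otimes n}$. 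I would then identify this submodule: the projective objects $P(x.\lambda)$ appearing in the projective presentations defining $\mathcal{O}_{k,{\bf d}}$ are indexed by longest double-coset representatives, and under $\Phi_n$ the corresponding graded lifts map into the image of the inclusion $\iota_{d_1}\otimes\cdots\otimes\iota_{d_r}$ of $V_{d_1}\otimes\cdots\otimes V_{d_r}$ into $V_1^{\otimes n}$. This is the categorification, on the decategorified level, of the fact from \cite[Theorem 1.11]{FK} alluded to after Theorem \ref{BG}.

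Next I would pin down the isomorphism on the distinguished bases. For \eqref{basisstandards}: the proper standard module $\hat{\blacktriangle}(k_1,d_1|\cdots|k_r,d_r)=\cL(M(\mu),M({\bf a}))$ with ${\bf a}$ the indicated $0/1$-sequence corresponds under ${}_\lambda\bar\iota_\mu$ to a module in $\mathcal{O}_{k,{\bf d}}$ whose class, pushed into $[\gmod\text{-}A_{k,1^n}]$ via $[{}_k\hat\iota_{\bf d}]$, I claim equals (up to the normalizing grading shift chosen in the statement) a suitable combination of Verma classes $[\hat M({\bf b})]$ whose $\Phi_n$-images sum to $\iota_{d_1}(v_{k_1})\otimes\cdots\otimes\iota_{d_r}(v_{k_r})$; comparing with \eqref{in} and the definition of $\iota_n$, the grading shift $-\prod_i k_i(d_i-k_i)$ is exactly what accounts for the power-of-$q$ normalization (note $b({\bf a})=|{\bf a}|(n-|{\bf a}|)-l({\bf a})$, and the total exponent $\sum_i k_i(d_i-k_i)$ arises from summing the $|{\bf a}|(n-|{\bf a}|)$-type terms across the $r$ blocks). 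Since the elements $v^{k_1}\otimes\cdots\otimes v^{k_r}$ form a basis of $V_{d_1}\otimes\cdots\otimes V_{d_r}$ and the proper standard modules form a basis of the Grothendieck space (they are the standard objects of a fully stratified structure, cf.\ Lemma \ref{propstrat} and \cite{MS2}), \eqref{basisstandards} determines $\Phi_{\bf d}$ uniquely as a $\mC((q))$-module isomorphism. That it is $\mathcal{U}_q$-equivariant follows because the $\mathcal{U}_q$-action is inherited by restriction of the functors of Theorem \ref{projfunc} to the subcategories $\mathcal{O}_{k,{\bf d}}$, and these commute with ${}_k\hat\iota_{\bf d}$ by the lemma, so equivariance is transported from $\Phi_n$. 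Finally, for \eqref{basissimples}: express $[\hat L(k_1,d_1|\cdots|k_r,d_r)]$ in the proper standard basis via the (fully stratified) decomposition multiplicities — equivalently, by Proposition \ref{simples}(i) and the compatibility of ${}_\lambda\bar\pi_\mu$ with simples, $\Phi_n$ sends $[\hat L({\bf a})]$ to a dual canonical basis element $v^{a_1}\heartsuit\cdots\heartsuit v^{a_n}$, and pushing through ${}_k\hat\pi_{\bf d}$ (which kills exactly those simples not indexed by longest double-coset representatives) produces $v^{k_1}\heartsuit\cdots\heartsuit v^{k_r}$, matching the known behavior of the projection $\pi_{d_1}\otimes\cdots\otimes\pi_{d_r}$ on dual canonical bases.

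The main obstacle I expect is the bookkeeping of grading shifts: verifying that the normalization $-\prod_{i=1}^r k_i(d_i-k_i)$ built into $\hat\Delta$ (and the head-in-degree-zero convention for $\hat{\blacktriangle}$ and $\hat L$) is precisely compatible with the power-of-$q$ conventions in \eqref{pn}, \eqref{in}, and \eqref{scalarprod}, so that \eqref{basissimples} and \eqref{basisstandards} hold on the nose rather than up to an overall scalar. This requires carefully tracking how the graded BG-functor ${}_k\hat\iota_{\bf d}$ acts on graded Verma lifts and comparing with the explicit formula for $\iota_n$; a clean way to do this is to check it blockwise (i.e.\ for a single pair $(k_i,d_i)$, reducing to the case $r=1$ handled in \cite{FKS, FSS1}) and then use multiplicativity of both the functors and the maps $\iota_n$ under the relevant inductions/tensor products. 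Everything else is a formal consequence of Theorem \ref{BG}, Proposition \ref{Grothgraded}, Theorem \ref{projfunc}, and the fully stratified structure.
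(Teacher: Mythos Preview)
The paper does not actually prove this theorem in the text: its entire proof is the single line ``See \cite[Theorem 45]{FSS1}.'' So there is no in-paper argument to compare against; your outline is a plausible reconstruction of how such an argument would proceed, and it is consistent with the surrounding infrastructure (Proposition~\ref{Grothgraded}, Theorem~\ref{BG}, Proposition~\ref{simples}, Definition~\ref{propstandard}, and the fully stratified structure alluded to in Lemma~\ref{propstrat}).

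One point deserves care. You propose to use $[{}_k\hat\iota_{\bf d}]$ to embed $\mC((q))\otimes[\gmod\text{-}A_{k,{\bf d}}]$ into $\mC((q))\otimes[\gmod\text{-}A_{k,1^n}]$ and then transport $\Phi_n$. But, as the paper remarks immediately \emph{after} this theorem, ${}_k\hat\iota_{\bf d}$ is only right exact, so it does not induce a well-defined map on Grothendieck groups of the abelian categories; one must pass to $\mathbb{L}({}_k\hat\iota_{\bf d})$ on $D^\triangledown$, which is precisely the content of Theorem~\ref{catJW} (itself deferred to \cite[Theorem 46]{FSS1}). If you want a self-contained argument in the order the paper presents things, it is cleaner to \emph{define} $\Phi_{\bf d}$ directly by formula~\eqref{basisstandards} on the proper standard classes (which do form a $\mZ[q,q^{-1}]$-basis by the stratified structure) and then verify $\mathcal{U}_q$-equivariance by computing $\hat{\mathcal{E}}$, $\hat{\mathcal{F}}$ on proper standards using Definition~\ref{standardsinduced} and the known behaviour on Verma modules. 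Your diagnosis that the grading-shift bookkeeping is the only genuinely technical step is correct; the blockwise reduction to $r=1$ you suggest is exactly the right way to handle it.
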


\begin{proof}
See \cite[Theorem 45]{FSS1}.
\end{proof}

While the functor $ {}_{k}{\hat{\pi}}_{\bf d} $ is exact, the functor $ {}_{k}{\hat{\iota}}_{\bf d} $ is only right exact. Let
\begin{eqnarray*}
\mathbb{L}({}_{k}{\hat{\iota}}_{\bf d}):\;D^{<}(\gmod-A_{k, {\bf d}}) \rightarrow D^<(\gmod-A_{k, 1^n})
\end{eqnarray*}
be the left derived functor of ${}_{k}{\hat{\iota}}_{\bf d}$, where we use the symbol $ D^{<}(?)$ to denote the full subcategory of
the derived category $D(?)$ consisting of complexes bounded to the right. In particular, for $?=\mathcal{O}_{k, {\bf d}}(\mathfrak{gl}_n)$, $\gmod-A_{k,1^n}$, ${}_{k}\mathcal{H}_{{\bf d}}^1(\mathfrak{gl}_n)$, or $\gmod-A_{k,{\bf d}}$, any complex in $ D^{<}(?)$ with cohomology only in finitely many places is quasi-isomorphic to a complex of projectives from ?. Note also that $\mathbb{L}({}_{k}{\hat{\pi}}_{\bf d})={}_{k}{\hat{\pi}}_{\bf d}$ and the functors from the $\mathcal{U}_q(\mathfrak{sl}_2)$-action extend uniquely to the corresponding  $D^{<}(?)$. In case $?$ is graded,  let  ${D}^{\triangledown}(?)$ denote the full subcategory of $D^{<}(?)$ consisting of all complexes $K^\bullet$ in  $D^{<}(?)$ such that the graded Euler characteristic $\sum_{i\in \mathbb{Z}}(-1)^{i}[K^i]$ is a well-defined element in  $\mC((q)) \otimes_{\mathbb{Z}[q,q^{-1}]} \left[?\right]$. We call such complexes {\it Euler finite}. In such a case, the Euler characteristic gives a well-defined element in the completed Grothendieck group $\mC((q)) \otimes_{\mathbb{Z}[q,q^{-1}]} \left[?\right]$ of the abelian category which we identify with the Grothendieck group of the triangulated category  ${D}^{\triangledown}(?)$ as in \cite{AchS}.

\begin{theorem}[Categorification of the Jones-Wenzl projector]\hfill
\label{catJW}
\begin{enumerate}
\item The composition ${\hat{p}}_{k,\bf d}:=(\mathbb{L}({}_k \hat{\iota}_{\bf d})){}_k\pi_{\bf d}$ is an idempotent. More precisely ${}_k\pi_{\bf d}\mathbb{L}({}_k \hat{\iota}_{\bf d})$ is isomorphic to the identity functor. 
\item The functors $\mathbb{L}({}_k \hat{\iota}_{\bf d})$ and ${}_k\pi_{\bf d}$ map Euler finite complexes to Euler finite complexes. 
\item The induced $\mZ((q))$-linear morphism between completed Grothendieck groups 
\begin{align*} 
\displaystyle  [\bigoplus_{k=0}^n \mathbb{L}({}_{k} {\hat{\iota}}_{\bf d})] \colon  \quad
    &
    %\mC((q)) \otimes_{\mathbb{Z}[q,q^{-1}]}
    [D^\triangledown(\bigoplus_{k=0}^n \gmod-A_{k,{\bf d}})] \rightarrow  
    %\mC((q)) \otimes_{\mathbb{Z}[q,q^{-1}]}
    [D^\triangledown(\bigoplus_{k=0}^n \gmod-A_{k,1^n})] 
\end{align*}    
    is equal to the tensor
    product of the inclusion maps $\iota_{d_1} \otimes \cdots \otimes \iota_{d_r}$.
\item The induced $\mZ((q))$-linear  morphism between completed Grothendieck groups 
\begin{align*} \displaystyle [\bigoplus_{k=0}^n \mathbb{L}({}_{k} {\hat{\pi}}_{\bf d})] \colon\quad 
&
% \mC((q)) \otimes_{\mathbb{Z}[q,q^{-1}]}
[D^\triangledown(\bigoplus_{k=0}^n \gmod-A_{k,1^n})]
    \rightarrow 
   % \mC((q)) \otimes_{\mathbb{Z}[q,q^{-1}]}
   [D^\triangledown(\bigoplus_{k=0}^n \gmod-A_{k,{\bf d}})]
\end{align*}
    is equal to the tensor product of the projection maps $
    \pi_{d_1} \otimes \cdots \otimes \pi_{d_r}.$
\end{enumerate}
\end{theorem}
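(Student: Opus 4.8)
The plan is to treat the four assertions in order; the real content sits in (3).

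\emph{Part (1).} Since $ {}_k\hat\pi_{\bf d} $ is exact it equals its own left derived functor, so I only need to understand $ \mathbb{L}({}_k\hat\iota_{\bf d}) $ on complexes of projectives. I would realise the equivalence of Theorem~\ref{BG} as an idempotent truncation: let $ e\in A_{k,1^n} $ be the idempotent picking out the indecomposable projectives $ P(x\cdot\lambda) $ with $ x $ longest in its double coset in $ \mathbb{S}_\mu\backslash\mathbb{S}_n/\mathbb{S}_\lambda $, so that $ A_{k,{\bf d}}\cong eA_{k,1^n}e $, the functor $ {}_k\hat\iota_{\bf d} $ is identified with $ (-)\otimes_{eA_{k,1^n}e}eA_{k,1^n} $ and $ {}_k\hat\pi_{\bf d} $ with $ (-)e $. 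For a bounded-above complex of projectives $ P^\bullet $ over $ A_{k,{\bf d}} $ one then has $ \mathbb{L}({}_k\hat\iota_{\bf d})(P^\bullet)={}_k\hat\iota_{\bf d}(P^\bullet) $ and $ {}_k\hat\pi_{\bf d}\,{}_k\hat\iota_{\bf d}(P^\bullet)\cong P^\bullet\otimes_{eA_{k,1^n}e}eA_{k,1^n}e=P^\bullet $; as every object of $ D^<(\gmod-A_{k,{\bf d}}) $ is quasi-isomorphic to such a complex, $ {}_k\hat\pi_{\bf d}\,\mathbb{L}({}_k\hat\iota_{\bf d})\cong\Id $. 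Idempotency of $ \hat p_{k,{\bf d}} $ is then formal: $ \hat p_{k,{\bf d}}^2=\mathbb{L}({}_k\hat\iota_{\bf d})\bigl({}_k\hat\pi_{\bf d}\,\mathbb{L}({}_k\hat\iota_{\bf d})\bigr){}_k\hat\pi_{\bf d}\cong\mathbb{L}({}_k\hat\iota_{\bf d})\,{}_k\hat\pi_{\bf d}=\hat p_{k,{\bf d}} $.

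\emph{Reduction for (2)--(4), and part (4).} Both completed Grothendieck groups are finite-dimensional $ \mC((q)) $-vector spaces; by Proposition~\ref{Grothgraded} the Verma classes $ [\hat M({\bf a})] $ are a basis of $ \mC((q))\otimes[\bigoplus_k\gmod-A_{k,1^n}] $, and by Theorem~\ref{cattensor} (equation~\eqref{basisstandards}) the proper standard classes $ [\hat{\blacktriangle}(k_1,d_1|\cdots|k_r,d_r)] $ are a basis of $ \mC((q))\otimes[\bigoplus_k\gmod-A_{k,{\bf d}}] $. So it is enough to evaluate both functors on these generators and to check Euler finiteness there. For $ {}_k\hat\pi_{\bf d} $ this is easy: being exact it induces an honest $ \mZ[q,q^{-1}] $-linear map on the Grothendieck groups of the abelian categories, which is automatically continuous in the $ q $-adic topology and hence preserves Euler finiteness; and $ {}_k\hat\pi_{\bf d}=\cL(M(\mu),-) $ sends a Verma lift $ \hat M({\bf a}) $ to a grading shift of the proper standard labelled by the block-contents $ |{\bf a}^{(j)}| $ (or to zero), the shift being $ -\sum_j l({\bf a}^{(j)}) $, the number of within-block inversions of $ {\bf a} $ (cf.\ \cite{MS2}, \cite[\S6]{FSS1}). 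Comparing with \eqref{pn} and \eqref{basisstandards} gives part~(4).

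\emph{Part (3), and the remaining half of (2).} Fix a proper standard $ \hat{\blacktriangle}:=\hat{\blacktriangle}(k_1,d_1|\cdots|k_r,d_r) $ and a projective resolution $ P^\bullet\to\hat{\blacktriangle} $ over $ A_{k,{\bf d}} $. Via the equivalence of Theorem~\ref{BG} the functor $ {}_k\hat\iota_{\bf d} $ sends indecomposable projectives of $ \gmod-A_{k,{\bf d}} $ to the projectives $ \hat P(x\cdot\lambda) $, which are projective also in $ \gmod-A_{k,1^n} $; hence $ \mathbb{L}({}_k\hat\iota_{\bf d})(\hat{\blacktriangle}) $ is represented by the (in general unbounded) complex of projectives $ {}_k\hat\iota_{\bf d}(P^\bullet) $. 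The core computation is to show that the alternating sum of the graded classes of its terms converges in $ \mC((q))\otimes[\gmod-A_{k,1^n}] $ — this is exactly where the factor $ \frac{1}{{d_j\brack k_j}} $, read as an integral formal Laurent series via Remark~\ref{powerseries}, enters — and that its value is $ \iota_{d_1}(v^{k_1})\otimes\cdots\otimes\iota_{d_r}(v^{k_r}) $, by comparison with formula~\eqref{in} and Example~\ref{JWex}; Euler finiteness for an arbitrary complex then follows from the hypercohomology spectral sequence of $ {}_k\hat\iota_{\bf d} $ together with continuity of the induced map, so (2) and (3) follow. Once Euler finiteness is in hand the identification in~(3) can also be obtained without the explicit match: by part~(1) the functors $ \mathbb{L}({}_k\hat\iota_{\bf d}) $ inherit the commutation of $ {}_k\hat\iota_{\bf d} $ with the $ \mathcal{U}_q(\mathfrak{sl}_2) $-action, so $ [\bigoplus_k\mathbb{L}({}_k\hat\iota_{\bf d})] $ is a $ \mathcal{U}_q(\mathfrak{sl}_2) $-module map $ V_{d_1}\otimes\cdots\otimes V_{d_r}\to V_1^{\otimes n} $ which by part~(1) splits $ \pi_{d_1}\otimes\cdots\otimes\pi_{d_r} $; since the categorifications of the cup--cap morphisms $ C_i $ lying inside a single $ {\bf d} $-block annihilate the image of $ \mathbb{L}({}_k\hat\iota_{\bf d}) $ (its objects lie in $ \mathcal{O}_{k,{\bf d}} $, built from projectives $ P(x\cdot\lambda) $ with $ x $ longest in its double coset, hence killed by translation onto the corresponding walls, cf.\ \cite{StrDuke}), the idempotent $ [\hat p_{k,{\bf d}}]=[\mathbb{L}({}_k\hat\iota_{\bf d})]\circ[{}_k\hat\pi_{\bf d}] $ has image inside $ \bigcap_i\ker C_i=\IM(\iota_{d_1}\otimes\cdots\otimes\iota_{d_r}) $ (by Proposition~\ref{charJW}, the intersection over $ i $ in a single block) and kernel $ \ker(\pi_{d_1}\otimes\cdots\otimes\pi_{d_r}) $; since $ V_1^{\otimes n}=\IM(\iota_{d_1}\otimes\cdots)\oplus\ker(\pi_{d_1}\otimes\cdots) $ this forces $ [\hat p_{k,{\bf d}}]=p_{d_1}\otimes\cdots\otimes p_{d_r} $, and then $ [\mathbb{L}({}_k\hat\iota_{\bf d})] $ and $ \iota_{d_1}\otimes\cdots\otimes\iota_{d_r} $ are two sections of $ \pi_{d_1}\otimes\cdots\otimes\pi_{d_r} $ with the same image, hence equal.

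\emph{Main obstacle.} The hard part will be the boundedness and Euler-finiteness analysis of $ \mathbb{L}({}_k\hat\iota_{\bf d}) $ on proper standards: one must control the projective resolution of $ \hat{\blacktriangle} $ — necessarily infinite, since $ \mathcal{O}_{k,{\bf d}} $ is only properly, not standardly, stratified — precisely enough to recognise the alternating sum of graded dimensions as the power series $ \frac{1}{{d_j\brack k_j}} $ of Remark~\ref{powerseries}. Verifying that the within-block cap functors annihilate $ \mathcal{O}_{k,{\bf d}} $ also needs some care, but is essentially contained in \cite{StrDuke}, \cite{FSS1}.
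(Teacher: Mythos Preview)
The paper does not give a proof here; it simply cites \cite[Theorem~46]{FSS1}. Your outline is structurally sound and likely close to what that reference does: the idempotent-truncation realisation $A_{k,{\bf d}}\cong eA_{k,1^n}e$ for part~(1) is exactly right, the evaluation of the exact functor ${}_k\hat\pi_{\bf d}$ on Verma lifts for part~(4) is the natural argument, and your alternative route to~(3) via the uniqueness characterisation of the Jones--Wenzl projector (Proposition~\ref{charJW}) is both correct and efficient once~(2) and~(4) are in hand.

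The genuine gap is the one you yourself flag: Euler finiteness of $\mathbb{L}({}_k\hat\iota_{\bf d})$ on proper standards is stated as the ``core computation'' and ``main obstacle'' but never resolved, and without it your uniqueness argument cannot even start, since there is no well-defined map on completed Grothendieck groups to compare with $\iota_{d_1}\otimes\cdots\otimes\iota_{d_r}$. The missing observation is elementary. The algebra $A_{k,{\bf d}}=eA_{k,1^n}e$ is non-negatively graded with semisimple degree-zero part (inherited from the Koszul algebra $A_{k,1^n}$), so in a \emph{minimal} projective resolution $P^\bullet\to\hat{\blacktriangle}$ each syzygy lives in strictly higher internal degree than the previous one, and $P^{-i}$ is generated in internal degrees $\geq i$. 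Since the indecomposable projectives are finite-dimensional there is a uniform $N$ with $P^{-i}$ concentrated in degrees $[i,i+N]$, and ${}_k\hat\iota_{\bf d}$ preserves this (it sends projectives to projectives with no degree shift). Hence for each fixed power of $q$ only finitely many homological degrees contribute to $\sum_i(-1)^i[{}_k\hat\iota_{\bf d}P^{-i}]$, and the sum lies in $\mathbb{Z}[[q]]\otimes[\gmod-A_{k,1^n}]$. This closes~(2), and then your uniqueness argument finishes~(3); the explicit identification of the power-series coefficient with $\prod_j{d_j\brack k_j}^{-1}$ that you describe as the hard part is not actually needed.
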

\begin{proof}
See \cite[Theorem 46]{FSS1}.
\end{proof}

\begin{remark}{\rm
\label{stillneeded}
Theorem~\ref{catJW} provides a categorification of the Jones-Wenzl projector. Later on we will introduce cup and cap functors.  By \cite[Theorem 70]{FSS1} the obvious categorified version of Proposition~ \ref{charJW} holds. 
See \cite[Theorem 70]{FSS1} for the properties needed to characterise this functor uniquely.}
%These properties moreover uniquely characterise the categorification of the Jones-Wenzl projector by \cite[Theorem 70]{FSS1}. }
\end{remark}

\begin{ex}[Infinite complexes]
{\rm The complexity of the above functors is already transparent in Example \ref{JWex}: namely $\iota_2\circ\pi_2(v_0\otimes v_1)=\iota_2(q^{-1}[2]^{-1}v_1)=[2]^{-1}(v_1\otimes v_0+q^{-1}v_0\otimes v_1)=\frac{1}{1+q^2}(qv_1\otimes v_0+v_0\otimes v_1)$ rewritten using formula \eqref{quantum2} gets categorified by the infinite resolution
$$\cdots\stackrel{f} {\longrightarrow}\hat{P}(01)\langle4\rangle\stackrel{f}\longrightarrow
\hat{P}(01)\langle 2\rangle\stackrel{f}\longrightarrow\hat{P}(01),$$
where $P(01)$ fits into a short exact sequence of the form $\hat{M}(10)\langle 1\rangle\rightarrow\hat{P}(01)\rightarrow \hat{M}(01)$
and $f$ is the unique up to a scalar degree $2$ element in $\End_{A_{1,1^2}}(\hat{P}(01))\cong\mC[x]/[x^2]$. Note that this is a complex which has homology in all degrees!
}
\end{ex}

The first part of Theorem~\ref{catJW} may be refined as follows, (see Figure \ref{projeat} for an illustration).
\begin{theorem} \label{JWgenidempotent}
Let ${\bf d}$ be a composition of $n$ and ${\bf e}$ a refinement. Then $${\hat{p}}_{k,\bf e}{\hat{p}}_{k,\bf d}\cong{\hat{p}}_{k,\bf d}\cong{\hat{p}}_{k,\bf d}{\hat{p}}_{k,\bf e}.$$
Moreover, for any refinements ${\bf e_i}$ of ${\hat{p}}_{k,\bf d}$ we have ${\hat{p}}_{k,\bf d}\cong {\hat{p}}_{k,\bf e_1}{\hat{p}}_{k,\bf e_2}\cdots{\hat{p}}_{k,\bf e_r}$
as long as each part of ${\bf d}$ appears as part in at least one ${\bf e_i}$.
\end{theorem}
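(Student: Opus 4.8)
The plan is to run the whole argument inside the idempotent–truncation picture underlying Theorem~\ref{catJW}. Fix the idempotent $e_{\bf d}\in A_{k,1^n}$ cutting out the admissible indecomposable projectives $P(x.\la)$ ($x$ a longest representative in $\mathbb S_{\bf d}\backslash\mathbb S_n/\mathbb S_\la$), so that $A_{k,\bf d}=e_{\bf d}A_{k,1^n}e_{\bf d}$, the functor ${}_k\hat\pi_{\bf d}$ is the exact functor $e_{\bf d}(-)$, $\mathbb L({}_k\hat\iota_{\bf d})=(-)\otimes^{\mathbb L}_{A_{k,\bf d}}e_{\bf d}A_{k,1^n}$, and ${}_k\hat\pi_{\bf d}\circ\mathbb L({}_k\hat\iota_{\bf d})\cong\Id$ by Theorem~\ref{catJW}(1). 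The first observation is that when $\bf e$ refines $\bf d$ we have $\mathbb S_{\bf e}\subseteq\mathbb S_{\bf d}$, so each $\mathbb S_{\bf d}$-$\mathbb S_\la$ double coset is a union of $\mathbb S_{\bf e}$-$\mathbb S_\la$ double cosets; since the longest element of a double coset is Bruhat-maximal in every sub-double-coset, every admissible representative for $\bf d$ is admissible for $\bf e$. Hence $e_{\bf d}$ and $e_{\bf e}$ may be taken to be sums from one fixed family of orthogonal primitive idempotents with $e_{\bf d}\le e_{\bf e}$; in particular $e_{\bf d}$ is an idempotent of $A_{k,\bf e}$ and $e_{\bf d}A_{k,\bf e}e_{\bf d}=A_{k,\bf d}$.

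Next I would prove transitivity: for $\bf e$ refining $\bf d$, if $\hat\pi^{\bf e}_{\bf d},\mathbb L(\hat\iota^{\bf e}_{\bf d})$ denote the analogous functors for the idempotent $e_{\bf d}\in A_{k,\bf e}$, then ${}_k\hat\pi_{\bf d}\cong\hat\pi^{\bf e}_{\bf d}\circ{}_k\hat\pi_{\bf e}$ and $\mathbb L({}_k\hat\iota_{\bf d})\cong\mathbb L({}_k\hat\iota_{\bf e})\circ\mathbb L(\hat\iota^{\bf e}_{\bf d})$. The first is immediate since both sides are $(-)e_{\bf d}$ and $e_{\bf e}e_{\bf d}=e_{\bf d}$; for the second the only point is that $\mathbb L({}_k\hat\iota_{\bf e})$ is underived on the projective $A_{k,\bf e}$-module $e_{\bf d}A_{k,\bf e}$ and sends it to $e_{\bf d}A_{k,\bf e}\otimes_{A_{k,\bf e}}e_{\bf e}A_{k,1^n}=e_{\bf d}A_{k,1^n}$, again by $e_{\bf d}e_{\bf e}=e_{\bf d}$; this is the analogue of Theorem~\ref{catJW}(1) for the pair $(\bf d,\bf e)$ (Lie-theoretically, the composition of right translation functors through successively deeper walls). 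Writing $\iota,\pi$ for the derived functors, the first displayed equation then follows by the formal computations
\begin{align*}
\hat p_{k,\bf d}\hat p_{k,\bf e}&=\iota_{\bf d}\pi_{\bf d}\iota_{\bf e}\pi_{\bf e}\cong\iota_{\bf d}\,\hat\pi^{\bf e}_{\bf d}\,(\pi_{\bf e}\iota_{\bf e})\,\pi_{\bf e}\cong\iota_{\bf d}\,\hat\pi^{\bf e}_{\bf d}\,\pi_{\bf e}=\iota_{\bf d}\pi_{\bf d}=\hat p_{k,\bf d},\\
\hat p_{k,\bf e}\hat p_{k,\bf d}&=\iota_{\bf e}\pi_{\bf e}\iota_{\bf d}\pi_{\bf d}\cong\iota_{\bf e}\,(\pi_{\bf e}\iota_{\bf e})\,\hat\iota^{\bf e}_{\bf d}\,\pi_{\bf d}\cong\iota_{\bf e}\,\hat\iota^{\bf e}_{\bf d}\,\pi_{\bf d}=\iota_{\bf d}\pi_{\bf d}=\hat p_{k,\bf d},
\end{align*}
using transitivity and $\pi_{\bf e}\iota_{\bf e}\cong\Id$.

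For the second displayed equation, the first part shows $\hat p_{k,\bf d}$ absorbs every $\hat p_{k,{\bf e}_i}$ on either side, so $\hat p_{k,\bf d}\circ Q\cong\hat p_{k,\bf d}\cong Q\circ\hat p_{k,\bf d}$ for $Q:=\hat p_{k,{\bf e}_1}\cdots\hat p_{k,{\bf e}_r}$; what must be added is that $Q$ is not a strictly larger idempotent, and this is where the covering hypothesis enters. Passing to a common refinement $\bf f=\bigwedge_i{\bf e}_i$, transitivity rewrites $\hat p_{k,{\bf e}_i}\cong\mathbb L({}_k\hat\iota_{\bf f})\,\hat p^{\bf f}_{{\bf e}_i}\,{}_k\hat\pi_{\bf f}$ and $\hat p_{k,\bf d}\cong\mathbb L({}_k\hat\iota_{\bf f})\,\hat p^{\bf f}_{\bf d}\,{}_k\hat\pi_{\bf f}$, with $\hat p^{\bf f}_{\bf e}$ the analogous idempotent endofunctor of $D^\triangledown(\gmod-A_{k,\bf f})$; using ${}_k\hat\pi_{\bf f}\circ\mathbb L({}_k\hat\iota_{\bf f})\cong\Id$ this reduces the claim to $\hat p^{\bf f}_{{\bf e}_1}\cdots\hat p^{\bf f}_{{\bf e}_r}\cong\hat p^{\bf f}_{\bf d}$ inside the single category $D^\triangledown(\gmod-A_{k,\bf f})$. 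Now a refinement $\bf e$ of $\bf d=(d_1,\dots,d_t)$ is a tuple $({\bf e}^{(1)},\dots,{\bf e}^{(t)})$ with ${\bf e}^{(\ell)}$ a composition of $d_\ell$, and by the constructions of \cite{FSS1} (which model $V_{\bf d}=V_{d_1}\otimes\cdots\otimes V_{d_r}$) the functor $\hat p^{\bf f}_{\bf e}$ respects this slot structure, decategorifying to $\bigotimes_{\ell=1}^t(\text{Jones--Wenzl pattern of }{\bf e}^{(\ell)})$. In the $\ell$-th slot the hypothesis provides an ${\bf e}_m$ whose pattern is the full projector $p_{d_\ell}$, and $p_{d_\ell}$ absorbs any tensor product of smaller Jones--Wenzl projectors on either side (Proposition~\ref{charJW}); so the identity to be proved is, slotwise, a finite sequence of absorptions of the shape of the first displayed equation, collapsing each slot to $p_{d_\ell}$ and hence $Q$ to $\hat p_{k,\bf d}$ (see Figure~\ref{projeat}; cf.\ Remark~\ref{stillneeded}), with the trivial base case all ${\bf e}_i=\bf d$.

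The hard part is this last step: turning the transparent, purely combinatorial slotwise absorption of Jones--Wenzl projectors into a genuinely functorial statement. The blocks $A_{k,\bf e}$ do \emph{not} literally tensor-factor along $V_{d_1}\otimes\cdots\otimes V_{d_r}$, so one must extract from \cite{FSS1} the precise slotwise behaviour of the functors $\hat p_{k,\bf e}$ — equivalently, establish the compatibility (``permutability'') of the colocalisation functors $\hat p_{k,{\bf e}_i}$ that is forced, and only forced, by the hypothesis that every part of $\bf d$ stays unsplit in some ${\bf e}_i$. That this hypothesis cannot be weakened to $\bigcap_iS_{{\bf e}_i}=S_{\bf d}$ is already visible on Grothendieck groups: for $\bf d=(3)$ one has $\hat p_{k,(1,2)}\hat p_{k,(2,1)}\not\cong\hat p_{k,(3)}$ since $(\Id\otimes p_2)(p_2\otimes\Id)\ne p_3$.
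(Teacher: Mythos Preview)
For the first isomorphism $\hat p_{k,\bf e}\hat p_{k,\bf d}\cong\hat p_{k,\bf d}\cong\hat p_{k,\bf d}\hat p_{k,\bf e}$ your argument is correct and is exactly the paper's, rephrased in the idempotent--truncation language. The paper observes that $G:=\hat\pi_{k,\bf e}\,\mathbb L\hat\iota_{k,\bf d}$ is the identity on the additive category of projectives of $\gmod\text{-}A_{k,\bf d}$ (this is your $e_{\bf d}\le e_{\bf e}$, i.e.\ that longest $\mathbb S_{\bf d}$-representatives are longest $\mathbb S_{\bf e}$-representatives), deduces $\mathbb L\hat\iota_{k,\bf e}\,G\cong\mathbb L\hat\iota_{k,\bf d}$ on projectives (your transitivity), and precomposes with $\hat\pi_{k,\bf d}$; the other direction follows the same way, as you write out explicitly. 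So for the first part you match the paper.

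For the second assertion the paper says only ``the second statement follows by similar arguments which are therefore omitted.'' Your attempt to spell this out has a genuine gap, which you yourself correctly flag in your final paragraph. The reduction via a common refinement $\bf f$ is formally correct but empty: for $\bf f=(1^n)$ it is a tautology, so nothing has been reduced. The slotwise Jones--Wenzl absorption you then invoke is only a Grothendieck-group statement; since the algebras $A_{k,\bf e}$ do not tensor-factor along the blocks of $\bf d$, Proposition~\ref{charJW} cannot be applied slot by slot to the \emph{functors} without an independent argument that the single-slot projectors $\hat p_{k,{\bf f}_\ell}$ commute with one another and that their product is $\hat p_{k,\bf d}$. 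But that last statement is of the same shape as the assertion you are trying to prove, so invoking it is circular. Your counterexample with $(2,1)$ and $(1,2)$ correctly shows that the covering hypothesis is essential and cannot be weakened to $\bigvee_i{\bf e}_i=\bf d$; what is missing is a direct functorial use of that hypothesis. Since the paper also omits the details here, the honest summary is: your first part reproduces the paper's proof, and your second part is a programme rather than a proof---the step ``turn slotwise absorption into a functorial statement'' is precisely the content that neither you nor the paper writes down.
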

\begin{figure}
\begin{equation*}
\begin{tikzpicture}[scale=0.8]

\draw (6,0) -- (8,0)[thick];
\draw (6,0) -- (6,.25)[thick];
\draw (8,0) -- (8, .25)[thick];
\draw (6,.25) -- (8,.25)[thick];

\draw (6.25, .25) -- (6.25,1)[thick];
\draw (7, .25) -- (7,1)[thick];
\draw (7.75, .25) -- (7.75,1)[thick];

\draw (6.25, 0) -- (6.25,-1)[thick];
\draw (7, 0) -- (7,-1)[thick];
\draw (7.75, 0) -- (7.75,-1)[thick];

\draw (9,0)  node{$=$};

\draw (11.5,0.5) -- (11.5, .75)[thick];
\draw (10.5,0.5) -- (10.5, .75)[thick];
\draw (10.5,0.5) -- (11.5,0.5)[thick];
\draw (10.5,0.75) -- (11.5,0.75)[thick];

\draw (10,0) -- (12,0)[thick];
\draw (10,0) -- (10,.25)[thick];
\draw (12,0) -- (12, .25)[thick];
\draw (10,.25) -- (12,.25)[thick];

\draw (10.25, .25) -- (10.25,1)[thick];
\draw (11, .25) -- (11,.5)[thick];
\draw (11, .75) -- (11,1)[thick];
\draw (11.75, .25) -- (11.75,1)[thick];

\draw (10.25, 0) -- (10.25,-1)[thick];
\draw (11, 0) -- (11,-1)[thick];
\draw (11.75, 0) -- (11.75,-1)[thick];

\draw (13,0)  node{$=$};

\draw (15.5,-0.25) -- (15.5, -.5)[thick];
\draw (14.5,-0.5) -- (14.5, -.25)[thick];
\draw (14.5,-0.25) -- (15.5,-0.25)[thick];
\draw (14.5,-0.5) -- (15.5,-0.5)[thick];

\draw (14,0) -- (16,0)[thick];
\draw (14,0) -- (14,.25)[thick];
\draw (16,0) -- (16, .25)[thick];
\draw (14,.25) -- (16,.25)[thick];

\draw (14.25, .25) -- (14.25,1)[thick];
\draw (15, .25) -- (15,1)[thick];
\draw (15.75, .25) -- (15.75,1)[thick];

\draw (14.25, 0) -- (14.25,-1)[thick];
\draw (15, 0) -- (15,-.25)[thick];
\draw (15, -.5) -- (15,-1)[thick];
\draw (15.75, 0) -- (15.75,-1)[thick];
\end{tikzpicture}
\end{equation*}
\caption{{\small Properties of projectors}}
\label{projeat}
\end{figure}
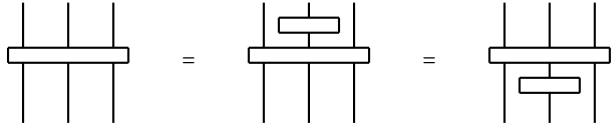

\begin{proof}
By definition of the quotient functor $G:={\hat{\pi}}_{k,\bf e}\mathbb{L}{\hat{\iota}}_{k,\bf d}$ is the identity functor on the additive category given by projectives in $\gmod-A_{k, \bf d}$, and hence also $\mathbb{L}{\hat{\iota}}_{k,\bf e}G\cong \mathbb{L}{\hat{\iota}}_{k,\bf d}$ on this subcategory. Since the derived category is generated by projectives, the first statement of the theorem follows by precomposing the functors by ${\hat{\pi}}_{k,\bf d}$. The second statement follows by similar arguments which are therefore omitted.
\end{proof}

\section{A Jones-Wenzl complex} \label{secJWcomplex}
\subsection{Lauda's $2$-category \cite{L}}
The $2$-category $\mathbf{U}$ is a an additive graded $\mathbb{C}$-linear category.  For each $\lambda \in \mathbb{Z}$, there is an object $\mathsf{1}_{\lambda}$ .  The $1$-morphisms are generated by symbols 
\begin{equation*}
\mathsf{E} \colon \mathsf{1}_{\lambda} \rightarrow \mathsf{1}_{\lambda +2},
\hspace{.5in}
\mathsf{F} \colon \mathsf{1}_{\lambda} \rightarrow \mathsf{1}_{\lambda -2}
\end{equation*}
which we often write as
$ \mathsf{1}_{\lambda+2} \mathsf{E} \mathsf{1}_{\lambda} $ and
$ \mathsf{1}_{\lambda-2} \mathsf{F} \mathsf{1}_{\lambda}$ respectively. The $1$-morphisms may be drawn as oriented strands.
\begin{equation}
\label{1morphisms}
\begin{tikzpicture}
\draw (-1.5,0) -- (4,0)[][very thick];
%\draw (0,.5) node{bidegree};
\draw (-.2, .7) node{Generating};
\draw (-.2, .3) node{$1$-morphisms};
\draw (-.2, -1) node{Diagrams};
\draw (1,1) -- (1,-2)[][very thick];
\draw (1.8,.5) node{$\mathsf{1}_{\lambda+2} \mathsf{E} \mathsf{1}_{\lambda}$};
\draw (2,-.5) -- (2,-1.5)[<-][thick];
\draw (2.25, -1) node{$\lambda$};
\draw (1.5, -1) node{$\lambda+2$};
\draw (2.5,1) -- (2.5,-2)[][very thick];

\draw (3.3,.5) node{$\mathsf{1}_{\lambda-2} \mathsf{F} \mathsf{1}_{\lambda}$};
\draw (3.5,-.5) -- (3.5,-1.5)[<-][thick];
\draw (3.75, -1) node{$\lambda$};
\draw (3, -1) node{$\lambda-2$};
\draw (4,1) -- (4,-2)[][very thick];

\draw (-1.5,1) -- (-1.5,-2)[very thick];
%\draw (6,1) -- (6,-2)[very thick];
\draw (-1.5,1) -- (4,1)[][very thick];
\draw (-1.5,-2) -- (4,-2)[][very thick];
\end{tikzpicture}
\end{equation}
The $2$-morphisms are generated by the following diagrams.

\begin{equation}
\label{degrees}
\begin{tikzpicture}
[scale=0.85]
\draw (-1.5,0) -- (12,0)[][very thick];
\draw (-.2,.5) node{Degrees};
\draw (-.2, -.8) node{Generating};
\draw (-.2, -1.2) node{$2$-morphisms};
\draw (1,1) -- (1,-2)[][very thick];

\draw (1.5,-.5) -- (1.5,-1.5)[<-][thick];
\draw (3,1) -- (3, -2)[very thick];
\draw (1.5, .5) node{$2$};
\filldraw[black](1.5,-1) circle (2pt);

\draw (2.5,-.5) -- (2.5,-1.5)[->][thick];
\draw (2,1) -- (2, -2)[very thick];
\draw (2.5, .5) node{$2$};
\filldraw[black](2.5,-1) circle (2pt);

\draw (3.5,-1.5) -- (4.5,-.5)[->][thick];
\draw (4.5, -1.5) -- (3.5, -.5)[->][thick];
\draw (4,.5) node{$-2$};
\draw (-1.5,1) -- (-1.5,-2)[very thick];
\draw (5,1) -- (5,-2)[very thick];
\draw (-1.5,1) -- (12,1)[][very thick];
\draw (-1.5,-2) -- (12,-2)[][very thick];
\draw (12,1) -- (12,-2)[][very thick];
\draw (10.5,1) -- (10.5,-2)[][very thick];
\draw (8.75,1) -- (8.75,-2)[][very thick];
\draw (7,1) -- (7,-2)[][very thick];
\draw (6,.5) node{$1+\lambda$};
\draw (6,-1.5) node{$\lambda$};
\draw (7.75,.5) node{$1-\lambda$};
\draw (7.75,-1.5) node{$\lambda$};

\draw (9.5,.5) node{$1-\lambda$};
\draw (10,-.3) node{$\lambda$};

\draw (11.25,.5) node{$1+\lambda$};
\draw (11.75,-.3) node{$\lambda$};

\draw (5.5,-.5) arc (180:360:.5)[->] [thick];
\draw (7.25,-.5) arc (180:360:.5)[<-] [thick];
\draw (9,-.95) arc (180:0:.5)[->] [thick];
\draw (10.75,-.95) arc (180:0:.5)[<-] [thick];

\end{tikzpicture}
\end{equation}
The $2$-morphisms are taken subject to the following relations.

\begin{itemize}
\item The biadjointness relations.
\begin{equation}
\label{biadj1}
\begin{tikzpicture}
[scale=0.85]
\draw[thick] (0,0) to (0,1) arc(180:0:.5) arc(180:360:.5) to (2,2)[->];
\draw (2.5,1) node {$=$};
\draw [thick] (3,0) -- (3,2)[->];
\draw (3.5,1) node {$=$};
\draw[thick] (4,2)[<-] to (4,1) arc(180:360:.5) arc(180:0:.5) to (6,0);
  
\draw [shift={+(-1,0)}] [thick] (8,0) to (8,1)[<-] arc(180:0:.5) arc(180:360:.5) to (10,2);
\draw [shift={+(-1,0)}] (10.5,1) node {$=$};
\draw [shift={+(-1,0)}][thick] (11,0) -- (11,2)[<-];
\draw [shift={+(-1,0)}](11.5,1) node {$=$};
\draw[thick] [shift={+(-1,0)}](12,2) to (12,1) arc(180:360:.5) arc(180:0:.5) to (14,0)[->];
\end{tikzpicture}
\end{equation}

\begin{equation}
\label{biadj2}
\begin{tikzpicture}[>=stealth]
\draw (0,0) -- (0,-.5) [->][thick];
\filldraw [black] (0,-.25) circle (2pt);
\draw (1,0) -- (1,-.5) [thick];
\draw (0,0) arc (180:0:.5) [thick];
\draw (1.5,0) node {=};
\draw (2,0) -- (2,-.5)[->] [thick];
\filldraw [black] (3,-.25) circle (2pt);
\draw (3,0) -- (3,-.5) [thick];
\draw (2,0) arc (180:0:.5) [thick];

\draw (5,0) -- (5,-.5) [thick];
\filldraw [black] (5,-.25) circle (2pt);
\draw (6,0) -- (6,-.5)[->] [thick];
\draw (5,0) arc (180:0:.5) [thick];
\draw (6.5,0) node {=};
\draw (7,0) -- (7,-.5) [thick];
\filldraw [black] (8,-.25) circle (2pt);
\draw (8,0) -- (8,-.5) [->][thick];
\draw (7,0) arc (180:0:.5) [thick];
\end{tikzpicture}
\end{equation}

\begin{equation}
\label{biadj3}
\begin{tikzpicture}[>=stealth]
\draw (0,0) -- (.5,.5)[thick];
\draw (.5,0) -- (0,.5)[thick];
\draw (1,0) -- (1,.5)[->][thick];
\draw (1.5,0) -- (1.5,.5)[->][thick];
\draw (.5,.5) arc (180:0:.25) [thick];
\draw (0,.5) arc (180:0:.75) [thick];
\draw (2,.5) node{=};

\draw (2.5,0) -- (2.5,.5)[<-][thick];
\draw (3,0) -- (3,.5)[<-][thick];
\draw (3.5,0) -- (4,.5)[thick];
\draw (4,0) -- (3.5,.5)[thick];
\draw (3,.5) arc (180:0:.25) [thick];
\draw (2.5,.5) arc (180:0:.75) [thick];

\draw [shift={+(6,0)}] (0,0) -- (.5,.5)[thick];
\draw  [shift={+(6,0)}] (.5,0) -- (0,.5)[thick];
\draw  [shift={+(6,0)}] (1,0) -- (1,.5)[<-][thick];
\draw  [shift={+(6,0)}] (1.5,0) -- (1.5,.5)[<-][thick];
\draw  [shift={+(6,0)}] (.5,.5) arc (180:0:.25) [thick];
\draw  [shift={+(6,0)}](0,.5) arc (180:0:.75) [thick];
\draw  [shift={+(6,0)}] (2,.5) node{=};

\draw  [shift={+(6,0)}](2.5,0) -- (2.5,.5)[->][thick];
\draw  [shift={+(6,0)}](3,0) -- (3,.5)[->][thick];
\draw  [shift={+(6,0)}](3.5,0) -- (4,.5)[thick];
\draw  [shift={+(6,0)}](4,0) -- (3.5,.5)[thick];
\draw  [shift={+(6,0)}](3,.5) arc (180:0:.25) [thick];
\draw  [shift={+(6,0)}](2.5,.5) arc (180:0:.75) [thick];

\end{tikzpicture}
\end{equation}

\item The bubble relations.
\begin{equation}
\label{BUBBLES1}
\begin{tikzpicture}[>=stealth]
\draw (.25,0)[<-] arc (180:360:0.5cm) [thick];
\draw(1.25,0) arc (0:180:0.5cm) [thick];
\draw (1,0) node [anchor=east] {$k$};
\draw (1.5,0) node {$=$};
\draw (1.75,0) node {$0$};
\draw (2,0) node {$=$};
\draw [shift={+(2.25,0)}](0,0)[->] arc (180:360:0.5cm) [thick];
\draw [shift={+(2.25,0)}](1,0) arc (0:180:0.5cm) [thick];
\draw [shift={+(2.25,0)}](.75,0) node [anchor=east] {$k$};
\draw (4.25,0) node{$\text{ if } k<0$,};
\end{tikzpicture}
\quad \quad
\begin{tikzpicture}[>=stealth]
\draw [shift={+(-.75,0)}](0,0)[<-] arc (180:360:0.5cm) [thick];
\draw [shift={+(-.75,0)}](1,0) arc (0:180:0.5cm) [thick];
\draw [shift={+(-.75,0)}](.75,0) node [anchor=east] {$0$};
\draw [shift={+(-1,0)}](1.5,0) node {$=$};
\draw [shift={+(-1.25,0)}](2,0) node {$1$};
\draw [shift={+(-1.5,0)}](2.5,0) node {$=$};
\draw [shift={+(-1.75,0)}][shift={+(3,0)}](0,0)[->] arc (180:360:0.5cm) [thick];
\draw [shift={+(-1.75,0)}][shift={+(3,0)}](1,0) arc (0:180:0.5cm) [thick];
\draw [shift={+(-1.75,0)}][shift={+(3,0)}](.75,0) node [anchor=east] {$0$};
\draw (4.75,0) node{$$};
\end{tikzpicture}
\end{equation}

%\begin{equation}
%\label{BUBBLES2}
%\begin{tikzpicture}[>=stealth]
%\draw [shift={+(-.75,0)}](0,0)[<-] arc (180:360:0.5cm) [thick];
%\draw [shift={+(-.75,0)}](1,0) arc (0:180:0.5cm) [thick];
%\draw [shift={+(-.75,0)}](.75,0) node [anchor=east] {$0$};
%\draw [shift={+(-.75,0)}](1.5,0) node {$=$};
%\draw [shift={+(-.75,0)}](2,0) node {$1$};
%\draw [shift={+(-.75,0)}](2.5,0) node {$=$};
%\draw [shift={+(-.75,0)}][shift={+(3,0)}](0,0)[->] arc (180:360:0.5cm) [thick];
%\draw [shift={+(-.75,0)}][shift={+(3,0)}](1,0) arc (0:180:0.5cm) [thick];
%\draw [shift={+(-.75,0)}][shift={+(3,0)}](.75,0) node [anchor=east] {$0$};
%\draw (4.75,0) node{$$};

%\end{tikzpicture}
%\end{equation}

\item The infinite Grassmannian relations.
\begin{equation}
\label{BUBBLES2}
\begin{tikzpicture}[>=stealth]
\draw [shift={+(-.75,0)}] (-.5,0) node {$\displaystyle \sum_{k \geq 0}$};
\draw [shift={+(-.75,0)}](0,0)[<-] arc (180:360:0.5cm) [thick];
\draw [shift={+(-.75,0)}](1,0) arc (0:180:0.5cm) [thick];
\draw [shift={+(-.75,0)}](.75,0) node [anchor=east] {$k$};
\draw [shift={+(-.75,0)}] (1.5,0) node {$t^k$};
\draw [shift={+(-.75,0)}] (-.8,0) node {$($};
\draw [shift={+(-.75,0)}] (1.8,0) node {$)$};

\draw [shift={+(3,0)}]  [shift={+(-.75,0)}] (-.5,0) node {$\displaystyle \sum_{k \geq 0}$};
\draw [shift={+(3,0)}][shift={+(-.75,0)}](0,0)[->] arc (180:360:0.5cm) [thick];
\draw [shift={+(3,0)}] [shift={+(-.75,0)}](1,0) arc (0:180:0.5cm) [thick];
\draw [shift={+(3,0)}][shift={+(-.75,0)}](.75,0) node [anchor=east] {$k$};
\draw [shift={+(3,0)}] [shift={+(-.75,0)}] (1.5,0) node {$t^k$};
\draw [shift={+(3,0)}][shift={+(-.75,0)}] (-.8,0) node {$($};
\draw [shift={+(3,0)}] [shift={+(-.75,0)}] (1.8,0) node {$)$};

\draw [shift={+(2,0)}](2.5,0) node {$=$};
\draw [shift={+(2,0)}](3,0) node {$1$};

\end{tikzpicture}
\end{equation}

\item The Nil-Hecke relations.
\begin{equation}
\label{nilhecke1}
\begin{tikzpicture}[>=stealth]
\draw (0,0) -- (1,1)[->][thick];
\draw (1,0) -- (0,1)[->][thick];
\filldraw [black] (.25,.25) circle (2pt);
\draw (1.5,.5) node{$-$};
\draw (2,0) -- (3,1)[->][thick];
\draw (3,0) -- (2,1)[->][thick];

\filldraw [black] (2.75, .75) circle (2pt);
\draw (3.5,.5) node{=};
\draw (4,0) -- (4,1)[->][thick];
\draw (5,0) -- (5,1)[->][thick];

\draw (5.5,.5) node{=};

\draw (6,0) -- (7,1)[->][thick];
\draw (7,0) -- (6,1)[->][thick];
\filldraw [black] (6.25,.75) circle (2pt);
\draw (7.5,.5) node{$-$};
\draw (8,0) -- (9,1)[->][thick];
\draw (9,0) -- (8,1)[->][thick];

\filldraw [black] (8.75, .25) circle (2pt);
\end{tikzpicture}
\end{equation}

\begin{equation}
\label{nilhecke2}
\begin{tikzpicture}
\draw (0,0) .. controls (1,1) .. (0,2)[->][thick];
\draw (1,0) .. controls (0,1) .. (1,2)[->][thick];
\draw (1.5,1) node{$=$};
\draw (2,1) node{$0$ }  ;
\end{tikzpicture}
\quad \quad \quad
\begin{tikzpicture}[>=stealth]
\draw (0,0) -- (2,2)[->][thick];\draw (2,0) -- (0,2)[->][thick];
\draw (1,0) .. controls (0,1) .. (1,2)[->][thick];
\draw (2.5,1) node {=};
\draw (3,0) -- (5,2)[->][thick];
\draw (5,0) -- (3,2)[->][thick];
\draw (4,0) .. controls (5,1) .. (4,2)[->][thick];
\end{tikzpicture}
\end{equation}

%\begin{equation}
%\label{nilhecke3}
%\begin{tikzpicture}[>=stealth]
%\draw (0,0) -- (2,2)[->][thick];\draw (2,0) -- (0,2)[->][thick];
%\draw (1,0) .. controls (0,1) .. (1,2)[->][thick];
%\draw (2.5,1) node {=};
%\draw (3,0) -- (5,2)[->][thick];
%\draw (5,0) -- (3,2)[->][thick];
%\draw (4,0) .. controls (5,1) .. (4,2)[->][thick];
%\end{tikzpicture}
%\end{equation}

\item The reduction to bubbles relations.
\begin{equation}
\label{REDBUBBLES1}
\begin{tikzpicture}[>=stealth]
\draw (1.2,.5) node{$\lambda$};
\draw  (2,0) .. controls (2,.5) and (1.3,.5) .. (1.1,0) [thick];
\draw  (2,0) .. controls (2,-.5) and (1.3,-.5) .. (1.1,0) [thick];
\draw  (1,-1) .. controls (1,-.5) .. (1.1,0) [thick];
\draw  (1.1,0) .. controls (1,.5) .. (1,1) [->] [thick];
\draw [shift={+(1,0)}](1.5,0) node {$= -$};
\draw [shift={+(1,0)}] (2,0) node {$\displaystyle \sum_{a+b=-\lambda}$};
\draw [shift={+(4.5,0)}](0,0)[<-] arc (180:360:0.5cm) [thick];
\draw [shift={+(4.5,0)}](1,0) arc (0:180:0.5cm) [thick];
\draw [shift={+(5,0)}](.25,0) node [anchor=east] {$a$};
\draw(4,-1) --(4,1)[->][thick];
\filldraw [black] (4,0) circle (2pt);
\draw (4.25,0) node {$b$};
\draw (4.35,.5) node {$\lambda$ };

\end{tikzpicture}
\quad \quad
\begin{tikzpicture}[>=stealth]
\draw (-.2,.5) node{$\lambda$};
\draw  (0,0) .. controls (0,.5) and (.7,.5) .. (.9,0) [thick];
\draw  (0,0) .. controls (0,-.5) and (.7,-.5) .. (.9,0) [thick];
\draw  (1,-1) .. controls (1,-.5) .. (.9,0) [thick];
\draw  (.9,0) .. controls (1,.5) .. (1,1) [->] [thick];
\draw (1.5,0) node {$=$};
\draw  (2,0) node {$\displaystyle \sum_{a+b=\lambda}$};
\draw [shift={+(2.5,0)}](0,0)[->] arc (180:360:0.5cm) [thick];
\draw [shift={+(2.5,0)}](1,0) arc (0:180:0.5cm) [thick] ;
\draw [shift={+(2.5,0)}](.75,0) node [anchor=east] {$a$};
\draw(4,-1) --(4,1)[->][thick];
\filldraw [black] (4,0) circle (2pt);
\draw (4.25,0) node {$b$};
\draw (3.5,.5) node {$\lambda$};

\end{tikzpicture}
\end{equation}

%\begin{equation}
%\label{REDBUBBLES2}
%\begin{tikzpicture}[>=stealth]
%\draw (-.2,.5) node{$\lambda$};
%\draw  (0,0) .. controls (0,.5) and (.7,.5) .. (.9,0) [thick];
%\draw  (0,0) .. controls (0,-.5) and (.7,-.5) .. (.9,0) [thick];
%\draw  (1,-1) .. controls (1,-.5) .. (.9,0) [thick];
%\draw  (.9,0) .. controls (1,.5) .. (1,1) [->] [thick];
%\draw (1.5,0) node {$=$};
%\draw  (2,0) node {$\displaystyle \sum_{a+b=\lambda}$};
%\draw [shift={+(2.5,0)}](0,0)[->] arc (180:360:0.5cm) [thick];
%\draw [shift={+(2.5,0)}](1,0) arc (0:180:0.5cm) [thick];
%\draw [shift={+(2.5,0)}](.75,0) node [anchor=east] {$a$};
%\draw(4,-1) --(4,1)[->][thick];
%\filldraw [black] (4,0) circle (2pt);
%\draw (4.25,0) node {$b$};
%\draw (3.5,.5) node {$\lambda$};

%\end{tikzpicture}
%\end{equation}

\item The identity relations.

\begin{equation}
\label{ID1}
\begin{tikzpicture}[>=stealth]
\draw (1.85,1) node{$-$};
\draw (1.95,1.5) node{$\lambda$};

\draw  (2,0) .. controls (3,1) .. (2,2)[->][thick];
\draw (3,0) .. controls (2,1) .. (3,2)[<-] [thick];
\draw (1.5,1) node {=};
\draw (-.5,1) node{$\lambda$};
\draw(0,0) --(0,2)[->][thick];
\draw (1,0) -- (1,2)[<-][thick];
\draw (3.8,1) node{$+ \displaystyle \sum \limits_{a+b+c=\lambda-1}$};
\draw (5.2,1) node{$\lambda$};

\draw [shift={+(1,0)}] (4,1.75) arc (180:360:.5) [thick];
\draw [shift={+(1,0)}] (4,2) -- (4,1.75) [thick];
\draw [shift={+(1,0)}](5,2) -- (5,1.75) [thick][->];
\draw [shift={+(1,0)}] (5,.25) arc (0:180:.5) [thick];
\filldraw [shift={+(1,0)}][black] (4.5,1.25) circle (2pt);
\draw [shift={+(1,0)}] (4.5,1.25) node [anchor=south] {$c$};
\filldraw [shift={+(1,0)}] [black] (4.5,0.75) circle (2pt);
\draw [shift={+(1,0)}] (4.5,.75) node [anchor=north] {$a$};
\draw [shift={+(1,0)}] (5,0) -- (5,.25) [thick];
\draw [shift={+(1,0)}] (4,0) -- (4,.25) [thick][->];

\draw [shift={+(6,1)}](0,0)[->] arc (180:360:0.5cm) [thick];
\draw [shift={+(6,1)}](1,0) arc (0:180:0.5cm) [thick];
\draw [shift={+(6,1)}](.75,0) node [anchor=east] {$b$};

\end{tikzpicture}
\end{equation}

\begin{equation}
\label{ID2}
\begin{tikzpicture}[>=stealth]
\draw (1.85,1) node{$-$};
\draw (1.95,1.5) node{$\lambda$};

\draw  (2,0) .. controls (3,1) .. (2,2)[<-][thick];
\draw (3,0) .. controls (2,1) .. (3,2)[->] [thick];
\draw (1.5,1) node {=};
\draw (-.5,1) node{$\lambda$};
\draw(0,0) --(0,2)[<-][thick];
\draw (1,0) -- (1,2)[->][thick];
\draw (3.8,1) node{$+ \displaystyle \sum \limits_{a+b+c=-\lambda-1}$};
\draw (5.2,1) node{$\lambda$};

\draw [shift={+(1,0)}] (4,1.75) arc (180:360:.5) [thick];
\draw [shift={+(1,0)}] (4,2) -- (4,1.75) [thick];
\draw [shift={+(1,0)}](5,2) -- (5,1.75) [thick][<-];
\draw [shift={+(1,0)}] (5,.25) arc (0:180:.5) [thick];
\filldraw [shift={+(1,0)}][black] (4.5,1.25) circle (2pt);
\draw [shift={+(1,0)}] (4.5,1.25) node [anchor=south] {$c$};
\filldraw [shift={+(1,0)}] [black] (4.5,0.75) circle (2pt);
\draw [shift={+(1,0)}] (4.5,.75) node [anchor=north] {$a$};
\draw [shift={+(1,0)}] (5,0) -- (5,.25) [thick];
\draw [shift={+(1,0)}] (4,0) -- (4,.25) [thick][<-];

\draw [shift={+(6,1)}](0,0) arc (180:360:0.5cm) [thick];
\draw [shift={+(6,1)}][<-](1,0) arc (0:180:0.5cm) [thick];
\draw [shift={+(6,1)}](.75,0) node [anchor=east] {$b$};

\end{tikzpicture}
\end{equation}
\end{itemize}

\begin{definition}
The nil-Hecke algebra $NH_n$ is the $\mathbb{Z}$-graded algebra generated by $y_1, \ldots, y_n$ of degree $2$ and generators $\psi_1, \ldots, \psi_{n-1}$ of degree $-2$ with relations 
\begin{enumerate}
\item $y_i y_j = y_j y_i $ for all $i$ and $j$,
\item $ \psi_i^2 = 0 $ for $i=1, \ldots, n-1$,
\item $ \psi_i \psi_j = \psi_j \psi_i$ if $|i-j|>1$,
\item $ \psi_i \psi_{i+1} \psi_i = \psi_{i+1} \psi_i \psi_{i+1} $ for $i=1, \ldots, n-2$,
\item $y_i \psi_i - \psi_i y_{i+1} = 1 = \psi_i y_i - y_{i+1} \psi_i$ for $i=1,\ldots n-1$.
\end{enumerate}
\end{definition}

Let $w_0$ be the longest element in the symmetric group and $\psi_{w_0}$ the corresponding element in $NH_n$.
Define the indecomposable idempotent in $NH_n$
\begin{equation*}
\epsilon_{w_0} = y_1^{n-1} y_2^{n-2} \cdots y_{n-1} \psi_{w_0}.
\end{equation*}
The idempotent $\epsilon_{w_0}$ is indecomposable since $\End_{NH_n}(NH_n \epsilon_{w_0})$ is a non-negatively graded algebra which is one-dimensional in degree zero.

Since the $2$-morphisms of $ \mathbf{U}$ satisfy nil-Hecke relations, there is an action of $NH_n$ on $\mathsf{E}^n$ and $ \mathsf{F}^n$ respectively. In the Karoubi envelope $\Kar(\mathbf{U})$ of $\mathbf{U}$ define objects
\begin{equation*}
\mathsf{E}^{(n)} = (\mathsf{E}^n,e_{w_0}) \langle \frac{n(n-1)}{2}   \rangle, \quad \quad
\mathsf{F}^{(n)} = (\mathsf{F}^n,e_{w_0}) \langle \frac{n(n-1)}{2}   \rangle.
\end{equation*}

\begin{theorem}
\cite[Theorem 9.13]{L}
There is an isomorphism $[\Kar(\mathbf{U})] \cong \dot{\mathcal{U}}_q$
where $\dot{\mathcal{U}}_q $ is Lusztig's idempotent version of $\mathcal{U}_q$.
\end{theorem}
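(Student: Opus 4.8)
The plan is to exhibit an explicit $\mathbb{Z}[q,q^{-1}]$-linear functor $\gamma\colon\dot{\mathcal{U}}_q\to[\Kar(\mathbf{U})]$, regarding both sides as $\mathbb{Z}[q,q^{-1}]$-linear categories with object set the weight lattice $\mathbb{Z}$, and to prove it is an isomorphism. On generators one sets $E^{(a)}\mathsf{1}_\lambda\mapsto[\mathsf{E}^{(a)}\mathsf{1}_\lambda]$ and $F^{(b)}\mathsf{1}_\lambda\mapsto[\mathsf{F}^{(b)}\mathsf{1}_\lambda]$. Here $[\Kar(\mathbf{U})]$ is the split Grothendieck group of the Karoubi envelope, with $q$ acting as the internal shift $\langle1\rangle$ and horizontal composition of $1$-morphisms inducing the product; it is free over $\mathbb{Z}[q,q^{-1}]$ on the classes of indecomposable $1$-morphisms taken up to shift, and the claim is that $\gamma$ identifies this lattice with Lusztig's idempotented form.

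The first step is to check that the defining relations of $\dot{\mathcal{U}}_q$ hold among the classes $[\mathsf{E}^{(a)}\mathsf{1}_\lambda]$ and $[\mathsf{F}^{(b)}\mathsf{1}_\lambda]$. The divided-power relations for $\mathsf{E}$ (and symmetrically for $\mathsf{F}$) come from nil-Hecke bookkeeping: since the $2$-morphisms satisfy the nil-Hecke relations \eqref{nilhecke1}--\eqref{nilhecke2}, the algebra $NH_n$ acts on $\mathsf{E}^n$, and decomposing $NH_{a+b}$ as a graded module over $NH_a\otimes NH_b$ yields $\mathsf{E}^{(a)}\mathsf{E}^{(b)}\cong\bigoplus\mathsf{E}^{(a+b)}\langle\cdot\rangle$ with multiplicity space of graded dimension ${a+b\brack a}$; in particular $[\mathsf{E}^n]=[n]!\,[\mathsf{E}^{(n)}]$. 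The key $\mathfrak{sl}_2$ relation $\mathsf{E}\mathsf{F}\mathsf{1}_\lambda\cong\mathsf{F}\mathsf{E}\mathsf{1}_\lambda\oplus\bigoplus_{j=0}^{\lambda-1}\mathsf{1}_\lambda\langle\lambda-1-2j\rangle$ for $\lambda\ge0$, and its mirror for $\lambda\le0$, is read off directly from the identity relations \eqref{ID1}--\eqref{ID2}, which display mutually inverse $2$-isomorphisms built from cups, caps, crossings, dots and bubbles; passing to the Grothendieck group gives $[\mathsf{E}\mathsf{F}\mathsf{1}_\lambda]-[\mathsf{F}\mathsf{E}\mathsf{1}_\lambda]=[\lambda]\,[\mathsf{1}_\lambda]$. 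Together with the bubble and reduction relations this produces all of Lusztig's relations, so $\gamma$ is well defined.

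Surjectivity is nearly formal: every $1$-morphism of $\mathbf{U}$ before Karoubi completion is a shift of a composite of $\mathsf{E}$'s and $\mathsf{F}$'s, and by the nil-Hecke idempotents such a composite becomes, in $\Kar(\mathbf{U})$, a finite direct sum of shifts of products of divided powers $\mathsf{E}^{(a_1)}\mathsf{F}^{(b_1)}\mathsf{E}^{(a_2)}\cdots$. Hence every object of $\Kar(\mathbf{U})$ is a summand of such a product, whose class lies in the image of $\gamma$ by the previous step, and since idempotent splitting only refines classes, $[\Kar(\mathbf{U})]$ is generated over $\mathbb{Z}[q,q^{-1}]$ by $\operatorname{im}\gamma$.

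Injectivity is the hard part, and reduces to the non-degeneracy of the graphical calculus: arbitrary $2$-morphisms can be reduced by the relations to a normal form (dots on through-strands, all closed components turned into bubbles pushed into the rightmost region), and one must prove these normal-form diagrams are linearly independent, so that the $2$-hom spaces $\mathbf{U}(\mathsf{E}^{\underline\epsilon}\mathsf{1}_\lambda,\mathsf{E}^{\underline{\epsilon'}}\mathsf{1}_\lambda)$ have exactly the expected graded dimension. I would establish this by constructing a sufficiently faithful $2$-representation of $\mathbf{U}$ on a $2$-category built from graded bimodules over the cohomology rings of iterated partial flag varieties (equivalently tensor products of polynomial rings with their nil-Hecke and symmetric-function structure), with $\mathsf{E},\mathsf{F}$ acting by the natural induction and restriction bimodules, dots by multiplication, crossings by Demazure operators, and cups, caps and bubbles by the associated trace and unit maps; evaluating the normal-form diagrams there separates them. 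Non-degeneracy then endows $[\Kar(\mathbf{U})]$ with the pairing $\langle[P],[Q]\rangle=\gdim\HOM_{\Kar(\mathbf{U})}(P,Q)$, which under $\gamma$ matches Lusztig's (non-degenerate) form on $\dot{\mathcal{U}}_q$; combined with surjectivity and the fact that both sides are free over $\mathbb{Z}[q,q^{-1}]$ on bases indexed by the same combinatorial data, this forces $\gamma$ to be an isomorphism, with indecomposable $1$-morphisms corresponding to Lusztig's canonical basis. The construction of the faithful flag-variety $2$-representation — hence the non-degeneracy statement — is the step I expect to be the main obstacle; the rest is manipulation of the defining relations plus nil-Hecke algebra combinatorics.
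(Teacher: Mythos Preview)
The paper does not prove this theorem at all: it is simply quoted from Lauda's paper \cite[Theorem 9.13]{L} and used as a black box, so there is no ``paper's own proof'' to compare against. Your outline is in fact a reasonable sketch of Lauda's actual argument in \cite{L}: the map $\gamma$ on divided powers, the nil-Hecke decomposition giving $[\mathsf{E}^n]=[n]!\,[\mathsf{E}^{(n)}]$, the $\mathsf{E}\mathsf{F}$ versus $\mathsf{F}\mathsf{E}$ isomorphism read off from \eqref{ID1}--\eqref{ID2}, and---most importantly---the non-degeneracy of the graphical calculus established via a faithful $2$-representation on bimodules over cohomology rings of partial flag varieties, which is then matched with Lusztig's bilinear form. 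You have correctly identified the last step as the real content; everything else is bookkeeping with the defining relations.
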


\subsection{The complex}
%Let $\lambda=(\lambda_1, \ldots, \lambda_l)$ be a partition of $r$ which we denote by $|\lambda|$ with
%\begin{align*}
%& \lambda_1 \geq \lambda_2 \geq \cdots \lambda_l >0 \\
%&m_i(\lambda) = | \{q | \lambda_q=i \} |.
%\end{align*}
%Now set
%\begin{align*}
%C(\lambda) &= (-1)^l  \frac{l!}{m_1(\lambda)!m_2(\lambda)! \cdots m_{\lambda_1} (\lambda)!} \\
%R_j &= \sum_{\substack{\lambda_1 \leq k \\  n-k+j=|\lambda|}} C(\lambda) e_1^{\lambda_1} \cdots e_l^{\lambda_l}.
%\end{align*}

For a $k$-tuple ${\bf n}=(n_1, \ldots, n_k) \in \mathbb{N}^k$ define
\begin{equation*}
|{\bf n}|=n_1+\cdots+n_k ,
\hspace{.4in}
\wt({\bf n})=\sum_{i=1}^k i n_i ,
\hspace{.4in}
{\bf e}^{\bf n}=e_1^{n_1} \cdots e_k^{n_k} ,
\hspace{.4in}
c_{\bf n}=\frac{|{\bf n}|!}{n_1! \cdots n_k!}.
\end{equation*}
Then set
\begin{equation*}
r_j = \sum_{\substack{{\bf n} \in \mathbb{N}^k \\ \wt({\bf n})=n-k+j }}
(-1)^{|{\bf n}|} c_{{\bf n}} {\bf e}^{\bf n}.
\end{equation*}

The cohomology of the Grassmannian of $k$-dimensional planes in $\C^n$ plays an important role in what follows.  We will sometimes abbreviate this cohomology by $\Hstar:=\Hstar(\Gr(k,n))$ and we have the following well known classical result.

\begin{prop}
%\cite[Theorem 2.1]{Hoff}
The cohomology of the Grassmannian $\Hstar(\Gr(k,n))$ is given by:
\begin{equation*}
\Hstar(\Gr(k,n)) \cong \mathbb{C}[e_1, \ldots, e_k]/I_{k,n}
\end{equation*}
where $I_{k,n}$ is the ideal generated by $r_1, \ldots, r_k$.
\end{prop}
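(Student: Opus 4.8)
The plan is to write down the ring map explicitly from the tautological exact sequence on $\Gr(k,n)$, observe that the stated relations $r_1,\dots,r_k$ map to zero for obvious Chern-class reasons, and then finish by matching dimensions on the two sides.

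First I would fix on $\Gr(k,n)$ the tautological sequence $0\to\mathcal{S}\to\underline{\C^n}\to\mathcal{Q}\to 0$ with $\RANK\mathcal{S}=k$ and $\RANK\mathcal{Q}=n-k$, and set $e_i:=c_i(\mathcal{S})$ (taking $c_i(\mathcal{S}^\vee)$ instead only changes signs). This defines a graded $\C$-algebra homomorphism $\phi\colon\C[e_1,\dots,e_k]\to\Hstar(\Gr(k,n))$. The Whitney sum formula gives $c(\mathcal{S})c(\mathcal{Q})=1$, hence $c(\mathcal{Q})=c(\mathcal{S})^{-1}$; expanding $(1+e_1+\cdots+e_k)^{-1}=\sum_{j\ge 0}(-(e_1+\cdots+e_k))^j$ and collecting terms of weight $m$ shows that the degree-$m$ part of $c(\mathcal{Q})$ equals $\sum_{\wt({\bf n})=m}(-1)^{|{\bf n}|}c_{\bf n}{\bf e}^{\bf n}$, i.e. $c_m(\mathcal{Q})=\phi(r_{m-(n-k)})$ for $m>n-k$. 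Since $\mathcal{Q}$ has rank $n-k$, its Chern classes vanish in degrees above $n-k$, so $\phi(r_1)=\cdots=\phi(r_k)=0$ and $\phi$ descends to $\bar\phi\colon\C[e_1,\dots,e_k]/I_{k,n}\to\Hstar(\Gr(k,n))$.

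Next I would prove $\bar\phi$ is surjective. One route is Schubert calculus: the special Schubert classes satisfy $\sigma_{(i)}=c_i(\mathcal{Q})\in\IM\phi$ by the previous paragraph, and every Schubert class $\sigma_\lambda$ (for $\lambda$ in the $k\times(n-k)$ box) is a polynomial in the $\sigma_{(i)}$ via the Giambelli determinantal formula; since the $\sigma_\lambda$ form a $\mathbb{Z}$-basis of $\Hstar(\Gr(k,n))$, surjectivity follows. (Alternatively one may pull back along the iterated projective bundle $Fl(1,\dots,k;n)\to\Gr(k,n)$, where the splitting principle together with the projective-bundle formula makes generation by Chern classes of $\mathcal{S}$ transparent.) Both sides being finite-dimensional graded vector spaces, it then suffices for injectivity to check $\dim_\C\C[e_1,\dots,e_k]/I_{k,n}\le\dim_\C\Hstar(\Gr(k,n))=\binom{n}{k}$.

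For that count I would pass to Chern roots: writing $e_i=e_i(x_1,\dots,x_k)$ we have $\C[e_1,\dots,e_k]=\C[x_1,\dots,x_k]^{\mathbb{S}_k}$, and since $\sum_m h_m t^m=\prod_i(1-x_it)^{-1}$ agrees with $c(\mathcal{S})^{-1}$ on roots, $r_j=\pm h_{n-k+j}(x_1,\dots,x_k)$ with $h_m$ the complete homogeneous symmetric polynomials. The sequence $h_{n-k+1},\dots,h_n$ is regular in $\C[x_1,\dots,x_k]$: if these all vanished at a point $a$, the Newton recursion $\sum_{i\ge 0}(-1)^i e_i(a)h_{m-i}(a)=0$ would force $h_m(a)=0$ for every $m\ge n-k+1$, making $\prod_i(1-a_it)^{-1}$ a polynomial and hence $a=0$. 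Thus $\C[x_1,\dots,x_k]/(h_{n-k+1},\dots,h_n)$ is a graded complete intersection of dimension $\prod_{j=n-k+1}^n j=n!/(n-k)!$; as $\C[x_1,\dots,x_k]$ is free of rank $k!$ over $\C[e_1,\dots,e_k]$ and the $h$'s lie in the subring, $\C[e_1,\dots,e_k]/I_{k,n}$ has dimension $\tfrac{1}{k!}\cdot\tfrac{n!}{(n-k)!}=\binom{n}{k}$. This matches, so $\bar\phi$ is an isomorphism. The main obstacle is the surjectivity step: that is the one place where external input (Giambelli's formula and the Schubert basis, or equivalently the projective-bundle computation) is genuinely needed, whereas everything else is Chern-class bookkeeping and a standard regular-sequence count.
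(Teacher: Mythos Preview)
Your argument is correct. Note, however, that the paper does not actually prove this proposition: it is simply recorded as a ``well known classical result'' and used as input for Wolffhardt's bimodule resolution, so there is nothing to compare against. What you have written is a clean self-contained verification: the Whitney identity $c(\mathcal{S})c(\mathcal{Q})=1$ identifies $r_j$ with $c_{n-k+j}(\mathcal{Q})$ (hence it vanishes), Giambelli gives surjectivity onto the Schubert basis, and the regular-sequence argument in Chern-root coordinates yields the dimension $\binom{n}{k}$. The only step worth spelling out a bit more is the last one: since $\C[x_1,\dots,x_k]$ is free of rank $k!$ over the invariant subring $\C[e_1,\dots,e_k]$ and the ideal $(h_{n-k+1},\dots,h_n)$ is extended from that subring, tensoring preserves freeness of rank $k!$, so the dimension division by $k!$ is legitimate.
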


We follow now the notation and ideas of Wolffhardt ~\cite{Wolf}.  He finds free bimodule resolutions of complete intersection rings 
in general but we focus here on the specific example of cohomology of Grassmannians.
Let $V$ be a $\mathbb{Z}^2$-graded vector space spanned by vectors
$b_i$ of degree 
%$(-2,2i)$ 
$(-2,2n-2k+2i)$
and vectors $f_i$ of degree 
%$(-1,2(n-i))$ 
$(-1,2i)$
for $i=1,\ldots, k$.  
Let $S(V)$ be the $\mathbb{Z}^2$-graded symmetric algebra of $V$.  It is spanned by elements of the form
\begin{equation*}
b_{j_1} \wedge \cdots \wedge b_{j_m} \wedge f_{i_1} \wedge \cdots \wedge f_{i_n}
\end{equation*}
with $j_1 \leq \cdots \leq j_m $ and
$i_1 < \cdots < i_n$.  In $S(V)$, we have $b_j \wedge x = x \wedge b_j$ for all $x \in S(V)$ while $f_{i} \wedge f_{i'}= - f_{i'} \wedge f_i$.
For a homogeneous element $x \in S(V)$ we refer to the first grading as the $q$-degree and denote it by $\DEG_q(x)$ and the second grading as the homological grading and denote it by $ \DEG_h(x)$. For $i=0,\ldots,k$ define
\begin{equation*}
\tau_i, \partial_i \colon \Hstar(\Gr(k,n)) \rightarrow \Hstar(\Gr(k,n)) \otimes \Hstar(\Gr(k,n))
\end{equation*}
by
\begin{equation*}
\tau_i(e_j) = 
\begin{cases}
e_j \otimes 1 \hspace{.2in} \text{ for } \hspace{.2in} 1 \leq j \leq i, \\
1 \otimes e_j \hspace{.2in} \text{ for } \hspace{.2in} i < j \leq k,
\end{cases}
\hspace{.2in}
\partial_i := \frac{1}{\tau_k (e_i) - \tau_0 (e_i)}(\tau_i - \tau_{i-1}).
\end{equation*}
%Now define the operator
%\begin{equation*}
%\partial_i \colon \Hstar(\Gr(k,n)) \rightarrow \Hstar(\Gr(k,n)) \otimes \Hstar(\Gr(k,n))
%\end{equation*}
%by
%\begin{equation*}
%\partial_i := \frac{1}{\tau_k (e_i) - \tau_0 (e_i)}(\tau_i - \tau_{i-1}).
%\end{equation*}
It is easy to check that the map $\partial_i$ is a well-defined.
Consider the vector space 
\begin{equation*}
\Hstar(\Gr(k,n)) \otimes_{\mathbb{C}} \Hstar(\Gr(k,n)) \otimes S(V)
\end{equation*}
endowed with the obvious algebra structure, explicitly:
\begin{equation*}
(a_1 \otimes b_1 \otimes v_1)(a_2 \otimes b_2 \otimes v_2)=
(a_1 a_2 \otimes b_1 b_2 \otimes v_1 \wedge v_2).
\end{equation*}

Define a complex of $\Hstar(\Gr(k,n)) \otimes_{\mathbb{C}} \Hstar(\Gr(k,n))$-bimodules:
\begin{equation}
\label{grbimodcomplex}
(\Hstar(\Gr(k,n)) \otimes_{\mathbb{C}} \Hstar(\Gr(k,n)) \otimes S(V), \partial)
\end{equation}
with 
%$\partial(1 \otimes 1 \otimes f_j)=e_j \otimes 1 \otimes 1 - 1 \otimes e_j \otimes 1,
%\partial(1 \otimes 1 \otimes b_j) = \sum_{i=1}^k \partial_i(r_j) \otimes f_i ,
%\partial(1 \otimes 1 \otimes (v \wedge w)) = 
%\partial(1 \otimes 1 \otimes v) (1 \otimes 1 \otimes w) + 
%(-1)^{deg_h(v)} (1 \otimes 1 \otimes v) \partial(1 \otimes 1 \otimes w)$, 
\[
\partial(1 \otimes 1 \otimes f_j)=e_j \otimes 1 \otimes 1 - 1 \otimes e_j \otimes 1 \ , \quad \quad
\partial(1 \otimes 1 \otimes b_j) = \sum_{i=1}^k \partial_i(r_j) \otimes f_i  \ ,
\]
\[
\partial(1 \otimes 1 \otimes (v \wedge w)) = 
\partial(1 \otimes 1 \otimes v) (1 \otimes 1 \otimes w) + 
(-1)^{\DEG_h(v)} (1 \otimes 1 \otimes v) \partial(1 \otimes 1 \otimes w) \ ,
\]
%\begin{align*}
%\partial(1 \otimes 1 \otimes f_j)&=e_j \otimes 1 \otimes 1 - 1 \otimes e_j \otimes 1 \\
%\partial(1 \otimes 1 \otimes b_j) &= \sum_{i=1}^k \partial_i(r_j) \otimes f_i \\
%\partial(1 \otimes 1 \otimes (v \wedge w)) &= 
%\partial(1 \otimes 1 \otimes v) (1 \otimes 1 \otimes w) + 
%(-1)^{deg_h(v)} (1 \otimes 1 \otimes v) \partial(1 \otimes 1 \otimes w)
%\end{align*}
and the $(\Hstar(\Gr(k,n)),\Hstar(\Gr(k,n)))$-bimodule action is given naturally on the first two tensor factors.

\begin{prop}
\cite[Theorem 2]{Wolf}
The complex in ~\eqref{grbimodcomplex} is a free bimodule resolution of 
$\Hstar(\Gr(k,n))$.
\end{prop}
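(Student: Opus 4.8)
The plan is to recognise the complex in \eqref{grbimodcomplex} as the instance, for the complete intersection $\Hstar(\Gr(k,n))=\mathbb{C}[e_1,\dots,e_k]/I_{k,n}$, of Wolffhardt's general free bimodule resolution of a complete intersection ring, and to reprove exactness directly by a filtration argument. Write $R=\mathbb{C}[e_1,\dots,e_k]$, $I=I_{k,n}=(r_1,\dots,r_k)$, $\Hstar=R/I$, $\Gamma=\Hstar\otimes_{\mathbb{C}}\Hstar$, and $x_i=e_i\otimes 1-1\otimes e_i\in\Gamma$ for $i=1,\dots,k$. First I would record the classical input that $r_1,\dots,r_k$ is a regular sequence in $R$: since $\Gr(k,n)$ is a compact manifold, $\Hstar$ is finite-dimensional over $\mathbb{C}$, so the quotient $R/I$ has Krull dimension $0=\dim R-k$, and $k$ elements cutting out a quotient of this maximal codimension in the polynomial ring $R$ automatically form a regular sequence. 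Hence $\Hstar$ is a complete intersection and $\operatorname{Koszul}(r_1,\dots,r_k;R)$ is its minimal free resolution over $R$. (One also checks from $r_j=(-1)^{n-k+j}h_{n-k+j}(x_1,\dots,x_k)$ that this is the standard presentation.)

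Next I would check $\partial^2=0$. Since $\partial$ is a derivation of the bigraded algebra $\Hstar\otimes\Hstar\otimes S(V)$, so is $\partial^2$, and it suffices to evaluate on the generators. One has $\partial^2(f_j)=\partial(e_j\otimes 1-1\otimes e_j)=0$, while $\partial^2(b_j)=\sum_{i=1}^k\partial_i(r_j)\,x_i$. The definition $\partial_i=(\tau_k(e_i)-\tau_0(e_i))^{-1}(\tau_i-\tau_{i-1})$ together with $\tau_k(e_i)-\tau_0(e_i)=x_i$ gives $\partial_i(r_j)\,x_i=\tau_i(r_j)-\tau_{i-1}(r_j)$; this also explains why $\partial_i(r_j)$ lies in $\Hstar\otimes\Hstar$, since $\tau_i$ and $\tau_{i-1}$ differ only by substituting $e_i\otimes1$ versus $1\otimes e_i$ in the $i$-th slot, so their difference is (canonically) divisible by $x_i$. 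Summing telescopes: $\sum_i\partial_i(r_j)\,x_i=\tau_k(r_j)-\tau_0(r_j)=r_j\otimes1-1\otimes r_j$, which vanishes in $\Gamma$ because $r_j\in I$. Hence $\partial^2=0$, and $H^0$ of the complex is $\Gamma/(x_1,\dots,x_k)\cong\Hstar$ with its diagonal bimodule structure.

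For exactness I would filter $C:=\Hstar\otimes\Hstar\otimes S(V)$ by the polynomial degree in the $b$'s (a filtration which is finite in each internal bidegree, so the associated spectral sequence converges strongly). The differential is filtration non-increasing and on the associated graded only the Koszul part survives, since $\partial(b_j)=\sum_i\partial_i(r_j)\otimes f_i$ strictly lowers the $b$-degree; thus $E_0=\bigoplus_{p\ge0}\mathbb{C}[b]_p\otimes\operatorname{Koszul}(x_1,\dots,x_k;\Gamma)$. Because $x_1,\dots,x_k$ is a regular sequence in the polynomial ring $R\otimes R$ with $R\otimes R/(x)\cong R$ the diagonal, $H_\bullet(\operatorname{Koszul}(x;\Gamma))\cong\operatorname{Tor}^{R\otimes R}_\bullet(R,\Hstar\otimes\Hstar)\cong\operatorname{Tor}^R_\bullet(\Hstar,\Hstar)$, and for the complete intersection $\Hstar=R/I$ this equals the exterior algebra $\Hstar\otimes_{\mathbb{C}}\Lambda(\eta_1,\dots,\eta_k)$ on generators $\eta_i$ in homological degree $1$ and $q$-degree $\DEG_q(r_i)$. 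So $E_1\cong\Hstar\otimes_{\mathbb{C}}\mathbb{C}[b_1,\dots,b_k]\otimes_{\mathbb{C}}\Lambda(\eta_1,\dots,\eta_k)$ as a bigraded algebra, and the induced differential $d_1$ is a derivation with $d_1(b_j)=[z_j]$, the class of the cycle $z_j:=\sum_i\partial_i(r_j)\otimes f_i\in\operatorname{Koszul}(x;\Gamma)_1$. The key point is that these particular divided-difference cycles represent a free $\Hstar$-basis of $H_1=\operatorname{Tor}^R_1(\Hstar,\Hstar)=I/I^2$, so that after matching bases $d_1(b_j)=\eta_j$. Then $(E_1,d_1)$ is, over $\Hstar$, the $\mathbb{C}$-tensor product over $j=1,\dots,k$ of the complexes $(\mathbb{C}[b_j]\otimes\Lambda(\eta_j),\ b_j\mapsto\eta_j)$; since $\operatorname{char}\mathbb{C}=0$ the map $b_j^m\mapsto m\,b_j^{m-1}\eta_j$ is injective for $m\ge1$, so each factor is acyclic with $H_0=\mathbb{C}$, and by Künneth $(E_1,d_1)$ has homology $\Hstar$ concentrated in homological degree $0$ and internal degree $0$. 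Hence the spectral sequence degenerates at $E_2$, $H_\bullet(C)\cong\Hstar$ concentrated in degree $0$, and since $C$ is a complex of free $\Gamma$-modules this exhibits \eqref{grbimodcomplex} as a free bimodule resolution of $\Hstar(\Gr(k,n))$.

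The step I expect to be the main obstacle is the identification $d_1(b_j)=\eta_j$, i.e.\ that the cycles $z_j$ built from the $\partial_i(r_j)$ form a free $\Hstar$-basis of the Koszul homology $H_1(\operatorname{Koszul}(x;\Gamma))\cong I/I^2$. This amounts to tracing the isomorphism $\operatorname{Tor}^{R\otimes R}_\bullet(R,\Gamma)\cong\operatorname{Tor}^R_\bullet(\Hstar,\Hstar)$ through a comparison of the two relevant Koszul resolutions over $R\otimes R$ and observing that the identity $r_j\otimes1-1\otimes r_j=\sum_i\partial_i(r_j)\,x_i$ is exactly the datum expressing $r_j\otimes1-1\otimes r_j\in(x_1,\dots,x_k)$ and hitting the $j$-th generator of $I$. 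This local computation is precisely what is carried out in \cite{Wolf}, to which one could alternatively simply appeal after verifying that $\Hstar(\Gr(k,n))$ is a complete intersection.
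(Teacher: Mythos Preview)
The paper does not actually prove this proposition: it simply records it as a citation of Wolffhardt's general result for complete intersections. Your proposal therefore goes well beyond what the paper does, giving a correct and detailed sketch of the argument itself.

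Your approach is sound. The verification of $\partial^2=0$ via the telescoping identity $\sum_i\partial_i(r_j)\,x_i=\tau_k(r_j)-\tau_0(r_j)=r_j\otimes1-1\otimes r_j$ is exactly right, as is the filtration by $b$-degree and the identification of the $E_1$-page with $\Hstar\otimes\mathbb{C}[b_1,\dots,b_k]\otimes\Lambda(\eta_1,\dots,\eta_k)$ through the chain of isomorphisms $H_\bullet(\operatorname{Koszul}(x;\Gamma))\cong\operatorname{Tor}^{R\otimes R}_\bullet(R,\Hstar\otimes\Hstar)\cong\operatorname{Tor}^R_\bullet(\Hstar,\Hstar)$. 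You correctly isolate the one nontrivial step --- that the cycles $z_j=\sum_i\partial_i(r_j)\,f_i$ give a free $\Hstar$-basis of $H_1\cong I/I^2$ --- and your explanation of why (the identity $r_j\otimes1-1\otimes r_j=\sum_i\partial_i(r_j)\,x_i$ is precisely the lift witnessing that $[z_j]$ maps to $\bar r_j$) is the right idea. What you have written is, in outline, Wolffhardt's own argument specialised to this particular complete intersection, so there is no genuinely different route to compare.

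One small slip: in your parenthetical you write $r_j=(-1)^{n-k+j}h_{n-k+j}(x_1,\dots,x_k)$, but the $x_i$ you just introduced are the diagonal differences $e_i\otimes1-1\otimes e_i\in\Gamma$; you mean the polynomial generators $e_i$ (or the Chern roots). This does not affect the argument.
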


%In Lauda's category the endomorphism algebras of $\mathsf{E}^{(k)}$ and $\mathsf{F}^{(k)}$ are modules over $\Hstar(\Gr(k,n))$. 
  %Thus we may
%define the following complex in the homotopy category:
%\begin{equation}
%(\mathsf{p}_{k,n}, \partial) =  ((\mathsf{E}^{(k)} \mathsf{F}^{(k)}) \otimes_{\Hstar \otimes_{\mathbb{C}} \Hstar} 
%(\Hstar \otimes_{\mathbb{C}} \Hstar) \otimes S(V), \partial).
%\end{equation}

\subsection{Properties of the complex}
In order to connect the complex \eqref{grbimodcomplex} to the categorified Jones-Wenzl projector from the previous section, we first recall some important results of Soergel.

\begin{prop} \cite[Endomorphismensatz]{SoergelKatO} \label{EndofAntiDom}
There is an isomorphism of algebras
\begin{equation*}
\End_{\mathfrak{g}}(P(0^{n-k} 1^k)) \cong \Hstar(\Gr(k,n)).
\end{equation*}
\end{prop}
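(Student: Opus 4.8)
The plan is to identify $P(0^{n-k}1^k)$ as the ``big'' (antidominant) projective in the block $\mathcal{O}_k(\mathfrak{gl}_n)$ and then invoke Soergel's Endomorphismensatz together with the standard presentation of the coinvariant-type algebra attached to the parabolic subgroup $\mathbb{S}_k\times\mathbb{S}_{n-k}$. First I would recall that in our labelling conventions the weight $0^{n-k}1^k$ corresponds, under the fixed bijection between $0/1$-tuples and shortest coset representatives in $\mathbb{S}_n/(\mathbb{S}_k\times\mathbb{S}_{n-k})$, to the \emph{longest} such representative, i.e.\ to the element $w$ with $w\cdot\lambda$ antidominant, where $\lambda = e_1+\cdots+e_k-\rho_n$ is the chosen dominant weight for the block $\mathcal{O}_k$. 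Hence $L(0^{n-k}1^k)$ is the unique simple module of maximal Gelfand--Kirillov dimension, and $P(0^{n-k}1^k)$ is its projective cover, the analogue of the ``antidominant projective'' $P(w_0\cdot\lambda)$ in the regular block. This is the object to which Soergel's Struktursatz/Endomorphismensatz applies most directly.

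Next I would apply Soergel's Endomorphismensatz in its singular-block form (e.g.\ \cite{SoergelKatO}): for $\lambda$ integral dominant with stabiliser $W_\lambda = \mathbb{S}_k\times\mathbb{S}_{n-k}$, the endomorphism ring of the antidominant projective $P$ in $\mathcal{O}_\lambda$ is isomorphic to the ring of $W_\lambda$-invariants in the coinvariant algebra $C = \mathbb{C}[\mathfrak{h}]/(\mathbb{C}[\mathfrak{h}]^{\mathbb{S}_n}_+)$ of $\mathbb{S}_n$, namely $\End_{\mathfrak{g}}(P) \cong C^{W_\lambda}$. The second and final step is then the classical identification $C^{\mathbb{S}_k\times\mathbb{S}_{n-k}} \cong \Hstar(\Gr(k,n))$, realised by writing $C^{\mathbb{S}_k\times\mathbb{S}_{n-k}}$ as $\mathbb{C}[x_1,\dots,x_n]^{\mathbb{S}_k\times\mathbb{S}_{n-k}}$ modulo the ideal generated by $\mathbb{C}[x_1,\dots,x_n]^{\mathbb{S}_n}_+$; the first factor is freely generated by the elementary symmetric polynomials $e_1,\dots,e_k$ in the first $k$ variables (together with those in the remaining $n-k$, which are expressed via these and the full symmetric functions), and imposing that the \emph{total} elementary symmetric functions vanish gives precisely the relations $r_1,\dots,r_k$ defining the ideal $I_{k,n}$ in the preceding Proposition. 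Matching gradings (each $e_i$ sits in degree $2i$, which agrees with the grading on $P$ coming from the Koszul grading on $A_{k,1^n}$) completes the identification as graded algebras, which is the form we will use later.

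The main obstacle is not any single deep step but rather bookkeeping: one must be careful that the parabolic subgroup attached to $0^{n-k}1^k$ really is $\mathbb{S}_k\times\mathbb{S}_{n-k}$ under the $\rho$-shifted dot-action and our tuple-to-coset-representative dictionary (a sign/convention mismatch here would replace $P(0^{n-k}1^k)$ by some other indecomposable projective and break the argument), and that Soergel's theorem is being applied to the correct block $\mathcal{O}_k = \mathcal{O}_\lambda$ with $\lambda$ the \emph{dominant} representative — not the antidominant one — so that $P(0^{n-k}1^k)$ is indeed the antidominant projective therein. Once these identifications are pinned down, the rest is the standard presentation of the Schubert/coinvariant description of $\Hstar(\Gr(k,n))$, which is exactly the content already recorded in the Proposition on $\Hstar(\Gr(k,n)) \cong \mathbb{C}[e_1,\dots,e_k]/I_{k,n}$ above, so I would simply cite it.
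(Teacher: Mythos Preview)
Your proposal is correct and essentially unpacks exactly what the paper is invoking: the paper does not give its own proof of this proposition but simply records it as a direct citation of Soergel's Endomorphismensatz \cite{SoergelKatO}. Your outline---identifying $P(0^{n-k}1^k)$ as the antidominant projective in $\mathcal{O}_k$, applying the Endomorphismensatz to obtain the $W_\lambda$-invariants in the coinvariant algebra, and then using the classical identification $C^{\mathbb{S}_k\times\mathbb{S}_{n-k}}\cong \Hstar(\Gr(k,n))$---is precisely the content of that citation, so there is nothing to compare beyond noting that you have supplied the argument the paper leaves implicit.
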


The {\it Soergel functor} 
%\begin{equation*}
$\mathbb{V}_{k,n} \colon \gmod-A_{k,(1^n)} \rightarrow \gmod-\Hstar(\Gr(k,n))$ is defined as
%\end{equation*}
\begin{equation*}
\mathbb{V}_{k,n}(M)=\Hom_{A_{k,(1^n)}}(\hat{P}(0^{n-k} 1^k), M)
\cong \hat{P}(0^{n-k} 1^k)   \otimes_{A_{k,(1^n)}} M .
\end{equation*}

Via the equivalence of categories
%\begin{equation*}
$\gmod-A_{k,(n)} \cong \gmod-\Hstar(\Gr(k,n))$,
%\end{equation*}
we identify half of the categorified Jones-Wenzl projector $\hat{\pi}_{k,(n)}$ with the Soergel functor $\mathbb{V}_{k,n}$.

The left adjoint functor of the Soergel functor 
%\begin{equation*}
$\mathbb{T}_{k,n} \colon D^\triangledown(\gmod-\Hstar(\Gr(k,n))) \rightarrow 
D^\triangledown(\gmod-A_{k,(1^n)})$ can be described by
%\end{equation*}
\begin{equation*}
\mathbb{T}_{k,n}(M)=\hat{P}(0^{n-k} 1^k)  \otimes^{\mathbb{L}}_{\Hstar(\Gr(k,n))} M .
\end{equation*}
Thus we could write the categorified Jones-Wenzl projector as
\begin{equation*}
\hat{p}_{k,n} \colon D^\triangledown(\gmod-A_{k,(1^n)}) \rightarrow D^\triangledown(\gmod-A_{k,(1^n)})
\end{equation*}
\begin{equation*}
\hat{p}_{k,n}(M)=  \hat{P}(0^{n-k} 1^k) \otimes^{\mathbb{L}}_{\Hstar} \hat{P}(0^{n-k} 1^k) \otimes_{A_{k,(1^n)}}  M.
\end{equation*}

There is a $2$-functor from Lauda's $2$-category to $\bigoplus_{k=0}^{n} \gmod-A_{k,1^n} $. By the discussion above, the endomorphism algebras of $\mathsf{E}^{(k)}$ and $\mathsf{F}^{(k)}$ are modules over $\Hstar(\Gr(k,n))$. 
  Thus we may
define the following complex in the homotopy category:
\begin{equation}
(\mathsf{p}_{k,n}, \partial) =  ((\mathsf{E}^{(k)} \mathsf{F}^{(k)}) \otimes_{\Hstar \otimes_{\mathbb{C}} \Hstar} 
(\Hstar \otimes_{\mathbb{C}} \Hstar) \otimes S(V), \partial).
\end{equation}

\begin{theorem}
\label{complexonO}
On $D^\triangledown(\gmod-A_{k,1^n})$, 
the functor $\mathsf{p}_{k,n}$ is isomorphic to 
$ \hat{p}_{k,(n)}$,
and thus the following isomorphisms hold:
\begin{enumerate}
\item $ \mathsf{p}_{k,n} \circ \mathsf{p}_{k,n} \cong \mathsf{p}_{k,n}$,
\item $ \hat{\mathcal{E}}_k \circ \mathsf{p}_{k,n} \cong \mathsf{p}_{k+1,n} \circ \mathcal{\hat{E}}_k$,
\item $ \mathcal{\hat{F}}_k \circ \mathsf{p}_{k,n} \cong \mathsf{p}_{k-1,n} \circ \mathcal{\hat{F}}_k$.
\end{enumerate}
\end{theorem}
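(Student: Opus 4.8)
The plan is to write both $\mathsf p_{k,n}$ and $\hat p_{k,(n)}$, evaluated on an object $M\in D^\triangledown(\gmod-A_{k,1^n})$, as iterated (derived) tensor products over the commutative ring $\Hstar=\Hstar(\Gr(k,n))$ and then to match them by replacing the bimodule complex $(\Hstar\otimes_{\mathbb C}\Hstar)\otimes S(V)$ with the diagonal $\Hstar$-bimodule that it resolves.

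First I would determine the $(A_{k,1^n},A_{k,1^n})$-bimodule-with-$\Hstar\otimes_{\mathbb C}\Hstar$-action underlying the endofunctor $\mathsf E^{(k)}\mathsf F^{(k)}$ of $\gmod-A_{k,1^n}$. Under the $2$-functor $\mathbf U\to\bigoplus_{j=0}^n\gmod-A_{j,1^n}$ the object $\mathsf 1_{2k-n}$ goes to (the category) $\gmod-A_{k,1^n}$ and $\mathsf 1_{-n}$ to $\gmod-A_{0,1^n}\cong\gmod-\mathbb C$, and $\mathsf E^{(k)}\mathsf 1_{-n}$, $\mathsf F^{(k)}\mathsf 1_{2k-n}$ become a biadjoint pair of functors between these two categories. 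Since $\hat{\mathcal E}$ is a graded lift of tensoring with the natural representation followed by projection onto a block (Theorem~\ref{projfunc}, \cite{FKS}), $\mathsf E^{(k)}$ applied to the generator of $\gmod-A_{0,1^n}$ is, up to a grading shift, the antidominant (``big'') projective $\hat P(0^{n-k}1^k)$, whose endomorphism algebra is $\Hstar$ by Proposition~\ref{EndofAntiDom}; this is precisely the $\Hstar$-module structure on $\End(\mathsf E^{(k)})$ (compatibly with the bubble algebra of $\mathbf U$ via the infinite Grassmannian relations~\eqref{BUBBLES2}). Biadjointness then identifies $\mathsf F^{(k)}$ with $\Hom_{A_{k,1^n}}(\hat P(0^{n-k}1^k),-)\cong\hat P(0^{n-k}1^k)\otimes_{A_{k,1^n}}(-)$, up to a grading shift, so that
\begin{equation*}
\mathsf E^{(k)}\mathsf F^{(k)}(M)\;\cong\;\hat P(0^{n-k}1^k)\otimes_{\mathbb C}\hat P(0^{n-k}1^k)\otimes_{A_{k,1^n}}M
\end{equation*}
as a right $\Hstar\otimes_{\mathbb C}\Hstar$-module, the two copies of $\Hstar$ acting through $\End_{A_{k,1^n}}(\hat P(0^{n-k}1^k))$ on the respective tensor factor. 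Verifying that these two $\Hstar$-actions genuinely match the ones built into the definition of $\mathsf p_{k,n}$, and that all grading shifts (the $\langle k(k-1)/2\rangle$ in $\mathsf E^{(k)},\mathsf F^{(k)}$ and the $\mathbb Z^2$-grading of $V$) are consistent, is the step I expect to be the main obstacle, since it requires tracking the $2$-functor on $2$-morphisms; but it is of the same nature as the identifications already recorded in Section~\ref{secJWcomplex} and in \cite{L,SoergelKatO}.

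Next, recall (Wolffhardt, \cite[Theorem~2]{Wolf}) that $\bigl((\Hstar\otimes_{\mathbb C}\Hstar)\otimes S(V),\partial\bigr)$ is a \emph{free} resolution of the diagonal $\Hstar$-bimodule over $\Hstar\otimes_{\mathbb C}\Hstar$. Hence tensoring the right $\Hstar\otimes_{\mathbb C}\Hstar$-module $\mathsf E^{(k)}\mathsf F^{(k)}(M)$ over $\Hstar\otimes_{\mathbb C}\Hstar$ with this resolution computes the derived tensor product with the diagonal $\Hstar$; combining this with the standard change-of-rings isomorphism $(X\otimes_{\mathbb C}Y)\otimes^{\mathbb L}_{\Hstar\otimes_{\mathbb C}\Hstar}\Hstar\cong X\otimes^{\mathbb L}_{\Hstar}Y$ (valid since $\mathbb C$ is a field and $\Hstar$ is commutative, for $X$ a right and $Y$ a left $\Hstar$-module) gives, naturally in $M$,
\begin{equation*}
\mathsf p_{k,n}(M)\;\cong\;\Bigl(\hat P(0^{n-k}1^k)\otimes^{\mathbb L}_{\Hstar}\hat P(0^{n-k}1^k)\Bigr)\otimes_{A_{k,1^n}}M\;=\;\hat P(0^{n-k}1^k)\otimes^{\mathbb L}_{\Hstar}\mathbb V_{k,n}(M)\;=\;\hat p_{k,(n)}(M),
\end{equation*}
using the descriptions of $\mathbb V_{k,n}$ and $\hat p_{k,(n)}$ recalled just before the theorem together with exactness of $\mathbb V_{k,n}$. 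As everything is given by explicit complexes of bimodules, this is an isomorphism of functors on $D^\triangledown(\gmod-A_{k,1^n})$.

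Finally, the three displayed isomorphisms follow formally from $\mathsf p_{k,n}\cong\hat p_{k,(n)}$. For (1): Theorem~\ref{catJW}(1) with ${\bf d}=(n)$ gives $\hat\pi_{k,(n)}\circ\mathbb L\hat\iota_{k,(n)}\cong\Id$, so $\hat p_{k,(n)}=\mathbb L\hat\iota_{k,(n)}\circ\hat\pi_{k,(n)}$ is idempotent (equivalently, invoke Theorem~\ref{JWgenidempotent}), whence $\mathsf p_{k,n}\circ\mathsf p_{k,n}\cong\mathsf p_{k,n}$. For (2) and (3): $\hat\pi_{k,(n)}=\mathbb V_{k,n}$ commutes with the categorified $\mathcal U_q(\mathfrak{sl}_2)$-action of Theorem~\ref{projfunc} by \cite[Lemma~44]{FSS1}, and since $\mathbb L\hat\iota_{k,(n)}=\mathbb T_{k,n}$ is its adjoint, $\hat p_{k,(n)}=\mathbb T_{k,n}\circ\mathbb V_{k,n}$ likewise commutes with $\hat{\mathcal E}_k$ and $\hat{\mathcal F}_k$ up to the natural isomorphisms — this categorifies the fact, implicit in Proposition~\ref{charJW} and Theorem~\ref{JWaltformthm}, that $p_n$ is a $\mathcal U_q$-module endomorphism (cf.\ Remark~\ref{stillneeded}). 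Transporting along $\mathsf p_{k,n}\cong\hat p_{k,(n)}$ gives $\hat{\mathcal E}_k\circ\mathsf p_{k,n}\cong\mathsf p_{k+1,n}\circ\hat{\mathcal E}_k$ and $\hat{\mathcal F}_k\circ\mathsf p_{k,n}\cong\mathsf p_{k-1,n}\circ\hat{\mathcal F}_k$.
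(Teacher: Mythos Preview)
Your proposal is correct and follows essentially the same argument as the paper: identify $\mathsf E^{(k)}\mathsf F^{(k)}$ with $\hat P(0^{n-k}1^k)\otimes_{\mathbb C}\hat P(0^{n-k}1^k)$ carrying its $\Hstar\otimes_{\mathbb C}\Hstar$-action, use Wolffhardt's free bimodule resolution of the diagonal $\Hstar$, and collapse to $\hat P(0^{n-k}1^k)\otimes^{\mathbb L}_{\Hstar}\hat P(0^{n-k}1^k)=\hat p_{k,(n)}$, then invoke the known properties of $\hat p_{k,(n)}$ for (1)--(3). The only cosmetic difference is that the paper runs the computation starting from $\hat p_{k,(n)}$ and arriving at $\mathsf p_{k,n}$, whereas you go in the opposite direction and are somewhat more explicit about the change-of-rings step and the grading compatibilities.
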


\begin{proof}
In order to obtain a complex of $A_{k,1^n}$-bimodules quasi-isomorphic to 
$ \hat{p}_{k,(n)}$, we use the resolution of $\Hstar(\Gr(k,n))$ of free 
$(\Hstar(\Gr(k,n)), \Hstar(\Gr(k,n)))$-\\bimodules from ~\eqref{grbimodcomplex}. Recall that $\hat{p}_{k,n}$ is given by the complex of $(A_{k,(1^n)},A_{k,(1^n)})$-bimodules
\begin{equation*}
\hat{P}(0^{n-k} 1^k) \otimes^{\mathbb{L}}_{\Hstar}  \hat{P}(0^{n-k} 1^k).
\end{equation*}
We know from ~\eqref{grbimodcomplex} that as an 
$(\Hstar, \Hstar)$-bimodule, there is a quasi-isomorphism
\begin{equation}
\label{Grbimodres}
\Hstar \cong \Hstar \otimes_{\mathbb{C}} \Hstar \otimes S(V)
\end{equation}
Tensoring ~\eqref{Grbimodres} over $\Hstar$
by $\hat{P}(0^{n-k} 1^k)$, we get that $\hat{p}_{k,n}$ is quasi-isomorphic to
\begin{equation*}
\hat{P}(0^{n-k} 1^k)  \otimes_{\Hstar}  \Hstar \otimes_{\mathbb{C}} \Hstar \otimes_{\Hstar}  \hat{P}(0^{n-k} 1^k) \otimes S(V)
\end{equation*}
which is then isomorphic to
%\begin{equation*}
$\hat{P}(0^{n-k} 1^k)  \otimes_{\mathbb{C}}  \hat{P}(0^{n-k} 1^k) \otimes S(V) \cong \mathsf{E}^{(k)} \mathsf{F}^{(k)} \otimes S(V)$.
%\end{equation*}
Thus $ \mathsf{p}_{k,n} \cong \hat{p}_{k,(n)}$.
The remaining statements of the theorem about $\mathsf{p}_{k,n}$
follow because we know the corresponding statements already for
$ \hat{p}_{k,(n)}$.
\end{proof}

\begin{remark}{\rm 
Webster also connected the categorified Jones-Wenzl projector to the cohomology of the Grassmannian in \cite[Section 4.5]{Webgrass}.

For the case $k=1$ (projective space) and also in the context of categorification at a root of unity, see \cite{QS2}.}
\end{remark}

\begin{remark}{\rm 
Let $\mathcal{C}_{1^n} = \oplus_{k=0}^n \mathcal{C}_{k,1^n}$ be a $2$-representation of $\mathbf{U}$ in the sense of Losev and Webster \cite{LW}, such that $[\mathcal{C}_{1^n}] \cong V_1^{\otimes n}$. Then on the homotopy category of $\mathcal{C}_{k,1^n}$ there are isomorphisms:
\vspace{-2mm}
\begin{multicols}{3}
\begin{enumerate}
\item $ \mathsf{p}_{k,n} \circ \mathsf{p}_{k,n} \cong \mathsf{p}_{k,n}$
\item $ \mathsf{E} \circ \mathsf{p}_{k,n} \cong \mathsf{p}_{k+1,n} \circ \mathsf{E}$
\item $ \mathsf{F} \circ \mathsf{p}_{k,n} \cong \mathsf{p}_{k-1,n} \circ \mathsf{F}$.
\end{enumerate}
\end{multicols}
\vspace{-2mm}
This is true for category $\mathcal{O}$ by Theorem ~\ref{complexonO} and it was shown in \cite{SaStr} that category $\mathcal{O}$ provides a $2$-representation of $\mathbf{U}$. 
The general case follows from the uniqueness result of tensor product categorifications of Losev and Webster \cite{LW} using \cite{SaStr}.}
\end{remark}
\section{Categorification of the uncoloured Reshetikhin-Turaev invariant}
\label{catuncoloredrt}
A categorification of the Reshetikhin-Turaev tangle invariant for the standard representation was first constructed in \cite{StrDuke}. The main result there is the following.

\begin{theorem}{\rm(\cite[Theorem 7.1, Remark 7.2]{StrDuke})}
\label{catjones} Let $ T $ be an oriented tangle from $ n $ points to $ m $ points.  Let $ D_1 $ and $ D_2 $ be two tangle diagrams of $
T. $ Let
$${\hat{\Phi}}(D_1),{\hat{\Phi}}(D_2)\colon \quad D^b\left(\bigoplus_{k=0}^n \gmod-A_{k,1^n} \right) \rightarrow
D^b\left(\bigoplus_{k=0}^m \gmod-A_{k,1^m}\right) $$ be the corresponding functors associated to the oriented tangle. Then there is an isomorphism of functors $
{\hat{\Phi}}(D_1)\langle 3 \gamma(D_1) \rangle \cong {\hat{\Phi}}(D_2)\langle 3 \gamma(D_2) \rangle $.
\end{theorem}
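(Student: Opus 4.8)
This is the main result of \cite{StrDuke}; the strategy one would follow is as follows. The plan is first to define $\hat\Phi$ on \emph{elementary} oriented tangle diagrams --- the crossing generators (over and under, in their orientation variants) together with the oriented cups and caps --- and then to show that the renormalised assignment $D\mapsto\hat\Phi(D)\langle3\gamma(D)\rangle$ factors through a presentation of the category of oriented tangles by generators and relations. An oriented cup, resp.\ cap, is categorified by a graded lift of one of the adjoint pair $\hat\cup_i,\hat\cap_i$ of cup/cap functors of \cite{BFK,StrDuke} between $D^b(\bigoplus_k\gmod-A_{k,1^n})$ and $D^b(\bigoplus_k\gmod-A_{k,1^{n\pm2}})$, carrying compensating internal grading shifts, chosen so that the composite $\hat C_i:=\hat\cup_i\hat\cap_i$ is the projective (translation-through-a-wall) endofunctor $\hat\theta_i$ of the $n$-strand category. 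A positive, resp.\ negative, crossing on strands $i,i+1$ is categorified by the two-term complex of functors $[\hat\theta_i\to\Id]$, resp.\ $[\Id\to\hat\theta_i]$, placed in the homological and internal degrees making its graded Euler characteristic equal to the morphism $\Pi_i$, resp.\ $\Omega_i$, of \eqref{crosses}; equivalently one may use the derived twisting (shuffling) functors $\mathbb{L}\mathcal{T}_{s_i}$ and their adjoint completion functors $\mathbb{R}\mathcal{G}_{s_i}$. One checks that $\hat\Phi$ is well defined on a diagram written as a composition of elementary pieces up to the distant-commutativity (``slide'') relations; this is immediate because functors acting on disjoint groups of strands are built from projective functors, which commute.

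It then suffices to verify invariance under a finite list of local moves: the oriented Reidemeister moves R1, R2, R3, the snake (zig-zag) identities for a cup followed by a cap, and the pitchfork moves relating a crossing to a cup or a cap. For R2 one uses that $[\hat\theta_i\to\Id]$ and $[\Id\to\hat\theta_i]$ are mutually inverse in the homotopy category --- equivalently, that $\mathbb{L}\mathcal{T}_{s_i}$ and $\mathbb{R}\mathcal{G}_{s_i}$ are inverse self-equivalences of $D^b$ --- together with the facts that the internal shifts cancel and that $\gamma$ vanishes on an R2 diagram. For R3 one uses the braid relation $\mathbb{L}\mathcal{T}_{s_i}\mathbb{L}\mathcal{T}_{s_{i+1}}\mathbb{L}\mathcal{T}_{s_i}\cong\mathbb{L}\mathcal{T}_{s_{i+1}}\mathbb{L}\mathcal{T}_{s_i}\mathbb{L}\mathcal{T}_{s_{i+1}}$, which follows from $s_is_{i+1}s_i=s_{i+1}s_is_{i+1}$ together with the compatibility of twisting functors with composition along reduced words; here too $\gamma$ agrees on the two sides. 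The snake identities come from the adjunction between $\hat\cup_i$ and $\hat\cap_i$: the unit and counit restrict to isomorphisms on the highest-weight subcategories that actually occur, and the internal shifts built into $\hat\cup_i$ and $\hat\cap_i$ are fixed precisely so that these become honest isomorphisms. The pitchfork moves follow from the (derived) commutation of twisting functors with projective functors.

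The delicate point --- and the reason for the renormalisation by $\langle3\gamma\rangle$ --- is R1. A positive curl composes a crossing with a cup (or a cap); restricting $[\hat\theta_i\to\Id]$ to the image of the cup functor, $\hat\theta_i$ becomes the cup functor again up to an internal shift, so the complex collapses on cohomology to the cup functor shifted by $\langle\mp3\rangle$, the ``$3$'' arising from the $q^{\pm2}$ coefficients and the shift conventions built into $\Pi_i,\Omega_i$. Since a positive, resp.\ negative, curl changes $\gamma$ by $\pm1$, the global twist $\langle3\gamma(D)\rangle$ absorbs exactly this discrepancy and one recovers an isomorphism of functors. To upgrade the comparisons above from ``isomorphic up to grading shift'' to genuine isomorphisms, one fixes compatible graded lifts throughout and argues by rigidity: the blocks $A_{k,1^n}$ are Koszul, so the graded endomorphism algebra of each composite functor in question is non-negatively graded and one-dimensional in the degree carrying the comparison morphism, whence any nonzero natural transformation realising the known Grothendieck-group identity is automatically invertible; this reduces every move to a nonvanishing check on a single object (for instance on a Verma or a dominant projective). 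Boundedness is not an issue, since each elementary functor preserves $D^b$ and hence so does any composite --- in contrast with the coloured situation treated later in the paper.

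The main obstacle I anticipate is the simultaneous bookkeeping of (i) the interaction of the derived braiding functors with the cup and cap functors, in particular making the pitchfork and R1 collapses work with the \emph{precise} internal shifts, and (ii) the graded, derived braid relation underlying R3; both rest on the fine homological structure of the blocks $A_{k,1^n}$ --- Koszulity, the standardly stratified structure, and the smallness of the relevant $\Hom$-spaces between projective and twisting functors --- rather than on any soft categorical argument.
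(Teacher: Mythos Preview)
Your outline is essentially the argument of \cite{StrDuke}, and as such is broadly correct. However, you should be aware that the present paper deliberately works in the \emph{Koszul dual} picture to \cite{StrDuke} (as stated explicitly just after the theorem), so the specific functors assigned to elementary tangles differ from what you describe. In the paper's conventions the cup and cap functors are built from derived Zuckerman functors $\hat\epsilon_i\circ\mathbb{L}\hat Z_i\lsem-1\rsem$ composed with the Enright--Shelton equivalence $\hat\zeta$, so the cup-cap composite $\hat C_i$ is $\hat\epsilon_i\mathbb{L}\hat Z_i$ and \emph{not} the translation functor $\hat\theta_i$; dually, the crossings are given by Arkhipov's twisting functor $\mathbb{L}\hat T_i\lsem-1\rsem$ and Joseph's completion functor $\mathbb{R}\hat J_i\lsem1\rsem$, sitting in the distinguished triangle $\mathbb{L}\hat T_i\to\hat\Id\langle-2\rangle\to\hat\epsilon_i\hat Z_i\langle-1\rangle$ of Corollary~\ref{disttri}, rather than the shuffling cones $[\hat\theta_i\to\Id]$ you describe. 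Your parenthetical ``twisting (shuffling)'' conflates two genuinely different families of functors: shuffling functors are the cones on $\hat\theta_i\to\Id$, twisting functors are tensoring with the semiregular bimodule; they are Koszul dual to one another, not synonymous.

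The upshot is that your verification of R2, R3, snake and pitchfork translates into the paper's language as the isomorphisms listed in Theorem~\ref{functorisos}, and your R1 collapse argument becomes item (1) there, $\hat\cap_{i,n}\circ\mathbb{L}\hat T_i\lsem-1\rsem\cong\hat\cap_{i,n}$. Either picture proves the theorem; the paper chooses the twisting/Zuckerman side because the categorified Jones--Wenzl projector and the projectively presented subcategories needed for the coloured invariant are more transparent there. Your rigidity argument for upgrading Grothendieck-group identities to functor isomorphisms is not how either proof actually proceeds---the isomorphisms are established directly via the distinguished triangles and adjunctions, with the Koszul duality results of \cite{MOS} transporting them between the two pictures---but it is not an unreasonable heuristic.
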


We now briefly explain how to associate a functor to a tangle. This is done by associating to each elementary tangle (cup, cap, braid) a functor. To a braid one associates a certain derived equivalence.
In order to prove that this is a tangle invariant, we must show that if two tangles are related by a Reidemeister move, then the associated functors are the same up to isomorphism.
Note that we work here in a Koszul dual picture of the one developed in \cite{StrDuke}, since we have a better understanding of the categorification of arbitrary tensor products in this context. The translation between these two picture is given by the result in \cite[Theorem 35, Theorem 39]{MOS} relating the corresponding functors via the Koszul duality equivalence of categories.

\subsection{Functors associated to cups and caps}
\label{catcupscaps}
In the following we briefly recall the definition of the functors and the main properties which will be used later. For each $1\leq i < n$ we will define now functors which sends a module to its maximal quotient which has only composition factors from a certain allowed set.

Given such $i$, consider the set $S$ of isomorphism classes of simple right $A_{k,1^n}$-modules  ${L}(a_1,a_2,\ldots, a_n)$ where the sequence ${\bf a}=(a_1,a_2,\ldots, a_n)$ is obtained from the sequence $(1^k0^{n-k})$ by applying an element $w\in \mathbb{S}_n$ which is a shortest coset representative in $\mathbb{S}_n / \mathbb{S}_{k}\times \mathbb{S}_{n-k}$ such that the entries in components $ i $ and $ i+1$ of the sequence resulting from $ w $ applied to
$(1^k0^{n-k})$ are $ 1 $ and $ 0 $ respectively. Let $\Mod-A^i_{k,1^n}$ be the full subcategory of $\Mod-A_{k,1^n}$ containing only modules with simple composition factors from the set $S$. There are the natural functors
\begin{eqnarray}
{\epsilon}_i: \Mod-A^i_{k,1^n} \rightarrow \Mod-A_{k,1^n} && {Z}_i: \Mod-A_{k,1^n} \rightarrow \Mod-A^i_{k,1^n},
\end{eqnarray}
of inclusion of the subcategory, respectively of taking the maximal quotient contained in the subcategory. Note that ${Z}_i$ is left adjoint to ${\epsilon}_i$.

The category $\gmod-A^i_{k,1^n}$ is a graded version of the so-called parabolic category $\cO$ defined as follows: let $\mathfrak{p}_i$ be the {\it $i$-th minimal parabolic} subalgebra of $ \mathfrak{g} $ which has basis the matrix units $E_{r,s}$, where $s\geq r$ or $r,s  \in \lbrace i, i+1 \rbrace$. Now replace locally $\mb$-finiteness in Definition \ref{defO} by locally $ \mathfrak{p}_i$-finiteness and obtain the {\it parabolic category} $ \mathcal{O}_k^i(\mathfrak{gl}_n)$, a full subcategory of $\cO_k$, see \cite[Section 9.3]{Hu}).  We have  $ \mathcal{O}_k^i\cong A^i_{k,1^n}-\op{mod}$. In this context $Z_i$ is the {\it Zuckerman functor} of taking the maximal locally finite quotient with respect to $ \mathfrak{p}_i$. That means we send a module $M\in\cO_k$ to the largest quotient in $\mathcal{O}_k^i(\mathfrak{gl}_n)$.
An important class of objects in these parabolic categories are the parabolic Verma modules $ M^{\mathfrak{p}_i}(a_1, \ldots, a_n) $ where $ {\bf a} = (a_1, \ldots, a_n) $ has the same conditions on it as the labels of the simple object in this category.  Each parabolic Verma module has a graded lift $ \hat{M}^{\mathfrak{p}_i}(a_1, \ldots, a_n) $ such that its head is concentrated in degree zero.
Now fix a graded lift $ \hat{Z}_i $ of $ Z_i $ such that $ \hat{Z}_i \hat{M}(a_1, \ldots, a_n) \cong \hat{M}^{\mathfrak{p}_i}(a_1, \ldots, a_n) \langle -1 \rangle $ (when $ a_i=1 $ and $ a_{i+1}=0$).
A classical result of Enright and Shelton \cite{ES} relates parabolic category $\cO$ with non-parabolic category $\cO$ for a smaller rank algebra. This equivalence was lifted to the graded setup in \cite{Rh}. For a geometric approach see  \cite{SoergelES}. The statement is the following.

\begin{prop}
Let $n\geq 0$. There is an equivalence of categories $\zeta_n \colon
    \mathcal{O}_k
    (\mathfrak{gl}_n) \rightarrow \mathcal{O}_{k+1}^1 (\mathfrak{gl}_{n+2})$ which can be lifted to an equivalence $ \hat{\zeta}_n \colon \gmod-A_{k,1^n} \cong  \\ \gmod-A^1_{k+1,1^{n+2}}$,
such that $ \hat{M}^{\mathfrak{p}_1}(a_1, \ldots, a_n) $ gets mapped to $ \hat{M}(a_3, \ldots, a_n) $.
For $n=0$ the corresponding category is equivalent to the category of graded vector spaces.
\end{prop}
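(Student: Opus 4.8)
The plan is to deduce the proposition from the classical Enright--Shelton equivalence \cite{ES}, promote it to the graded setting as in \cite{Rh} (see also \cite{SoergelES} for the perverse-sheaf incarnation), and then fix normalisations so that graded parabolic Verma modules go to graded Verma modules. The first step is to record the bijection of indexing sets: by the description preceding the proposition, the simple objects of $\mathcal{O}^1_{k+1}(\mathfrak{gl}_{n+2})$ are labelled by the $\{0,1\}$-sequences of length $n+2$ with $k+1$ ones whose first two entries are $1$ and $0$, and deleting those first two entries gives a bijection onto the $\{0,1\}$-sequences of length $n$ with $k$ ones, i.e.\ onto the labels of the simple objects of $\mathcal{O}_k(\mathfrak{gl}_n)$. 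One then checks that this bijection is an isomorphism of posets (after the appropriate length normalisation on the $\mathfrak{gl}_{n+2}$-side) and, crucially, that the parabolic Kazhdan--Lusztig polynomials of $\mathfrak{gl}_{n+2}$ for each such pair of labels agree with the ordinary Kazhdan--Lusztig polynomials of $\mathfrak{gl}_n$ for the corresponding pair; this identity is the combinatorial heart of \cite{ES}.

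With this dictionary in hand the ungraded equivalence $\zeta_n$ is obtained as in \cite{ES}: it is assembled from a suitable composition of translation functors (and, if needed, a Zuckerman-type functor), and the matching of Kazhdan--Lusztig data is exactly what forces it to be fully faithful and dense. Alternatively one argues through Soergel's Struktursatz and Endomorphismensatz: the two blocks have equivalent combinatorial categories in the sense of \cite{SoergelKatO} --- the same poset carrying the same $\mathbb{V}$-data, by the previous paragraph --- and one transports an equivalence so that standard objects correspond to standard objects, using the parabolic analogue of Proposition~\ref{EndofAntiDom}. In either approach one arranges that $\zeta_n$ carries a Verma module $M(b_1,\ldots,b_n)\in\mathcal{O}_k(\mathfrak{gl}_n)$ to the parabolic Verma module with label $(1,0,b_1,\ldots,b_n)$ in $\mathcal{O}^1_{k+1}(\mathfrak{gl}_{n+2})$; equivalently, the inverse equivalence sends a parabolic Verma module to the Verma module obtained by deleting the first two, forced, entries of its label. (This is precisely the equivalence underlying the cup and cap functors below, which is why it is recalled here.)

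To obtain the graded lift, note that $A_{k,1^n}$ and $A^1_{k+1,1^{n+2}}$ are Koszul \cite{BGS}, and a positive Koszul grading on a basic finite-dimensional algebra is unique up to isomorphism; since the ungraded equivalence respects the quasi-hereditary structure (it matches standard objects with standard objects), it lifts to an equivalence $\hat{\zeta}_n$ of the graded module categories, unique up to an overall grading shift, by the lifting machinery of \cite{MOS} --- this is the argument carried out in \cite{Rh}. Normalising the standard graded lifts $\hat{M}^{\mathfrak{p}_1}$ and $\hat{M}$ to have head concentrated in degree zero pins down the shift and yields that $\hat{\zeta}_n$ sends $\hat{M}(b_1,\ldots,b_n)$ to the graded parabolic Verma module with label $(1,0,b_1,\ldots,b_n)$, i.e.\ the graded parabolic Verma module with label $(a_1,\ldots,a_{n+2})$ corresponds to the graded Verma module with label $(a_3,\ldots,a_{n+2})$, as asserted.

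Finally, the case $n=0$ (so $k=0$) is immediate: by Definition~\ref{defO} the category $\mathcal{O}_0(\mathfrak{gl}_0)$ is the category of finite-dimensional vector spaces, while $\mathfrak{p}_1=\mathfrak{gl}_2$, so $\mathcal{O}^1_1(\mathfrak{gl}_2)$ is a semisimple block of parabolic category $\mathcal{O}$ with a single simple object; both are therefore (graded-)equivalent to graded vector spaces and the normalisation is trivial. I expect the main obstacle to be not the existence of the ungraded equivalence, which is \cite{ES}, but confirming that it can be chosen to carry parabolic Vermas to Vermas and then descends compatibly to the Koszul-graded categories with \emph{exactly} the stated normalisation; concretely, one must make sure that the parabolic and ordinary Kazhdan--Lusztig data match including the grading shifts, so that the graded lift both exists and is uniquely pinned down.
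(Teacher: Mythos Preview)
Your approach is correct and is exactly what the paper does: the paper itself offers no proof beyond the sentence preceding the proposition, which cites \cite{ES} for the ungraded equivalence, \cite{Rh} for the graded lift, and \cite{SoergelES} for a geometric version. Your write-up simply unpacks what those citations contain and how the normalisation on parabolic Verma modules is fixed, so it is a more detailed version of the same argument rather than a different one.
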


Now there are functors (up to shifts in the internal and homological degree) pairwise adjoint in both directions
\begin{eqnarray*}
&{\hat{\cap}}_{i,n} \colon D^b(\gmod-A_{k,1^n}) \rightarrow D^b(\gmod-A_{k,1^{n-2}})&\\
&{\hat{\cap}}_{i,n}:={\hat{\zeta}_n}^{-1} \circ \mathbb{L} \hat{Z}_1\lsem -1 \rsem \circ \hat\epsilon_2 \circ \mathbb{L} \hat{Z}_2 \lsem -1 \rsem \circ \cdots \circ \hat\epsilon_{i}\circ \mathbb{L} \hat{Z}_i \lsem -1 \rsem&\\
&{\hat{\cup}}_{i,n}\colon D^b(\gmod-A_{k,1^n}) \rightarrow D^b(\gmod-A_{k,1^{n+2}})& \\
&{\hat{\cup}}_{i,n}:=\hat\epsilon_i \circ \mathbb{L} \hat{Z}_i \lsem -1 \rsem \circ \cdots \circ\hat\epsilon_2\circ \mathbb{L} \hat{Z}_2 \lsem -1 \rsem \circ \hat\epsilon_1\circ \hat{\zeta}_n
\end{eqnarray*}
where we denote by $\hat\epsilon_i$, the standard lift of the inclusion functor compatible with \eqref{defcupcap}.\\

The following theorem implies that these functors provide a functorial action of the Temperley-Lieb category.
\begin{theorem}
\label{cupcapiso}
Let $ j \geq k$.  There are isomorphisms
\vspace{-2mm}
\begin{multicols}{2}
\begin{enumerate}
\item $ \hat{\cap}_{i+1,n+2} \hat{\cup}_{i,n} \cong \hat{\Id} $ 
\item $ \hat{\cap}_{i,n+2}
    \hat{\cup}_{i+1,n} \cong \hat{\Id} $ 
    \item $ \hat{\cap}_{j,n} \hat{\cap}_{i,n+2} \cong
    \hat{\cap}_{i,n} \hat{\cap}_{j+2,n+2}$ 
    \item $ \hat{\cap}_{i, n+2} \hat{\cup}_{i,n} \cong
    \hat{\Id}\lsem1\rsem\langle 1 \rangle \bigoplus \hat{\Id}\lsem-1\rsem\langle -1 \rangle $
    \item $\hat{\cup}_{j,n-2} \hat{\cap}_{i,n} \cong
    \hat{\cap}_{i,n+2} \hat{\cup}_{j+2,n} $ \item $ \hat{\cup}_{i,n-2} \hat{\cap}_{j,n} \cong
    \hat{\cap}_{j+2,n+2} \hat{\cup}_{i,n} $ \item $ \hat{\cup}_{i,n+2} \hat{\cup}_{j,n} \cong
    \hat{\cup}_{j+2,n+2} \hat{\cup}_{i,n} $ 
    \\
\end{enumerate}
\end{multicols}
\vspace{-2mm}
of graded endofunctors of $\oplus_{k=0}^n \gmod-A_{k,1^n}$. 
In the Grothendieck group, $ [\hat{\cap}_{i,n}]=\cap_{i,n} $ and $ [\hat{\cup}_{i,n}]=\cup_{i,n} $.
\end{theorem}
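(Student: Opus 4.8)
The plan is to verify each of the seven isomorphisms by a local reduction: every composite functor $\hat\cap_{i,n}$ and $\hat\cup_{i,n}$ is assembled from the Zuckerman functors $\hat Z_j$, the inclusions $\hat\epsilon_j$ and a single Enright--Shelton equivalence $\hat\zeta$, with all indices $j$ confined to a window of bounded size. First I would record the elementary properties of these building blocks: the derived adjunction with $\mathbb L\hat Z_i$ left adjoint to $\hat\epsilon_i$; the fact that $\hat\epsilon_i$ is exact and fully faithful with essential image a Serre subcategory; that functors attached to disjoint index sets commute up to canonical isomorphism; the normalization $\hat Z_i\hat M(a_1,\ldots,a_n)\cong\hat M^{\mathfrak p_i}(a_1,\ldots,a_n)\langle -1\rangle$; and the effect of $\hat\zeta_n$ on parabolic Verma modules. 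With these in hand, relations (3), (6) and (7)---which only move a cap or cup past a cap or cup supported on disjoint strands---follow from the disjoint-support principle together with the finitely many genuinely nested cases, which are checked for small $n$.

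For relations (1), (2), (4) and (5) I would unwind the definitions and use the derived adjunction to cancel adjacent inclusion/Zuckerman pairs. The crucial local input is the value of the composite of a parabolic inclusion with the corresponding derived Zuckerman functor on the parabolic block $D^b(\gmod-A^j_{k,1^n})$. Transporting this composite via $\hat\zeta$ to a non-parabolic block of smaller rank reduces it to a computation controlled by the cohomology of $\mathbb P^1$, whose outcome is two shifted copies of the identity functor; after incorporating the normalizing shifts $\lsem -1\rsem$ built into $\mathbb L\hat Z_j$ this becomes $\hat\Id\lsem 1\rsem\langle 1\rangle\oplus\hat\Id\lsem -1\rsem\langle -1\rangle$, which is relation (4). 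Relations (1) and (2) follow from the same cancellation, the remaining string of inclusions and Zuckerman functors collapsing to the identity in bidegree $(0,0)$ by the chosen normalizations, and (5) follows formally from (1), (2) and the disjoint-support commutations by a diagram chase. Alternatively, all seven isomorphisms can be deduced from the corresponding statements for the functors of \cite{StrDuke} by transporting along the Koszul-duality equivalence of \cite[Theorems 35 and 39]{MOS}, since the cup and cap functors here are the Koszul duals of those there.

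For the Grothendieck group identities I would evaluate $[\hat\cap_{i,n}]$ and $[\hat\cup_{i,n}]$ on the classes $[\hat M(a_1,\ldots,a_n)]$ of graded Verma modules, which under the isomorphism of Proposition \ref{Grothgraded} are the standard basis vectors $v_{a_1}\otimes\cdots\otimes v_{a_n}$. Using that $\hat\epsilon_i$ is exact, that $\mathbb L\hat Z_i$ sends a graded Verma module either to a shifted parabolic Verma module or to zero, and that $\hat\zeta_n$ sends parabolic Vermas to Vermas as above, one computes these composites explicitly on the standard basis; the internal grading shifts produce the powers of $q$ and the homological shifts $\lsem -1\rsem$ the signs in the graded Euler characteristic, reproducing exactly the maps $\cap_{i,n}$ and $\cup_{i,n}$ determined by \eqref{defcupcap}.

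I expect the main obstacle to be the grading and homological-shift bookkeeping: after cancelling inclusions against Zuckerman functors and passing through $\hat\zeta_n^{\pm 1}$, one must check that the internal and homological shifts in (1), (2), (4), (5), (6), (7) come out precisely as stated rather than off by a shift, which requires keeping careful track of the normalization $\hat Z_i\hat M\cong\hat M^{\mathfrak p_i}\langle -1\rangle$ and of the grading shift implicit in $\hat\zeta_n$. A secondary technical point is to confirm that the relevant composites have bounded cohomology, so that the isomorphisms genuinely hold in $D^b$ and not merely in an unbounded derived category; this uses that each block of $\cO$ has finite global dimension.
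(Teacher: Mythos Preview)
Your ``alternative'' in the second paragraph---transporting the isomorphisms from \cite{StrDuke} via the Koszul duality of \cite[Theorems 35, 39]{MOS}---is exactly the paper's proof of the first part; the paper simply cites \cite[Theorem 6.2]{StrDuke} for the Koszul-dual statement and \cite[Section 6.4]{MOS} for the transport, with no further argument. For the Grothendieck group identities the paper cites \cite[Proposition 15]{BFK} together with Lemma~\ref{ZonVermas}, which is essentially your plan of evaluating on graded Verma modules.

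Your primary direct approach, by contrast, has a gap in the treatment of relations (3), (6), (7). The cup and cap functors defined in this paper are \emph{not} local: by definition
\[
\hat{\cap}_{i,n}=\hat{\zeta}_n^{-1}\circ\mathbb{L}\hat Z_1\lsem -1\rsem\circ\hat\epsilon_2\circ\mathbb{L}\hat Z_2\lsem -1\rsem\circ\cdots\circ\hat\epsilon_i\circ\mathbb{L}\hat Z_i\lsem -1\rsem,
\]
a chain that runs from index $i$ all the way down to index $1$ and ends in a single Enright--Shelton equivalence attached to position $1$. Two such functors therefore share all of the low-index Zuckerman/inclusion pairs and both involve Enright--Shelton pieces, so a ``disjoint-support principle'' does not apply: even when the corresponding Temperley--Lieb arcs are disjoint, the functorial composites overlap heavily. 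Showing that these long overlapping composites nonetheless satisfy the height-exchange relations is precisely the substantive content of \cite[Theorem 6.2]{StrDuke} (in the Koszul-dual setting), and reproving it directly is considerably more than ``checking finitely many nested cases for small $n$.'' Your sketches of (1), (2), (4) via adjunction cancellation and a $\mathbb P^1$ computation are more plausible in outline, though they too must be routed through this non-local definition and the interaction of $\hat\zeta_n$ with the chain of Zuckerman functors. In short: the paper's route (your ``alternative'') is the one that actually goes through, and it does so by outsourcing the work rather than by the local reduction you propose first.
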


\begin{proof}
The first part was proven in the Koszul dual case in \cite[Theorem 6.2]{StrDuke}. The theorem now holds for the functors defined above by \cite[Section 6.4]{MOS}. The second part follows directly from \cite[Proposition 15]{BFK} and Lemma \ref{ZonVermas} below.
\end{proof}

The following result categorifies \eqref{defcupcap}, and is the main tool in computing the cap functors explicitly.
\begin{lemma}
\label{ZonVermas}
Let $\hat M({\bf a})\in \gmod-A_{k,1^n}$ be the standard graded lift of the Verma module $M({\bf a})\in\cO_k$. Let ${\bf b}$ be the sequence ${\bf a}$ with $a_i$ and $a_{i+1}$ removed. Then there are isomorphisms of graded modules
\begin{eqnarray}
\hat{\cap}_i\hat M(\bf a)
&\cong&
\begin{cases}
0&\text{if $a_i=a_{i+1}$}\\
\hat M({\bf b})\langle-1\rangle\lsem-1\rsem\in \gmod-A_{k-1,1^{n-2}} &\text{if $a_i=1$, $a_{i+1}=0$}\\
\hat M({\bf b})\in \gmod-A_{k-1,1^{n-2}} &\text{if $a_i=0$, $a_{i+1}=1$}
\end{cases}
\end{eqnarray}
whereas $\hat\epsilon_i(\hat M({\bf b}))$ is quasi-isomorphic to a complex of the form
$$\cdots\quad0\longrightarrow\hat M({\bf c})\langle 1\rangle\longrightarrow\hat M({\bf d})\longrightarrow0\quad\cdots,$$
where ${\bf c}$ and ${\bf d}$ are obtained from ${\bf b}$ by inserting $01$ respectively $10$ at places $i$, $i+1$.
\end{lemma}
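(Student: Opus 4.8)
The plan is to unwind $\hat{\cap}_i=\hat{\cap}_{i,n}$ into its elementary factors and to evaluate it on a Verma module one factor at a time, keeping careful track of both the internal shift $\langle\,\rangle$ and the homological shift $\lsem\,\rsem$. The organising observation is the identity $\hat{\cap}_i\cong\hat{\cap}_{i-1}\circ\hat{\epsilon}_i\circ\mathbb{L}\hat{Z}_i\lsem-1\rsem$, which is immediate from the definition of $\hat{\cap}_{i,n}$, so the first assertion will follow by induction on $i$ once one controls the building blocks $\mathbb{L}\hat{Z}_j$ and $\hat{\epsilon}_j$ on Verma modules and on parabolic Verma modules. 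I would prove the second assertion of the Lemma first, since it is also precisely the input that carries the induction through the functors $\hat{\epsilon}_i$ (and it is what is needed to identify $[\hat{\cup}_{i,n}]$ afterwards).

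For the second assertion one uses the standard short exact sequence in $\cO$ relating a parabolic Verma module to the two regular Vermas lying above it: if ${\bf d}$ is of type $\up\down$ and ${\bf c}=s_i\cdot{\bf d}$ of type $\down\up$ at the positions $i,i+1$, then there is an exact sequence $0\to\hat{M}({\bf c})\langle1\rangle\to\hat{M}({\bf d})\to\hat{M}^{\mathfrak{p}_i}({\bf d})\to0$ of graded modules, the middle map being the (up to scalar) unique nonzero homomorphism and the shift being pinned down by the chosen normalisation of graded lifts. Since $\hat{\epsilon}_i$ is exact and $\hat{M}^{\mathfrak{p}_i}({\bf d})$ is the object which the Enright--Shelton equivalence at positions $i,i+1$ identifies with $\hat{M}({\bf b})$, this sequence exhibits $\hat{\epsilon}_i(\hat{M}({\bf b}))$ as quasi-isomorphic to the two-term complex $[\hat{M}({\bf c})\langle1\rangle\to\hat{M}({\bf d})]$, as claimed; on the Grothendieck group this complex represents $[\hat{M}({\bf d})]-q\,[\hat{M}({\bf c})]$, i.e.\ it recovers $\cup$.

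Next I compute $\mathbb{L}\hat{Z}_j$ on a Verma $\hat{M}({\bf a})$ according to the type at $(j,j+1)$. If $a_j=1$, $a_{j+1}=0$, then $\hat{Z}_j\hat{M}({\bf a})\cong\hat{M}^{\mathfrak{p}_j}({\bf a})\langle-1\rangle$ by the fixed normalisation, and I claim $\mathbb{L}\hat{Z}_j\hat{M}({\bf a})$ equals this, concentrated in homological degree zero: the derived Zuckerman functor of a minimal parabolic has cohomological amplitude $[-1,0]$, parabolic Verma modules are $\hat{Z}_j$-acyclic (since $\hat{Z}_j$ restricts to the identity on $\gmod\text{-}A^j$), and feeding the short exact sequence above into the long exact sequence annihilates the remaining term. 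If $a_j=0$, $a_{j+1}=1$, the same short exact sequence presents $\hat{M}({\bf a})\langle1\rangle$ as the kernel inside $\hat{M}(s_j\cdot{\bf a})$, whence $\mathbb{L}\hat{Z}_j\hat{M}({\bf a})\cong\hat{M}^{\mathfrak{p}_j}(s_j\cdot{\bf a})\langle-1\rangle\lsem1\rsem$, concentrated in homological degree one. If $a_j=a_{j+1}$, the highest weight of $\hat{M}({\bf a})$ lies on the $\alpha_j$-wall, and one must show $\mathbb{L}\hat{Z}_j\hat{M}({\bf a})=0$ outright --- not merely $\hat{Z}_j\hat{M}({\bf a})=0$, which is clear since the head is not $\mathfrak{p}_j$-finite; I would obtain this from the interaction of $\hat{Z}_j$ with translation onto the $\alpha_j$-wall, or by transporting the Koszul-dual computation of these functors from \cite{StrDuke} via the comparison in \cite{MOS}. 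This is the main obstacle: for a type-wall Verma that is neither projective nor simple the projective resolution is genuinely long, and one has to see that the complex produced by $\hat{Z}_j$ is acyclic.

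With these inputs the induction is routine. Applying $\hat{\epsilon}_i\circ\mathbb{L}\hat{Z}_i\lsem-1\rsem$ to $\hat{M}({\bf a})$ gives, up to shift, the two-term Verma complex $[\hat{M}(s_i\cdot{\bf a})\langle1\rangle\to\hat{M}({\bf a})]$ when ${\bf a}$ is of type $\up\down$ at $(i,i+1)$, the analogous complex on the same pair $\{{\bf a},s_i\cdot{\bf a}\}$ when of type $\down\up$, and $0$ when of type wall. Applying $\hat{\cap}_{i-1}$ term by term, one checks that $\hat{M}({\bf a})$ and $\hat{M}(s_i\cdot{\bf a})$ differ exactly in positions $i,i+1$, hence carry opposite types at positions $(i-1,i)$, so precisely one of them is of type wall there and is annihilated by the inductive hypothesis, while the survivor, with positions $i-1,i$ deleted, is exactly $\hat{M}({\bf b})$; thus no cancellation occurs and the output is a single Verma module $\hat{M}({\bf b})$ in a single homological degree. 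The base case $i=1$ is the same computation with $\hat{\zeta}_n^{-1}$ in place of $\hat{\cap}_{0}$, deleting the final two entries. What remains is the bookkeeping of the shifts $\langle\,\rangle$ and $\lsem\,\rsem$, which accumulate to $\langle-1\rangle\lsem-1\rsem$ in the type $\up\down$ case and to nothing in the type $\down\up$ case.
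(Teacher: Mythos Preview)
Your route is very different from the paper's. The paper gives a one-line proof: the Koszul dual of $\hat{\cap}_{i,n}$ was computed on (co)standard objects in \cite[Theorems 5.3 and 8.2]{Strgrad}, and \cite[Theorem 35]{MOS} transports that computation across Koszul duality to the present picture. No unwinding of the definition of $\hat{\cap}_{i,n}$ is performed. Your strategy of evaluating the elementary factors $\hat{\epsilon}_j\circ\mathbb{L}\hat{Z}_j$ on Verma modules and inducting on $i$ is a reasonable alternative and, if carried through, would be more self-contained; but as written it has a genuine gap beyond the wall case you flagged.

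The gap is in the acyclicity claims for $\mathbb{L}\hat{Z}_j$ on Verma modules. Your argument that a type $\up\down$ Verma is $\hat{Z}_j$-acyclic feeds the short exact sequence $0\to\hat{M}(s_j{\bf a})\langle1\rangle\to\hat{M}({\bf a})\to\hat{M}^{\mathfrak{p}_j}({\bf a})\to0$ into the long exact sequence; but this only yields $L_1\hat{Z}_j\hat{M}({\bf a})\cong L_1\hat{Z}_j\hat{M}(s_j{\bf a})\langle1\rangle$, and $s_j{\bf a}$ is of type $\down\up$, whose derived Zuckerman you have not yet controlled. Your subsequent computation of the type $\down\up$ case then uses the type $\up\down$ acyclicity as input, so the two arguments are circular. (There is also a sign issue: with the paper's convention $M\lsem i\rsem$ sits in homological degree $i$, and $\hat{Z}_j$ is right exact, so $\mathbb{L}\hat{Z}_j$ lives in degrees $\leq 0$; your ``$\lsem1\rsem$, concentrated in homological degree one'' should read $\lsem-1\rsem$.) The clean fix, available in this paper, is to use the distinguished triangle $\mathbb{L}\hat{T}_i\to\hat{\Id}\langle-2\rangle\to\hat{\epsilon}_i\mathbb{L}\hat{Z}_i\langle-1\rangle$ of Corollary~\ref{disttri} together with the explicit action \eqref{liftofark} of $\hat{T}_i$ on Verma modules (Vermas being $T_i$-acyclic): this simultaneously gives the $\up\down$, $\down\up$, and wall cases of $\hat{\epsilon}_i\mathbb{L}\hat{Z}_i\hat{M}({\bf a})$ with the correct shifts, after which your induction goes through. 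Note that one of your proposed remedies for the wall case---transport via Koszul duality from \cite{StrDuke}---is exactly the paper's entire argument.
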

\begin{proof}
This follows directly from \cite[Theorem 8.2, Theorem 5.3]{Strgrad} and Koszul duality \cite[Theorem 35]{MOS}.
\end{proof}

To a coloured cap or cup we assign the functors graphically depicted in Figure~\ref{coloured}.

\subsection{Twisting functors and (coloured) crossings}
Fix an element $ w $ in the Weyl group of $ \mathbb{S}_n. $  Let $ \mathfrak{n}_w = \mathfrak{n}^- \cap w^{-1}(\mathfrak{n}^+). $ Denote by $ N_w $
the universal enveloping algebra $ \mathcal{U}(\mathfrak{n}_w). $ There is a natural $ \mathbb{Z}$-grading on $ \mathfrak{gl}_n $ where
the degree one part is the direct sum of root spaces for simple roots. This induces a $ \mathbb{Z}$-grading on $ N_w. $

Let $ \Gamma_w $ be an automorphism of $ \mathfrak{gl}_n $ corresponding to the element $w$.  Define the semiregular bimodule $ S_w =
\mathcal{U}(\mathfrak{gl}_n)\otimes_{N_w} N_w^*, $ where $ N_w^* $ is the graded dual of $ N_w. $  It is a nontrivial fact that $ S_w $
is a $ (\mathcal{U}(\mathfrak{gl}_n),\mathcal{U}(\mathfrak{gl}_n))$-bimodule and the functor of tensoring with this bimodule does not depend (up to isomorphism) on the choice of $\Gamma_w$. An algebraic proof first appeared in \cite{Ark}, see also \cite{AL}.

\begin{define}
The {\it twisting functor} $ T_w \colon \mathcal{U}(\mathfrak{gl}_n)-\Mod \rightarrow \mathcal{U}(\mathfrak{gl}_n)-\Mod $ is defined by
$ T_w(M)=S_w \otimes_{\mathcal{U}(\mathfrak{gl}_n)} M $ and the action of $ \mathfrak{gl}_n $ is twisted by $ \Gamma_w. $
\end{define}
The functor is right exact. It is well-known (\cite{AL}) that $T_w $ restricts to an endofunctor on $
\mathcal{O}_{\lambda}(\mathfrak{gl}_n)$. For a detailed study of the properties of these functors we also refer to \cite{AS}.
These functors satisfy the braid relations, so we are particularly interested in the functors $ T_w $ for $w=\sigma_i$, the simple
reflections generating $\mathbb{S}_n$.  Denote $T_{s_i}$ by $T_i$. By \cite[Section 5.1]{FKS}, we know that the functor $T_i$ has a graded lift $
\hat{T}_i \colon \gmod-A_{k,n} \rightarrow \gmod-A_{k,n}$  unique up to isomorphism and a shift in the grading. We choose this lift
such that
\begin{eqnarray}
\label{liftofark}
\hat{T}_i \hat{M}(\lambda) \cong
\begin{cases}
 \hat{M}(s_i.\lambda) \langle -1 \rangle &\text{if $s_i \in W^{\lambda}$}\\
 \hat{M}(s_i.\lambda) \langle -2 \rangle  &\text{otherwise.}
 \end{cases}
\end{eqnarray}

\begin{remark}{\rm 
This lift differs from the one in \cite[Section 5]{FKS} by an overall shift.}
\end{remark}
We will need the following important relationship between the twisting functor $ T_i $ and the Zuckerman functor $ Z_i $:
\begin{theorem}{\rm (\cite{KhoMaz}, \cite[Proposition 5.4]{AS})}
\label{KMmap} There is a natural transformation $ \tau_i \colon T_i \rightarrow \Id $ such that for any object $ M, $ the cokernel of the
map $ (\tau_i)_M \colon T_i M \rightarrow M $ is $ Z_i M. $
\end{theorem}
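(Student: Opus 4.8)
The plan is to isolate two structural features of the twisting functor, after which the statement follows by a short formal argument with no dévissage. Recall that $T_i=T_{s_i}$ is tensoring with the semiregular bimodule $S_{s_i}=\mathcal U(\mathfrak{gl}_n)\otimes_{N_{s_i}}N_{s_i}^{*}$, the $\mathfrak{gl}_n$-action being twisted by $\Gamma_{s_i}$; concretely $T_iM\cong{}^{\Gamma_{s_i}}\!\big(M[f_i^{-1}]/M\big)$, where $f_i=E_{i+1,i}$, the localisation $M[f_i^{-1}]$ is the Ore localisation of $M$ at $\{1,f_i,f_i^{2},\dots\}$, and $M[f_i^{-1}]/M$ is the cokernel of the localisation map. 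From the theory of twisting functors (\cite{Ark}, \cite{AL}, \cite[Prop.~5.4]{AS}, \cite{KhoMaz}) I take: the natural transformation $\tau_i\colon T_i\Rightarrow\mathrm{Id}$ together with its behaviour on Verma modules -- if $(a_i,a_{i+1})=(1,0)$ then $T_iM({\bf a})\cong M({\bf a}')$, the ordinary Verma obtained by swapping the $i$-th and $(i{+}1)$-st entries, and $(\tau_i)_{M({\bf a})}$ is, up to a nonzero scalar, the inclusion of the submodule $\mathcal U(\mathfrak{gl}_n)f_iv_{\bf a}\cong M({\bf a}')$ generated by the singular vector, whereas for all other $\bf a$ the module $T_iM({\bf a})$ is a (possibly genuinely twisted) Verma with simple head $L({\bf a})$ and $(\tau_i)_{M({\bf a})}$ is surjective -- and the fact that Verma modules are $T_i$-acyclic, so that $T_i$ is exact along a Verma flag.

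Two facts are extracted from this. \emph{(1)} $\operatorname{coker}\!\big((\tau_i)_M\big)$ always lies in the parabolic subcategory $\mathcal O^{\mathfrak p_i}_k$: on a Verma $M({\bf a})$ the cokernel is either the parabolic Verma $M^{\mathfrak p_i}({\bf a})$ (when $(a_i,a_{i+1})=(1,0)$) or $0$, and since $\operatorname{coker}\!\big((\tau_i)_{-}\big)$ is right exact while $\mathcal O^{\mathfrak p_i}_k$ is closed under quotients and extensions, the case of a projective object (which carries a Verma flag), and hence of an arbitrary module, follows. \emph{(2)} $Z_i\circ T_i=0$, i.e.\ $T_iM$ has no nonzero quotient in $\mathcal O^{\mathfrak p_i}_k$: on a Verma $M({\bf a})$ the module $T_iM({\bf a})$ has simple head $L({\bf b})$ with $(b_i,b_{i+1})\neq(1,0)$; the generator of $L({\bf b})$ is annihilated by $E_{i,i+1}$ and has negative $(E_{i,i}-E_{i+1,i+1})$-eigenvalue, so $L({\bf b})\notin\mathcal O^{\mathfrak p_i}_k$, and therefore $T_iM({\bf a})$ -- having $L({\bf b})$ as its unique simple quotient -- has no nonzero quotient in $\mathcal O^{\mathfrak p_i}_k$, i.e.\ $Z_iT_iM({\bf a})=0$; using that $T_i$ is exact along a Verma flag and $Z_i$ is right exact, this propagates to projectives and then to all modules.

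Granting \emph{(1)} and \emph{(2)}, fix $M$, put $K_iM:=\ker\!\big(M\twoheadrightarrow Z_iM\big)$, and recall that $Z_iM=M/K_iM$ is the \emph{maximal} quotient of $M$ lying in $\mathcal O^{\mathfrak p_i}_k$, since $Z_i$ is left adjoint to the inclusion $\epsilon_i$. By \emph{(1)} the quotient $M/\operatorname{im}\!\big((\tau_i)_M\big)=\operatorname{coker}\!\big((\tau_i)_M\big)$ lies in $\mathcal O^{\mathfrak p_i}_k$, so by maximality $K_iM\subseteq\operatorname{im}\!\big((\tau_i)_M\big)$, which yields a natural epimorphism $Z_iM\twoheadrightarrow\operatorname{coker}\!\big((\tau_i)_M\big)$. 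Conversely $\operatorname{im}\!\big((\tau_i)_M\big)$ is a quotient of $T_iM$, so by \emph{(2)} it has no nonzero quotient in $\mathcal O^{\mathfrak p_i}_k$; but its image under $M\twoheadrightarrow Z_iM$ is at once a quotient of $\operatorname{im}\!\big((\tau_i)_M\big)$ and a submodule of $Z_iM\in\mathcal O^{\mathfrak p_i}_k$, hence (as $\mathcal O^{\mathfrak p_i}_k$ is a Serre subcategory) it lies in $\mathcal O^{\mathfrak p_i}_k$ and is therefore $0$; thus $\operatorname{im}\!\big((\tau_i)_M\big)\subseteq K_iM$. Combining the two inclusions gives $\operatorname{im}\!\big((\tau_i)_M\big)=K_iM$ and $\operatorname{coker}\!\big((\tau_i)_M\big)=Z_iM$, naturally in $M$. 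The only genuine obstacle is the one ingredient I am importing rather than proving, namely the construction of $\tau_i$ itself: it is \emph{not} induced by any homomorphism of $\mathcal U(\mathfrak{gl}_n)$-bimodules $S_{s_i}\to\mathcal U(\mathfrak{gl}_n)$ and must be assembled by hand from the semiregular/localisation picture as in \cite{AL}, \cite{AS}, \cite{KhoMaz}; with $\tau_i$ and its Verma behaviour in hand, facts \emph{(1)}--\emph{(2)} and the argument above are routine, and passing to Grothendieck groups recovers the decategorified identity relating $T_i$ to the Zuckerman functor used in the next section.
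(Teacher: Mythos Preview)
The paper does not give its own proof of this theorem: it is stated with attribution to \cite{KhoMaz} and \cite[Proposition~5.4]{AS} and then immediately used. So there is no in-paper argument to compare against; your write-up is effectively a reconstruction of the reasoning behind the cited results.

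That reconstruction is sound. The reduction to facts \emph{(1)} and \emph{(2)} and the short adjunction argument at the end are correct: the cokernel functor $C(-)=\operatorname{coker}\big((\tau_i)_{-}\big)$ is right exact (since $T_i$ is right exact and $\Id$ is exact), so the passage from Vermas to projectives to arbitrary objects for \emph{(1)} is valid; and for \emph{(2)}, right exactness of $Z_i$ together with the vanishing $Z_iT_iM({\bf a})=0$ on Vermas propagates through Verma flags and projective presentations as you indicate (if $0\to A\to B\to C\to 0$ with $Z_iA=Z_iC=0$, then the right-exact sequence forces $Z_iB=0$). Your description of $T_iM({\bf a})$ and its head is also accurate: when $(a_i,a_{i+1})=(1,0)$ one has $T_iM({\bf a})\cong M({\bf a}')$ with $\tau_i$ the Verma inclusion, and otherwise $T_iM({\bf a})$ is a (possibly twisted) Verma whose simple head is $L({\bf a})\notin\mathcal O^{\mathfrak p_i}_k$, so $Z_iT_iM({\bf a})=0$.

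You are also honest about the one point where the argument is not self-contained: the existence of $\tau_i$ and its stated behaviour on Verma modules are precisely the content of \cite[Proposition~5.4]{AS} and \cite{KhoMaz}. Since that is exactly what the paper is citing, your proposal does not add an independent proof so much as unpack why the cited statement implies the theorem in the form needed here. That is a perfectly reasonable thing to do; just be aware that the substantive work (constructing $\tau_i$ via the semiregular bimodule and verifying the Verma case) still lives in the references.
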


\begin{corollary}{\rm (\cite[Lemma 41]{MOS})}
\label{disttri}
There is a distinguished triangle of derived functors
$$ L \hat{T}_i \rightarrow \hat{\Id}\langle -2 \rangle \rightarrow \hat{\epsilon}_i \hat{Z}_i \langle -1
\rangle. $$
\end{corollary}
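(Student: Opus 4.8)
The plan is to deduce this from Theorem~\ref{KMmap} by lifting the natural transformation $\tau_i\colon T_i\to\Id$ to the graded setting and then passing to the derived category. First I would invoke the general uniqueness (up to grading shift) of graded lifts of functors and of natural transformations between them -- the same principle that fixes $\hat T_i$ via \eqref{liftofark} and $\hat Z_i$ via its stated normalisation -- to produce a graded lift $\hat\tau_i\colon\hat T_i\to\hat\Id\langle-2\rangle$ of $\tau_i$. The shift $\langle-2\rangle$ is forced, not chosen: evaluating on a single Verma module $\hat M({\bf a})$ with $a_i=1$, $a_{i+1}=0$, the map $\hat\tau_i$ must realise, up to the normalising shifts, the canonical embedding of the graded Verma $\hat M(s_i{\bf a})$ as the radical-type submodule of $\hat M({\bf a})$ whose cokernel is the graded parabolic Verma $\hat M^{\mathfrak{p}_i}({\bf a})$, and comparing internal degrees pins down the constant (matching $\hat Z_i\hat M({\bf a})\cong\hat M^{\mathfrak{p}_i}({\bf a})\langle-1\rangle$).

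Next I would reduce to projective objects, which is legitimate since $D^b(\gmod-A_{k,1^n})$ is generated by the projectives and since $L\hat T_i$, $\hat\Id\langle-2\rangle$ and the derived functor $\hat\epsilon_i\hat Z_i\langle-1\rangle$ are all computed on a complex by applying the corresponding functors term by term to a projective resolution (here using that $\hat\epsilon_i$ is exact). The essential input is that $T_i$ is acyclic on modules with a Verma flag, by Arkhipov and Andersen--Lauritzen: this gives $\mathbb{L}^jT_i\hat P=0$ for $j>0$, so $L\hat T_i\hat P\cong\hat T_i\hat P$ sits in a single homological degree and $\hat T_i\hat P$ again has a Verma flag. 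Since $(\tau_i)_{M(\mu)}$ is injective for every Verma module occurring in such a flag (it is an inclusion of Verma modules when $s_i\cdot\mu<\mu$ and an isomorphism when $s_i\cdot\mu=\mu$), a snake-lemma induction along the flag shows $(\tau_i)_{\hat P}\colon\hat T_i\hat P\to\hat P$ is injective, with cokernel $\hat\epsilon_i\hat Z_i\hat P$ by Theorem~\ref{KMmap}. Incorporating the grading shifts established above, we obtain on projectives a short exact sequence of complexes, functorial in $\hat P$,
\begin{equation*}
0\longrightarrow L\hat T_i\hat P\longrightarrow \hat P\langle-2\rangle\longrightarrow \hat\epsilon_i\hat Z_i\hat P\langle-1\rangle\longrightarrow 0 .
\end{equation*}
Viewed in the homotopy category of projectives this is a distinguished triangle, and since the projectives generate $D^b$ it propagates to the asserted distinguished triangle of endofunctors.

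I expect the real obstacle to be exactly the two facts that feed the middle step: the acyclicity of $T_i$ on Verma-flag modules and the injectivity of $(\tau_i)_{\hat P}$ on projectives, which is where the cited work of Arkhipov, Andersen--Lauritzen and Khovanov--Mazorchuk is used; the remaining work is bookkeeping of internal and homological grading shifts. This is, after all, just \cite[Lemma~41]{MOS} transported to the present normalisation, so in the paper it could also simply be cited.
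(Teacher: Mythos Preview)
Your reduction to projectives via acyclicity of $T_i$ on Verma-filtered modules is fine, and the strategy would succeed if $(\tau_i)_P$ were injective for every projective $P$. The gap is precisely in your argument for that injectivity. You handle only the cases $s_i\cdot\mu<\mu$ and $s_i\cdot\mu=\mu$, but a Verma flag of $P(\nu)$ with $s_i\cdot\nu>\nu$ contains subquotients $M(\mu)$ with $s_i\cdot\mu>\mu$ (for instance $M(\nu)$ itself), and there $(\tau_i)_{M(\mu)}$ is \emph{not} injective: the head $L(\mu)$ lies outside $\mathcal{O}^i$, so $Z_iM(\mu)=0$ and $(\tau_i)_{M(\mu)}$ is surjective by Theorem~\ref{KMmap}; yet $[T_iM(\mu)]=[M(s_i\cdot\mu)]$ in the Grothendieck group, which strictly dominates $[M(\mu)]$, so the kernel is nonzero. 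Already for $\mathfrak{sl}_2$ one finds $T_sM(s\cdot 0)\cong\nabla(0)$ and $(\tau_s)$ is the surjection onto $L(s\cdot 0)=M(s\cdot 0)$ with kernel $L(0)$. The normalisation \eqref{liftofark} is to be read for $\lambda$ with $s_i\cdot\lambda\le\lambda$; it does not give a module isomorphism in the remaining case, and correspondingly $T_iP$ need not have a Verma flag. Thus the snake-lemma induction along the flag does not run: one would have to show the connecting homomorphisms are isomorphisms, which is essentially the content of the result.

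The paper simply cites \cite[Lemma~41]{MOS}, where---as in the paper's own proof of the companion Lemma~\ref{distrijos}---the argument is by Koszul duality rather than a direct computation: the Koszul dual of $\mathbb{L}\hat T_i$ is the derived shuffling functor and that of $\hat\epsilon_i\mathbb{L}\hat Z_i\lsem-1\rsem$ is the translation $\hat\theta_i$, so the triangle is the Koszul transform of the defining triangle $\hat\Id\langle 1\rangle\to\hat\theta_i\to(\text{shuffling})$. A direct repair of your route would need to identify $\ker(\tau_i)_M$ with the maximal $\mathcal{O}^i$-submodule of $T_iM$ and then show separately that this vanishes on projectives; neither step comes for free from the snake lemma on a Verma flag.
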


\begin{remark}{\rm 
The shifts in the Corollary ~\ref{disttri} differ from those in \cite{MOS} since the graded lifts of the functors differ from the
conventions in \cite{MOS}.
Also note that the internal grading convention in ~\cite{MOS} is opposite to the one used here.}
\end{remark}
A right adjoint to $T_w$ was studied in \cite{AS} and is known to coincide with Joseph's completion functor by \cite{KhoMaz}. Denote this functor by $J_w$ and its graded version by $\hat{J}_w$. Again we write $J_i $ and $\hat{J}_i$ in case $w=s_i$.
By \cite{AS}, the graded lift $ \hat{J}_i $ satisfies
\begin{eqnarray}
\label{liftofjos}
\hat{J}_i \hat{\nabla}(\lambda) \cong
\begin{cases}
 \hat{\nabla}(s_i.\lambda) \langle 1 \rangle &\text{if $s_i \in W^{\lambda}$}\\
 \hat{\nabla}(s_i.\lambda) \langle 2 \rangle  &\text{otherwise}
 \end{cases}
\end{eqnarray}

\begin{lemma}
\label{distrijos}
There is a distinguished triangle relating $ R\hat{J}_i $ and $ L\hat{Z}_i $ as in Corollary ~\ref{disttri}:
$$ \hat{\epsilon}_i \mathbb{L}\hat{Z}_i \lsem -2 \rsem \langle 1 \rangle \rightarrow \hat{\Id} \langle 2 \rangle \rightarrow
\mathbb{R}\hat{J}_i.$$
\end{lemma}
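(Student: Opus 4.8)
The plan is to obtain the triangle of Lemma~\ref{distrijos} by applying the graded internal duality $\hat{\op{d}}$ to the distinguished triangle of Corollary~\ref{disttri} and identifying each of the three resulting terms. Recall that $\hat{\op{d}}$ is a contravariant exact autoequivalence, so it extends to the bounded derived categories, interchanges left and right derived functors, negates both gradings, $\hat{\op{d}}(M\langle j\rangle\lsem i\rsem)\cong\hat{\op{d}}(M)\langle -j\rangle\lsem -i\rsem$, interchanges $\hat M({\bf a})$ with $\hat\nabla({\bf a})$ and fixes the $\hat L({\bf a})$; in particular applying $\hat{\op{d}}(-)\hat{\op{d}}$ to a distinguished triangle of functors reverses the order of its terms.

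The first ingredient is that the completion functor is the $\hat{\op{d}}$-conjugate of the twisting functor: with our normalisations one has $\hat J_i\cong\hat{\op{d}}\circ\hat T_i\circ\hat{\op{d}}$. This is essentially the characterisation of the completion functor in \cite{KhoMaz},\cite{AS}; concretely both sides are exact on costandardly filtered modules and, comparing \eqref{liftofark} with \eqref{liftofjos} together with $\hat{\op{d}}(\hat M(\la)\langle -1\rangle)=\hat\nabla(\la)\langle 1\rangle$, they agree on every $\hat\nabla(\la)$, and such objects generate. Since $\hat{\op{d}}$ is exact and contravariant this gives $\hat{\op{d}}\circ\mathbb{L}\hat T_i\circ\hat{\op{d}}\cong\mathbb{R}\hat J_i$, while trivially $\hat{\op{d}}\circ\hat\Id\langle -2\rangle\circ\hat{\op{d}}\cong\hat\Id\langle 2\rangle$.

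The remaining, and most delicate, point is the middle term $\hat{\op{d}}\circ\hat\epsilon_i\mathbb{L}\hat Z_i\langle -1\rangle\circ\hat{\op{d}}$. Here $\hat{\op{d}}$ restricts to a duality of the parabolic subcategory $\gmod$-$A^i_{k,1^n}$ (its defining class of composition factors is $\hat{\op{d}}$-stable), hence fixes the inclusion $\hat\epsilon_i$, and it sends the Zuckerman functor $\hat Z_i$ (maximal parabolic quotient) to $\hat Z_i'$ (maximal parabolic submodule), the right adjoint of $\hat\epsilon_i$; exactness of $\hat{\op{d}}$ then yields $\hat{\op{d}}\circ\hat\epsilon_i\mathbb{L}\hat Z_i\circ\hat{\op{d}}\cong\hat\epsilon_i\mathbb{R}\hat Z_i'$. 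It thus remains to identify $\hat\epsilon_i\mathbb{R}\hat Z_i'$ with $\hat\epsilon_i\mathbb{L}\hat Z_i\lsem -2\rsem$ — the standard ``self-duality up to shift'' of derived Zuckerman functors, reflecting that the relevant wall-crossing is governed by $\mathbb{P}^1=\Gr(1,2)$ of cohomological dimension $1$; this is contained in \cite{MOS} (it is the statement $\hat{\op{d}}$-dual to \cite[Lemma~41]{MOS}), and the precise shift $\lsem -2\rsem$ with no internal shift is pinned down by evaluating both sides on the graded Verma modules via Lemma~\ref{ZonVermas} (and is consistent with the self-dual form of $\hat\cap_{i,n+2}\hat\cup_{i,n}$ in Theorem~\ref{cupcapiso}(4)). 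Hence $\hat{\op{d}}\circ\hat\epsilon_i\mathbb{L}\hat Z_i\langle -1\rangle\circ\hat{\op{d}}\cong\hat\epsilon_i\mathbb{L}\hat Z_i\lsem -2\rsem\langle 1\rangle$.

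Assembling the three identifications and using that $\hat{\op{d}}(-)\hat{\op{d}}$ reverses triangles, Corollary~\ref{disttri} turns into
$$\hat\epsilon_i\mathbb{L}\hat Z_i\lsem -2\rsem\langle 1\rangle\;\longrightarrow\;\hat\Id\langle 2\rangle\;\longrightarrow\;\mathbb{R}\hat J_i\;\longrightarrow\;[1],$$
which is the claim. (Equivalently one may pass to right adjoints throughout Corollary~\ref{disttri}, using that $\hat J_i$ is right adjoint to $\hat T_i$, that $\hat\epsilon_i$ is right adjoint to $\mathbb{L}\hat Z_i$ and $\mathbb{R}\hat Z_i'$ right adjoint to $\hat\epsilon_i$, and that the three functors are derived tensoring with the terms of a triangle of bimodule complexes, so that the corresponding $\mathbb{R}\op{Hom}$'s again form a triangle; this reproduces the same computation.) The main obstacle is the Zuckerman self-duality identification together with the bookkeeping of homological and internal grading shifts; everything else is formal.
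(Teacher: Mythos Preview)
Your argument is correct, but it follows a genuinely different route from the paper. The paper does \emph{not} dualise Corollary~\ref{disttri} via $\hat{\op{d}}$; instead it works on the Koszul dual side. There one has the coshuffling triangle
\[
\mathbb{R}\hat D_i \longrightarrow \hat\theta_i \longrightarrow \hat{\Id}\langle -1\rangle,
\]
and by \cite[Theorems~35 and~39]{MOS} the Koszul duals of $\hat\theta_i$ and $\mathbb{R}\hat D_i$ are $\hat\epsilon_i\mathbb{L}\hat Z_i\lsem -1\rsem$ and $\mathbb{R}\hat J_i\langle -1\rangle$ respectively; transporting the triangle across Koszul duality and rotating yields the lemma. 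So the paper's proof is a one-line application of the Koszul dictionary to a standard triangle of shuffling/translation functors, whereas your proof stays entirely inside category~$\cO$ and trades Koszul duality for two classical ingredients: the identification $\hat J_i\cong\hat{\op{d}}\,\hat T_i\,\hat{\op{d}}$ from \cite{AS,KhoMaz} and the Enright--Wallach/Zuckerman self-duality $\mathbb{R}\hat Z_i'\cong\mathbb{L}\hat Z_i\lsem -2\rsem$. Your approach is more elementary in that it avoids invoking the full Koszul duality equivalence, and it makes transparent that Lemma~\ref{distrijos} is literally the $\hat{\op{d}}$-dual of Corollary~\ref{disttri}; the paper's approach, on the other hand, is more in keeping with the overall strategy of the article (which systematically passes between the two Koszul-dual pictures via \cite{MOS}) and requires no separate verification of the Zuckerman self-duality shift.
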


\begin{proof}
By ~\cite[Theorem 35]{MOS}, a graded lift of the translation functor $ \hat{\theta}_i $ and the functor $ \hat{\epsilon}_i \mathbb{L} \hat{Z}_i \lsem -1 \rsem $ are Koszul dual.
There is a distinguished triangle of functors
\begin{eqnarray}
\label{districoshuffling}
\mathbb{R}\hat{D}_i \rightarrow \hat{\theta}_i \rightarrow \Id \langle -1 \rangle
\end{eqnarray}
where $ \hat{D}_i $ is the coshuffling functor, which is defined as the kernel of the canonical morphism $ \hat{\theta}_i \rightarrow \Id \langle -1 \rangle $.  See \cite{MS2} for more details.
By ~\cite[Theorem 39]{MOS}, the Koszul dual of $ \mathbb{R}\hat{D}_i $ is the functor $ \mathbb{R}\hat{J}_i \langle -1 \rangle $.
After applying the Koszul duality functor to the distinguished triangle in ~\eqref{districoshuffling}, rotation and internal grading shift of the resulting triangle gives the lemma.
\end{proof}

These distinguished triangles play an important role in proving the following theorem which appears in \cite{StrDuke} in
the Koszul dual case. It follows from there directly by applying the results from \cite[Sections 6.4 and 6.5]{MOS}.

\begin{theorem} [Reidemeister moves]
\label{functorisos} There are isomorphism of functors:
   \vspace{-2mm}
\begin{multicols}{2}
\begin{enumerate}
\item $ \hat{\cap}_{i,n} \circ \mathbb{L}\hat{T}_i \lsem -1\rsem \cong \hat{\cap}_{i,n} $
 \item $ \hat{\cap}_{i,n} \circ
    \mathbb{R}\hat{J}_i \lsem 1\rsem \cong \hat{\cap}_{i,n} $
    \item $ \mathbb{L} \hat{T}_i \circ \mathbb{R} \hat{J}_i \cong \hat{\Id} \cong
    \mathbb{R} \hat{J}_i \circ \mathbb{L} \hat{T}_i $
      \item $ \mathbb{L} \hat{T}_i \circ \mathbb{L} \hat{T}_j \cong \mathbb{L} \hat{T}_j \circ \mathbb{L}\hat{T}_i, $ if $ |i-j| \geq 2 $
    \item $ \mathbb{R} \hat{J}_i \circ \mathbb{R} \hat{J}_j \cong \mathbb{R} \hat{J}_j \circ \mathbb{R} \hat{J}_i, $ if $ |i-j| \geq 2 $
    \item $ \mathbb{L} \hat{T}_i \circ \mathbb{R} \hat{J}_j \cong \mathbb{R} \hat{J}_j \circ \mathbb{L} \hat{T}_i, $ if $ |i-j| \geq 2 $
   \end{enumerate} 
   \end{multicols} 
   \vspace{-5mm}
    \begin{enumerate}
      \setcounter{enumi}{6}
    \item $ \mathbb{L} \hat{T}_{i} \circ \mathbb{L} \hat{T}_{i+1} \circ \mathbb{L} \hat{T}_{i}
    \cong \mathbb{L} \hat{T}_{i+1} \circ \mathbb{L}\hat{T}_i \circ \mathbb{L}\hat{T}_{i+1} $
    \item $ \mathbb{R} \hat{J}_i \circ
    \mathbb{R} \hat{J}_{i+1} \circ \mathbb{R} \hat{J}_i \cong \mathbb{R} \hat{J}_{i+1} \circ \mathbb{R} \hat{J}_i \circ \mathbb{R}\hat{J}_{i+1} $.
\end{enumerate}
\end{theorem}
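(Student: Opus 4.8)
The plan is to derive each of the identities (1)--(8) from its counterpart in the Koszul dual incarnation of the categorification, which is the one set up in \cite{StrDuke}. There the functor attached to an elementary cup or cap is built from derived translation functors through the $i$-th wall, the two elementary crossings are sent to the derived shuffling functor $\mathbb{L}\hat{C}_i$ and the derived coshuffling functor $\mathbb{R}\hat{D}_i$, and the corresponding Reidemeister identities are established there. By \cite[Theorem 35]{MOS} the translation functor $\hat\theta_i$ is Koszul dual to $\hat\epsilon_i\mathbb{L}\hat Z_i\lsem-1\rsem$, by \cite[Theorem 39]{MOS} the coshuffling functor $\mathbb{R}\hat D_i$ is Koszul dual to $\mathbb{R}\hat J_i\langle-1\rangle$, and, taking left adjoints, $\mathbb{L}\hat C_i$ is Koszul dual to $\mathbb{L}\hat T_i$ up to an overall shift. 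Hence, as spelled out in \cite[Sections 6.4 and 6.5]{MOS}, the functors $\hat\cap_{i,n}$, $\hat\cup_{i,n}$ of Section~\ref{catcupscaps}, $\mathbb{L}\hat T_i$ and $\mathbb{R}\hat J_i$ are precisely the Koszul duals of the cup, cap and crossing functors of \cite{StrDuke}, and each of (1)--(8) is obtained by applying the Koszul duality equivalence to the corresponding functor isomorphism proved there: (1), (2) to the two ways of absorbing a crossing into a cap, (3) to the Reidemeister~II relation, (4), (5) to the two far-commutativity relations for crossings, (6) to the mixed far-commutativity relation, and (7), (8) to the two forms of the Reidemeister~III (braid) relation.

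It is also instructive to note that the purely algebraic identities (3)--(8) admit a direct proof not passing through Koszul duality. The twisting functors satisfy the braid relations, and this lifts to the statement that $\mathbb{L}\hat T_w$ is independent of the chosen reduced expression for $w$ (cf.\ \cite{AS}); this gives (4) and (7) at once, with no grading shift because the lift was normalised in \eqref{liftofark} precisely so that none appears. Since $\mathbb{R}\hat J_i$ is right adjoint to $\mathbb{L}\hat T_i$ and, by \cite{KhoMaz}, $\mathbb{L}\hat T_i$ is a self-equivalence of the bounded derived category with inverse $\mathbb{R}\hat J_i$, identity (3) follows, the absence of a shift being a consequence of the compatibility of the normalisations \eqref{liftofark} and \eqref{liftofjos}; and (5), (8) follow from (4), (7) by passing to inverse equivalences. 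Finally (6) is a formal consequence of (4): $\mathbb{L}\hat T_i\,\mathbb{R}\hat J_j = \mathbb{L}\hat T_i(\mathbb{L}\hat T_j)^{-1}$ and $\mathbb{R}\hat J_j\,\mathbb{L}\hat T_i = (\mathbb{L}\hat T_j)^{-1}\mathbb{L}\hat T_i$ agree exactly when $\mathbb{L}\hat T_i$ and $\mathbb{L}\hat T_j$ commute, which is (4).

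For the cup/cap relations (1) and (2) the essential input is the two distinguished triangles of Corollary~\ref{disttri} and Lemma~\ref{distrijos}. By construction $\hat\cap_{i,n}$ is $\mathbb{L}\hat Z_i\lsem-1\rsem$ followed by functors involving only indices $\neq i$, so it suffices to understand $\mathbb{L}\hat Z_i\circ\mathbb{L}\hat T_i$. Applying $\mathbb{L}\hat Z_i$ to the triangle $\mathbb{L}\hat T_i\to\hat{\Id}\langle-2\rangle\to\hat\epsilon_i\mathbb{L}\hat Z_i\langle-1\rangle$ and using the standard decomposition of $(\hat\epsilon_i\mathbb{L}\hat Z_i)^2$ (Koszul dual to $\hat\theta_i\hat\theta_i\cong\hat\theta_i\langle1\rangle\oplus\hat\theta_i\langle-1\rangle$), the third term of the resulting triangle splits off a copy of $\mathbb{L}\hat Z_i$ identifying with the middle term, so that the cone is the desired shift of $\mathbb{L}\hat Z_i$; composing with the remaining part of $\hat\cap_{i,n}$ yields (1), and (2) is the same argument run with the triangle of Lemma~\ref{distrijos} in place. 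Equivalently, (1) and (2) are again the Koszul duals of the corresponding cap identities of \cite{StrDuke}, which there read off from the triangle \eqref{districoshuffling} together with the fact that $\hat\theta_i$ annihilates the image of the translation functor out of the $i$-th wall.

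The substance of the argument is entirely in the bookkeeping of the two gradings, and this is where I expect the real work to be. The graded lifts of $\hat T_i$, $\hat J_i$, $\hat Z_i$ and $\hat\zeta_n$ used here, as well as those of the translation and (co)shuffling functors in \cite{StrDuke}, differ from the normalisations of \cite{MOS} by overall shifts in the internal and/or homological grading, as flagged in the Remarks following \eqref{liftofark} and Corollary~\ref{disttri}, and the internal-grading convention of \cite{MOS} is moreover the opposite of the one used here. One therefore has to recompute, from \eqref{liftofark}, \eqref{liftofjos} and the definitions of $\hat\cap_{i,n}$ and $\hat\cup_{i,n}$, exactly which combination of $\langle\,\cdot\,\rangle$ and $\lsem\,\cdot\,\rsem$ the Koszul duality equivalence produces in each case, and verify that it matches the shifts displayed in (1)--(8)---in particular that (3)--(8) come out with no shift at all.
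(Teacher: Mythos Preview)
Your primary argument---transporting the functor isomorphisms of \cite{StrDuke} through Koszul duality via \cite[Sections 6.4 and 6.5]{MOS}---is exactly the paper's own proof, which simply records that the theorem ``follows from there directly by applying the results from \cite[Sections 6.4 and 6.5]{MOS}.'' So on the main line you and the paper agree.

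The supplementary direct arguments you offer go beyond what the paper does. Your treatment of (3)--(8) is sound and self-contained: braid relations for twisting functors from \cite{AS}, the derived equivalence $\mathbb{L}\hat T_i$ with inverse $\mathbb{R}\hat J_i$, and formal passage to inverses/adjoints cover these cleanly without Koszul duality. Your direct sketch for (1) and (2), however, is looser than you suggest. After applying $\mathbb{L}\hat Z_i$ to the triangle of Corollary~\ref{disttri} one has $\mathbb{L}\hat Z_i\hat\epsilon_i\mathbb{L}\hat Z_i\cong\mathbb{L}\hat Z_i$ (since $Z_i\epsilon_i\cong\Id$ on the parabolic subcategory), not a two-term splitting, so the resulting triangle reads $\mathbb{L}\hat Z_i\mathbb{L}\hat T_i\to\mathbb{L}\hat Z_i\langle-2\rangle\to\mathbb{L}\hat Z_i\langle-1\rangle$; extracting the desired identification from this requires identifying the connecting map, which you have not done. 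Since you fall back on the Koszul-dual argument anyway, this does not affect correctness, but the ``direct'' route for (1)--(2) as written is not a proof.
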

\vspace{-2mm}

To the crossing appearing in the left of \eqref{crosses} we assign the functor $ \mathbb{L}\hat{T}_i \lsem -1 \rsem. $ To the crossing appearing in
the right of \eqref{crosses} we assign the functor $ \mathbb{R} \hat{J}_i \lsem 1 \rsem. $ To the cup appearing in \eqref{fig:capcup} we assign the functor
$ \hat{\cup}_{i,n}. $ To the cap appearing in \eqref{fig:capcup} we assign the functor $ \hat{\cap}_{i,n}. $ Thus to any
unoriented tangle diagram D, from $n$ points to $m$ points, we may associate a functor
$$ \hat{\Phi}(D) \colon D^b(\bigoplus_{k=0}^n \gmod-A_{k,1^n}) \rightarrow D^b(\bigoplus_{k=0}^m \gmod-A_{k,1^m}). $$

By counting the number of
positive and negative crossings and including an overall shift into the functors, we get Theorem ~\ref{catjones}, giving a functor valued invariant
of oriented tangle which categorifies the invariant from Theorem ~\ref{jonesinvariant}.  The proof of the Reidemeister moves,  follows
directly from Theorem ~\ref{functorisos}.

\begin{remark}{\rm 
On the Grothendieck group level we have the skein relations:
\begin{eqnarray*}
[\mathbb{L} \hat{T}_k \lsem -1\rsem ]&=&-q^{-1}[\hat{\cup}_{k-1,n-2} \circ \hat{\cap}_{k,n}]-q^{-2}[\hat{\Id}]\\
\left[\mathbb{R} \hat{J}_k \lsem 1\rsem \right]&=&-q^2[\hat{\Id}] -q[\hat{\cup}_{k-1,n-2} \circ \hat{\cap}_{k,n}].
\end{eqnarray*}
Thus we have a functor valued invariant of tangles which categorifies the Jones polynomial. Note that the categorification of the Jones
polynomial by Khovanov \cite{KhovJones} can be deduced by restricting the functors from the previous theorem to a certain subcategory which is
equivalent to Khovanov's category of graded $H_n$-modules (this follows from \cite[Theorem 5.8.1]{StrSpringer}, \cite[Theorem 1.1 and
Theorem 1.2]{BS3}).}
\end{remark}

\section{Slide moves}
\label{twistprojpres} 

\subsection{Height moves of projection, inclusion and projectors}
Let ${\bf d}$ be a composition of $n$ with the corresponding Young subgroup $\mathbb{S}_{\bf d}\subset \mathbb{S}_n$.  Fix $k$ with $0\leq k\leq n$ and let $F={}_k {\pi}_{\bf d}$ . 
On the abelian category, $F$ is given by a Serre quotient functor
\[
F \; \colon \;  \mathcal{O}_{k}(\mathfrak{gl}_n) \rightarrow
\mathcal{O}_{k}(\mathfrak{gl}_n) / \mathcal{S} .
\]
Recall that $\lambda= e_1+\cdots+e_k-\rho_n$.
Let $L(w \cdot \lambda)$ for $w \in J$, be the simple objects in $\mathcal{S}$.  These are the objects annihilated by the functor $F$.
Let $L(x \cdot \lambda)$ for $x \in I$, be the simple objects not annihilated by the functor $F$.
Note that $x $ is a longest coset representative in $  \mathbb{S}_{\bf d} \backslash \mathbb{S}_n / \mathbb{S}_k \times \mathbb{S}_{n-k} $. Fix $i$, $1\leq i\leq n-1$ such that $ss_i=s_is$ for any $s\in\mathbb{S}_{\bf d}$. 
Let $G:=\epsilon_i \circ \mathbb{L} {Z}_i \lsem -1 \rsem$.
\begin{lemma} 
There exists a triangulated functor $\bar{G}$ making the following diagram 
 \begin{equation}
\label{FGG'diag}
\xymatrix{
\mathcal{D}^<(\mathcal{O}_k(\mathfrak{gl}_n)) \ar[r]^{\hspace{0in} G} \ar[d]^{F}
&
\mathcal{D}^<(\mathcal{O}_k(\mathfrak{gl}_n))  \ar[d]^{{F}}\\
\mathcal{D}^<(\mathcal{O}_k(\mathfrak{gl}_n)/\mathcal{S}) \ar[r]^{\hspace{0in} \bar{G}} 
&
\mathcal{D}^<(\mathcal{O}_k(\mathfrak{gl}_n) / \mathcal{S})  \ .
} 
\end{equation}
commute.  Moreover the straightforward graded version also holds, with $F$ replaced by $\hat{F} ={}_k \hat{\pi}_{\bf d}$ and $D^<$ replaced by $D^\triangledown$ of the corresponding graded category. 
\end{lemma}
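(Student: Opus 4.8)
The plan is to recognise $F={}_k\pi_{\bf d}$, via the Bernstein--Gelfand equivalence of Theorem~\ref{BG}, as the Serre quotient functor $\cO_k\to\cO_k/\cS$, and to use that the induced exact functor $F\colon\mathcal{D}^<(\cO_k)\to\mathcal{D}^<(\cO_k/\cS)$ is a Verdier localization: it admits the fully faithful section $\mathbb{L}({}_k\iota_{\bf d})$ with $F\circ\mathbb{L}({}_k\iota_{\bf d})\cong\Id$ (Theorem~\ref{catJW}(1)), and, $F$ being exact, the objects it sends to $0$ are precisely those of $\mathcal{D}^<_\cS$, the full triangulated subcategory of complexes all of whose cohomology lies in $\cS$. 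By the universal property of Verdier localization, a triangulated $\bar G$ with a natural isomorphism $\bar G\circ F\cong F\circ G$ exists as soon as $F\circ G$ annihilates $\mathcal{D}^<_\cS$, equivalently as soon as $G(\mathcal{D}^<_\cS)\subseteq\mathcal{D}^<_\cS$; one may then set $\bar G:=F\circ G\circ\mathbb{L}({}_k\iota_{\bf d})$, the commutativity of \eqref{FGG'diag} following by applying $F\circ G$ to the colocalization triangle $\mathbb{L}({}_k\iota_{\bf d})F(-)\to(-)\to C$ with $C\in\mathcal{D}^<_\cS$. Thus the whole statement reduces to the inclusion $G(\mathcal{D}^<_\cS)\subseteq\mathcal{D}^<_\cS$.

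Since $\mathcal{D}^<_\cS$ is a triangulated subcategory, every object of $\cS$ has a finite filtration by the simple objects $L(w\cdot\lambda)$ with $w\in J$, and $G$ is triangulated and compatible with truncation, a standard dévissage reduces this inclusion to the assertion $G(L(w\cdot\lambda))\in\mathcal{D}^<_\cS$ for each $w\in J$. For this I would invoke the distinguished triangle of Corollary~\ref{disttri}: up to grading and homological shifts its third term is exactly $G=\hat\epsilon_i\circ\mathbb{L}\hat Z_i\lsem-1\rsem$, so $G$ differs only by shifts from the cone of a morphism $\mathbb{L}\hat T_i\to\hat\Id\langle-2\rangle$. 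As $\hat\Id$ and the shift functors obviously preserve $\mathcal{D}^<_\cS$, the claim reduces to showing that the derived twisting functor $\mathbb{L}\hat T_i$ preserves $\mathcal{D}^<_\cS$, i.e.\ that $F\circ\mathbb{L}T_i$ kills $L(w\cdot\lambda)$ for $w\in J$.

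The real content, and the step I expect to be the main obstacle, is this compatibility of the $s_i$-twisting functor with the quotient $F$, and it is exactly here that the hypothesis $ss_i=s_is$ for all $s\in\mathbb{S}_{\bf d}$ enters: it makes $s_i$ orthogonal to the Young subgroup $\mathbb{S}_{\bf d}$ defining $\cS$. Under this hypothesis one shows that $T_i$ restricts to an endofunctor of the projectively presented subcategory $\cO_{k,{\bf d}}(\mathfrak{gl}_n)\cong{}_k\cH^1_{\bf d}(\mathfrak{gl}_n)$ --- on the Harish--Chandra bimodule side, where ${}_k\pi_{\bf d}(X)=\cL(M(\mu),X)$, the twisting functor affects only the left $\mathfrak{gl}_n$-action, leaves the right central character $\chi_\mu$ untouched, and hence commutes with $\cL(M(\mu),-)$ --- so it induces an endofunctor $\bar T_i$ of $\cO_k/\cS$ with $F\circ T_i\cong\bar T_i\circ F$. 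Passing to left derived functors gives $F\circ\mathbb{L}T_i\cong\mathbb{L}\bar T_i\circ F$ on $\mathcal{D}^<$, whence $F(\mathbb{L}T_i\,L(w\cdot\lambda))\cong\mathbb{L}\bar T_i(F\,L(w\cdot\lambda))=\mathbb{L}\bar T_i(0)=0$ for $w\in J$, as required. Making the preservation $T_i(\cO_{k,{\bf d}})\subseteq\cO_{k,{\bf d}}$ --- and its persistence on passing to the derived category --- precise is the one genuinely non-formal ingredient, and is the same slide phenomenon the rest of Section~\ref{twistprojpres} develops; working instead directly with $G=\epsilon_i\circ\mathbb{L}Z_i\lsem-1\rsem$ and the explicit action of the derived Zuckerman functor on the simples $L(w\cdot\lambda)$ leads to the identical combinatorial point. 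Finally, the graded refinement needs no new idea: $\hat F={}_k\hat\pi_{\bf d}$, $\hat T_i$, $\hat Z_i$, $\hat\epsilon_i$, their derived versions and the shift functors all have the graded lifts fixed earlier and send Euler finite complexes to Euler finite complexes (Theorem~\ref{catJW}(2) and the corresponding statement for $\hat T_i$), so $D^\triangledown$ is preserved and the argument above produces $\bar G$ in the graded setting verbatim.
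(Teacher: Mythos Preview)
Your reduction to the inclusion $G(\mathcal{D}^<_\cS)\subseteq\mathcal{D}^<_\cS$ is correct and is exactly the paper's first step. From there the two arguments diverge substantially.

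The paper does not pass through the twisting functor at all. It uses that $G=\epsilon_i\circ\mathbb{L}Z_i\lsem-1\rsem$ is self-adjoint, so
\[
\Hom\bigl(\textstyle\bigoplus_{x\in I}P(x\cdot\lambda),\,GX\lsem j\rsem\bigr)\ \cong\ \Hom\bigl(\textstyle\bigoplus_{x\in I}GP(x\cdot\lambda),\,X\lsem j\rsem\bigr),
\]
and it remains to show that each $GP(x\cdot\lambda)$ is quasi-isomorphic to a complex of projectives $P(y\cdot\lambda)$ with $y\in I$. Here the hypothesis $s_is_j=s_js_i$ for $s_j\in\mathbb{S}_{\bf d}$ enters directly and visibly: it gives $(\epsilon_j\circ\mathbb{L}Z_j)\circ G\cong G\circ(\epsilon_j\circ\mathbb{L}Z_j)$, hence $(\epsilon_j\circ\mathbb{L}Z_j)\,GP(x\cdot\lambda)=0$ for all such $j$, which is precisely the characterisation of projectives indexed by $I$. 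Two elementary facts (self-adjointness of $G$, commutation of distant Zuckerman functors) finish the proof.

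Your route through Corollary~\ref{disttri} does reduce correctly to showing that $\mathbb{L}T_i$ preserves $\mathcal{D}^<_\cS$, but the Harish--Chandra argument you offer for this is not a proof. Saying that left-tensoring with $S_i$ ``only affects the left action'' does not establish that $S_i\otimes_{\mathcal{U}}M$ is again ad-finite, nor does it produce the natural isomorphism $\mathcal{L}(M(\mu),T_iX)\cong\bar T_i\,\mathcal{L}(M(\mu),X)$; and as written the sketch never actually uses the commutation hypothesis $s_is=ss_i$. You acknowledge this is ``the one genuinely non-formal ingredient'' and gesture towards the rest of Section~\ref{twistprojpres}, but those later results (Lemma~\ref{finite} through Proposition~\ref{twistprop}) treat $T_{w_{p,q}}$ moving between \emph{different} presented subcategories rather than a single $T_i$ preserving one, so an adaptation is still required. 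The cleanest way to close your gap would in fact be to show $\mathbb{L}T_i$ commutes with each $\epsilon_j\mathbb{L}Z_j$ for $s_j\in\mathbb{S}_{\bf d}$ --- but that is exactly the paper's argument, applied to $T_i$ rather than to $G$ directly, making the detour through the distinguished triangle superfluous.
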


\begin{proof}
Let $\mathcal{D}^<(\mathcal{O}_k(\mathfrak{gl}_n))_{\mathcal{S}} $ denote the full category of $\mathcal{D}^<(\mathcal{O}_k(\mathfrak{gl}_n)) $
generated by complexes with cohomology in $\mathcal{S}$.
Let $P(w\cdot \lambda)$ denote the projective cover of the corresponding simple module $L(w \cdot \lambda)$ in $\mathcal{S}$.
We would like to show the existence of the dashed arrow in \eqref{FGG'diag2} which we will define to be $\bar{G}$. It is enough to show that if $F(X)=0$, then $FG(X)=0$.  Assume $F(X)=0$.  This means that
\[
\Hom(\oplus_{x \in I} P(x \cdot \lambda), X\lsem j \rsem) =0 
\]
for all $j \in \mathbb{Z}$.  
Note that this implies that $X \in \mathcal{D}^<(\mathcal{O}_k(\mathfrak{gl}_n))_{\mathcal{S}}$.
We would like to show $\Hom(\oplus_{x \in I} P(x \cdot \lambda), GX \lsem j \rsem)=0$.

 \begin{equation}
\label{FGG'diag2}
\xymatrix{
\mathcal{D}^<(\mathcal{O}_k(\mathfrak{gl}_n))_{\mathcal{S}} \ar[d]^{}
&
\mathcal{D}^<(\mathcal{O}_k(\mathfrak{gl}_n))_{\mathcal{S}}   \ar[d]^{}\\
\mathcal{D}^<(\mathcal{O}_k(\mathfrak{gl}_n)) \ar[r]^{\hspace{0in} G} \ar[d]^{F}
&
\mathcal{D}^<(\mathcal{O}_k(\mathfrak{gl}_n))  \ar[d]^{{F}}\\
\mathcal{D}^<(\mathcal{O}_k(\mathfrak{gl}_n)/\mathcal{S}) \ar@{-->}[r]^{\hspace{0in} \bar{G} } 
&
\mathcal{D}^<(\mathcal{O}_k(\mathfrak{gl}_n) / \mathcal{S})   
} 
\end{equation}

Since $G$ is self-adjoint, we have
\begin{equation} \label{Gadjointeq}
\Hom(\oplus_{x \in I} P(x \cdot \lambda), GX \lsem j \rsem) =
\Hom(\oplus_{x \in I} G P(x \cdot \lambda), X \lsem j \rsem) .
\end{equation}
Clearly $G P(x \cdot \lambda)$ is quasi-isomorphic to a complex of projective objects and
it is enough to show that the projectives which appear are of the form $P(y \cdot \lambda)$ where $y \in I$.
Note that $y \in I$, if and only if $\epsilon_j \circ \mathbb{L} {Z}_jP(y \cdot \lambda)=0$ for all $j$ such that $s_j \in \mathbb{S}_{\bf d}$.

By assumption, we have for all $s_j \in \mathbb{S}_{\bf d}$ that  $s_j s_i = s_i s_j$. Thus
$(\epsilon_j \circ \mathbb{L} {Z}_j ) G \cong G (\epsilon_j \circ \mathbb{L} {Z}_j) $. 
In particular,
$(\epsilon_j \circ \mathbb{L} {Z}_j ) G P(x \cdot \lambda) = 0$.
Thus $GP(x \cdot \lambda)$ is quasi-isomorphic to a complex of projectives involving only objects of the form $P(y \cdot \lambda)$ where $y \in I$.
As a consequence, the spaces in \eqref{Gadjointeq} vanish, and thus the functor $\bar{G}$ exists.  The commutativity of \eqref{FGG'diag} follows immediately.
The construction obviously lifts to the graded setting. In this case we can replace $D^<$ by $D^\triangledown$.
\end{proof}

\subsection{Twisting functors on $ \mathcal{O}_{k, {\bf d}}(\mathfrak{gl}_n)$}
For this subsection, fix $ {\bf d}=(d_1, \ldots, d_r) $ with $ d_c = p, d_{c+1}=q $ and $ {\bf d'} = (d_1, \ldots, d_{c+1}, d_c,
\ldots, d_r). $ Let $ b = d_1 + \cdots + d_{c-1}. $ Let
$$ w_{p,q} = (\sigma_{b+q} \sigma_{b+q+1} \cdots \sigma_{b+p+q-1}) \cdots (\sigma_{b+2} \cdots \sigma_{b+p+1})(\sigma_{b+1} \cdots
\sigma_{b+p}). $$

Now we fix several Lie algebras.  Let
\begin{align}
\label{auxilaryalgebras}
\mathfrak{g} &= \mathfrak{gl}_{d_1+ \cdots + d_r}\\
\mathfrak{a}_1 &= \mathfrak{gl}_{d_1} \oplus \cdots \oplus
\mathfrak{gl}_{d_c+d_{c+1}} \oplus \cdots \oplus \mathfrak{gl}_{d_r}\\
\mathfrak{a}_2 &= \mathfrak{gl}_{d_1} \oplus \cdots \oplus \mathfrak{gl}_{d_c} \oplus
\mathfrak{gl}_{d_{c+1}} \oplus \cdots \oplus \mathfrak{gl}_{d_r}\\
\mathfrak{a}_3 &= \mathfrak{gl}_{d_1} \oplus \cdots \oplus \mathfrak{gl}_{d_{c+1}} \oplus
\mathfrak{gl}_{d_c} \oplus \cdots \oplus \mathfrak{gl}_{d_r}.
\end{align}

The goal of this subsection is to prove that $ T_{w_{p,q}} $ is a functor from $ \mathcal{O}_{k, {\bf d}}(\mathfrak{gl}_n) $ to $
\mathcal{O}_{k, {\bf d'}}(\mathfrak{gl}_n). $

\begin{lemma}
\label{injlemma} All injective modules in the block $ \mathcal{O}_k(\mathfrak{gl}_n) $ are objects in the category $ \mathcal{O}_{k, {\bf
d}}(\mathfrak{gl}_n). $
\end{lemma}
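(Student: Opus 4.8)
The plan is to translate the statement into a statement about projectives under the duality $\op{d}$ of category $\mathcal{O}$, and then to exploit the classification of the blocks and the combinatorics of the relevant double cosets. Recall that $\mathcal{O}_{k,\mathbf d}(\mathfrak{gl}_n)$ is the full subcategory of $\mathcal{O}_k(\mathfrak{gl}_n)$ of modules admitting a projective presentation $P_2\to P_1\to M\to 0$ with each $P_j$ a direct sum of copies of $P(x\cdot\lambda)$, $x$ a longest double-coset representative in $\mathbb{S}_{\mathbf d}\backslash\mathbb{S}_n/\mathbb{S}_k\times\mathbb{S}_{n-k}$. Since $\mathcal{O}_k$ has finite global dimension and finitely many simples, every object — in particular every injective $I(w\cdot\lambda)$ — trivially has a projective presentation; the content of the lemma is that for an injective module one may choose this presentation using only the "allowed" projectives $P(x\cdot\lambda)$, $x\in I$ (notation of the previous subsection).

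First I would apply the duality $\op{d}$, which identifies $A_{k,1^n}$ with its opposite and sends $I(w\cdot\lambda)$ to $P(w\cdot\lambda)$. Under $\op{d}$, "$M$ has a presentation by allowed projectives" becomes "$\op{d}(M)$ has a copresentation $0\to M'\to J_1\to J_2$ by allowed injectives", i.e.\ the dual statement is: \emph{every projective module in $\mathcal{O}_k$ is $\mathbf d$-copresented}. Equivalently, using Theorem~\ref{BG} (the Bernstein--Gelfand equivalence $\mathcal{O}_{k,\mathbf d}\cong {}_k\mathcal H^1_{\mathbf d}$) and its realisation via the functors ${}_k\bar\pi_{\mathbf d}={\cal L}(M(\mu),-)$ and ${}_k\bar\iota_{\mathbf d}=-\otimes_{\mathcal U}M(\mu)$, it suffices to check that ${}_k\bar\pi_{\mathbf d}$ applied to an injective (or dually a projective) lands in the Harish-Chandra category and that the adjunction counit is an isomorphism on it; concretely, that $I(w\cdot\lambda)\otimes_{\mathcal U(\mathfrak{gl}_n)}M(\mu)$-type considerations degenerate. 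The cleanest route, though, is the direct module-theoretic one: show that $I(w\cdot\lambda)$ has a linear (or just minimal) injective-to-the-right resolution whose terms are sums of $I(x\cdot\lambda)$, $x\in I$. For this I would use that the injective hull of $L(w\cdot\lambda)$ is the dual of the projective cover $P(w\cdot\lambda)$, and that by BGG reciprocity the $\nabla$-filtration multiplicities of $I(w\cdot\lambda)$ are controlled by decomposition numbers; the key point is that the simple socles and the $\nabla$-subquotients that appear in $I(w\cdot\lambda)$ are indexed by weights $x\cdot\lambda$ with $x$ of the required maximal-length type, because the block $\mathcal{O}_k$ of $\mathfrak{gl}_n$ with singular weight $\lambda$ has stabiliser $\mathbb S_k\times\mathbb S_{n-k}$ and every injective in such a block is automatically relatively injective with respect to the parabolic induced by $\mathbb S_{\mathbf d}$ — this is precisely the self-duality of the tilting-like objects $P(x\cdot\lambda)$ with $x$ a longest representative.

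The main obstacle, as I see it, is the bookkeeping of double-coset representatives: one must verify that the longest representatives in $\mathbb S_{\mathbf d}\backslash\mathbb S_n/\mathbb S_k\times\mathbb S_{n-k}$ are exactly the indices $x$ for which $P(x\cdot\lambda)$ is \emph{projective-injective} (self-dual) in $\mathcal{O}_k$, and hence that an injective resolution of any $I(w\cdot\lambda)$ can be built from these. I would establish this via Irving's characterisation of projective-injective modules in singular blocks (the "Irving shuffling" / socle-of-Verma criterion), or equivalently via translation to and from the principal block: the projective-injectives in $\mathcal{O}_\lambda$ correspond under translation functors to $T_{w_0}$-type objects, whose highest weights are the longest coset representatives. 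Once that identification is in place, the lemma is immediate: $I(w\cdot\lambda)$, being injective, embeds with finite injective resolution into sums of indecomposable injectives, each of which is one of the self-dual $P(x\cdot\lambda)=I(x\cdot\lambda)$ with $x\in I$, giving the required (co)presentation and hence membership in $\mathcal{O}_{k,\mathbf d}(\mathfrak{gl}_n)$. I would conclude by invoking Lemma~\ref{distrijos}-style adjunction arguments only if needed to upgrade the statement to the graded setting, but here the ungraded claim suffices for the block $\mathcal{O}_k(\mathfrak{gl}_n)$ as stated.
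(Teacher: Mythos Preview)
Your duality idea is correct and is exactly how the paper starts, but the argument goes off the rails at the identification step. You claim that ``the longest representatives in $\mathbb S_{\mathbf d}\backslash\mathbb S_n/\mathbb S_k\times\mathbb S_{n-k}$ are exactly the indices $x$ for which $P(x\cdot\lambda)$ is \emph{projective--injective} (self-dual) in $\mathcal O_k$''. This is false: the set of allowed indices depends on $\mathbf d$, whereas the set of projective--injectives in $\mathcal O_k$ does not. Already for $\mathbf d=(1^n)$ every index is allowed (so $\mathcal O_{k,(1^n)}=\mathcal O_k$), but the block still has only one indecomposable projective--injective, namely the anti-dominant $P(0^{n-k}1^k)$. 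Your final sentence, where you resolve $I(w\cdot\lambda)$ by ``sums of indecomposable injectives, each of which is one of the self-dual $P(x\cdot\lambda)=I(x\cdot\lambda)$'', therefore has no force: the indecomposable injectives $I(y\cdot\lambda)$ appearing in an injective resolution are in general not projective, so you cannot conclude they are allowed.

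The paper's fix is to stop trying to match the allowed set with the projective--injectives and to use instead that the \emph{single} module $P(0^{n-k}1^k)$ already does everything. It is self-dual, it is allowed for \emph{every} $\mathbf d$ (the longest element $w_0$ is always a longest double-coset representative), and every projective $P$ in $\mathcal O_k$ admits a copresentation
\[
0\longrightarrow P\longrightarrow P(0^{n-k}1^k)^{\oplus Q}\longrightarrow P(0^{n-k}1^k)^{\oplus R}
\]
(this is \cite[Proposition~1.6]{StrTQFT}; it follows because the socle of every Verma, hence of every Verma-filtered module, is a sum of copies of the anti-dominant simple). Applying $\op d$ gives the required projective presentation of any injective by copies of $P(0^{n-k}1^k)$ alone, and the lemma follows in one line. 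So your duality reduction is right, but the missing observation is that one allowed projective suffices; the Irving/tilting detour is both unnecessary and, as stated, incorrect.
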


\begin{proof}
By \cite[Proposition 1.6]{StrTQFT}, any projective object $ P $ in $ \mathcal{O}_k(\mathfrak{gl}_n) $ has a copresentation $ 0 \rightarrow
P \rightarrow P(0^{n-k} 1^{k})^{\bigoplus Q} \rightarrow P(0^{n-k} 1^{k})^{\bigoplus R}. $  The module $P(0^{n-k} 1^k) $ is clearly in the subcategory $
\mathcal{O}_{k, {\bf d}}(\mathfrak{gl}_n). $  Since $ P(0^{n-k} 1^{k}) $ is self dual, for any injective object $ I $ in $
\mathcal{O}_k(\mathfrak{gl}_n), $ there is a short exact sequence $ P(0^{n-k} 1^{k})^{\bigoplus R} \rightarrow P(0^{n-k} 1^{k})^{
\bigoplus Q} \rightarrow I \rightarrow 0. $ Thus any injective $ I $ is in the projectively presented subcategory $ \mathcal{O}_{k, {\bf
d}} (\mathfrak{gl}_n). $
\end{proof}

As a consequence, we obtain the following result.

\begin{corollary}
Any complex $ A $ in $ D^{<}(\mathcal{O}_{k, {\bf d}}(\mathfrak{gl}_n)), $ (or in $ D^{<}({}_{k}\mathcal{H}_{{\bf d}}^1(\mathfrak{gl}_n))
$), is quasi-isomorphic to a complex $A'$ in $ D^{<}(\mathcal{O}_{k, {\bf d}}(\mathfrak{gl}_n)), $ (or in $
D^{<}({}_{k}\mathcal{H}_{{\bf d}}^1(\mathfrak{gl}_n)) $), where $ A' $ is a (possibly infinite) complex of projectives or a finite complex of injectives.
\end{corollary}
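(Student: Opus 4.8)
The plan is to prove the projective and the injective assertion separately, since the former follows directly from the definition of $\mathcal{O}_{k,{\bf d}}(\mathfrak{gl}_n)$ as a projectively presented subcategory, whereas the latter is precisely what Lemma~\ref{injlemma} is designed for. In both cases I would first use Theorem~\ref{BG} and the definition of $A_{k,{\bf d}}$ to replace $\mathcal{O}_{k,{\bf d}}(\mathfrak{gl}_n)$ (equivalently ${}_{k}\mathcal{H}_{{\bf d}}^1(\mathfrak{gl}_n)$, which is equivalent to it) by the module category $\gmod-A_{k,{\bf d}}$ over a finite-dimensional algebra; this category has enough projectives and enough injectives, and its $D^{<}$ is equivalent to the homotopy category of bounded-above complexes of projective modules.

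For the projective statement: as $A_{k,{\bf d}}$ is by construction the endomorphism algebra of a minimal projective generator of $\mathcal{O}_{k,{\bf d}}(\mathfrak{gl}_n)$, the indecomposable projectives of the category are, under the above identification, exactly the $P(x\cdot\lambda)$ with $x$ a longest double coset representative in $\mathbb{S}_{\bf d}\backslash\mathbb{S}_n/\mathbb{S}_k\times\mathbb{S}_{n-k}$. Every object therefore admits a projective resolution — in general infinite — by direct sums of these, and I would then invoke the classical equivalence between the homotopy category of bounded-above complexes of projectives and $D^{<}(\gmod-A_{k,{\bf d}})$ (concretely: resolve the naive truncations of $A$ and pass to the limit, or resolve $A$ term by term and take the total complex). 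This represents $A$ by a complex $A'$, bounded to the right, all of whose terms are direct sums of the $P(x\cdot\lambda)$.

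For the injective statement I would use that the ambient block $\mathcal{O}_k(\mathfrak{gl}_n)$ has finite global dimension, which is well known for blocks of category $\mathcal{O}$. Hence every object $M$ of $\mathcal{O}_{k,{\bf d}}(\mathfrak{gl}_n)$, viewed inside $\mathcal{O}_k(\mathfrak{gl}_n)$, has a finite injective resolution $0\to M\to I^0\to\cdots\to I^d\to 0$ there; by Lemma~\ref{injlemma} every $I^j$ already lies in $\mathcal{O}_{k,{\bf d}}(\mathfrak{gl}_n)$, so the injective dimension of every object of $\mathcal{O}_{k,{\bf d}}(\mathfrak{gl}_n)$ is bounded by $d$. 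Given $A$ with cohomology in only finitely many degrees, I would truncate it to a bounded complex and then build a Cartan--Eilenberg injective resolution; since that truncated complex is finite and each of its terms has injective dimension $\le d$, the resulting complex $A'$ of injectives is finite. (For a general complex in $D^{<}$ with unbounded cohomology the same construction still produces a complex of injectives, but only one bounded to the right.)

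The step I expect to require the most care is the closure of the subcategory under the (co)syzygies used in these resolutions: for projectives it is immediate from the list of projective objects, while for injectives it is exactly the combination of Lemma~\ref{injlemma} with the finiteness of the global dimension of $\mathcal{O}_k(\mathfrak{gl}_n)$. One should also record that the inclusion $\mathcal{O}_{k,{\bf d}}(\mathfrak{gl}_n)\hookrightarrow\mathcal{O}_k(\mathfrak{gl}_n)$ is exact, so that moving between the two derived categories is legitimate; this is part of the Bernstein--Gelfand theory (\cite{BG}, \cite{Ja}). Beyond these points the argument is routine homological algebra over a finite-dimensional algebra.
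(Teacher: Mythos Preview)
Your approach is correct and is exactly what the paper has in mind; the paper in fact gives no proof beyond the sentence ``As a consequence, we obtain the following result'', so the injective half is meant to be immediate from Lemma~\ref{injlemma} together with the finite global dimension of $\mathcal{O}_k(\mathfrak{gl}_n)$, and the projective half from the very definition of $\mathcal{O}_{k,{\bf d}}$.

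One small correction of phrasing: the inclusion $\mathcal{O}_{k,{\bf d}}(\mathfrak{gl}_n)\hookrightarrow\mathcal{O}_k(\mathfrak{gl}_n)$ is not an exact functor in general (kernels in the subcategory need not agree with kernels in $\mathcal{O}_k$). What you actually need, and what does hold, is the implication in the other direction: a sequence of objects of $\mathcal{O}_{k,{\bf d}}$ which is exact in $\mathcal{O}_k$ is automatically exact in $\mathcal{O}_{k,{\bf d}}$. This follows because ${}_{\lambda}\bar{\pi}_{\mu}$ is exact on all of $\mathcal{O}_k$ and restricts to an equivalence on $\mathcal{O}_{k,{\bf d}}$ (Theorem~\ref{BG}), so exactness in $\mathcal{O}_k$ forces exactness after applying ${}_{\lambda}\bar{\pi}_{\mu}$, which is exactness in $\mathcal{O}_{k,{\bf d}}$. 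With this in place your injective-resolution argument goes through verbatim.
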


In order to understand how the twisting functor acts on projective objects in $ \mathcal{O}_{k, {\bf d}}(\mathfrak{gl}_n) $, it is easier
to study its action on standard objects.  The key connection between standard and projective objects is the following lemma.

\begin{lemma} \cite[Theorem 2.16]{MS2}\label{propstrat}
Any projective object $ P \in \mathcal{O}_{k,{\bf d}}(\mathfrak{gl}_n) $ has a filtration with subquotients
$ \Delta(k_1, d_1 | \cdots | k_r, d_r) $ where $ k_1 + \cdots + k_r = k$.
\end{lemma}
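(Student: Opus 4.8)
The plan is to verify that the modules $\Delta(k_1,d_1|\cdots|k_r,d_r)$ are the standard objects of a fully stratified structure on $A_{k,\mathbf d}$, so that the assertion becomes a defining property of such a structure. First I would reduce to the case of an indecomposable projective $P=P(x\cdot\lambda)$ with $x$ a longest representative of a double coset in $\mathbb{S}_{\mathbf d}\backslash\mathbb{S}_n/(\mathbb{S}_k\times\mathbb{S}_{n-k})$; by the very definition of $\mathcal{O}_{k,\mathbf d}(\mathfrak{gl}_n)$ these are precisely its indecomposable projectives. Two elementary observations set the stage. (a) Each $\Delta(k_1,d_1|\cdots|k_r,d_r)$ has a finite Verma flag: parabolic induction $\Ind_{\mathfrak p}^{\mathfrak g}=\mathcal{U}(\mathfrak{gl}_n)\otimes_{\mathcal{U}(\mathfrak p)}(-)$ is exact (as $\mathcal{U}(\mathfrak{gl}_n)$ is $\mathcal{U}(\mathfrak p)$-free by PBW), sends a $\mathfrak{gl}_{\mathbf d}$-Verma to a $\mathfrak{gl}_n$-Verma, and the ``big projective'' $P(0^{d_1-k_1}1^{k_1})\boxtimes\cdots\boxtimes P(0^{d_r-k_r}1^{k_r})$ has a finite Verma flag as a $\mathfrak{gl}_{\mathbf d}$-module. (b) $\Delta(k_1,d_1|\cdots|k_r,d_r)$ has simple head $L(k_1,d_1|\cdots|k_r,d_r)$: by right exactness of $\Ind_{\mathfrak p}^{\mathfrak g}$ its top is the induction of the simple Levi module $L(0^{d_1-k_1}1^{k_1})\boxtimes\cdots$ (inflated along $\mathfrak p\to\mathfrak{gl}_{\mathbf d}$), and the induction of a simple Levi module is a generalised Verma module, which has a simple head; in the labelling of Proposition~\ref{simples} this head is $L(k_1,d_1|\cdots|k_r,d_r)$.

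Next I would bring the translation principle into play to reduce everything to the classical fact that projectives in an arbitrary (possibly singular) integral block of $\mathcal{O}$ have Verma flags. Fix a dominant integral weight $\mu$ whose stabiliser is $\mathbb{S}_{\mathbf d}$ and let $\theta^{\mathrm{out}}\colon\mathcal{O}_\mu(\mathfrak{gl}_n)\to\mathcal{O}_\lambda(\mathfrak{gl}_n)$ be translation out of the $\mu$-wall, with biadjoint $\theta^{\mathrm{on}}\colon\mathcal{O}_\lambda(\mathfrak{gl}_n)\to\mathcal{O}_\mu(\mathfrak{gl}_n)$; both functors are exact. Three standard facts about $\theta^{\mathrm{out}}$ (see \cite{Hu},\cite{MS2}) do the work: $\theta^{\mathrm{out}}$ sends each Verma module of $\mathcal{O}_\mu$ to one of the standard modules $\Delta(k_1,d_1|\cdots|k_r,d_r)$ (with repetitions; the image of $M^\mu(w\cdot\mu)$ depends only on the double coset of $w$ and is exactly the parabolically induced big projective of Definition~\ref{standardsinduced}); $\theta^{\mathrm{out}}$ sends the indecomposable projectives of $\mathcal{O}_\mu$ into $\mathcal{O}_{k,\mathbf d}(\mathfrak{gl}_n)$, and conversely every indecomposable projective $P(x\cdot\lambda)$ of $\mathcal{O}_{k,\mathbf d}(\mathfrak{gl}_n)$ is a direct summand of $\theta^{\mathrm{out}}P^\mu$ for some projective $P^\mu$ of $\mathcal{O}_\mu$ (this is one way of expressing the defining ``$\mathbf d$-presentability'' of $\mathcal{O}_{k,\mathbf d}(\mathfrak{gl}_n)$); and projectives in the block $\mathcal{O}_\mu$ have Verma flags (classical BGG reciprocity for that block). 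Applying the exact functor $\theta^{\mathrm{out}}$ to a Verma flag of such a $P^\mu$ produces a filtration of $\theta^{\mathrm{out}}P^\mu$ whose subquotients are modules $\theta^{\mathrm{out}}(\text{Verma})=\Delta(\cdots)$, i.e.\ a $\Delta$-filtration; and then $P(x\cdot\lambda)$, being a direct summand of $\theta^{\mathrm{out}}P^\mu$, inherits a $\Delta$-filtration.

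The last implication is the genuine obstacle: in this stratified (not highest-weight) situation it is \emph{not} automatic that a direct summand of a $\Delta$-filtered module is again $\Delta$-filtered. To get it one needs the homological characterisation of $\Delta$-filtered objects by the vanishing of $\Ext^{>0}$ against the proper costandard objects $\bar\nabla(k_1,d_1|\cdots|k_r,d_r)$ (the duals of the proper standard modules $\blacktriangle$ of Definition~\ref{propstandard}), which in turn rests on the $\Ext$-orthogonality $\Ext^{>0}(\Delta,\bar\nabla)=0$ between standard and proper costandard objects. Establishing this orthogonality, hence the full fully stratified structure on $A_{k,\mathbf d}$, is precisely the content of \cite[Section 2]{MS2} (compare also \cite{BSsemi}), so I would assemble the proof from these inputs — (a), (b), the three translation facts, and the stratified $\Ext$-orthogonality — rather than re-deriving the orthogonality here. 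Alternatively one can invoke the general machinery of stratified algebras in the sense of Cline--Parshall--Scott and Frisk--Mazorchuk, for which the verification of (a), (b) and the translation facts is exactly what is required.
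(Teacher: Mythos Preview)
The paper does not give its own proof of this lemma: it is stated with the citation \cite[Theorem~2.16]{MS2} and used as a black box. So there is nothing in the paper to compare your argument against line by line.

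That said, your outline is essentially the strategy carried out in \cite{MS2}. You correctly isolate the two nontrivial points: (i) that the $\Delta(k_1,d_1|\cdots|k_r,d_r)$ arise as images of Verma modules under a suitable translation functor, so that translated projectives acquire $\Delta$-flags; and (ii) that passing to a direct summand preserves the property of being $\Delta$-filtered, which is \emph{not} automatic in a merely stratified (as opposed to highest weight) setting and requires the $\Ext$-orthogonality between standards and proper costandards. Your explicit acknowledgement of (ii) is exactly right and is the heart of \cite[Section~2]{MS2}. One word of caution on (i): the stabilisers $\mathbb{S}_{\mathbf d}$ and $\mathbb{S}_k\times\mathbb{S}_{n-k}$ are in general not comparable, so the phrase ``translation out of the $\mu$-wall'' is slightly loose; what one actually uses is the description of the $\Delta$'s as parabolically induced antidominant projectives (Definition~\ref{standardsinduced}, which is \cite[Proposition~2.9]{MS2}) together with compatibility of translation functors with parabolic induction. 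With that adjustment your sketch matches the argument in \cite{MS2}.
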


The twisting functor is a composition of tensoring with a bimodule and twisting the action of the Lie algebra by an automorphism defined earlier.  We describe now such an automorphism explicitly.

\begin{lemma}
\label{automorphismlemma} The automorphism $ \Gamma_{w_{p,q}} $ can be chosen to act on $ \mathfrak{gl}_p \oplus \mathfrak{gl}_q \subset \mathfrak{gl}_{p+q}
$ as follows: $ \Gamma_{w_{p,q}}(e_{i,j}) = e_{i+q,j+q} $ if $ i,j \leq p $ and is equal to $ e_{i-p,j-p} $ if $ i,j \geq p. $
\end{lemma}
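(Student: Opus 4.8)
The plan is to reduce the statement to an explicit description of $w_{p,q}$ as a permutation of $\{1,\dots,n\}$ together with a convenient choice of the automorphism $\Gamma_{w_{p,q}}$. Recall from the discussion preceding the definition of the twisting functor that $T_w$ does not depend, up to isomorphism, on the choice of $\Gamma_w$ (the bimodule $S_w$ is built from $\mathfrak{n}_w$ alone, and the $\Gamma_w$-twist only changes the functor by an isomorphism). Hence we are free to take $\Gamma_w$ to be conjugation by a permutation matrix representative $\dot{w}\in GL_n(\mathbb{C})$ of $w$, normalised so that $\Gamma_w(E_{i,j})=\dot{w}\,E_{i,j}\,\dot{w}^{-1}=E_{w(i),w(j)}$ for every matrix unit $E_{i,j}$ of $\mathfrak{gl}_n$. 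With this choice the lemma becomes a purely combinatorial statement about the permutation $w_{p,q}$.

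The main step is then to show that $w_{p,q}$ fixes every index outside $\{b+1,\dots,b+p+q\}$ and, on that interval, is the block transposition
\[
w_{p,q}(b+i)=b+q+i\quad(1\le i\le p),\qquad w_{p,q}(b+p+i)=b+i\quad(1\le i\le q).
\]
I would prove this by induction on $q$. Write $w_{p,q}=R_q R_{q-1}\cdots R_1$ with $R_j:=\sigma_{b+j}\sigma_{b+j+1}\cdots\sigma_{b+j+p-1}$, and observe that each $R_j$ is the $(p+1)$-cycle supported on $\{b+j,\dots,b+j+p\}$ that sends $b+j\mapsto b+j+1\mapsto\cdots\mapsto b+j+p\mapsto b+j$. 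The base case $q=1$ is exactly this cycle description of $R_1$. For the inductive step, apply $R_q$ to the permutation $R_{q-1}\cdots R_1$: by the inductive hypothesis the images of the first block are $b+q,\dots,b+q+p-1$, which $R_q$ shifts up by one to $b+q+1,\dots,b+q+p$; the index $b+p+q$, fixed so far, is sent by $R_q$ to $b+q$; and the indices $b+1,\dots,b+q-1$ lie strictly below the support of $R_q$ and are therefore untouched. Collecting these cases gives the asserted formula; this bookkeeping (and fixing a composition-order convention consistent with it) is the only point requiring some care, the rest being formal.

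Finally, identify $\mathfrak{gl}_p\oplus\mathfrak{gl}_q\subset\mathfrak{gl}_{p+q}$ with the block of $\mathfrak{gl}_n$ supported on the rows and columns $\{b+1,\dots,b+p+q\}$, so that a matrix unit $e_{i,j}$ of $\mathfrak{gl}_{p+q}$ corresponds to $E_{b+i,b+j}$. Then $\Gamma_{w_{p,q}}(E_{b+i,b+j})=E_{w_{p,q}(b+i),\,w_{p,q}(b+j)}$, and the block-transposition description gives $w_{p,q}(b+i)=b+q+i$ when $i\le p$, hence $\Gamma_{w_{p,q}}(e_{i,j})=e_{i+q,j+q}$ in that range, while $w_{p,q}(b+p+i')=b+i'$ for $1\le i'\le q$, hence $\Gamma_{w_{p,q}}(e_{i,j})=e_{i-p,j-p}$ when $i,j>p$. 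This is exactly the claimed description of $\Gamma_{w_{p,q}}$ on $\mathfrak{gl}_p\oplus\mathfrak{gl}_q$.
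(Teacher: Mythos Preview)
Your proof is correct and follows essentially the same approach as the paper: choose $\Gamma_{w}$ to be conjugation by the permutation matrix for $w$, observe that this gives $\Gamma_w(e_{i,j})=e_{w(i),w(j)}$, and then specialise to $w=w_{p,q}$. The paper's proof is terser, leaving the identification of $w_{p,q}$ as the block transposition on $\{b+1,\dots,b+p+q\}$ implicit; your induction on $q$ via the factorisation $w_{p,q}=R_q\cdots R_1$ with each $R_j$ a $(p+1)$-cycle makes this step explicit.
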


\begin{proof}
We choose the automorphism $ \Gamma_w (g) = wgw^{-1} $ where $ w $ is the matrix obtained from the identity by permuting the rows.  The $ i\text{th} $
column of $ w $ has a non-zero entry only at the row $ w(i). $  Thus $ w e_{i,j} = e_{w(i),i} e_{i,j} = e_{w(i),j}. $ The $ j\text{th} $
row of $ w^{-1} $ has a non-zero entry only at the $ r \text{th} $ column such that $ w^{-1}(r)=j. $  Therefore $ r=w(j). $  Thus $
e_{w(i),j} w^{-1} = e_{w(i),j} e_{j,w(j)} = e_{w(i),w(j)}. $ Now the lemma follows when $ w=w_{p,q}. $
\end{proof}

We now aim to give an explicit description of the bimodule which defines the twisting functor.  We recall its definition in terms of localization with respect to a certain set of root vectors which satisfies
the Ore condition.  For the moment, take $ w=s_i $ a simple reflection.  Let $ f_i $ be the basis vector for the one-dimensional subalgebra $ \mathfrak{n}_{s_i}$.
Define $ S_i'' = \mathcal{U}(\mathfrak{gl}_n) \otimes_{\mathbb{C}[f_i]} \mathbb{C}[f_i, f_i^{-1}] $ to be the localization of the enveloping algebra with respect to the set generated by $ f_i$.
This is naturally a $ (\mathcal{U}(\mathfrak{gl}_n), \mathcal{U}(\mathfrak{gl}_n)) $-bimodule which contains $ \mathcal{U}(\mathfrak{gl}_n) $ as a subbimodule.
Set $ S_i' = S_i'' / \mathcal{U}(\mathfrak{gl}_n)$.  This is in fact precisely how Arkhipov endows $ S_i $ with the structure of a bimodule so we record the following non-trivial lemma.

\begin{lemma} {\rm(\cite[Corollary 2.1.4]{Ark})}
\label{arklemma}
There is an isomorphism of $ (\mathcal{U}(\mathfrak{gl}_n), \mathcal{U}(\mathfrak{gl}_n))$-bimodules: $ S_i \cong S_i'$.
\end{lemma}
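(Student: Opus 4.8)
The plan is to follow Arkhipov's argument, which I outline step by step. Since $w = s_i$ is a simple reflection, the subalgebra $\mathfrak{n}_{s_i} = \mathfrak{n}^- \cap s_i(\mathfrak{n}^+)$ is one-dimensional: it is exactly the root space of $-\alpha_i$, spanned by the negative simple root vector $f_i$. Hence $N_{s_i} = \mathcal{U}(\mathfrak{n}_{s_i})$ is the graded polynomial ring $\mathbb{C}[f_i]$, and the first step is to identify its graded dual, as a graded $(\mathbb{C}[f_i],\mathbb{C}[f_i])$-bimodule, with
\[
 N_{s_i}^* \cong \mathbb{C}[f_i,f_i^{-1}]/\mathbb{C}[f_i], \qquad (f_i^m)^* \longmapsto f_i^{-(m+1)} + \mathbb{C}[f_i].
\]
Both the left and the right $\mathbb{C}[f_i]$-actions are visibly intertwined by this map: on $N_{s_i}^*$ one has $f_i^n\cdot (f_i^m)^* = (f_i^{m-n})^*$, which vanishes exactly when $m < n$, while $f_i^n\cdot f_i^{-(m+1)} = f_i^{\,n-m-1}$ lands in $\mathbb{C}[f_i]$ exactly when $m < n$.

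Next I would check that the multiplicatively closed set generated by $f_i$ satisfies the left and right Ore conditions in $\mathcal{U}(\mathfrak{gl}_n)$, so that $S_i'' = \mathcal{U}(\mathfrak{gl}_n)\otimes_{\mathbb{C}[f_i]}\mathbb{C}[f_i,f_i^{-1}]$ really is the two-sided localization. This holds because $\operatorname{ad}(f_i)$ acts locally nilpotently on $\mathcal{U}(\mathfrak{gl}_n)$: given $u$ in a finite-dimensional $\operatorname{ad}$-stable subspace, choose $m$ with $(\operatorname{ad} f_i)^m u = 0$; expanding $f_i^N u = \sum_k \binom{N}{k}(\operatorname{ad} f_i)^k(u)\,f_i^{N-k}$ shows $f_i^N u \in \mathcal{U}(\mathfrak{gl}_n)\,f_i^{N-m+1}$ for large $N$, which gives the left Ore condition, and the right one is symmetric. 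Since $\mathcal{U}(\mathfrak{gl}_n)$ is a domain, the canonical map $\mathcal{U}(\mathfrak{gl}_n)\to S_i''$ is injective. Applying $\mathcal{U}(\mathfrak{gl}_n)\otimes_{\mathbb{C}[f_i]}(-)$ to the short exact sequence $0\to \mathbb{C}[f_i]\to\mathbb{C}[f_i,f_i^{-1}]\to N_{s_i}^*\to 0$ of $\mathbb{C}[f_i]$-bimodules and using this injectivity to identify the cokernel yields an isomorphism of left $\mathcal{U}(\mathfrak{gl}_n)$-modules $S_i = \mathcal{U}(\mathfrak{gl}_n)\otimes_{\mathbb{C}[f_i]} N_{s_i}^* \cong S_i''/\mathcal{U}(\mathfrak{gl}_n) = S_i'$.

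The remaining and genuinely non-trivial step, which I expect to be the main obstacle, is to promote this to an isomorphism of \emph{bimodules}: one must show that the right $\mathcal{U}(\mathfrak{gl}_n)$-action that Arkhipov puts on the semiregular bimodule $S_i$ (built from the coregular right action on $N_{s_i}^*$ twisted by the adjoint action) corresponds, under the identifications above, to honest right multiplication in $S_i''/\mathcal{U}(\mathfrak{gl}_n)$. I would verify this on the Chevalley generators of $\mathfrak{gl}_n$: for a generator $x$, the relation $x\,f_i^{-r} = f_i^{-r}x + (\text{terms of lower order in }f_i^{-1})$, coming again from the local $\operatorname{ad}$-nilpotence of $f_i$, produces precisely the correction terms prescribed by the coregular/adjoint formula. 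This is a direct but somewhat lengthy bookkeeping computation; the full details are Arkhipov's \cite[Corollary 2.1.4]{Ark}, with a cleaner localization-theoretic treatment in \cite{AL}. Everything outside this matching of the right module structures is formal.
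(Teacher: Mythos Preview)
The paper does not give its own proof of this lemma: it simply records it as a known result of Arkhipov, citing \cite[Corollary 2.1.4]{Ark} (and noting immediately before the statement that ``this is in fact precisely how Arkhipov endows $S_i$ with the structure of a bimodule''). So there is nothing in the paper to compare your argument against.

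That said, your outline is an accurate sketch of Arkhipov's proof. The identification $N_{s_i}^* \cong \mathbb{C}[f_i,f_i^{-1}]/\mathbb{C}[f_i]$ and the Ore condition via local $\operatorname{ad}$-nilpotence are exactly the ingredients needed, and you are right that the substantive content is matching the right $\mathcal{U}(\mathfrak{gl}_n)$-action defined via the semiregular construction with honest right multiplication in the localization quotient. You correctly defer this last step to \cite{Ark} and \cite{AL}, which is precisely what the paper itself does.
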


Let $ B_1 = z_1, \ldots, z_{\beta} $ be a basis for $ \mathfrak{n}_{w_{p,q}} $ and $ B_2= y_1, \ldots, y_{\alpha} $ a basis for $ \mathfrak{g}/\mathfrak{n}_{w_{p,q}} $.
Define $ S_{w_{p,q}}'' $ to be the localization of $ \mathcal{U}(\mathfrak{gl}_n) $ with respect to $ B_1$.
$$ S_{w_{p,q}}'' = \mathcal{U}(\mathfrak{gl}_n) \otimes_{\mathbb{C}[B_1]} \mathbb{C}[B_1, B_1^{-1}]. $$
As before, define the bimodule $ S_{w_{p,q}}' = S_{w_{p,q}}'' / \mathcal{U}(\mathfrak{gl}_n)$.

\begin{lemma}
\label{vor}
Let $ \lbrace k_1, \ldots, k_{\beta} \rbrace \subset \mathbb{Z}_{\geq 0} $ and $ \lbrace l_1, \ldots, l_{\alpha} \rbrace \subset \mathbb{Z}_{>0}$.
Then the monomials $ y_{a_1}^{k_1} \cdots y_{a_{\gamma}}^{k_{\gamma}} \otimes y_{b_1}^{-l_1} \cdots y_{b_{\delta}}^{-l_{\delta}}$ form a basis for $ S_{w_{p,q}}' $ over $ \mathbb{C} $.
\end{lemma}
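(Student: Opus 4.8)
The plan is to establish that the monomials described form a $\mathbb{C}$-basis of $S_{w_{p,q}}'$ by reducing to a PBW-type argument for the localization. First I would fix, as is standard, a PBW ordering on $\mathcal{U}(\mathfrak{gl}_n)$ adapted to the decomposition $\mathfrak{g} = \mathfrak{n}_{w_{p,q}} \oplus (\mathfrak{g}/\mathfrak{n}_{w_{p,q}})$, i.e. write basis monomials of $\mathcal{U}(\mathfrak{gl}_n)$ with the $y_b \in B_2$ factors to the left and the $z_a \in B_1$ factors to the right (or vice versa). The key input is that $B_1$ spans a subalgebra $\mathfrak{n}_{w_{p,q}}$ and that this subalgebra is ``locally nilpotent'' in a way that makes the multiplicative set generated by the $z_a$ satisfy the Ore condition — this is exactly what makes $S_{w_{p,q}}''$ a well-defined localization and is the nontrivial content already invoked in the construction of $S_w$ (via \cite{Ark}, \cite{AL}, and the analogous rank-one statement in Lemma \ref{arklemma}). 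I would cite this rather than reprove it.

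The main step is then a normal-form argument. Since $S_{w_{p,q}}'' = \mathcal{U}(\mathfrak{gl}_n) \otimes_{\mathbb{C}[B_1]} \mathbb{C}[B_1, B_1^{-1}]$, every element can be written as $u \otimes m$ with $u \in \mathcal{U}(\mathfrak{gl}_n)$ and $m$ a Laurent monomial in the (commuting up to lower order — here one needs that $\mathbb{C}[B_1]$ really is a polynomial ring, which holds because $\mathfrak{n}_{w_{p,q}}$ is abelian in the relevant cases, or more carefully one works with the ordered monomials) variables $z_a$. Using the PBW normal form on $u$ and sliding the $\mathbb{C}[B_1]$-part across the tensor, I would show that $S_{w_{p,q}}''$ has $\mathbb{C}$-basis given by monomials $y_{a_1}^{k_1} \cdots y_{a_\gamma}^{k_\gamma} \otimes z_{b_1}^{m_1} \cdots z_{b_\delta}^{m_\delta}$ with $k_i \geq 0$, the $a_i$ distinct and ordered, $m_j \in \mathbb{Z}$, the $b_j$ distinct and ordered. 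The subbimodule $\mathcal{U}(\mathfrak{gl}_n) \subset S_{w_{p,q}}''$ is spanned by those monomials in which all exponents $m_j$ are $\geq 0$. Passing to the quotient $S_{w_{p,q}}' = S_{w_{p,q}}''/\mathcal{U}(\mathfrak{gl}_n)$ kills exactly those, so a basis of the quotient is given by the monomials with at least one $m_j < 0$; after renaming and absorbing the nonnegative $z$-powers using the commutation relations (again only changing things by lower-order PBW terms, which I would need to check do not disturb the leading-term argument), one arrives at the asserted basis with all $l_j \in \mathbb{Z}_{>0}$.

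I expect the main obstacle to be the bookkeeping in the normal-form computation: making precise that one can always move the negative-power $z$-factors to one side and the $B_2$-factors to the other while controlling the ``correction terms'' coming from the non-commutativity of $\mathfrak{gl}_n$, so that the claimed monomials are genuinely linearly independent (spanning is comparatively routine). The cleanest way to handle this is to set up a filtration on $S_{w_{p,q}}''$ by total degree in the $y$'s and $z$'s and pass to the associated graded, where everything becomes a genuine localization of a polynomial ring $\mathbb{C}[B_1 \cup B_2]$ at $B_1$; there the statement is transparent, and one lifts back. A secondary subtlety worth a remark is checking that the ordered-monomial set really is linearly independent in the localization and not subject to unexpected relations — this again follows from the Ore/flatness property of the localization, for which I would point to Lemma \ref{arklemma} and \cite{Ark}.
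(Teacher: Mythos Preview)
Your proposal is essentially the same approach the paper takes. The paper's proof is extremely terse: it cites \cite[Lemma~13]{KhoMaz} directly for linear independence, and for spanning it notes that the proof of that lemma simplifies here because $\mathfrak{n}_{w_{p,q}}$ is commutative, so the rearrangement of terms (exactly the ``correction terms'' you worry about) becomes trivial. Your sketch unpacks precisely this argument: you use the PBW decomposition adapted to $\mathfrak{g}=\mathfrak{n}_{w_{p,q}}\oplus(\mathfrak{g}/\mathfrak{n}_{w_{p,q}})$, observe that abelianness of $\mathfrak{n}_{w_{p,q}}$ makes $\mathbb{C}[B_1]$ an honest polynomial ring, and propose a filtration/associated-graded argument for independence --- which is one standard proof of the result in \cite{KhoMaz} that the paper cites. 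One small caution: your ``absorbing the nonnegative $z$-powers'' step in passing from ``at least one exponent negative'' to the claimed normal form is the place where the commutativity is genuinely doing work, and you should make that explicit rather than leave it as a renaming; this is exactly the simplification the paper singles out.
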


\begin{proof}
The linear independence of the elements follows directly from ~\cite[Lemma 13]{KhoMaz}.  The fact that these elements also span the bimodule follows from a simplification of the proof of ~\cite[Lemma 13]{KhoMaz}
since $ \mathfrak{n}_{w_{p,q}} $ is commutative.  Thus the rearrangement of terms given in the aforementioned proof is trivial in this case.
\end{proof}

\begin{remark}{\rm 
For a similar statement, see \cite[Theorem 3.1]{Vor}.}
\end{remark}

\begin{corollary}
\label{bimodiso}
Suppose $ w_{p,q}=w_1 \cdots w_l $ is a reduced expression for $ w $ in terms of simple reflections.  Then
$ S_{w_{p,q}}' \cong S_{w_{p,q}} $ as $ (\mathcal{U}(\mathfrak{gl}_n), \mathcal{U}(\mathfrak{gl}_n)) $-bimodules.
\end{corollary}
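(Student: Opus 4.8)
The plan is to reduce the statement to the case of a single simple reflection, which is exactly Lemma~\ref{arklemma}, and then to assemble the general case by exploiting the commutativity of $\mathfrak{n}_{w_{p,q}}$. Write $w_{p,q} = w_1 \cdots w_l$ for the given reduced expression, with $w_t = s_{i_t}$. Since the twisting functor factors along a reduced word as $T_{w_{p,q}} \cong T_{w_1} \circ \cdots \circ T_{w_l}$ (the twisting functors satisfy the braid relations, see \cite{AL}), the bimodule $S_{w_{p,q}}$ is isomorphic to the iterated tensor product $S_{w_1} \otimes_{\mathcal{U}(\mathfrak{gl}_n)} \cdots \otimes_{\mathcal{U}(\mathfrak{gl}_n)} S_{w_l}$, where between consecutive factors the inner $\mathcal{U}(\mathfrak{gl}_n)$-actions are matched using the automorphisms $\Gamma_{w_1 \cdots w_{t-1}}$. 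Applying Lemma~\ref{arklemma} to each simple reflection replaces $S_{w_t}$ by $S_{w_t}' = S_{w_t}''/\mathcal{U}(\mathfrak{gl}_n)$, so it suffices to identify this iterated tensor product of single-root-vector localizations with the single localization $S_{w_{p,q}}'$.

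For that identification I would proceed by induction on $l$. At stage $t$, the root vector $f_{i_t}$ inverted in $S_{w_t}'$ becomes, after transport through the first $t-1$ twists, the element $g_t := \Gamma_{w_1 \cdots w_{t-1}}(f_{i_t}) \in \mathfrak{n}_{w_{p,q}}$; standard facts about reduced words show that $g_1, \dots, g_l$ form a basis of $\mathfrak{n}_{w_{p,q}}$ compatible with $B_1$. Because $\mathfrak{n}_{w_{p,q}}$ is abelian — Lemma~\ref{automorphismlemma} makes $\Gamma_{w_{p,q}}$ explicit on $\mathfrak{gl}_p \oplus \mathfrak{gl}_q$ — the $g_t$ pairwise commute, each generates an Ore set, and the partial localizations nest: inverting $g_1, \dots, g_t$ one at a time produces the same bimodule as inverting them simultaneously. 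Tracking the quotients by $\mathcal{U}(\mathfrak{gl}_n)$ through the tensor product (the factor $S_{w_t}''/\mathcal{U}(\mathfrak{gl}_n)$ contributes exactly the strictly negative powers of $g_t$), one obtains after $l$ steps the bimodule $\mathcal{U}(\mathfrak{gl}_n)\otimes_{\mathbb{C}[B_1]}\mathbb{C}[B_1,B_1^{-1}]\,/\,\mathcal{U}(\mathfrak{gl}_n) = S_{w_{p,q}}'$. To see that the resulting isomorphism is one of bimodules and not merely of left modules, I would compare both sides against the monomial basis of Lemma~\ref{vor}: the right $\mathcal{U}(\mathfrak{gl}_n)$-action on $S_{w_{p,q}}'$ is computed by the rearrangement in the proof of \cite[Lemma~13]{KhoMaz}, which degenerates to the trivial one precisely because $\mathfrak{n}_{w_{p,q}}$ is commutative, and the same rearrangement governs the right action transported from $S_{w_{p,q}}$; hence the two right actions agree on this basis, and therefore everywhere.

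The main obstacle I anticipate is the bookkeeping in the inductive collapse: keeping careful track of how the inverted root vector of each simple reflection is conjugated by the cumulative twist, verifying that the elements $g_1,\dots,g_l$ really do exhaust a basis of $\mathfrak{n}_{w_{p,q}}$ (so that the nested partial localizations genuinely reconstitute the full localization at $B_1$), and confirming that the successive quotients by $\mathcal{U}(\mathfrak{gl}_n)$ interact correctly with the subsequent tensor products rather than producing spurious terms. This is exactly what forces the hypothesis: in the general non-abelian situation the commutators among the $g_t$ would obstruct both the nesting of localizations and the triviality of the rearrangement, and it is precisely the commutativity of $\mathfrak{n}_{w_{p,q}}$ — the same feature isolated in the proof of Lemma~\ref{vor} — that makes the argument go through.
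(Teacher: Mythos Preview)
Your approach is essentially the same as the paper's: factor $S_{w_{p,q}}$ as $S_{w_1}\otimes_{\mathcal{U}(\mathfrak{gl}_n)}\cdots\otimes_{\mathcal{U}(\mathfrak{gl}_n)} S_{w_l}$, replace each factor by $S_{w_t}'$ via Lemma~\ref{arklemma}, and then collapse the iterated tensor product to $S_{w_{p,q}}'$. The paper compresses your entire inductive localization argument into the single phrase ``the multiplication map and Lemma~\ref{vor},'' so your write-up is a more detailed unpacking of exactly that step.
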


\begin{proof}
Since $ S_{w_{p,q}} $ is independent of a reduced expression for $ {w_{p,q}} $,
$$ S_{w_{p,q}} \cong S_{w_1} \otimes_{\mathcal{U}(\mathfrak{gl}_n)} \cdots \otimes_{\mathcal{U}(\mathfrak{gl}_n)} S_{w_l} \cong
S_{w_1}'  \otimes_{\mathcal{U}(\mathfrak{gl}_n)} \cdots \otimes_{\mathcal{U}(\mathfrak{gl}_n)} S_{w_l}'  $$
by Lemma ~\ref{arklemma}.  The multiplication map and Lemma ~\ref{vor} give that the latter bimodule is isomorphic to $ S_{w_{p,q}}'$.
\end{proof}

\begin{lemma}
\label{finite} As an $ \mathfrak{a}_2 $-module under the adjoint action, $ S_{w_{p,q}} $ is a direct sum of finite-dimensional
submodules.
\end{lemma}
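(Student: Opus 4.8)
The plan is to make $S_{w_{p,q}}$ completely explicit and then reduce the assertion to local finiteness of two elementary modules. By Lemma~\ref{arklemma} and Corollary~\ref{bimodiso} one may replace $S_{w_{p,q}}$ by $S'_{w_{p,q}} = S''_{w_{p,q}}/\mathcal{U}(\mathfrak{gl}_n)$, where $S''_{w_{p,q}}$ is the Ore localization of $\mathcal{U}(\mathfrak{gl}_n)$ at the multiplicatively closed set generated by a basis $\{z_1,\dots,z_\beta\}$ of the abelian subalgebra $\mathfrak{n}_{w_{p,q}}$; Lemma~\ref{vor} then supplies a $\mathbb{C}$-basis consisting of the monomials $u\cdot z^{-\mathbf{l}}$, with $u$ running over PBW monomials in a fixed vector space complement $\mathfrak{c}$ of $\mathfrak{n}_{w_{p,q}}$ in $\mathfrak{gl}_n$. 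The first point I would record is that $\mathfrak{a}_2$ normalizes $\mathfrak{n}_{w_{p,q}}$: from the explicit description of $\Gamma_{w_{p,q}}$ in Lemma~\ref{automorphismlemma}, $\mathfrak{n}_{w_{p,q}}$ is a sum of $\mathfrak{h}$-weight spaces stable under the block-diagonal Levi underlying $\mathfrak{a}_2$, so $[\mathfrak{a}_2,\mathfrak{n}_{w_{p,q}}]\subseteq\mathfrak{n}_{w_{p,q}}$. Consequently the adjoint action of $\mathfrak{a}_2$ extends to the localization $S''_{w_{p,q}}$ by the Leibniz rule, with $\operatorname{ad}_x(z_i^{-1}) = -z_i^{-1}(\operatorname{ad}_x z_i)z_i^{-1}$, and descends to $S'_{w_{p,q}}$.

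Next I would choose $\mathfrak{c}$ to be $\mathfrak{a}_2$-stable, which is possible because $\mathfrak{gl}_n$, under the adjoint action of the reductive algebra $\mathfrak{a}_2$, is a direct sum of finite-dimensional submodules (it is a sum of $\mathfrak{h}$-weight spaces and the centre of $\mathfrak{a}_2$ acts semisimply), with $\mathfrak{n}_{w_{p,q}}$ a submodule; concretely, take $\mathfrak{c}$ to be $\mathfrak{h}$ together with all $\mathfrak{h}$-root spaces not lying in $\mathfrak{n}_{w_{p,q}}$. Then PBW identifies $\mathcal{U}(\mathfrak{gl}_n)$ with $P\otimes_{\mathbb{C}}\mathcal{U}(\mathfrak{n}_{w_{p,q}})$ as a right $\mathcal{U}(\mathfrak{n}_{w_{p,q}})$-module, where $P$ is the span of the PBW monomials in $\mathfrak{c}$, and therefore
\[
S_{w_{p,q}} = \mathcal{U}(\mathfrak{gl}_n)\otimes_{\mathcal{U}(\mathfrak{n}_{w_{p,q}})} N_{w_{p,q}}^{*} \;\cong\; P\otimes_{\mathbb{C}} N_{w_{p,q}}^{*}
\]
as vector spaces; the essential gain is that the surviving tensor product is over $\mathbb{C}$. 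Now the two factors are built from finite-dimensional $\mathfrak{a}_2$-modules: $\mathcal{U}(\mathfrak{gl}_n)$ is $\operatorname{ad}$-locally finite, since the PBW filtration $\mathcal{U}_{\le k}$ consists of finite-dimensional adjoint-stable subspaces, hence so is its restriction to $\mathfrak{a}_2$; and $N_{w_{p,q}}^{*} = S(\mathfrak{n}_{w_{p,q}})^{\vee}$ is graded by polynomial degree, the grading being preserved by $\mathfrak{a}_2$ (which acts on $S(\mathfrak{n}_{w_{p,q}})$ by degree-preserving derivations) and having finite-dimensional pieces, so $N_{w_{p,q}}^{*}$ is a direct sum of finite-dimensional $\mathfrak{a}_2$-submodules. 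Since a $\mathbb{C}$-tensor product of modules that are direct sums of finite-dimensional $\mathfrak{a}_2$-submodules is again such a module — here one uses reductivity of $\mathfrak{a}_2$ and semisimplicity of the action of its centre, so that local finiteness is equivalent to decomposing into finite-dimensional submodules — the lemma would follow.

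The step I expect to be the main obstacle is reconciling the adjoint $\mathfrak{a}_2$-action on $S_{w_{p,q}}$ with the vector-space splitting $P\otimes_{\mathbb{C}} N_{w_{p,q}}^{*}$: the subspace $P$ is \emph{not} adjoint-stable inside $\mathcal{U}(\mathfrak{gl}_n)$, since a bracket of two elements of $\mathfrak{c}$ can pick up a component in $\mathfrak{n}_{w_{p,q}}$, and one must rule out that iterating $\operatorname{ad}_x$ drives the order of the pole $|\mathbf{l}|$ to infinity. My plan here is to filter $S_{w_{p,q}}$ by the pair (PBW-degree in $\mathfrak{c}$, pole order) and to prove that for each homogeneous element and each $x\in\mathfrak{a}_2$ the image $\operatorname{ad}_x(\,\cdot\,)$ lands in a \emph{bounded} band of this bifiltration; this is exactly the place where the precise shape of $\mathfrak{n}_{w_{p,q}}$ and of $\Gamma_{w_{p,q}}$ from Lemma~\ref{automorphismlemma} must be used decisively, together with the fact that $\operatorname{ad}_x$ preserves the total $\mathfrak{n}_{w_{p,q}}$-weight. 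Granting this boundedness, the associated graded of the bifiltration is visibly the $\mathbb{C}$-tensor product analysed above, and the standard transfer of local finiteness from the associated graded to the filtered module completes the argument. This lemma is then exactly what is needed, in combination with Lemma~\ref{propstrat} and Lemma~\ref{injlemma}, to control the image of the standard and projective objects of $\mathcal{O}_{k,\mathbf{d}}(\mathfrak{gl}_n)$ under $T_{w_{p,q}}$ in the remainder of the subsection.
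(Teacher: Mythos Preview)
Your approach is correct and your identification of the crux is right, but you arrive at it via a detour that the paper skips. The paper works directly with the monomial basis of Lemma~\ref{vor}: it filters $S'_{w_{p,q}}$ by total monomial length and checks, by an elementary commutation split into the two cases $X \notin \mathfrak{gl}_{d_c}\oplus\mathfrak{gl}_{d_{c+1}}$ (where $X$ commutes with the pole part outright) and $X \in \mathfrak{gl}_{d_c}\oplus\mathfrak{gl}_{d_{c+1}}$ (where $[X,z_j]$ is a linear combination of the $z_{j'}$, so every positive power produced in the commutation either cancels a negative one or kills the monomial in the quotient), that $\operatorname{ad}_X$ does not increase this length. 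Each filtered piece is finite-dimensional and $\operatorname{ad}$-stable, and local finiteness follows immediately. Your structural observation that $\mathfrak{a}_2$ normalises $\mathfrak{n}_{w_{p,q}}$ is precisely why the second case goes through, though the paper never isolates it as a separate statement. The tensor-product decomposition $P\otimes_{\mathbb{C}}N_{w_{p,q}}^{*}$ and the associated-graded transfer you propose are valid but add nothing over the direct length filtration: once you establish that $\operatorname{ad}_X$ actually \emph{preserves} the filtration (which is the sharp form of your ``bounded band'' claim, and is the only thing that really needs checking), the argument is finished without invoking the tensor structure at all. One small caution: ``bounded band'' in the sense of a uniformly bounded shift would not suffice, since iterating $\operatorname{ad}_X$ could then escape any finite piece; what you need, and what the paper proves, is genuine preservation of the filtration.
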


\begin{proof}
We must prove that if $ X \in \mathfrak{a}_2 $,  then the adjoint action of $ X $ on $ S_{w_{p,q}} $ is locally finite.
By Corollary ~\ref{bimodiso}, we consider the action of $ X $ on $S_{w_{p,q}}' $.
As a vector space, $ S_{w_{p,q}}' $ is filtered by subspaces each of which is spanned  by monomials given in
Lemma ~\ref{vor} of a fixed length .
Let $ m_1 =  y_{a_1}^{k_1} \cdots y_{a_{\gamma}}^{k_{\gamma}} $ and $ m_2 = y_{b_1}^{-l_1} \cdots y_{b_{\delta}}^{-l_{\delta}}$.
There is an obvious embedding $ \mathfrak{gl}_{d_c} \oplus \mathfrak{gl}_{d_{c+1}} \subset \mathfrak{a}_2 $.

If $ X \notin \mathfrak{gl}_{d_c} \oplus \mathfrak{gl}_{d_{c+1}} $, then $ X $ and $ m_2 $ commute.
Then $ Xm_1 \otimes m_2 - m_1 \otimes m_2 X = (Xm_1 - m_1 X) \otimes m_2$.
This is then essentially the standard adjoint action of the Lie algebra on an enveloping algebra so the length of $ m_1 $ does not increase.

Now suppose $ X \in \mathfrak{gl}_{d_c} \oplus \mathfrak{gl}_{d_{c+1}} $.  Then
$ y_{b_j}^{-1} X = X y_{b_j}^{-1} $ or
$ y_{b_j}^{-1} X = X y_{b_j}^{-1} + y_{b_{j'}} y_{b_j}^{-2} $
for some other index $ j'$.
Continuing to commute $ X $ to the left of $ m_2 $, we get
$ m_2 X = X m_2 + J $ for some polynomial $ J $ in the generators $ y_{b_j} $ with now positive and negative exponents.  Since a term with a positive exponent
either cancel a term with a negative exponent or kill the monomial, it is clear that the length of $ J$ is less than or equal to the length of $ m_2 $.
Thus $ Xm_1 \otimes m_2 - m_1 \otimes m_2 X = (Xm_1-m_1X) \otimes m_2 + J $.
Once again, the length of $ Xm_1-m_1X $ is less than or equal to the length of $ m_1 $ so the adjoint action of  $ X $ on $ m_1 \otimes m_2 $ does not increase its length.

Thus for all $ X \in \mathfrak{a}_2 $, $ X $ preserves this filtration so
each vector subspace is stable under this adjoint action and is finite-dimensional.
\end{proof}

\begin{lemma}
\label{twiststandardlemma} Let $ \Delta $ be a standard object in $ \mathcal{O}_{k, {\bf
d}}(\mathfrak{gl}_n). $  Then $ T_{w_{p,q}} \Delta $ is an object in $ \mathcal{O}_{k, {\bf d'}}(\mathfrak{gl}_n).  $
\end{lemma}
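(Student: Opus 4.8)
The plan is to reduce the statement to a computation about how the twisting functor $T_{w_{p,q}}$ acts on the parabolically induced standard modules $\Delta(k_1,d_1|\cdots|k_r,d_r)$, using the explicit bimodule description just established. The key point to exploit is Definition~\ref{standardsinduced}: a standard object $\Delta$ is induced from a ``big projective'' for the Levi $\mathfrak{gl}_{\bf d}$ sitting inside $\mathfrak{a}_2$, so it is a $\cU(\mathfrak{gl}_n)\otimes_{\cU(\mathfrak p)}(-)$ applied to a $\mathfrak{gl}_{\bf d}$-module. The effect of $T_{w_{p,q}}$ on an induced module can be analyzed by the standard commutation of induction with twisting: first I would observe that $T_{w_{p,q}}$ tensors with $S_{w_{p,q}}$ and twists by $\Gamma_{w_{p,q}}$, and by Lemma~\ref{automorphismlemma} this automorphism merely swaps the $\mathfrak{gl}_{d_c}$ and $\mathfrak{gl}_{d_{c+1}}$ blocks (conjugating $e_{i,j}\mapsto e_{i+q,j+q}$ and $e_{i,j}\mapsto e_{i-p,j-p}$). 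Composing the twist with induction from $\mathfrak p$ therefore produces induction from the conjugated parabolic $\mathfrak p'$ whose Levi is $\mathfrak{a}_3$ in the order ${\bf d}'$. So morally $T_{w_{p,q}}\Delta(k_1,d_1|\cdots)$ should be $\Delta$ for the composition ${\bf d}'$ with the $c$ and $c{+}1$ entries swapped, up to a twist and a grading shift; in particular it lies in $\mathcal O_{k,{\bf d}'}(\mathfrak{gl}_n)$.

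To make this rigorous I would proceed in the following steps. First, show $S_{w_{p,q}}\otimes_{\cU(\mathfrak{gl}_n)}(-)$ commutes (up to natural isomorphism) with induction $\cU(\mathfrak{gl}_n)\otimes_{\cU(\mathfrak p)}(-)$ from the parabolic $\mathfrak p=\mathfrak{gl}_{\bf d}+\mathfrak n^+$: this is where Lemma~\ref{finite} enters, since local finiteness of the $\mathfrak{a}_2$-action on $S_{w_{p,q}}$ (and hence on the relevant Levi) is exactly what lets one move the twisting bimodule past the induction and keeps everything inside category $\mathcal O$; one uses that $\mathfrak n_{w_{p,q}}$ is built only from roots between the $c$th and $(c{+}1)$st blocks, so on the Levi $\mathfrak{gl}_{\bf d}$ nothing dangerous happens. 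Second, compute the effect on the Levi factor: the big projective $P(0^{d_c-k_c}1^{k_c})\boxtimes P(0^{d_{c+1}-k_{c+1}}1^{k_{c+1}})$ for $\mathfrak{gl}_{d_c}\oplus\mathfrak{gl}_{d_{c+1}}$ is sent by the corresponding ``small'' twist/conjugation to the analogous big projective for $\mathfrak{gl}_{d_{c+1}}\oplus\mathfrak{gl}_{d_c}$, because conjugation by $\Gamma_{w_{p,q}}$ restricted to this Levi is the block swap and big projectives are intrinsic to the block structure. Third, assemble: combining the first two steps gives $T_{w_{p,q}}\Delta(k_1,d_1|\cdots|k_c,d_c|k_{c+1},d_{c+1}|\cdots)\cong\Delta(k_1,d_1|\cdots|k_{c+1},d_{c+1}|k_c,d_c|\cdots)$ as objects of $\cO_k(\mathfrak{gl}_n)$, possibly up to a grading shift in the graded lifts; either way the right-hand side lies in $\mathcal O_{k,{\bf d}'}(\mathfrak{gl}_n)$ by Definition~\ref{standardsinduced} and Lemma~\ref{propstrat}.

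The main obstacle I expect is making the ``twisting commutes with parabolic induction'' step genuinely precise rather than morally correct: $T_{w_{p,q}}$ is only right exact and is defined via the semiregular bimodule, so one must be careful that $S_{w_{p,q}}\otimes_{\cU(\mathfrak{gl}_n)}\cU(\mathfrak{gl}_n)\otimes_{\cU(\mathfrak p)}(-)$ really is $\cU(\mathfrak{gl}_n)\otimes_{\cU(\mathfrak p')}(-)$ applied to the twisted Levi module, and not merely has the same composition factors. Here I would use Corollary~\ref{bimodiso} and the explicit monomial basis of $S_{w_{p,q}}'$ from Lemma~\ref{vor} to identify $S_{w_{p,q}}\otimes_{\cU(\mathfrak{gl}_n)}\cU(\mathfrak{gl}_n)=S_{w_{p,q}}'$ as a $\cU(\mathfrak p)$-module (via right multiplication), check it is free over $\cU(\mathfrak n^-\cap \mathfrak{a}_3)$ of the correct rank after twisting, and conclude the induction is identified. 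Once this structural isomorphism of bimodules is in hand, the rest is bookkeeping with the grading shifts coming from the normalization~\eqref{liftofark} of $\hat T_i$.
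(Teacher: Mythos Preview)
Your approach differs substantially from the paper's and aims for more than the lemma requires. You try to establish an explicit isomorphism $T_{w_{p,q}}\Delta(\ldots|k_c,d_c|k_{c+1},d_{c+1}|\ldots)\cong\Delta(\ldots|k_{c+1},d_{c+1}|k_c,d_c|\ldots)$, whereas the lemma only asks for membership in $\mathcal O_{k,{\bf d}'}$. The paper avoids your Step~1 entirely by invoking the criterion of \cite[Proposition~2.10]{MS2}: an object $M$ lies in $\mathcal O_{k,{\bf d}'}$ if and only if $\Res^{\mathfrak a_3}_{\mathfrak g}M$ is a direct sum of projectives in $\mathcal O_k(\mathfrak a_3)$. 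Since $\Gamma_{w_{p,q}}$ swaps the two blocks (Lemma~\ref{automorphismlemma}), this reduces to showing that $\Res^{\mathfrak a_2}_{\mathfrak g}\bigl(S_{w_{p,q}}\otimes_{\cU(\mathfrak g)}\Delta\bigr)$ is a direct sum of projectives in $\mathcal O_k(\mathfrak a_2)$. Writing $\Delta=\cU(\mathfrak g)\otimes_{\cU(\mathfrak p_2)}P^{\mathfrak a_2}$ with $P^{\mathfrak a_2}$ the antidominant projective for the Levi, the $\mathfrak a_2$-action on $S_{w_{p,q}}\otimes_{\cU(\mathfrak p_2)}P^{\mathfrak a_2}$ is the diagonal one with the \emph{adjoint} action on the first factor; Lemma~\ref{finite} then gives that this is (a quotient of) a direct sum of modules of the form $(\text{finite-dimensional})\otimes_{\mC}P^{\mathfrak a_2}$, hence a direct sum of projectives.

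This is exactly where your plan has a gap. The role you assign to Lemma~\ref{finite} is not what it delivers: adjoint-finiteness of $S_{w_{p,q}}$ under $\mathfrak a_2$ gives a decomposition into finite-dimensional adjoint pieces, which is precisely the input needed for the restriction-is-projective argument above; it does \emph{not} provide a mechanism to commute $S_{w_{p,q}}\otimes_{\cU(\mathfrak g)}(-)$ past parabolic induction. Your proposed fix---checking freeness of $S_{w_{p,q}}'$ over $\cU(\mathfrak n^-\cap\mathfrak a_3)$ via the monomial basis of Lemma~\ref{vor}---does not by itself yield an isomorphism ${}^{\Gamma}\bigl(S_{w_{p,q}}\otimes_{\cU(\mathfrak p)}P^{\mathfrak a_2}\bigr)\cong\cU(\mathfrak g)\otimes_{\cU(\mathfrak p')}P'$: you would still have to identify the $\cU(\mathfrak p')$-module $P'$ and verify the full bimodule structure matches, and for a genuine big projective (rather than a one-dimensional character) this is not bookkeeping. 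Your route might be completable, and if so it would give the bonus of an explicit identification of $T_{w_{p,q}}\Delta$, but as written the crucial commutation step is asserted rather than proved, and the tool you cite for it (Lemma~\ref{finite}) is doing a different job.
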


\begin{proof}
By \cite[Proposition 2.10]{MS2}, it suffices to verify that as an $ \mathfrak{a}_3$-module, $ T_{w_{p,q}} \Delta $ is a
direct sum of projective objects of $ \mathcal{O}_k(\mathfrak{a}_3). $ Due to Lemma ~\ref{automorphismlemma}, it suffices to show that as
an $ \mathfrak{a}_2$-module, $ S_{w_{p,q}} \otimes_{\mathcal{U}(\mathfrak{gl}_n)} \Delta $ is a direct sum of projective
modules in $ \mathcal{O}_k(\mathfrak{a}_2). $
By the definition of the standard module,
\begin{equation*}
\text{Res}_{\mathfrak{g}}^{\mathfrak{a}_2} S_{w_{p,q}} \otimes_{\mathcal{U}(\mathfrak{g})} \Delta \cong
\text{Res}_{\mathfrak{g}}^{\mathfrak{a}_2} S_{w_{p,q}} \otimes_{\mathcal{U}(\mathfrak{g})} (\mathcal{U}(\mathfrak{g})
\otimes_{\mathcal{U}(\mathfrak{p}_{2})} P^{\mathfrak{a}_2})
\cong \text{Res}_{\mathfrak{g}}^{\mathfrak{a}_2} S_{w_{p,q}}
\otimes_{\mathcal{U}(\mathfrak{p}_{{2}})} P^{\mathfrak{a}_2}
\end{equation*}
where $ \mathfrak{p}_{2} $ is the parabolic subalgebra whose reductive part is $ \mathfrak{a}_2 $ and $ P^{\mathfrak{a}_2} $ is an anti-dominant projective object in $ \mathcal{O}(\mathfrak{a}_2)$.  This is a quotient of $ S_{w_{p,q}}^{}
\otimes_{\mathbb{C}} P^{\mathfrak{a}_2}, $ where $ S_{w_{p,q}}^{} $ is an $ \mathfrak{a}_2$-module under the adjoint action.
By Lemma ~\ref{finite}, this module is locally finite under the adjoint action, so as an $ \mathfrak{a}_3$-module, $ T_{w_{p,q}}
\Delta $ is a direct sum of projective objects of $ \mathcal{O}_k(\mathfrak{a}_3). $
\end{proof}

\begin{prop}
\label{twistprop} Let $ P $ be a projective object of $ \mathcal{O}_{k, {\bf d}}(\mathfrak{gl}_n). $  Then $ T_{w_{p,q}} P $ is an object
of $ \mathcal{O}_{k, {\bf d'}}(\mathfrak{gl}_n). $
\end{prop}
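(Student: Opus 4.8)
The plan is to reduce, via Lemma~\ref{propstrat}, to the standard objects already treated in Lemma~\ref{twiststandardlemma}, and then to check that the relevant filtration survives application of the (only right exact) twisting functor.

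First I would invoke Lemma~\ref{propstrat}: since $P$ has finite length in $\mathcal{O}_k(\mathfrak{gl}_n)$, it carries a finite filtration $0 = P_0 \subset P_1 \subset \cdots \subset P_m = P$ whose subquotients are standard objects $\Delta^{(j)} := \Delta(k_1^{(j)},d_1\mid\cdots\mid k_r^{(j)},d_r)$ of $\mathcal{O}_{k,{\bf d}}(\mathfrak{gl}_n)$ with $k_1^{(j)}+\cdots+k_r^{(j)}=k$. By Definition~\ref{standardsinduced} each $\Delta^{(j)}$ is the parabolic induction $\mathcal{U}(\mathfrak{gl}_n)\otimes_{\mathcal{U}(\mathfrak{p})}(-)$ of a projective object of category $\mathcal{O}$ for the Levi $\mathfrak{gl}_{\bf d}$; since parabolic induction is exact, sends a Verma module of the Levi to a Verma module of $\mathfrak{gl}_n$, and projectives in category $\mathcal{O}$ have Verma flags, each $\Delta^{(j)}$ has a Verma flag.

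Next I would use the acyclicity of twisting functors on Verma-flag modules from \cite{AS}: if a module has a Verma flag then $L_iT_{w_{p,q}}$ vanishes on it for $i>0$. Applying the long exact sequence of $L_\bullet T_{w_{p,q}}$ to $0\to P_{j-1}\to P_j\to\Delta^{(j)}\to 0$ and using $L_1T_{w_{p,q}}\Delta^{(j)}=0$ shows that each sequence stays exact, i.e. $0\to T_{w_{p,q}}P_{j-1}\to T_{w_{p,q}}P_j\to T_{w_{p,q}}\Delta^{(j)}\to 0$ is short exact; hence $T_{w_{p,q}}P$ acquires a finite filtration with subquotients $T_{w_{p,q}}\Delta^{(j)}$. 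By Lemma~\ref{twiststandardlemma} each $T_{w_{p,q}}\Delta^{(j)}$ lies in $\mathcal{O}_{k,{\bf d'}}(\mathfrak{gl}_n)$. Finally I would observe that $\mathcal{O}_{k,{\bf d'}}(\mathfrak{gl}_n)$ is closed under extensions in $\mathcal{O}_k(\mathfrak{gl}_n)$: given $0\to M'\to M\to M''\to 0$ with $M',M''$ presented by the allowed projectives $P(x\cdot\lambda)$, the horseshoe lemma (using that these $P(x\cdot\lambda)$ are genuine projectives of $\mathcal{O}_k(\mathfrak{gl}_n)$) produces a presentation of $M$ by such projectives. Repeatedly applying this to the filtration gives $T_{w_{p,q}}P\in\mathcal{O}_{k,{\bf d'}}(\mathfrak{gl}_n)$. (Alternatively one can conclude via the restriction criterion of \cite[Proposition~2.10]{MS2} already used in Lemma~\ref{twiststandardlemma}: restricting the filtration of $T_{w_{p,q}}P$ to $\mathfrak{a}_3$ yields a filtration by direct sums of projectives of $\mathcal{O}_k(\mathfrak{a}_3)$, which splits since $\Ext^1$ vanishes on projectives.)

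I expect the main obstacle to be Step 3, namely making precise and citing correctly that the filtration of $P$ survives the merely right exact functor $T_{w_{p,q}}$ — that is, the acyclicity of $T_{w_{p,q}}$ on modules with a Verma (equivalently standard) flag. Once that is in hand the argument is formal: it just combines Lemmas~\ref{propstrat} and~\ref{twiststandardlemma} with the easy extension-closure of $\mathcal{O}_{k,{\bf d'}}(\mathfrak{gl}_n)$.
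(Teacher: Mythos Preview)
Your proof is correct and follows essentially the same approach as the paper: reduce to standard objects via Lemma~\ref{propstrat}, use that standard objects have Verma flags so that $T_{w_{p,q}}$ is exact on them (citing \cite{AS}), apply Lemma~\ref{twiststandardlemma}, and conclude by induction on the length of the standard filtration. The paper's version is considerably terser and leaves the extension-closure of $\mathcal{O}_{k,{\bf d'}}(\mathfrak{gl}_n)$ implicit in the phrase ``by induction on the length of the standard flag''; your horseshoe-lemma argument (or the alternative via \cite[Proposition~2.10]{MS2}) makes this step explicit.
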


\begin{proof}
By Lemma ~\ref{twiststandardlemma}, this functor sends standard objects to objects of $ \mathcal{O}_{k,{\bf d'}}(\mathfrak{gl}_n). $
Standard objects have Verma flags, thus the twist functor is exact on the subcategory of standard objects by \cite[Theorem 2.2]{AS}.
Since projective objects of $ \mathcal{O}_{k, {\bf d}}(\mathfrak{gl}_n) $ have standard flags the twist functor sends a projective object
of $ \mathcal{O}_{k, {\bf d}}(\mathfrak{gl}_n) $ to an object of $ \mathcal{O}_{k, {\bf d'}}(\mathfrak{gl}_n) $ by induction on the length
of the standard flag.
\end{proof}

\begin{prop}
\label{inversetwistprop} Let $I$ be an injective object of $ \mathcal{O}_{k}(\mathfrak{gl}_n). $ Then $J_{w_{p,q}}I$ is an object of
$\mathcal{O}_{k, {\bf d'}}(\mathfrak{gl}_n). $
\end{prop}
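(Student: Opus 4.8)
The plan is to deduce Proposition~\ref{inversetwistprop} from Proposition~\ref{twistprop} by dualizing. Recall that the completion functor $J_w$ is the right adjoint of the twisting functor $T_w$, and that the two are interchanged by the duality $\operatorname{d}$ of category $\mathcal{O}$: there is an isomorphism of functors $J_w\cong \operatorname{d}\circ T_{w^{-1}}\circ\operatorname{d}$ (compare \cite{AL,AS}; the inverse $w^{-1}$ appears because $T_{ww'}\cong T_wT_{w'}$ for reduced products while $J_{ww'}\cong J_{w'}J_w$). Since $I$ is injective in $\mathcal{O}_k(\mathfrak{gl}_n)$, the module $\operatorname{d}I$ is projective in $\mathcal{O}_k(\mathfrak{gl}_n)$; moreover $w_{p,q}^{-1}$ is the element $w_{q,p}$ attached to the pair $(q,p)$, and the setup of this subsection is symmetric in $\mathbf{d}$ and $\mathbf{d'}$. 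Hence the assertion to prove becomes: $\operatorname{d}\big(T_{w_{q,p}}(\operatorname{d}I)\big)$ is an object of $\mathcal{O}_{k,\mathbf{d'}}(\mathfrak{gl}_n)$.

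First I would reduce, exactly as in the proof of Lemma~\ref{injlemma}, to the self-dual big projective $P(0^{n-k}1^k)$. By \cite[Proposition~1.6]{StrTQFT} the projective module $\operatorname{d}I$ fits into an exact sequence $0\to\operatorname{d}I\to P(0^{n-k}1^k)^{\oplus Q}\to P(0^{n-k}1^k)^{\oplus R}$ whose image and cokernel carry Verma flags. Since $T_{w_{q,p}}$ is exact on the subcategory of modules with a Verma flag (\cite[Theorem~2.2]{AS}), applying it keeps this sequence exact. Now $P(0^{n-k}1^k)$ lies in $\mathcal{O}_{k,\mathbf{d'}}(\mathfrak{gl}_n)$ (as observed in the proof of Lemma~\ref{injlemma} it lies in such a subcategory for every composition), so Proposition~\ref{twistprop}, applied with the roles of $\mathbf{d}$ and $\mathbf{d'}$ interchanged, gives $T_{w_{q,p}}P(0^{n-k}1^k)\in\mathcal{O}_{k,\mathbf{d}}(\mathfrak{gl}_n)$. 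Applying $\operatorname{d}$ turns the resulting exact sequence into an injective copresentation of $J_{w_{p,q}}I$ by copies of $\operatorname{d}\big(T_{w_{q,p}}P(0^{n-k}1^k)\big)$, and $\mathcal{O}_{k,\mathbf{d'}}(\mathfrak{gl}_n)$ is closed under subobjects of such copresentations (it is the $\operatorname{d}$-image of a quotient-closed subcategory); so the claim will follow once the single building block $\operatorname{d}\big(T_{w_{q,p}}P(0^{n-k}1^k)\big)$ is known to lie in $\mathcal{O}_{k,\mathbf{d'}}(\mathfrak{gl}_n)$.

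The main obstacle is exactly this last point, because the projectively presented subcategories $\mathcal{O}_{k,\mathbf{d}}(\mathfrak{gl}_n)$ are \emph{not} stable under the naive duality $\operatorname{d}$, so one cannot literally ``apply $\operatorname{d}$ to Proposition~\ref{twistprop}''. I would resolve this using the properly stratified structure: via Theorem~\ref{BG} the categories are identified with the Harish-Chandra bimodule categories ${}_k\mathcal{H}^1_{\mathbf d}$, which carry a simple-preserving duality under which standard objects go to proper costandards and the self-dual big projective is preserved. Concretely, I would re-run the passage from Lemma~\ref{twiststandardlemma} to Proposition~\ref{twistprop} on the ``co'' side: show that $J_{w_{p,q}}$ sends the relevant costandard objects of $\mathcal{O}_{k,\mathbf{d}}(\mathfrak{gl}_n)$ into $\mathcal{O}_{k,\mathbf{d'}}(\mathfrak{gl}_n)$, using formula~\eqref{liftofjos} together with the bimodule finiteness Lemma~\ref{finite} (which is symmetric in the two tensor factors and so applies verbatim to the completion bimodule) and the identification of standard and proper costandard objects from \cite[Proposition~2.10, Theorem~2.16]{MS2}; then use that $J_{w_{p,q}}$ is exact on dual-Verma-flag modules (dually to \cite[Theorem~2.2]{AS}) to pass from the building blocks to the injective $I$, since injectives of $\mathcal{O}_k(\mathfrak{gl}_n)$ have dual-Verma flags. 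Finally, with Proposition~\ref{inversetwistprop} in hand, the corollary following Lemma~\ref{injlemma} --- every object of $D^{<}$ of a projectively presented category is quasi-isomorphic to a \emph{finite} complex of injectives --- allows one to define $\mathbb{R}\hat{J}_{w_{p,q}}$ as a functor $D^{\triangledown}\big(\bigoplus_{k}\gmod\text{-}A_{k,\mathbf{d}}\big)\to D^{\triangledown}\big(\bigoplus_{k}\gmod\text{-}A_{k,\mathbf{d'}}\big)$ by applying $\hat{J}_{w_{p,q}}$ termwise to such resolutions.
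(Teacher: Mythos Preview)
Your overall architecture in the third paragraph --- reduce to dual Verma modules and then use that $J_{w_{p,q}}$ is exact on modules with a dual Verma flag, together with the fact that injectives have such a flag --- is exactly the shape of the paper's proof. The gap is in the crucial step you leave unargued: why does $J_{w_{p,q}}\nabla(\mathbf a)$ lie in $\mathcal{O}_{k,\mathbf{d'}}(\mathfrak{gl}_n)$? Formula~\eqref{liftofjos} only tells you that $J_{w_{p,q}}$ maps a dual Verma module to another dual Verma module; it says nothing about that dual Verma module admitting a projective presentation by the distinguished projectives indexed by longest $(\mathbb{S}_{\mathbf d'},\mathbb{S}_k\times\mathbb{S}_{n-k})$-double coset representatives. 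A generic dual Verma module is certainly not in $\mathcal{O}_{k,\mathbf{d'}}$. Your appeal to Lemma~\ref{finite} is also misplaced: that lemma concerns the adjoint $\mathfrak{a}_2$-action on the \emph{twisting} bimodule $S_{w_{p,q}}$, and there is no analogous ``completion bimodule'' realizing $J_w$ as a tensor product --- $J_w$ is a right adjoint, and the structural analogue of Lemma~\ref{twiststandardlemma} via \cite[Proposition~2.10]{MS2} simply does not transfer.

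What the paper actually does for this step is a direct combinatorial/categorical argument. Working first in the principal block, one observes that $sw_{p,q}>w_{p,q}$ for every simple reflection $s\in\mathbb{S}_{\mathbf d'}$; by \cite[Theorem~2.3]{AS} this forces $T_sN'\cong N'$ for $N'=\nabla(w_{p,q}\cdot 0)$. Using the description of $T_s$ as partial coapproximation from \cite{KhoMaz}, this identifies the head of the kernel in the projective presentation $K\hookrightarrow P(w_0\cdot 0)\twoheadrightarrow N'$: every indecomposable summand of the projective cover of $K$ is of the form $P(y\cdot 0)$ with $sy<y$ for all $s\in\mathbb{S}_{\mathbf d'}$, which is precisely the condition for $N'$ to lie in the projectively presented category. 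One then translates to $\mathcal{O}_k$ via $\theta_{\mathrm{on}}$, using that $J_{w_{p,q}}$ commutes with translation functors, and checks the analogous condition on summands there via \cite[Proposition~5.3, Corollary~5.2]{AS}. None of this is captured by your sketch.

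Finally, your second paragraph contains a separate error: the assertion that ``$\mathcal{O}_{k,\mathbf d'}(\mathfrak{gl}_n)$ is closed under subobjects of such copresentations (it is the $\operatorname{d}$-image of a quotient-closed subcategory)'' conflates $\mathcal{O}_{k,\mathbf d'}$ with $\operatorname{d}(\mathcal{O}_{k,\mathbf d'})$. The projectively presented category is closed under cokernels, not kernels; its $\operatorname{d}$-image is a different subcategory altogether. So even granting that $\operatorname{d}\bigl(T_{w_{q,p}}P(0^{n-k}1^k)\bigr)$ lies in $\mathcal{O}_{k,\mathbf d'}$, the copresentation argument would not place $J_{w_{p,q}}I$ there.
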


\begin{proof}
Let us first look at the corresponding statement for the principal block. Consider the injective object $N:=I(0)=\op{d}M(0)$ in the principal block of $\cO(\mathfrak{gl}_n)$. Then $N':=J_{w_{p,q}}\nabla(0)=\op{d}M(w_{p,q}\cdot0)$ is again a dual Verma module by \eqref{liftofjos}. The projective cover of a dual Verma module is $P(w_0\cdot0)$, hence we have a short exact sequence of the form
\begin{equation}
\label{ses}
K\hookrightarrow P(w_0\cdot0) \surj N',
\end{equation}
for some module $K$. We claim that the projective cover of $K$ has only indecomposable summands of the form $P(y\cdot0)$ with $sy<y$ for any $s\in \mathbb{S}_{{\bf d}'}$. This claim can be verified by using an alternative definition of the twisting functor $T_s$ as partial coapproximation, \cite{KhoMaz}. Namely $T_s$ of the module $N'$ is isomorphic to the largest quotient of the projective cover $P(N')$ of $N'$ which surjects onto $N'$ and the kernel contains only simple modules of the form $L(x.0)$ where $sx>x$. The claim is then equivalent to the statement that $T_sN'=N'$ for any simple reflection $s\in \mathbb{S}_{{\bf d}'}$. Since $sw_{p,q}>w_{p,q}$ for those $s$, the latter claim follows from \cite[Theorem 2.3]{AS}. Since $J_{w_{p,q}}$ commutes with translation functors and hence with shuffling functors, the same statements hold for any injective object $I(w\cdot0)$ and for any dual Verma module $\op{d}M(w\cdot0)$.
Let now $Q:=\op{d}M(\bf{a})\in\cO_k$. Then $Q$ can be obtained from a dual Verma module $N=M(w\cdot0)$ in the principal block by translating to $\cO_k$, in formulas $Q=\theta_{on}N$. Using \eqref{ses} and the exactness of $\theta_{on}$ we get an exact sequence
$$\theta_{on}P(K)\longrightarrow \theta_{on}P(w_0\cdot0) \surj \theta_{on}N'=\theta_{on}J_{w_{p,q}}N.$$
Now $\theta_{on}P(w_0\cdot0)$ is isomorphic to several copies of $P(0^{n-k}1^k)$ and $\theta_{on}P(K)$ is a direct sum of copies of $P(x\cdot\la)\in\cO_k$, where $sx\cdot\la=x\cdot\la$ or $sx<x$. The first statement here is clear, and for the second note that $sy<y$ if and only if $T_sP(y\cdot0)\cong P(y\cdot0)$, \cite[Proposition 5.3 and Corollary 5.2]{AS}. In this case $T_s\theta_{on}P(y\cdot0) \cong \theta_{on}T_sP(y\cdot0)\cong \theta_{on}P(y\cdot0)$ which, by the arguments in the proof of \cite[Corollary 5.2]{AS}, is only possible if all summands occurring are of the form $P(x\cdot\la)\in\cO_k$, $sx\cdot\la=x\cdot\la$ or $sx<x$. Hence, $J_{w_{p,q}}\op{d}M(\bf{a})$ is an object of $\mathcal{O}_{k, {\bf d'}}(\mathfrak{gl}_n)$. Since dual Verma modules are acyclic for $J_{w_{p,q}}$ (\cite[Theorem 4.1, Theorem 2.2, Lemma 2.1 (4)]{AS}) and injective objects have a dual Verma flag, the claim of the proposition follows.
\end{proof}

We now obtain the following important result needed to define categorified coloured crossings.
\begin{corollary}
\label{maintwistingtheorem}
The functors $ \mathbb{L} \hat{T}_{w_{p,q}} $ and $ \mathbb{R} \hat{J}_{w_{p,q}^{-1}} $ restrict to projectively presented subcategories as follows:
\begin{enumerate}
\item $ \mathbb{L} \hat{T}_{w_{p,q}} \colon D^{\triangledown}(\oplus_{k=0}^n \gmod-A_{k,{\bf d}}) \rightarrow D^{\triangledown}(\oplus_{k=0}^n \gmod-A_{k,{\bf d'}}) $,
\item $ \mathbb{R} \hat{J}_{w_{p,q}^{-1}} \colon D^{\triangledown}(\oplus_{k=0}^n \gmod-A_{k,{\bf d'}}) \rightarrow D^{\triangledown}(\oplus_{k=0}^n \gmod-A_{k,{\bf d}})$.
\end{enumerate}
\end{corollary}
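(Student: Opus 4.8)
The plan is to prove both statements by the same mechanism: resolve a complex by projectives (for~(1)) or injectives (for~(2)) that already lie in the relevant subcategory, apply the underived functor term by term using Propositions~\ref{twistprop} and~\ref{inversetwistprop}, and then check that Euler finiteness is preserved. Throughout, $\hat T_{w_{p,q}}$ denotes the graded lift obtained by composing the lifts~\eqref{liftofark} along a reduced expression $w_{p,q}=s_{j_1}\cdots s_{j_\ell}$, and $\hat J_{w_{p,q}^{-1}}=\hat J_{j_1}\circ\cdots\circ\hat J_{j_\ell}$ is built from~\eqref{liftofjos}; since each simple twist has finite cohomological dimension, so do $\hat T_{w_{p,q}}$ and $\hat J_{w_{p,q}^{-1}}$, hence their derived functors are defined on the whole right-bounded derived category $D^{<}$.

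For~(1): given $A\in D^{\triangledown}(\oplus_{k}\gmod-A_{k,{\bf d}})$, use the Corollary following Lemma~\ref{injlemma} to replace $A$ by an Euler-finite, right-bounded complex $P^{\bullet}$ all of whose terms are projective objects of $\oplus_{k}\mathcal{O}_{k,{\bf d}}(\mathfrak{gl}_n)$. These are projective in the ambient blocks $\mathcal{O}_k$, they carry Verma flags, Verma modules are $T_s$-acyclic, and twisting functors satisfy braid relations on the derived category, so $P^{\bullet}$ is $\hat T_{w_{p,q}}$-acyclic and $\mathbb{L}\hat T_{w_{p,q}}(A)\cong \hat T_{w_{p,q}}(P^{\bullet})$ computed term by term. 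By the graded lift of Proposition~\ref{twistprop}, each $\hat T_{w_{p,q}}(P^i)$ is an object of $\gmod-A_{k,{\bf d'}}$, so the result is a right-bounded complex in $D^{<}(\oplus_{k}\gmod-A_{k,{\bf d'}})$. For Euler finiteness, note that $\hat T_{w_{p,q}}$ sends each indecomposable projective to a single finite-length graded module, whose class is a fixed $\mathbb{Z}[q,q^{-1}]$-combination of classes of simples; as there are finitely many indecomposable projectives up to grading shift, $\hat T_{w_{p,q}}$ induces a $\mathbb{Z}[q,q^{-1}]$-linear map on Grothendieck groups, which extends continuously to the $\mathbb{C}((q))$-completions. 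Applying it entrywise carries the well-defined class $\sum_i(-1)^i[P^i]$ to the well-defined class $\sum_i(-1)^i[\hat T_{w_{p,q}}P^i]$, so $\mathbb{L}\hat T_{w_{p,q}}(A)\in D^{\triangledown}(\oplus_{k}\gmod-A_{k,{\bf d'}})$.

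For~(2): the new point is that the right derived functor of the left-exact $\hat J_{w_{p,q}^{-1}}$ naturally uses injective coresolutions, which grow in the wrong direction. However the blocks $\mathcal{O}_k$ have finite global dimension, so injective dimensions are uniformly bounded, and a Cartan--Eilenberg injective coresolution of a right-bounded complex is again right-bounded. Hence any $A\in D^{\triangledown}(\oplus_{k}\gmod-A_{k,{\bf d'}})$ is quasi-isomorphic to an Euler-finite, right-bounded complex $I^{\bullet}$ of injective objects of $\oplus_{k}\mathcal{O}_k(\mathfrak{gl}_n)$; by Lemma~\ref{injlemma} these already lie in $\mathcal{O}_{k,{\bf d'}}(\mathfrak{gl}_n)$, and being duals of projectives they carry dual Verma flags, which are acyclic for every completion functor (\cite[Theorems~4.1,~2.2, Lemma~2.1(4)]{AS}). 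Therefore $\mathbb{R}\hat J_{w_{p,q}^{-1}}(A)\cong \hat J_{w_{p,q}^{-1}}(I^{\bullet})$ term by term, and by Proposition~\ref{inversetwistprop} applied with the roles of ${\bf d}$ and ${\bf d'}$ interchanged (i.e.\ to $w_{q,p}=w_{p,q}^{-1}$) each term is an object of $\gmod-A_{k,{\bf d}}$. Euler finiteness is preserved exactly as in~(1), using that $\hat J_{w_{p,q}^{-1}}$ sends indecomposable injectives to finite-length modules and that there are finitely many of these up to shift.

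I expect the real work to be in the $D^{\triangledown}$ bookkeeping rather than in the subcategory statements: one must make sure the chosen projective/injective resolutions can be taken Euler finite and right-bounded, and that applying the functor term by term genuinely computes the (co)derived functor on the unbounded-to-the-left derived category — for $\mathbb{R}\hat J_{w_{p,q}^{-1}}$ this rests entirely on the uniform bound on injective dimensions in $\mathcal{O}_k$. Once those points are secured, landing in the projectively (respectively injectively) presented subcategory is immediate from Propositions~\ref{twistprop} and~\ref{inversetwistprop}.
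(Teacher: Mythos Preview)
Your proposal is correct and takes essentially the same approach as the paper: reduce to complexes of projectives (for $\mathbb{L}\hat T_{w_{p,q}}$) and injectives (for $\mathbb{R}\hat J_{w_{p,q}^{-1}}$), then invoke Propositions~\ref{twistprop} and~\ref{inversetwistprop} term by term. The paper's own proof is a two-line sketch that omits the Euler-finiteness bookkeeping and the Cartan--Eilenberg/finite-global-dimension justification for the right-bounded injective resolution that you spell out; your version simply makes these steps explicit.
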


\begin{proof}
The ungraded version follows from Propositions ~\ref{twistprop} and ~\ref{inversetwistprop} since we need only to apply these functors to complexes of projective and injective objects respectively.
The graded version follows immediately.
\end{proof}

To the coloured crossings as displayed on the left hand side of Figure \ref{coloured} we associate the compositions of functors as displayed on the right hand side.

\subsection{Sliding past a crossing}
The next result says that we could slide the categorified Jones-Wenzl projector past a categorified crossing when we restrict to the appropriate category.  This is a key result in proving the main theorem in Section ~\ref{reidemeistersection}.  See Figure ~\ref{crossslidefig} for a graphical interpretation.

\begin{corollary}
\label{crossingslide}
There are isomorphisms of functors restricted to the subcategories:
$$ D^{\triangledown}(\oplus_{k=0}^n \gmod-A_{k,{\bf d}}) \rightarrow D^{\triangledown}(\oplus_{k=0}^n \gmod-A_{k,{\bf d'}}),$$
as follows
\begin{eqnarray}
\bigoplus_{k=0}^{|{\bf d}|}  \mathbb{L} \hat{T}_{w_{p,q}} \circ \mathbb{L} {}_k \hat{\iota}_{\bf d} \circ {}_k \hat{\pi}_{\bf d} &\cong&
\bigoplus_{k=0}^{|{\bf d'}|} \mathbb{L} {}_k \hat{\iota}_{\bf d'} \circ {}_k \hat{\pi}_{\bf d'} \circ  \mathbb{L} \hat{T}_{w_{p,q}},\\
\bigoplus_{k=0}^{|{\bf d}|}  \mathbb{R} \hat{J}_{w_{p,q}^{-1}} \circ \mathbb{L} {}_k \hat{\iota}_{\bf d} \circ {}_k \hat{\pi}_{\bf d} &\cong&
\bigoplus_{k=0}^{|{\bf d'}|} \mathbb{L} {}_k \hat{\iota}_{\bf d'} \circ {}_k \hat{\pi}_{\bf d'} \circ  \mathbb{R} \hat{J}_{w_{p,q}^{-1}}.
\end{eqnarray}
\end{corollary}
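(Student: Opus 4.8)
The plan is to derive both isomorphisms formally from three facts already available in the paper. First, by Theorem~\ref{catJW}(1) the functor ${}_k\hat{\pi}_{\bf d}\circ\mathbb{L}({}_k\hat{\iota}_{\bf d})$ is isomorphic to the identity, so $\mathbb{L}({}_k\hat{\iota}_{\bf d})$ is fully faithful and the categorified Jones--Wenzl projector $\hat{p}_{k,{\bf d}}=\mathbb{L}({}_k\hat{\iota}_{\bf d})\circ{}_k\hat{\pi}_{\bf d}$ is an idempotent endofunctor of $D^{\triangledown}(\gmod-A_{k,1^n})$ which restricts to the identity on the essential image $\mathcal{I}_{\bf d}$ of $\mathbb{L}({}_k\hat{\iota}_{\bf d})$. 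Second, $\mathcal{I}_{\bf d}$ is exactly $D^{\triangledown}(\mathcal{O}_{k,{\bf d}}(\mathfrak{gl}_n))$, the full triangulated subcategory generated by the projectives $P(x\cdot\lambda)$ with $x$ a longest representative in $\mathbb{S}_{\bf d}\backslash\mathbb{S}_n/\mathbb{S}_k\times\mathbb{S}_{n-k}$, by the discussion following Theorem~\ref{cattensor} (every right-bounded complex with finitely supported cohomology is quasi-isomorphic to a complex of such projectives). Third, Corollary~\ref{maintwistingtheorem} says $\mathbb{L}\hat{T}_{w_{p,q}}$ sends $\mathcal{I}_{\bf d}$ into $\mathcal{I}_{\bf d'}$, and $\mathbb{R}\hat{J}_{w_{p,q}^{-1}}$ sends the projectively presented subcategory for ${\bf d'}$ into that for ${\bf d}$. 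The displayed assertion is to be read as an isomorphism of functors on $D^{\triangledown}(\oplus_k\gmod-A_{k,{\bf d}})$, that is, as the isomorphism obtained from the two composites after precomposing with $\oplus_k\mathbb{L}({}_k\hat{\iota}_{\bf d})$ and identifying $\mathcal{I}_{\bf d}$ with $D^{\triangledown}(\gmod-A_{k,{\bf d}})$.

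With this set-up the first isomorphism is short. I would precompose both sides with $\oplus_k\mathbb{L}({}_k\hat{\iota}_{\bf d})$. On the left, Theorem~\ref{catJW}(1) collapses the middle two factors, leaving $\mathbb{L}\hat{T}_{w_{p,q}}\circ\mathbb{L}({}_k\hat{\iota}_{\bf d})$. On the right, for any object $Y$ of $D^{\triangledown}(\gmod-A_{k,{\bf d}})$ the object $\mathbb{L}\hat{T}_{w_{p,q}}\bigl(\mathbb{L}({}_k\hat{\iota}_{\bf d})(Y)\bigr)$ lies in $\mathcal{I}_{\bf d'}$ by Corollary~\ref{maintwistingtheorem}, so the idempotent $\hat{p}_{k,{\bf d'}}=\mathbb{L}({}_k\hat{\iota}_{\bf d'})\circ{}_k\hat{\pi}_{\bf d'}$ acts on it as the identity, and the right-hand composite also reduces to $\mathbb{L}\hat{T}_{w_{p,q}}\circ\mathbb{L}({}_k\hat{\iota}_{\bf d})$. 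Comparing and summing over $k$ (each functor in sight preserves the block $\mathcal{O}_k$) gives the first isomorphism. For the second one I would run the identical argument with $\mathbb{R}\hat{J}_{w_{p,q}^{-1}}$ in place of $\mathbb{L}\hat{T}_{w_{p,q}}$ and part~(2) of Corollary~\ref{maintwistingtheorem} in place of part~(1); the only thing to keep track of is the direction, since $\mathbb{R}\hat{J}_{w_{p,q}^{-1}}$ moves the projectively presented subcategories from ${\bf d'}$ towards ${\bf d}$, so that the roles of ${\bf d}$ and ${\bf d'}$ are interchanged relative to the first isomorphism.

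The hard part is not in this argument at all: it is ingredient three, Corollary~\ref{maintwistingtheorem}, which in turn rests on Propositions~\ref{twistprop} and~\ref{inversetwistprop} (the twisting functor carries projectives of $\mathcal{O}_{k,{\bf d}}$ to objects of $\mathcal{O}_{k,{\bf d'}}$, and the completion functor carries injectives of $\mathcal{O}_k$ to objects of a projectively presented subcategory). Once that is granted, the present corollary is a purely formal consequence of the idempotency of the categorified Jones--Wenzl projector. The only residual bookkeeping, which I do not expect to cause trouble, concerns the internal and homological grading shifts: they cancel because $\mathbb{L}\hat{T}_{w_{p,q}}$ (respectively $\mathbb{R}\hat{J}_{w_{p,q}^{-1}}$) and $\mathbb{L}({}_k\hat{\iota}_{\bf d})$ each occur exactly once on both sides, and the isomorphism ${}_k\hat{\pi}_{\bf d}\circ\mathbb{L}({}_k\hat{\iota}_{\bf d})\cong\Id$ of Theorem~\ref{catJW}(1) is shift-free.
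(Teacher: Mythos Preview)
Your argument is correct and matches the paper's own proof, which is the one-line observation that by Corollary~\ref{maintwistingtheorem} the twisting and completion functors preserve the projectively presented subcategories, on which the Jones--Wenzl projectors act as the identity. You have simply unpacked this more carefully---in particular your remark about the direction reversal for $\mathbb{R}\hat{J}_{w_{p,q}^{-1}}$ is a detail the paper glosses over.
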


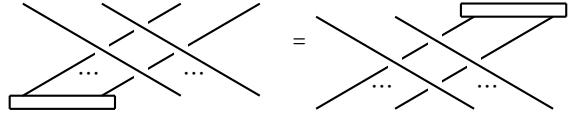
\begin{figure}
  \centering
\begin{equation*}
\begin{tikzpicture}
[scale=0.70]
\draw [shift={(-3,-2)}]  (3,0) -- (5,0)[thick];
\draw [shift={(-3,-2)}] (3,0) -- (3,.25)[thick];
\draw [shift={(-3,-2)}](5,0) -- (5,.25)[thick];
\draw [shift={(-3,-2)}] (3,.25) -- (5,.25)[thick];
\draw  [shift={(-2.5,-1.5)}] (4, .125) node{$ \cdots$};
\draw  [shift={(-.5,-1.5)}] (4, .125) node{$ \cdots$};
\draw (.25,-1.75) -- (3.25,0)[thick];
  \path [fill=white] (1.625,-1) rectangle (1.875,-.75);
\draw (1.75,-1.75) -- (4.75,0)[thick];
  \path [fill=white] (3.125,-1) rectangle (3.375,-.75);
  \path [fill=white] (2.375,-.3125) rectangle (2.625,-.5625);  
    \path [fill=white] (2.375,-1.1875) rectangle (2.625,-1.4375);
\draw (4.75,-1.75) -- (1.75,0)[thick];
\draw (3.25,-1.75) -- (.25,0)[thick];
  \draw  [shift={(3,-1.25)}] (2.5, .5) node{$ =$};
\end{tikzpicture}
\begin{tikzpicture}
[scale=.7]
\draw  [shift={(-2.5,-1.5)}] (4, .125) node{$ \cdots$};
\draw  [shift={(-.5,-1.5)}] (4, .125) node{$ \cdots$};
\draw [shift={(3,0)}](0,0) -- (2,0)[thick];
\draw [shift={(3,0)}](0,0) -- (0,.25)[thick];
\draw [shift={(3,0)}](2,0) -- (2, .25)[thick];
\draw [shift={(3,0)}](0, .25) -- (2, .25)[thick];
\draw (.25,-1.75) -- (3.25,0)[thick];
  \path [fill=white] (1.625,-1) rectangle (1.875,-.75);
\draw (1.75,-1.75) -- (4.75,0)[thick];
  \path [fill=white] (3.125,-1) rectangle (3.375,-.75);
  \path [fill=white] (2.375,-.3125) rectangle (2.625,-.5625);  
    \path [fill=white] (2.375,-1.1875) rectangle (2.625,-1.4375);
\draw (4.75,-1.75) -- (1.75,0)[thick];
\draw (3.25,-1.75) -- (.25,0)[thick];
\end{tikzpicture}
\end{equation*}
\caption{{\small Sliding past a crossing.}}
\label{crossslidefig}
\end{figure}

\begin{proof}
By Theorem ~\ref{maintwistingtheorem}, all of the Jones-Wenzl projection functors are isomorphic to the identity upon restriction to the projectively presented subcategory.
\end{proof}

The following result gives a categorification of the $R$-matrix in $\op{Rep}(\mathcal{U}_q(\mathfrak{sl}_2))$.
\begin{theorem}[Braiding]
\label{braid}
The functor $$ \mathbb{L} \hat{T} _{w_{p,q}} \colon D^{\triangledown}(\gmod-A_{k, {\bf d}}) \rightarrow  D^{\triangledown}(\gmod-A_{k, {\bf d'}}) $$ is an equivalence of categories with inverse functor $ \mathbb{R} \hat{J}_{w_{p,q}^{-1}}.$
\end{theorem}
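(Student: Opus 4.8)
The plan is to establish the equivalence by showing that the standard adjunction $(\mathbb{L}\hat{T}_{w_{p,q}}, \mathbb{R}\hat{J}_{w_{p,q}^{-1}})$, already known to be an adjunction of equivalences on the \emph{full} derived categories $D^{\triangledown}(\gmod\text{-}A_{k,1^n})$ (this is the content of Theorem~\ref{functorisos}(3) applied along a reduced word for $w_{p,q}$, since the twisting functors satisfy braid relations and each $\mathbb{L}\hat{T}_i$ is invertible with inverse $\mathbb{R}\hat{J}_i$), restricts to an adjunction of equivalences between the projectively presented subcategories. The key point is that by Corollary~\ref{maintwistingtheorem}, both $\mathbb{L}\hat{T}_{w_{p,q}}$ and $\mathbb{R}\hat{J}_{w_{p,q}^{-1}}$ genuinely map the relevant $D^{\triangledown}(\oplus_k \gmod\text{-}A_{k,{\bf d}})$ and $D^{\triangledown}(\oplus_k \gmod\text{-}A_{k,{\bf d'}})$ into one another, so it suffices to check that the unit and counit of the ambient adjunction, restricted to these subcategories, remain isomorphisms.

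Concretely, I would proceed as follows. First, recall from Corollary~\ref{maintwistingtheorem} that $\mathbb{L}\hat{T}_{w_{p,q}}$ takes a complex of projectives in $\mathcal{O}_{k,{\bf d}}(\mathfrak{gl}_n)$ (i.e. built from $P(x\cdot\lambda)$ with $x$ a longest double coset representative for $\mathbb{S}_{\bf d}\backslash\mathbb{S}_n/\mathbb{S}_k\times\mathbb{S}_{n-k}$) to a complex living in $D^{\triangledown}(\oplus_k \gmod\text{-}A_{k,{\bf d'}})$, using Proposition~\ref{twistprop}; dually $\mathbb{R}\hat{J}_{w_{p,q}^{-1}}$ takes complexes of injectives to the projectively presented subcategory by Proposition~\ref{inversetwistprop} together with Corollary~\ref{bimodiso}. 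Second, since every object of $D^{\triangledown}(\oplus_k \gmod\text{-}A_{k,{\bf d}})$ is represented by a (possibly infinite, but bounded-above and Euler-finite) complex of such projectives, and the isomorphisms $\mathbb{L}\hat{T}_{w_{p,q}}\circ\mathbb{R}\hat{J}_{w_{p,q}^{-1}}\cong\hat{\Id}\cong\mathbb{R}\hat{J}_{w_{p,q}^{-1}}\circ\mathbb{L}\hat{T}_{w_{p,q}}$ already hold as endofunctors of the big categories $D^{\triangledown}(\gmod\text{-}A_{k,1^n})$, they automatically hold on the full subcategories once we know both functors preserve them. The only thing to verify is well-definedness on $D^{\triangledown}$ as opposed to $D^{<}$, i.e. that Euler finiteness is preserved; this follows because $\mathbb{L}\hat{T}_{w_{p,q}}$ is built from the finite-length functors $\mathbb{L}\hat{T}_i$, each of which sends proper standard / Verma objects to objects with a finite Verma filtration (up to shift), hence induces an honest $\mZ((q))$-linear map on the completed Grothendieck group, and likewise for $\mathbb{R}\hat{J}_{w_{p,q}^{-1}}$ using \eqref{liftofjos}.

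For the statement about the Grothendieck group action: on $[\,\cdot\,]$ the functor $\mathbb{L}\hat{T}_{w_{p,q}}$ induces, by the skein-type computations underlying Theorem~\ref{functorisos} and the decategorified braid action of the $R$-matrix, exactly the braiding isomorphism $R_{p,q}\colon V_{d_1}\otimes\cdots\otimes V_{d_c}\otimes V_{d_{c+1}}\otimes\cdots \to V_{d_1}\otimes\cdots\otimes V_{d_{c+1}}\otimes V_{d_c}\otimes\cdots$ (up to the framing shift absorbed into the overall grading renormalization), which is an isomorphism of $\mathcal{U}_q(\mathfrak{sl}_2)$-modules; its inverse is the $R$-matrix for the oppositely-oriented crossing, matching $[\mathbb{R}\hat{J}_{w_{p,q}^{-1}}]$. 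This is consistent with, and in fact forced by, the adjunction.

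I expect the main obstacle to be verifying the $D^{\triangledown}$ (Euler-finiteness) compatibility carefully: one must check that applying $\mathbb{L}\hat{T}_{w_{p,q}}$ to an infinite bounded-above complex of projectives from $\mathcal{O}_{k,{\bf d}}(\mathfrak{gl}_n)$ does not destroy the property that the alternating sum of classes converges in $\mC((q))\otimes_{\mZ[q,q^{-1}]}[\gmod\text{-}A_{k,{\bf d'}}]$. The resolution is that $\mathbb{L}\hat{T}_{w_{p,q}}$ has bounded cohomological amplitude on projectives (indeed $\mathbb{L}\hat{T}_i$ has amplitude concentrated in two adjacent degrees by Corollary~\ref{disttri}), so it commutes with the relevant limits and the Euler characteristic is computed termwise; combined with the fact that each term $\mathbb{L}\hat{T}_{w_{p,q}}P(x\cdot\lambda)$ has finite length and controlled grading, the completed Euler characteristic is well-defined and transforms by the $\mZ((q))$-linear $R$-matrix. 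Once this bookkeeping is in place, Theorem~\ref{braid} follows from Corollary~\ref{maintwistingtheorem} and Theorem~\ref{functorisos}(3) with essentially no further work.
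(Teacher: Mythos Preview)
Your proposal is correct and follows essentially the same approach as the paper: the functors $\mathbb{L}\hat{T}_{w_{p,q}}$ and $\mathbb{R}\hat{J}_{w_{p,q}^{-1}}$ are already inverse equivalences on the ambient category $D^{<}(\mathcal{O}_k(\mathfrak{gl}_n))$, and by Corollary~\ref{maintwistingtheorem} both restrict to the projectively presented subcategories, whence the unit and counit of the ambient equivalence remain isomorphisms upon restriction. The paper's proof is essentially two sentences to this effect; your additional paragraphs on Euler-finiteness and on the Grothendieck-group interpretation spell out details the paper leaves implicit (or, in the case of the $R$-matrix discussion, goes beyond what the stated theorem requires), but the core logical structure is identical.
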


\begin{proof}
On $ D^{<}(\mathcal{O}_k(\mathfrak{gl}_n)), $ the functors $ \mathbb{L}T_{w_{p,q}} $ and $ \mathbb{R}J_{w_{p,q}^{-1}} $ are inverse
    equivalences of categories. By Corollary ~\ref{maintwistingtheorem}, these functors restrict to
    equivalences on the subcategories. The graded version follows immediately.
%\item We claim that the inverse functor is $ {}_k \overline{\pi}_{\bf d} \circ
    %RJ_{w_{p,q}^{-1}} \circ L ({}_k \widetilde{\iota}_{\bf d'}). $ Since the image of $ LT_{w_{p,q}} $ on $
    %D^{<}(\mathcal{O}_{k, {\bf d}}(\mathfrak{gl}_n)) $ is $ D^{<}(\mathcal{O}_{k, {\bf d'}}(\mathfrak{gl}_n)), $ and on this
    %subcategory $ L ({}_k \widetilde{\iota}_{\bf d'}) \circ {}_k \widetilde{\pi}_{\bf d'} \cong \Id, $ clearly
%$$ {}_k \overline{\pi}_{\bf d} \circ RJ_{w_{p,q}^{-1}} \circ L ({}_k \overline{\iota}_{\bf d'}) \circ {}_k
%\overline{\pi}_{\bf d'} \circ LT_{w_{p,q}} \circ L ({}_k \overline{\iota}_{\bf d}) \cong \Id. $$ Similarly,
%$$ {}_k \overline{\pi}_{\bf d'} \circ LT_{w_{p,q}} \circ L ({}_k \overline{\iota}_{\bf d}) \circ {}_k \overline{\pi}_{\bf
%d} \circ RJ_{w_{p,q}^{-1}} \circ L ({}_k \overline{\iota}_{\bf d'}) \cong \Id. $$
\end{proof}

\subsection{Sliding projectors along a cup or cap}
Consider the cabled cap diagram $D_1$ and cup diagram $D_2$ displayed in Figure ~\ref{fig:coloredcapcup}. Let $ n_1 = d_1 + \cdots + d_{i-1}$ be the number of strands to the left of the cap and cup. Then we add $d_i$ nested caps (and cups) and denote $n_2 = n_1+ d_i$. Finally let $ n_3 = n_1+ 2d_i$. The diagrams $D_1^l$  and $D_1^r$ in Figure \ref{fig:capslide} differ from $D_1$ by an extra Jones-Wenzl projection associated with the composition
$ {\bf d_1} = (1^{n_1}, d_i, 1^{|{\bf d}|-n_2}) $ and
$ {\bf d_2} = (1^{n_2}, d_i, 1^{|{\bf d}|-n_3})$ respectively. Similar for the cup diagrams  $D_2^l$ and $D_2^r$ in Figure  \ref{fig:cupslide} in comparison with $D_2$. Let $F(D_i)$, $F(D_i^l)$ and $F(D_i^r)$ for $i=1,2$ be the functors associated with the respective diagrams. The equalities of intertwiners displayed in Figures ~\ref{fig:capslide} and ~\ref{fig:cupslide} lift to isomorphisms of functors as follows.

\begin{figure}[b]
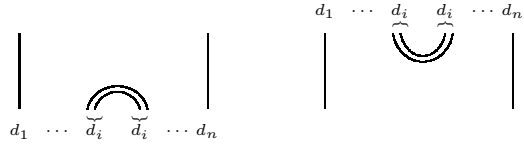

\begin{equation*}
 \xy (0,0);(0,10)*{}**\dir{-};
 (9,0)*{};(17,0)*{}**\crv{(10,4)&(16,4)}; 
  (10,0)*{};(16,0)*{}**\crv{(11,3)&(15,3)}; 
%  (10,0)*{};(16,0)*{}**\crv{(11,4)&(15,4)}; 
 (25,0)*{};(25,10)*{}**\dir{-};(25,10)*{};
(0,-3)*{\mbox{\tiny $d_1$}};(5,-3)*{\mbox{\tiny $\ldots$}};
(10,-1)*{\mbox{\tiny $\underbrace{}$}};
(10,-3)*{\mbox{\tiny $d_i$}};
(16,-1)*{\mbox{\tiny $\underbrace{}$}};
(16,-3)*{\mbox{\tiny $d_{i}$}};
(21,-3)*{\mbox{\tiny$\ldots$}};
(25,-3)*{\mbox{\tiny$d_n$}};
\endxy
\hspace{0.5in} \xy (0,0)*{};(0,10)*{}**\dir{-};
(10,10)*{};(16,10)*{}**\crv{(11,6)&(15,6)};
(9,10)*{};(17,10)*{}**\crv{(10,5)&(16,5)};
(10,10)*{};(16,10)*{};
(25,0)*{};(25,10)*{}**\dir{-};(25,10)*{}; (0,13)*{\mbox{\tiny $d_1$}};
(5,13)*{\mbox{\tiny $\ldots$}};
(10,13)*{\mbox{\tiny ${d_i}$}};
(10,11)*{\mbox{\tiny $\overbrace{}$}};
(16,13)*{\mbox{\tiny $d_{i}$}};
(16,11)*{\mbox{\tiny $\overbrace{}$}};
(21,13)*{\mbox{\tiny$\ldots$}};
(25,13)*{\mbox{\tiny$d_{n}$}};
\endxy
\end{equation*}
\caption{{\small Diagrams $D_1$ and $D_2$}}
\label{fig:coloredcapcup}
\end{figure}

\begin{figure}
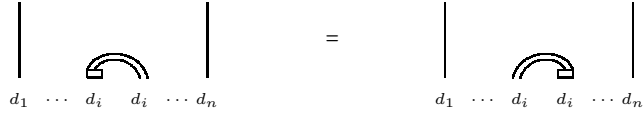

\begin{equation*}
 \xy (0,0);(0,10)*{}**\dir{-};
 (0,10); 
% (10,0)*{}; (16,0)*{}**\crv{(11,4)&(15,4)}; 
  (9,1)*{};(17,0)*{}**\crv{(10,4)&(16,4)}; 
  (10,1)*{};(16,0)*{}**\crv{(11,3)&(15,3)}; 
  (25,0)*{};(25,10)*{}**\dir{-};(25,10)*{};
(0,-3)*{\mbox{\tiny $d_1$}};(5,-3)*{\mbox{\tiny $\ldots$}};(10,-3)*{\mbox{\tiny $d_i$}};(16,-3)*{\mbox{\tiny
$d_{i}$}};(21,-3)*{\mbox{\tiny$\ldots$}};(25,-3)*{\mbox{\tiny$d_n$}};
(9,1);(11,1)*{}**\dir{-};
(9,0);(11,0)*{}**\dir{-};
(9,0);(9,1)*{}**\dir{-};
(11,0);(11,1)*{}**\dir{-};
\endxy
\hspace{.1in} \xy {(0,0)*{};(12.5,5)*{=}};
\endxy
\hspace{0.5in}
 \xy (0,0);(0,10)*{}**\dir{-};(0,10);
  %(10,0)*{};(16,0)*{}**\crv{(11,4)&(15,4)}; 
   (9,0)*{};(17,1)*{}**\crv{(10,4)&(16,4)}; 
  (10,0)*{};(16,1)*{}**\crv{(11,3)&(15,3)}; 
  (25,0)*{};(25,10)*{}**\dir{-};(25,10)*{};
(0,-3)*{\mbox{\tiny $d_1$}};(5,-3)*{\mbox{\tiny $\ldots$}};(10,-3)*{\mbox{\tiny $d_i$}};(16,-3)*{\mbox{\tiny
$d_{i}$}};(21,-3)*{\mbox{\tiny$\ldots$}};(25,-3)*{\mbox{\tiny$d_n$}};
(15,1);(17,1)*{}**\dir{-};
(15,0);(17,0)*{}**\dir{-};
(15,0);(15,1)*{}**\dir{-};
(17,0);(17,1)*{}**\dir{-};
\endxy
\end{equation*}
\caption{{\small Diagrams $D_1^l$ and $D_1^r$: Cap slide}}
\label{fig:capslide}
\end{figure}

\begin{figure}
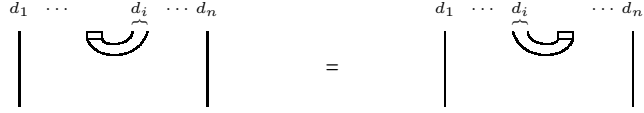

\begin{equation*}
 \xy (0,0)*{};(0,10)*{}**\dir{-};
 (9,9)*{};  (17,10)*{}**\crv{(10,6)&(16,6)};
  (11,9)*{};  (15,10)*{}**\crv{(11,8)&(15,8)};
%  (10,10)*{};  (16,10)*{}**\crv{(11,6)&(15,6)};
(25,0)*{};(25,10)*{}**\dir{-};(25,10)*{}; 
(0,13)*{\mbox{\tiny $d_1$}};
(5,13)*{\mbox{\tiny $\ldots$}};
%(10,-3)*{\mbox{\tiny $d_i$}};
(16,13)*{\mbox{\tiny ${d_i}$}};
(16,11)*{\mbox{\tiny $\overbrace{}$}};
%(16,-3)*{\mbox{\tiny $d_{i}$}};
(21,13)*{\mbox{\tiny$\ldots$}};
(25,13)*{\mbox{\tiny$d_{n}$}};
(9,9);(9,10)*{}**\dir{-};
(9,9);(11,9)*{}**\dir{-};
(11,9);(11,10)*{}**\dir{-};
(9,10);(11,10)*{}**\dir{-};
\endxy
\hspace{.1in} \xy {(0,0)*{};(12.5,5)*{=}};
\endxy
\hspace{0.5in} \xy (0,0)*{};(0,10)*{}**\dir{-};
(9,10)*{};(17,9)*{}**\crv{(10,6)&(16,6)};
  (11,10)*{};  (15,9)*{}**\crv{(11,8)&(15,8)};
%(10,10)*{};(16,10)*{}**\crv{(11,6)&(15,6)};
(25,0)*{};(25,10)*{}**\dir{-};(25,10)*{}; 
(0,13)*{\mbox{\tiny $d_1$}};
(5,13)*{\mbox{\tiny $\ldots$}};
%(10,-3)*{\mbox{\tiny ${d_i}$}};
(10,13)*{\mbox{\tiny ${d_i}$}};
(10,11)*{\mbox{\tiny $\overbrace{}$}};
%(16,-3)*{\mbox{\tiny $d_{i}$}};
(21,13)*{\mbox{\tiny$\ldots$}};
(25,13)*{\mbox{\tiny$d_{n}$}};
(15,9);(15,10)*{}**\dir{-};
(15,9);(17,9)*{}**\dir{-};
(17,9);(17,10)*{}**\dir{-};
(15,10);(17,10)*{}**\dir{-};
\endxy
\end{equation*}
\caption{{\small Diagrams $D_2^l$ and $D_2^r$: Cup slide}}
\label{fig:cupslide}
\end{figure}

%\begin{figure}
%\begin{equation*}
 %\xy (0,0)*{};(0,10)*{}**\dir{-};(0,10)*{}; (10,10)*{};(16,10)*{}**\crv{(11,6)&(15,6)};(10,10)*{};(16,10)*{};
%(25,0)*{};(25,10)*{}**\dir{-};(25,10)*{}; (0,-3)*{\mbox{\tiny $d_1$}};(5,-3)*{\mbox{\tiny $\ldots$}};(10,-3)*{\mbox{\tiny
%$d_i$}};(16,-3)*{\mbox{\tiny $d_{i}$}};(21,-3)*{\mbox{\tiny$\ldots$}};(25,-3)*{\mbox{\tiny$d_{n}$}};
%(9,9);(9,10)*{}**\dir{-};
%(9,9);(11,9)*{}**\dir{-};
%(11,9);(11,10)*{}**\dir{-};
%(9,10);(11,10)*{}**\dir{-};
%\endxy
%\hspace{.1in} \xy {(0,0)*{};(12.5,5)*{=}};
%\endxy
%\hspace{0.5in} \xy (0,0)*{};(0,10)*{}**\dir{-};(0,10)*{}; (10,10)*{};(16,10)*{}**\crv{(11,6)&(15,6)};(10,10)*{};(16,10)*{};
%(25,0)*{};(25,10)*{}**\dir{-};(25,10)*{}; (0,-3)*{\mbox{\tiny $d_1$}};(5,-3)*{\mbox{\tiny $\ldots$}};(10,-3)*{\mbox{\tiny
%$d_i$}};(16,-3)*{\mbox{\tiny $d_{i}$}};(21,-3)*{\mbox{\tiny$\ldots$}};(25,-3)*{\mbox{\tiny$d_{n}$}};
%(15,9);(15,10)*{}**\dir{-};
%(15,9);(17,9)*{}**\dir{-};
%(17,9);(17,10)*{}**\dir{-};
%(15,10);(17,10)*{}**\dir{-};
%\endxy
%\end{equation*}
%\caption{Diagrams $D_2'$ and $D_2''$: Cup slide}
%\label{fig:cupslide}
%\end{figure}

\begin{theorem}[Cup and cap slides]
\label{cupslide}
There are isomorphisms of functors 
\begin{equation}
\label{RumRosinentee}
F(D_1^l)\cong F(D_1^r) \ , \hspace{.25in} F(D_2^l)\cong F(D_2^r) .
\end{equation}
\end{theorem}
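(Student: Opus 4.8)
## Proof proposal

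The plan is to reduce the cap slide $F(D_1^l) \cong F(D_1^r)$ (the cup slide being entirely dual) to the decategorified computation carried out in the propositions just before Theorem~\ref{complexonO}, upgraded using the categorified Jones-Wenzl projector machinery from Section~\ref{sec:JW} together with the cup/cap functor identities of Theorem~\ref{cupcapiso} and Lemma~\ref{ZonVermas}. The two diagrams $D_1^l$ and $D_1^r$ differ only locally: both consist of the same underlying cabled cap (a bunch of $d_i$ nested caps, functor $\hat{\cap}$-composite), but $D_1^l$ has a Jones-Wenzl projector $\hat p_{k,\mathbf{d_1}}$ inserted on the $d_i$ strands \emph{before} the cap (i.e.\ on the incoming $2d_i$ strands grouped so that the left block of size $d_i$ is projected) and $D_1^r$ has the projector $\hat p_{k,\mathbf{d_2}}$ inserted on the other block of size $d_i$. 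So the statement to prove is: \emph{the nested-cap functor intertwines the two projectors}, i.e.\ (schematically) $\hat{\cap}^{(d_i)} \circ \hat p^{\text{left block}} \cong \hat{\cap}^{(d_i)} \circ \hat p^{\text{right block}}$ as functors on $D^\triangledown$ of the appropriate projectively presented categories.

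First I would set up the local picture carefully: fix $n_1, n_2 = n_1 + d_i, n_3 = n_1 + 2d_i$ as in the statement, and observe that by Theorem~\ref{JWgenidempotent} it suffices to treat the case of a single pair of nested caps absorbing one unit of the projector, then induct on the number $d_i$ of nested caps. For the single-cap step I would use the concrete model $\mathsf{p}_{k,n} \cong \hat p_{k,(n)}$ from Theorem~\ref{complexonO}, writing the Jones-Wenzl projector as the $S(V)$-twisted complex built on $\mathsf{E}^{(k)}\mathsf{F}^{(k)}$ over $\Hstar(\Gr(k,n))$. The key algebraic input is exactly Proposition~\ref{decatJWcupslideaux} and the subsequent proposition on sliding $\sum_k \frac{E^{(k)}F^{(k)}}{{n\brack k}} 1_{-n+2k}$ along $C_n$: these say that, after composing with the (co)evaluation, the projector expressed via divided powers of $E$ and $F$ slides from one tensor factor to the other, at the cost of $K$-shifts which are absorbed by the grading shifts built into our cup/cap functors. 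I would categorify this: the functorial version of $E, F, K$ is $\hat{\mathcal E}, \hat{\mathcal F}, \langle\cdot\rangle$ from Theorem~\ref{projfunc}, the functorial version of $C_n$ is the nested cup/cap functor from Theorem~\ref{cupcapiso}, and the functorial version of the comultiplication formulas for divided powers is encoded in Lauda's $2$-category $\mathbf U$ acting via the $2$-functor to $\bigoplus_k \gmod\text{-}A_{k,1^n}$ (the last remark of Section~\ref{secJWcomplex}). Concretely, Lemmas~\ref{Fslidelemma} and \ref{Eslidelemma} and Proposition~\ref{decatJWcupslideaux} should each have a direct functorial analogue obtained by replacing the scalar identities with the defining $2$-morphism relations of $\mathbf U$ and the identity \cite[(2.11)]{L}; the fact that ``all but one term vanishes by weight considerations'' becomes ``all but one summand of the direct-sum decomposition is zero on the relevant block.'' Since the $\mathsf{p}_{k,n}$-model is a complex built functorially out of $\hat{\mathcal E}, \hat{\mathcal F}$ and the bimodule $S(V)$ over $\Hstar$, and the nested cap functor commutes with $\hat{\mathcal E}, \hat{\mathcal F}$ up to the shifts recorded in Theorem~\ref{complexonO}(2),(3), the slide of the whole complex follows termwise and is compatible with the differential because the differential is built from the $\partial_i(r_j)$ which live in $\Hstar$ and are transported by the same intertwining isomorphism.

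Alternatively, and perhaps more cleanly, I would argue purely categorically without the $S(V)$-model: by Theorem~\ref{catJW}(1) the projector $\hat p_{k,\mathbf{d}}$ is $\mathbb{L}({}_k\hat\iota_{\mathbf d}) \circ {}_k\hat\pi_{\mathbf d}$, and by Theorem~\ref{JWgenidempotent} together with the fact (from the Enright--Shelton equivalence and Lemma~\ref{ZonVermas}) that the cap functor $\hat{\cap}$ is built from $\hat\epsilon$'s and $\mathbb{L}\hat Z$'s, one checks that absorbing one nested cap into the projector on the left block produces the same projectively presented subcategory, hence the same projection functor, as absorbing it into the projector on the right block. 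The identification of the two subcategories is a combinatorial statement about which double cosets $\mathbb S_{\mathbf{d_1}} \backslash \mathbb S_n / \mathbb S_k\times\mathbb S_{n-k}$ versus $\mathbb S_{\mathbf{d_2}}\backslash \mathbb S_n/\mathbb S_k\times\mathbb S_{n-k}$ survive after cap-ping off, and the cap functor sends the projective generators of one to the projective generators of the other by Lemma~\ref{ZonVermas}; this mirrors the proof of the lemma around diagram~\eqref{FGG'diag}.

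The main obstacle I expect is the bookkeeping of grading shifts, both internal ($\langle\cdot\rangle$) and homological ($\lsem\cdot\rsem$): the decategorified slide in Proposition~\ref{decatJWcupslideaux} introduces factors $(-1)^n q^{n(n-1)} q^{-2n^2}$ and $K^n \otimes K^n$, and one must verify that these are \emph{exactly} the shifts already present in the definitions of $\hat{\cap}_{i,n}$ (with its $\langle -1\rangle\lsem -1\rsem$ from Lemma~\ref{ZonVermas}) and of $\hat p_{k,\mathbf{d}}$ (with the $-\prod k_i(d_i - k_i)$ shift on standard lifts), so that no residual shift remains and the isomorphism is on the nose rather than up to a shift. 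A secondary subtlety is that these functors live on the unbounded-on-one-side derived categories $D^\triangledown$ and the complexes involved (as the infinite-resolution example shows) are genuinely infinite, so I must make sure the termwise slide of the $S(V)$-complex, or the identification of projective generators, is compatible with passing to $D^\triangledown$ — this is handled by the Euler-finiteness part of Theorem~\ref{catJW}(2) and the generation of $D^\triangledown$ by projectives, exactly as in the proof of Theorem~\ref{JWgenidempotent}.
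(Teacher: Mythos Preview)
Your proposal takes a genuinely different route from the paper, and in its current form has a real gap.

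Your first approach hinges on categorifying Lemmas~\ref{Fslidelemma}, \ref{Eslidelemma} and Proposition~\ref{decatJWcupslideaux} termwise via the model $\mathsf{p}_{k,n}$ of Theorem~\ref{complexonO}. The difficulty is that this model describes the \emph{full} projector $\hat p_{k,(n)}$ on all $n$ strands, whereas the projectors $\hat p_{k,\mathbf{d_1}}$, $\hat p_{k,\mathbf{d_2}}$ appearing in $D_1^l$, $D_1^r$ are \emph{partial} projectors associated to compositions with a single block of size $d_i$ amid $1$'s. There is no analogue of ``$E^{(k)}F^{(k)}\otimes 1$ acting on the first tensor factor'' available on the categorical side: the functors $\hat{\mathcal E},\hat{\mathcal F}$ act on the whole $\gmod$-$A_{k,1^{2d_i}}$, and the partial projector is defined via the BG quotient/inclusion, not via any visible $\hat{\mathcal E}^{(k)}\hat{\mathcal F}^{(k)}$-complex. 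Your suggestion to reduce to a single cap and induct via Theorem~\ref{JWgenidempotent} does not close this gap. Your second, ``purely categorical'' sketch about matching double cosets is too vague to be a proof; in particular the cap functor does \emph{not} send projective generators to projective generators (it is built from derived Zuckerman functors), so the statement to be checked is homological, not combinatorial.

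The paper's argument is different and shorter. By adjointness it suffices to treat the cup case. One introduces the diagram $D_2^{lr}$ with projectors on \emph{both} sides of the cups, and shows that each adjunction morphism $F(D_2^{lr})\to F(D_2^l)$ and $F(D_2^{lr})\to F(D_2^r)$ is an isomorphism; composing gives \eqref{RumRosinentee}. Since the source category (no vertical strands first) is generated by $\mathbb{C}$, one checks on $\mathbb{C}$: the cup functor sends it to a simple object $\hat L$, and one must show that the minimal projective resolution $\hat P^\bullet$ of $\hat L$ in the one-sided projectively presented category already lies in the smaller, two-sided one. If not, some $Z_i P_j\neq 0$ for an $i$ in the second block; choosing a maximal such set $I$ and the maximal such $j_0$, one gets nonzero homology of $\mathbb{L}\hat Z_I \hat L$ in degree $j_0$. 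But this contradicts the categorified annihilation property $\mathbb{L}\hat Z_I \hat L = 0$ from \cite[Theorem~70]{FSS1} (the functorial version of $C_{i,n}\circ p_n=0$ in Proposition~\ref{charJW}; cf.\ Remark~\ref{stillneeded}). Thus no such $i,j$ exist, the resolution lies in the smaller category, applying the second projector is the identity, and the adjunction is an isomorphism. The key input you are missing is precisely this annihilation property: rather than sliding $E^{(k)}F^{(k)}$ through the cup, the paper uses the \emph{characterisation} of the categorified projector to force the resolution into the right subcategory.
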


\begin{proof}
By definition, \eqref{RumRosinentee} means there are isomorphisms:
\begin{enumerate}
\item
$\bigoplus_{k=0}^{|{\bf d}|} F(D_1) \circ \mathbb{L} ({}_k \hat{\iota}_{\bf d_1}) \circ {}_k \hat{\pi}_{\bf d_1} \cong \bigoplus_{k=0}^{|{\bf d}|} F(D_1) \circ \mathbb{L} ({}_k \hat{\iota}_{\bf d_2}) \circ {}_k \hat{\pi}_{\bf d_2}$,
\item $\bigoplus_{k=0}^{|{\bf d}|} \mathbb{L} ({}_k \hat{\iota}_{\bf d_1}) \circ {}_k \hat{\pi}_{\bf d_1} \circ F(D_2) \cong \bigoplus_{k=0}^{|{\bf d}|} \mathbb{L} ({}_k \hat{\iota}_{\bf d_2}) \circ {}_k \hat{\pi}_{\bf d_2} \circ F(D_2)$.
\end{enumerate}

Using adjointness properties it is enough to establish the second isomorphism. For that note that there is an adjunction morphism from the functor $G'$ attached to the diagram $D_2^{lr}$ which is $D_2$ but with two projectors, one on the left of the cups as in $D_2^l$ and one on the right as in $D_2^r$, to the functor associated with $D_2^l$ and $D_2^r$ respectively. We claim that in each case this is an isomorphism of functors. Composing the two isomorphisms provides then the isomorphism of functors we are looking for. Since the arguments in the two cases are completely analogous, we consider only the situation arising from $D_2^l$, {\it i.e.} the projector is on the left.

Let us first assume that there are no vertical strands in the diagram $D_2$. Then $F$ is a functor from a triangulated subcategory of the (bounded in one direction) derived category of finite-dimensional graded vector spaces. To prove the adjunction is an isomorphism of functors it is enough to check it on the one-dimensional vector space $\mC$ concentrated in homological degree zero. To see that this suffices note that this implies an isomorphism when applied to any bounded complex of graded vector spaces. It holds then also for any complex in the subcategory, since the subcomplex given by fixing an internal degree is always quasi-isomorphic to a bounded complex by definition of the  triangulated subcategory, \cite{AchS}.

Now observe that  $F(D_2)(\mC)$ is a simple object (obtained by applying inclusion functors to a simple object). By \eqref{basissimples} ${}_{d_1}\hat{\pi}_{(d_1, 1^{d_1})}F(D_2)(\mC)$ is isomorphic to the simple object $\hat{L}=\hat{L}(d_1,d_1|0,1|0,1|\cdots |0,1)$ which is the graded lift concentrated in degree zero of $L={L}(d_1,d_1|0,1|0,1|\cdots |0,1) \in\mathcal{O}_{d_1, (d_1,1^{d_1})}$. 

Let $\hat{P}^\bullet=(\cdots\rightarrow \hat{P}_{-2}\rightarrow \hat{P}_{-1}\rightarrow \hat{P}_0)$ be a minimal graded projective resolution of $\hat{L}$ and let it be a graded lift of a minimal projective resolution  ${P}^\bullet=(\cdots\rightarrow {P}_{-2}\rightarrow {P}_{-1}\rightarrow {P}_0)$ of $L$ in $\mathcal{O}_{d_1, (d_1,1^{d_1})}$. If now $Z_iP_j=0$ for any $d_1+1\leq i\leq 2d_1-1$ and $j\leq 0$, then $P_j\in\mathcal{O}_{d_1, (d_1, d_1)}$ for any $j\leq 0$. In particular applying  $\mathbb{L} ({}_{d_1} \hat{\iota}_{(d_1, d_1)}) \circ {}_{d_1} \hat{\pi}_{(d_1,d_1)}$ to $\hat{P}^\bullet$ does not change anything and the claimed isomorphism follows.  

Otherwise, there exist $i, j$ such that $Z_iP_j\not=0$. Pick $I:=\{i_1,i_2,\ldots, i_r\}\subseteq\{d_1+1, \ldots, 2d_1-1\}$ maximal such that there exists $j$ with  $Z_iP_j\not=0$ for any $i\in I$. Let $j_0$ be the maximal $j$ with this property. Note that by assumption $I\not=\emptyset$ and $j_0\not=0$. Consider now $Z_I:=Z_{i_1}\cdots Z_{i_r}$ with standard graded lift $\hat{Z}_I$. By construction $Z_IP_{j_0}\not=0$. To calculate $\mathbb{L}\hat Z_I\hat{L}$ we apply $\hat Z_I$ to $\hat{P}^\bullet$. The result is nonzero, since it has by construction nonzero homology in degree $j_0$. (Note that $\hat Z_I$ applied to the differential does not surject onto $\hat{P}_{j_0}$, since it did not surject in ${P}^\bullet$ by maximality.) But on the other hand, $\mathbb{L}\hat Z_I\hat{L}=0$ by \cite[Theorem~70]{FSS1}, see Remark~\ref{stillneeded}. Thus we have a contradiction and \eqref{RumRosinentee} follows in case the diagram contains no vertical lines. 

Now we add extra lines on the left and right. Again it is enough to show that the adjunction morphism gives an isomorphism when the functors are applied to simple modules. Applying the cup functors from $D_2$ maps a simple module to a simple module. Then the argument goes along the same lines as above.
\end{proof}

\section{Categorification of the coloured Reshetikhin-Turaev invariant}
\label{reidemeistersection}
\subsection{Main theorem}
Let $ E $ be an elementary, oriented, framed tangle diagram from $r$ ordered points to $ s $ ordered points such that each strand is labeled by a
natural number.  This naturally induces colours ${\bf d}=(d_1, \ldots, d_r) $ on the  $r$ points and colours ${\bf e}=(e_1, \ldots, e_s)$ on the $s$ points. We define a functor for the diagram $ E $
$$ \hat{\Phi}_{col}(E) \colon\quad D^{\triangledown}(\bigoplus_{k=0}^{|{\bf d}|} \gmod-A_{k, {\bf d}})
\rightarrow D^{\triangledown}( \bigoplus_{k=0}^{|{\bf e}|} \gmod-A_{k, {\bf e}}) $$ by
$$ \hat{\Phi}_{col}(E) = (\bigoplus_{k=0}^{|{\bf e}|} {}_k \hat{\pi}_{{\bf e}}) \circ
\hat{\Phi}(\op{cab}(D)) \circ (\bigoplus_{k=0}^{|{\bf d}|} \mathbb{L} ({}_k \hat{\iota}_{{\bf d}})) $$ where $ E $ is an oriented cabling of
$ E. $  Then for an arbitrary tangle $ T $ with diagram $ D = E_{\alpha_n} \circ \cdots \circ E_{\alpha_1}, $ define $
\hat{\Phi}_{col}(D)=\hat{\Phi}_{col}(E_{\alpha_n}) \circ \cdots \circ \hat{\Phi}_{col}(E_{\alpha_1}). $

\begin{theorem} 
\label{maintheorem}
Let $ D_1 $ and $ D_2 $ be two diagrams for an oriented, framed, coloured tangle $ T $ from points coloured by $ \bf d
$ to points coloured by $ \bf e$.  Then
$$ \hat {\Phi}_{col}(D_1)\langle 3\gamma(\op{cab}(D_1)) \rangle \cong
\hat{\Phi}_{col}(D_2)\langle 3\gamma(\op{cab}(D_2)) \rangle. $$
\end{theorem}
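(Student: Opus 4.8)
The plan is to reduce the invariance statement to a finite list of local moves and then check each move using the categorical tools assembled in Sections~\ref{catuncoloredrt} and~\ref{twistprojpres}. As in the classical setting (Theorems~\ref{jonesinvariant} and~\ref{coloredRTthm}), any two diagrams $D_1,D_2$ of the same oriented framed coloured tangle $T$ are related by a finite sequence of the Reidemeister moves RII, RIII (for framed tangles, the relevant version of RI), together with the planar isotopy moves that permute the heights of distant elementary pieces and the zig-zag (snake) moves for cups and caps. It therefore suffices to produce, for each such move, an isomorphism between the composite functors $\hat\Phi_{col}$ of the two sides, compatible with the normalising shift $\langle 3\gamma(\op{cab}(-))\rangle$, since the writhe contribution $\gamma(\op{cab}(-))$ changes in exactly the way that compensates the grading shift coming from RI (and is unchanged by the other moves). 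Because $\hat\Phi_{col}$ is built by composing the inclusion $\mathbb{L}({}_k\hat\iota_{\bf d})$, the uncoloured functor $\hat\Phi(\op{cab}(-))$, and the projection ${}_k\hat\pi_{\bf d}$, the key mechanism is to \emph{slide the categorified Jones--Wenzl projectors} through the cabled diagram until they are adjacent, where they collapse by idempotency.

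\textbf{Key steps.} First I would record that the uncoloured functors satisfy all Reidemeister isomorphisms after the writhe normalisation: this is exactly Theorem~\ref{catjones}, whose proof rests on Theorem~\ref{functorisos}. Second, I would handle the case where the Reidemeister move takes place entirely \emph{inside} a cable bundle, away from any projector: here the move is literally an uncoloured Reidemeister move applied to $\op{cab}(D)$, so Theorem~\ref{catjones} applies verbatim, and the writhe bookkeeping matches because $\gamma(\op{cab}(D_1))-\gamma(\op{cab}(D_2))$ equals the corresponding change for the uncoloured move. Third, and this is the substantive part, I would treat moves that interact with the projectors. For a crossing sliding past a strand bundle carrying a projector, this is precisely Corollary~\ref{crossingslide} (equivalently Theorem~\ref{braid} plus Corollary~\ref{maintwistingtheorem}): the twisting functor $\mathbb{L}\hat T_{w_{p,q}}$ restricts to projectively presented subcategories and the Jones--Wenzl projection functors restrict to the identity there. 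For a cup or cap sliding past a projector, this is Theorem~\ref{cupslide}. For the idempotency/absorption needed when two projectors come together after sliding, I would invoke Theorem~\ref{catJW}(1) and its refinement Theorem~\ref{JWgenidempotent}. Finally, for the framed RI move I would combine the cup/cap slide (Theorem~\ref{cupslide}) with the curl computation in the uncoloured category (Theorem~\ref{functorisos}, items on $\mathbb{L}\hat T_i\circ\mathbb{R}\hat J_i$ and the cup/cap relations of Theorem~\ref{cupcapiso}), which produces a grading shift exactly cancelled by the change in $3\gamma(\op{cab}(-))$.

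\textbf{Assembling the argument.} With the per-move isomorphisms in hand, I would argue as follows: write $D_1$ and $D_2$ as composites of elementary coloured tangles; by the Reidemeister theorem for oriented framed (coloured) tangles, there is a finite chain $D_1 = D^{(0)} \rightsquigarrow D^{(1)} \rightsquigarrow \cdots \rightsquigarrow D^{(m)} = D_2$ of such moves and planar isotopies. Each link in the chain gives an isomorphism of the associated functors, after inserting the normalising shift; functoriality of composition lets one propagate these isomorphisms through the unchanged parts of the diagram (using that $\hat\Phi_{col}$ of a horizontal or vertical composite is the corresponding composite of functors, with projectors inserted at the interfaces — here one uses Theorem~\ref{JWgenidempotent} to see that inserting an extra projector at a common refinement changes nothing). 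Concatenating yields $\hat\Phi_{col}(D_1)\langle 3\gamma(\op{cab}(D_1))\rangle \cong \hat\Phi_{col}(D_2)\langle 3\gamma(\op{cab}(D_2))\rangle$. The statement about the Grothendieck group then follows by applying the graded Euler characteristic and invoking Theorem~\ref{catJW}(3),(4) and Theorem~\ref{cattensor}, which identify the decategorified maps with $\iota_{d_i}$, $\pi_{d_i}$ and the uncoloured $\Phi$, hence with the Reshetikhin--Turaev morphism of Theorem~\ref{coloredRTthm}.

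\textbf{Main obstacle.} The delicate point is the interaction of the projector-sliding moves with the \emph{unboundedness} of the derived categories $D^\triangledown$: the inclusion functors $\mathbb{L}({}_k\hat\iota_{\bf d})$ produce genuinely infinite complexes (as in the example after Theorem~\ref{catJW}), so the isomorphisms of Corollary~\ref{crossingslide} and Theorem~\ref{cupslide} must be checked to preserve the Euler-finiteness condition and to be compatible when composed in a long chain of moves. Controlling this — essentially verifying that every functor in sight maps $D^\triangledown$ to $D^\triangledown$ and that the per-move isomorphisms glue — is where the real work lies; fortunately the bookkeeping is localised by Theorem~\ref{catJW}(2) and Corollary~\ref{maintwistingtheorem}, so the argument is a careful but routine assembly rather than a new difficulty.
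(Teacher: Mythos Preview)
Your proposal is correct and follows essentially the same route as the paper: reduce to the coloured Reidemeister moves, then use Corollary~\ref{crossingslide} and Theorem~\ref{cupslide} to slide the categorified Jones--Wenzl projectors out of the way (collapsing them via Theorem~\ref{catJW} and Theorem~\ref{JWgenidempotent}), leaving only an uncoloured Reidemeister move handled by Theorem~\ref{functorisos}. The paper's presentation is slightly more streamlined in that it slides \emph{all} interior projectors to the bottom at once (where they act as the identity on the projectively presented source category) rather than distinguishing ``moves inside a cable'' from ``moves interacting with projectors'', but the mechanism and the ingredients are identical.
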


\begin{proof}
It suffices to show that the coloured Reidemeister moves appearing in ~\eqref{cr1}-\eqref{cr6} from the Appendix hold.
These functors are comprised of cup, cap, crossing, and Bernstein-Gelfand functors. The Bernstein-Gelfand-functors appearing in the interior of the diagrams appear always in pairs, forming categorified Jones-Wenzl projectors.
By Corollary ~\ref{crossingslide} and Theorem ~\ref{cupslide}, we may commute (up to isomorphism) all these categorified Jones-Wenzl projectors to the bottom where they act as identity functors by Theorem \ref{catJW}. Now only cup, cap, and crossing functors labeled by $1$ remain in the interior.  The result then follows from the invariance of the categorified uncoloured Reshetikhin-Turaev invariant in Theorem ~\ref{functorisos}.
\end{proof}
\begin{remark}{\rm 
With the categorification of finite tensor products of arbitrary irreducible finite-dimensional representation for $\mathfrak{sl}_k$ \cite{SaStr} and the results in \cite{MSslk}, our proofs show that Theorem~\ref{maintheorem} holds (with adapted grading shifts) for any $\mathfrak{sl}_k$, $k\geq 2$.}
\end{remark}

As an example, we illustrate the arguments of the proof of Theorem~\ref{maintheorem} more explicitly by explaining the Reidemeister move in \eqref{cr1}, in more detail.  For notational simplicity, we assume that there are no additional strands present. The left hand side of $\eqref{cr1}$ stands for the composition \eqref{cr1detailed} of elementary diagrams.
To each of the elementary diagrams we associated a functor, and let  $F=F_6F_5F_4F_3F_2F_1$ be their composition. Note that the inclusion and projection maps in the middle of the diagram all pair to Jones-Wenzl projectors. In fact, it is always the same projector in our example. Denote by $\hat{p}=\mathbb{L} {}_k \hat{\iota}_{(m,m,m)} \circ {}_k \hat{\pi}_{(m,m,m)}$ its categorification. We have  $F_i=  {}_k \hat{\pi}_{(m,m,m)} G_i\mathbb{L} {}_k \hat{\iota}_{(m,m,m)}$, where $G_i$
denotes the functor associated to the $m$-cabling of the elementary diagram $D_i$. Then we have
$$ F = \bigoplus_{k=0}^m
{}_k \hat{\pi}_{(m)} \circ
G_6 \circ
\hat{p} \circ
G_5 \circ
\hat{p} \circ
G_4 \circ
\hat{p} \circ
G_3 \circ
\hat{p} \circ
G_2 \circ
\hat{p} \circ
G_1 \circ
\mathbb{L} {}_{k} \hat{\iota}_{(m)}. $$
By Corollary ~\ref{crossingslide} and Theorem \ref{JWgenidempotent} we can remove the second and the fourth $\hat{p}$, since we can slide it through the crossing and then use the fact that it is an idempotent.
 \begin{minipage}{0.6\textwidth}
\end{minipage}

Another application of Theorems ~\ref{crossingslide} and ~\ref{cupslide}, gives
$$ F \cong \bigoplus_{k=0}^m
{}_k \hat{\pi}_{(m)} \circ
F_6 \circ
\mathbb{L} {}_k \hat{\iota}_{(m,1, \ldots, 1)} \circ {}_k \hat{\pi}_{(m,1, \ldots, 1)} \circ
F_5 \circ F_4 \circ F_3 \circ F_2 \circ F_1 \circ
\mathbb{L} {}_{k} \hat{\iota}_{(m)}. $$
This in turn is isomorphic to
$$ \bigoplus_{k=0}^m
{}_k \hat{\pi}_{(m)} \circ\mathbb{L} {}_k \hat{\iota}_{(m)} \circ {}_k \hat{\pi}_{(m)} \circ
F_6 \circ
F_5 \circ F_4 \circ F_3 \circ F_2 \circ F_1 \circ
\mathbb{L} {}_{k} \hat{\iota}_{(m)}. $$
Since $ {}_k \hat{\pi}_{(m)} \circ\mathbb{L} {}_k \hat{\iota}_{(m)} \circ {}_k \hat{\pi}_{(m)} \cong  {}_k \hat{\pi}_{(m)} $, we get
$ F \cong \bigoplus_k^m {}_k \hat{\pi}_{(m)} \circ F_6 \circ F_5 \circ F_4 \circ F_3 \circ F_2 \circ F_1 \circ \mathbb{L} {}_{k} \hat{\iota}_{(m)} $.
By Theorem ~\ref{catjones}, this is isomorphic to
$ \bigoplus_{k=0}^m {}_k \hat{\pi}_{(m)} \circ \mathbb{L} {}_{k} \hat{\iota}_{(m)} \cong \hat{\Id} $.

\subsection{Conjectures about the coloured unknot}
While it is easy to calculate the homology of the unknot coloured by the standard two-dimensional representation $V_1$, it is much more challenging to determine the homology of the unknot coloured by $V_n$ for $n>1$.  In the next section we will compute explicitly the homology of the unknot coloured by $V_2$ using the fully stratified structure \cite{BSsemi} of the category of Harish-Chandra bimodules.

Gorsky-Oblomkov-Rasmussen~\cite{GORS} gave a conjecture of the homology of the unknot coloured by $V_n$ coming from the study of rational DAHAs in a setting which is Koszul dual to our construction.

\begin{conjecture}
\cite{GOR}
The homology of the unknot coloured by $V_n$ is isomorphic to the homology of the differential bigraded algebra 
\begin{equation*}
B_n = (\mathbb{C}[u_1, \ldots, u_n] \otimes \Lambda[\zeta_1, \ldots, \zeta_n], d)
\end{equation*}
where 
\begin{equation*}
deg(u_k)=(2-2k,2k) \hspace{.5in}
deg(\zeta_k)=(1-2k,2+2k)
\end{equation*}
and
\begin{equation*}
d(u_k)=0 \hspace{.5in}
d(\zeta_k) = \sum_{i+j=k+1} u_i u_j.
\end{equation*}
\end{conjecture}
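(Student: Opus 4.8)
\emph{(The statement above is a conjecture; the case $n=1$ is classical and the case $n=2$ is established in the next section. What follows is the line of attack one would pursue in general, together with the point at which it presently breaks down.)}

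The plan is to compute the left hand side — the complex of graded vector spaces obtained by applying $\hat{\Phi}_{col}$ to $\mC$ for the $V_n$-coloured unknot diagram (a cup followed by a cap, so that $\gamma(\op{cab})=0$ and no overall shift intervenes) — entirely in terms of the cohomology of Grassmannians, and then to exhibit a quasi-isomorphism with $B_n$. First I would unfold the definitions: by Figure~\ref{coloured} the $V_n$-coloured cup and cap functors are the uncoloured nested (co)evaluation functors with a categorified Jones--Wenzl projector $\hat{p}_{k,n}$ inserted on the cabled strands, and closing up the diagram makes the two projectors face each other. Using the Temperley--Lieb isomorphisms of Theorem~\ref{cupcapiso} together with idempotency and the sliding relations of Theorems~\ref{catJW}, \ref{JWgenidempotent} and \ref{complexonO}, the coloured unknot functor collapses to the derived self-$\Hom$ (equivalently, the derived trace, a Hochschild-type construction) of the Jones--Wenzl projector object. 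Via the Bernstein--Gelfand equivalence one can instead realise this as a derived $\Hom$-space of a distinguished object $X_n$, the categorified coevaluation, inside the singular--singular block $\,{}_n\mathcal{H}^1_{(n,n)}(\mathfrak{gl}_{2n})$ of Harish--Chandra bimodules, where the fully stratified structure of \cite{BSsemi, MS2} (standard, proper standard and costandard objects) is available — this is exactly the route used for $n=2$ in the next section.

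Next I would bring in Grassmannian cohomology. By Soergel's \emph{Endomorphismensatz} (Proposition~\ref{EndofAntiDom}), $\End_{\mathfrak{g}}(P(0^{n-k}1^k))\cong\Hstar(\Gr(k,n))$, and by Theorem~\ref{complexonO} the projector $\hat{p}_{k,n}$ is modelled by $\mathsf{E}^{(k)}\mathsf{F}^{(k)}\otimes S(V)$, where $S(V)=\mC[b_1,\dots,b_k]\otimes\Lambda[f_1,\dots,f_k]$ is Wolffhardt's free bimodule resolution of $\Hstar(\Gr(k,n))$. Feeding this resolution into the derived trace of the previous step turns the computation into the Hochschild homology of $\Hstar(\Gr(k,n))$, presented by the complex $\Hstar(\Gr(k,n))\otimes\mC[b_1,\dots,b_k]\otimes\Lambda[f_1,\dots,f_k]$ with $\partial f_j=0$ and $\partial b_j=\sum_{i=1}^k(\partial r_j/\partial e_i)\,f_i$, the operator $\partial r_j/\partial e_i$ being the restriction of the divided difference $\partial_i(r_j)$ to the diagonal. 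It then remains to identify the total object, assembled over the weight spaces $k=0,\dots,n$ with the grading shifts prescribed by the graded lifts fixed earlier, with $B_n$: the $b$-generators of $S(V)$ (homological degree $-2$, of Chern-class type) should correspond to the $u_i$, the $f$-generators (homological degree $-1$) to the $\zeta_i$, and the relations $r_j$ should be repackaged via the generating series $\sum_i u_i t^i$ and its square, so that $\partial b_j=\sum_i(\partial r_j/\partial e_i)f_i$ becomes $d\zeta_k=\sum_{i+j=k+1}u_iu_j$, with the tower $\Hstar(\Gr(k,n))$, $k=0,\dots,n$, appearing as the natural filtration of $\mC[u_1,\dots,u_n]$. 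A structurally different alternative is to invoke the Koszul duality between the present construction and the rational-DAHA / diagrammatic Cherednik picture of \cite{GOR} (in the spirit of \cite{SS}, \cite{MOS}), in which $B_n$ arises naturally, and to transport the computation across that equivalence; for this one must first establish that the coloured unknot homology is genuinely Koszul self-dual.

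The main obstacle is this final matching, in two respects. First, the Hochschild homology of a complete-intersection ring such as $\Hstar(\Gr(k,n))$ generically carries divided-power contributions (the Guccione--Guccione description of $HH_*$ of a complete intersection), whereas $B_n$ is a \emph{free} graded-commutative differential algebra, polynomial tensored with exterior; one must check that here these higher terms either vanish or are exactly re-encoded by $\Lambda[\zeta_1,\dots,\zeta_n]$ and the single Koszul differential. Second — and this is the delicate point already visible at $n=2$, where the homology is non-zero in infinitely many degrees — gluing the contributions of the different weight spaces into the one complex $B_n$ requires pinning down the internal and homological shifts precisely enough that no spurious cancellation occurs, and recognising the stacking of $\Hstar(\Gr(k,n))$, $k=0,\dots,n$, as a polynomial ring. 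The explicit $n=2$ computation of the next section is meant to serve as the prototype; making it uniform in $n$ is precisely what keeps the statement a conjecture.
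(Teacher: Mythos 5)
You correctly identify that this is an open conjecture, due to Gorsky--Oblomkov--Rasmussen, which the paper \emph{states} but does not prove. The paper's own treatment consists only of the citation [GOR], the remark that Hogancamp made partial progress, and a verification of the case $n=2$ via the explicit projective resolution computation culminating in Corollary~\ref{extcalforunknot}, carried out in the fully stratified category of Harish--Chandra bimodules. Your sketch is a plausible plan of attack that is consistent with the paper's machinery --- the Wolffhardt resolution and Theorem~\ref{complexonO} exhibiting the categorified Jones--Wenzl projector as a complex governed by $\Hstar(\Gr(k,n))$, Soergel's Endomorphismensatz (Proposition~\ref{EndofAntiDom}), and the fully stratified framework used for $n=2$ --- and the two obstacles you isolate (potential divided-power contributions in the Hochschild homology of a complete intersection versus the free graded-commutative form of $B_n$, and the precise alignment of internal and homological shifts when assembling the weight-space contributions) are indeed the genuinely hard points. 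Since the paper offers no proof of the general statement, there is no proof to compare against; the honest conclusion, as you note, is that making the $n=2$ computation uniform in $n$ is exactly what remains open.
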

Some progress towards a proof of this conjecture has been made by Hogancamp ~\cite{Hog1}.
The homology of the unknot coloured by the three-dimensional representation calculated in \cite{CoKr} confirms the conjecture for $n=2$.  
Later on in Corollary \ref{extcalforunknot}, we reconfirm this conjecture for $n=2$.  
Also see \cite[Proposition 8.4]{Webcombined}.

%One cannot simply take Hochschild homology of the complex in a naive way to compute the homology of the unknot.   For example, consider the identity functor on $A_{1,2} -\gmod$.  Closing the identity two strand tangle in a way which produces two unlinked trivial components feels like taking Hochschild homology of the identity functor.  However $HH_i$ of the identity functor is two dimensional if $i=0$ and and zero if $i>0$.  However the Jones polynomial of the two component unlink is $[2]^2$.

%It is a straightforward exercise to compute the Hochschild homology of the complex for the projector on two strands for $A_{1,2} - \gmod$.  It is infinite-dimensional and up to shifts, its graded Euler characteristic is $1$ when it should be $-[3]$. 

%\JS{I think we should compute $HH$ on all of the blocks and this direct sum should be the homology of unknot.  For example, computing $HH(A_{0,2} \oplus A_{1,2} \oplus A_{2,2})$ is four-dimensional which is $[2]^2$. Or a more interesting example for the the projector $p_2$ in the other blocks each contribute a shift copy of $\C$ which should be the homology of the coloured unknot.}

\section{Examples}
\subsection{Categorified projector on $\mathcal{O}_{1}(\mathfrak{gl}_2)$}
Let $ Q $ denote the quiver in ~\eqref{quiversl2} with vertices $1$ and $2$.
A path (of length $ l>0$) is a sequence $ p = \alpha_1 \alpha_2 \cdots \alpha_l $ of arrows where the starting point of $ \alpha_i $ is the ending point of $ \alpha_{i+1} $ for $ i=1,\ldots,l-1$.
By $ \mathbb{C}Q $ we denote the path algebra of $ Q $. It has basis the set of all paths with additionally $(1)$ and $(2)$ the trivial paths of length $ 0 $ beginning at $ 1 $ and $ 2 $ respectively, and product given by concatenation.  For example, $(2|1|2) = (2|1)(1|2)$ is a basis element of $\mathbb{C}Q$ of degree two.
The path algebra is a graded algebra where the grading comes from the length of each path.
\begin{equation}
\label{quiversl2}
\begin{tikzpicture}
\filldraw[black](0,0) circle (1pt);
\filldraw[black](1,0) circle (1pt);
\draw (0,0) .. controls (.5,.5) .. (1,0)[->][thick];
\draw (0,0) .. controls (.5,-.5) .. (1,0)[<-][thick];
%\draw (.5,.6) node{$a$};
%\draw (.5,-.6) node{$b$};
\draw (0,-.3) node{$1$};
\draw (1,-.3) node{$2$};
\end{tikzpicture}
\end{equation}

Set $ A $ to be the algebra $ \mathbb{C}Q $ modulo the two-sided ideal generated by $ (1|2|1) $.  By abuse of notation, we denote the image of an element $ p \in \mathbb{C}Q $ in the algebra $ A $ also by $ p$.
The algebra $ A $ inherits a grading from $ \mathbb{C}Q$ since the relation $ (1|2|1)=0 $ is homogenous.  Let $ A_j $ denote the degree $ j $ subspace of $ A $.  The degree zero part $ A_0 $ is a semi-simple algebra
spanned by $ (1) $ and $ (2)$.  The degree one subspace is spanned by $ (1|2) $ and $ (2|1) $.  The degree two subspace is spanned by $ (2|1|2) $ and $ A_j=0 $ for all $ j \geq 3$.  Let $ A_+ $ be the subspace of $ A $ whose homogenous elements are in positive degree.  The subspace $ A_+ $ is the radical of $ A $.

The graded category $ \mathcal{O}_{1}(\mathfrak{gl}_2) $ is equivalent to the category of finitely-generated, graded, right modules over the algebra $A$.
The projective modules $ (1)A $ and $ (2)A $ correspond to the dominant and anti-dominant projective modules respectively in category $ \mathcal{O}_1(\mathfrak{gl}_2)$.  The simple quotients of the latter two objects correspond to the one-dimensional right $A$-modules $ L(1) = (1)A/A_+ $ and $L(2) = (2)A/(2)A_+$.

Let $C=\End_A((2)A)$ be the endomorphism algebra of the anti-dominant projective module.  It is easy to compute that it's isomorphic to $\mathbb{C}[x]/(x^2)$.
Define functors
\begin{equation*}
\hat{\pi} \colon \gmod-A \rightarrow \gmod-C \hspace{.5in} \hat{\pi}(M)=M \otimes_A A(2)
\end{equation*}
and
\begin{equation*}
\hat{\iota} \colon D^\triangledown(\gmod-C) \rightarrow D^\triangledown(\gmod-A) \hspace{.5in}
\hat{\iota}(M)=M \otimes^{\mathbf{L}}_C (2)A.
\end{equation*}
The categorified Jones-Wenzl projector is then the composite
\begin{equation*}
\hat{p} \colon D^\triangledown(\gmod-A) \rightarrow D^\triangledown(\gmod-A) \hspace{.5in} 
\hat{p} =  \hat{\iota} \circ \hat{\pi}.
\end{equation*}
We now construct an explicit complex of $(A,A)$-bimodules which is quasi-isomorphic to the functor $\hat{p}$.
The first step is to resolve the $(A,C)$-bimodule $A(2)$ as a projective right $C$-module:
\begin{equation}
\label{projresofbigproj}
\cdots \longrightarrow A(2) \otimes C \langle 6 \rangle \stackrel{h_3}{\longrightarrow} A(2) \otimes C \langle 4 \rangle \stackrel{h_2}{\longrightarrow}  A(2) \otimes C \langle 2 \rangle \stackrel{h_1}{\longrightarrow}  A(2) \otimes C \stackrel{h_0}{\longrightarrow} A(2)
\end{equation}
where
\begin{equation*}
h_n((2) \otimes 1)=
\begin{cases}
(2) & \text{ if } n=0 \\
(2) \otimes x - (2|1|2) \otimes 1 & \text{ if } n=1,3,\ldots \\
(2) \otimes x  + (2|1|2) \otimes 1 & \text{ if } n=2,4,\ldots.
\end{cases}
\end{equation*}
Next, tensoring the complex
\begin{equation*}
\cdots \longrightarrow A(2) \otimes C \langle 6 \rangle \stackrel{h_3}{\longrightarrow} A(2) \otimes C \langle 4 \rangle \stackrel{h_2}{\longrightarrow}  A(2) \otimes C \langle 2 \rangle \stackrel{h_1}{\longrightarrow}  A(2) \otimes C 
\end{equation*}
on the right over $C$ with $(2)A$ we get that $\hat{p}$ is quasi-isomorphic to
\begin{equation}
\label{projresofbigproj2}
\cdots \longrightarrow A(2) \otimes (2)A \langle 6 \rangle \stackrel{f_3}{\longrightarrow} A(2) \otimes (2)A \langle 4 \rangle \stackrel{f_2}{\longrightarrow}  A(2) \otimes (2)A \langle 2 \rangle \stackrel{f_1}{\longrightarrow}  A(2) \otimes (2)A
\end{equation}

where
\begin{equation*}
f_n((2) \otimes (2))=
\begin{cases}
(2) \otimes (2|1|2) - (2|1|2) \otimes 1 & \text{ if } n=1,3,\ldots \\
(2) \otimes (2|1|2)  + (2|1|2) \otimes 1 & \text{ if } n=2,4,\ldots.
\end{cases}
\end{equation*}
Noticing that the bimodule $A(2) \otimes (2)A$ is isomorphic to the composition of projective functors
$\hat{\mathcal{E}}_0 \hat{\mathcal{F}}_1$,
the complex in ~\eqref{projresofbigproj2} could be understood as a complex of projective functors
\begin{equation*}
\cdots \longrightarrow \hat{\mathcal{E}}_0 \hat{\mathcal{F}}_1 \langle 6 \rangle \stackrel{f_3}{\longrightarrow} \hat{\mathcal{E}}_0 \hat{\mathcal{F}}_1 \langle 4 \rangle \stackrel{f_2}{\longrightarrow}  \hat{\mathcal{E}}_0 \hat{\mathcal{F}}_1 \langle 2 \rangle \stackrel{f_1}{\longrightarrow}  \hat{\mathcal{E}}_0 \hat{\mathcal{F}}_1.
\end{equation*}

\subsection{Categorified projectors on $\mathcal{O}_{1}(\mathfrak{gl}_3)$}
Let $ Q $ denote the quiver in ~\eqref{quiversl3} with vertices $1$, $2$, and $3$. 
Once again, by $ \mathbb{C}Q $ we denote the path algebra of $ Q $. 
Set $ A $ to be the algebra $ \mathbb{C}Q $ modulo the two-sided ideal generated by $ (1|2|1) $.
\begin{equation}
\label{quiversl3}
\begin{tikzpicture}
\filldraw[black](0,0) circle (1pt);
\filldraw[black](1,0) circle (1pt);
\draw (0,0) .. controls (.5,.5) .. (1,0)[->][thick];
\draw (0,0) .. controls (.5,-.5) .. (1,0)[<-][thick];
\draw (1,0) .. controls (1.5,.5) .. (2,0)[->][thick];
\draw (1,0) .. controls (1.5,-.5) .. (2,0)[<-][thick];
%\draw (.5,.6) node{$a$};
%\draw (.5,-.6) node{$b$};
\draw (0,-.3) node{$1$};
\draw (1,-.3) node{$2$};
\draw (2,-.3) node{$3$};
\end{tikzpicture}
\end{equation}
The graded category $ {}^{\mathbb{Z}} \mathcal{O}_{1}(\mathfrak{gl}_3) $ is equivalent to the category of finitely-generated, graded, right modules over the algebra $A$.
The projective modules $ (1)A $ and $ (3)A $ correspond to the dominant and anti-dominant projective modules respectively in category $ \mathcal{O}_1(\mathfrak{gl}_3)$.  The simple quotients of the projective objects correspond to the one-dimensional right $A$-modules $ L(1) = (1)A(1)/(1)A_+ $, $L(2) = (2)A/(2)A_+$, and $L(3) = (3)A/(3)A_+$.

\subsubsection{The functor $\hat{p}_{(3)}$}
Let $C_{(3)}=\End_A((3)A)$ be the endomorphism algebra of the antidominant projective module.  It is easy to compute that it's isomorphic to $\mathbb{C}[x]/(x^3)$, (also see Proposition \ref{EndofAntiDom}).
Define functors
\begin{equation*}
\hat{\pi}_{(3)} \colon \gmod-A \rightarrow \gmod-C_{(3)} \hspace{.5in} \hat{\pi}_{(3)}(M)=M \otimes_A A(3)
\end{equation*}
and
\begin{equation*}
\hat{\iota}_{(3)} \colon D^\triangledown(\gmod-C_{(3)}) \rightarrow D^\triangledown(\gmod-A) \hspace{.5in}
\hat{\iota}_{(3)}(M)=M \otimes^{\mathbf{L}}_{C_{(3)}} (3)A.
\end{equation*}
The functor categorifying the Jones-Wenzl projector $V_1^{\otimes 3} \rightarrow V_3 \rightarrow V_1^{\otimes 3}$  is then the composite
\begin{equation*}
\hat{p}_{(3)} \colon D^\triangledown(\gmod-A) \rightarrow D^\triangledown(\gmod-A) \hspace{.5in} 
\hat{p}_{(3)} =  \hat{\iota}_{(3)} \circ \hat{\pi}_{(3)}.
\end{equation*}

We now construct an explicit complex of $(A,A)$-bimodules which is\\ quasi-isomorphic to the functor $\hat{p}_{(3)}$.
The first step is to resolve the $(A,C_{(3)})$-bimodule $A(3)$ as a projective right $C_{(3)}$-module:
\begin{equation*}
%\label{projresofbigproj3}
\cdots \longrightarrow A(3) \otimes C_{(3)} \langle 8 \rangle \stackrel{h_3}{\longrightarrow} A(3) \otimes C_{(3)} \langle 6 \rangle  \stackrel{h_2}{\longrightarrow}  A(3) \otimes C_{(3)}
\langle 2 \rangle \stackrel{h_1}{\longrightarrow}  A(3) \otimes C_{(3)} \stackrel{h_0}{\longrightarrow} A(3)
\end{equation*}
where
\begin{equation*}
h_n((3) \otimes 1)=
\begin{cases}
(3) & \text{ if } n=0 \\
(3) \otimes x - (3|2|3) \otimes x & \text{ if } n=1,3,\ldots \\
(3) \otimes x^2  + (3|2|3) \otimes x + (3|2|3|2|3) \otimes 1 & \text{ if } n=2,4,\ldots
\end{cases} .
\end{equation*}

Next, tensoring the complex
\begin{equation*}
\cdots \longrightarrow A(3) \otimes C_{(3)} \langle 8 \rangle \stackrel{h_3}{\longrightarrow} A(3) \otimes C_{(3)} \langle 6 \rangle \stackrel{h_2}{\longrightarrow}  A(3) \otimes C_{(3)} \langle 2 \rangle \stackrel{h_1}{\longrightarrow}  A(3) \otimes C_{(3)} 
\end{equation*}
on the right over $C_{(3)}$ with $(3)A$ we get that $\hat{p}_{(3)}$ is quasi-isomorphic to
\begin{equation}
\label{projresofbigproj4}
\cdots \rightarrow A(3) \otimes (3)A \langle 8 \rangle \stackrel{f_3}{\longrightarrow} A(3) \otimes (3)A \langle 6 \rangle \stackrel{f_2}{\longrightarrow}  A(3) \otimes (3)A \langle 2 \rangle \stackrel{f_1}{\longrightarrow}  A(3) \otimes (3)A
\end{equation}

where
\begin{equation*}
f_n((3) \otimes (3))=
\begin{cases}
(3) \otimes (3|2|3) - (3|2|3) \otimes (3) & \text{ for } n \text{ odd } \\
(3) \otimes (3|2|3|2|3)  + (3|2|3) \otimes (3|2|3) + (3|2|3|2|3) \otimes (3) & \text{ for } n \text{ even. }
\end{cases} 
\end{equation*}
Noticing that the bimodule $A(3) \otimes (3)A$ is isomorphic to the composition of projective functors
$\hat{\mathcal{E}}_0 \hat{\mathcal{F}}_1$
the complex in ~\eqref{projresofbigproj2} could be understood as a complex of projective functors
\begin{equation*}
\cdots \longrightarrow \hat{\mathcal{E}}_0 \hat{\mathcal{F}}_1 \langle 8 \rangle \stackrel{f_3}{\longrightarrow} \hat{\mathcal{E}}_0 \hat{\mathcal{F}}_1 \langle 6 \rangle \stackrel{f_2}{\longrightarrow}  \hat{\mathcal{E}}_0 \hat{\mathcal{F}}_1 \langle 2 \rangle \stackrel{f_1}{\longrightarrow}  \hat{\mathcal{E}}_0 \hat{\mathcal{F}}_1.
\end{equation*}

\subsubsection{The functor $\hat{p}_{(2,1)}$}
\label{p2,1}
Let $C_{(2,1)}=\End_A((2)A \oplus (3)A)$.
Define functors
\begin{equation*}
\hat{\pi}_{(2,1)} \colon \gmod-A \rightarrow \gmod-C_{(2,1)} \hspace{.5in} \hat{\pi}_{(2,1)}(M)=M \otimes_A (A(2) \oplus A(3))
\end{equation*}
and
\begin{equation*}
\hat{\iota}_{(2,1)} \colon D^\triangledown(\gmod-C_{(2,1)}) \rightarrow D^\triangledown(\gmod-A) \hspace{.3in}
\hat{\iota}_{(2,1)}(M)=M \otimes^{\mathbf{L}}_{C_{(2,1)}} ((2)A \oplus (3)A).
\end{equation*}
The functor categorifying the Jones-Wenzl projector $V_1^{\otimes 3} \rightarrow V_2 \otimes V_1 \rightarrow V_1^{\otimes 3}$  is then the composite
\begin{equation*}
\hat{p}_{(2,1)} \colon D^\triangledown(\gmod-A) \rightarrow D^\triangledown(\gmod-A) \hspace{.5in} 
\hat{p}_{(2,1)} =  \hat{\iota}_{(2,1)} \circ \hat{\pi}_{(2,1)}.
\end{equation*}

We now construct an explicit complex of $(A,A)$-bimodules which is \\ quasi-isomorphic to the functor $\hat{p}_{(2,1)}$.
The first step is to resolve the $(A,C_{(2,1)})$-bimodule $A(2) \oplus A(3)$ as a projective right $C_{(2,1)}$-module:

\begin{equation}
\label{projresofbigproj21}
\xymatrix{
A(2) \otimes (2)C_{(2,1)} \langle 4 \rangle \ar[r]^{\hspace{0in} h_2} 
&
{\begin{matrix}
A(2) \otimes (3) C_{(2,1)} \langle 1 \rangle \\ \oplus \\ A(3) \otimes (2) C_{(2,1)} \langle 1 \rangle
\end{matrix} \ar[r]^{\hspace{.15in} h_1}}
&
{\begin{matrix}
A(2) \otimes (2)C_{(2,1)} \\ \oplus  \\ A(3) \otimes (3)C_{(2,1)} 
\end{matrix} \ar[r]^{\hspace{.3in} h_0}}
&
{\begin{matrix} A(2) \\ \oplus \\ A(3) \end{matrix}}
\\
A(2) \otimes (2)C_{(2,1)} \langle 6 \rangle \ar[u]^{\hspace{.3in} h_3}
&
\cdots \ar[l]_{\hspace{.3in} h_4}
&
&
}
\end{equation}
where
\begin{align*}
&h_0((2) \otimes (2))=(2), &h_1((2) \otimes (3))=(2) \otimes (2|3) - (2|3) \otimes (3),  \\
&h_0((3) \otimes (3))=(3), &h_1((3) \otimes (2))=(3|2) \otimes (2) - (3) \otimes (3|2),
\end{align*}
\begin{equation*}
h_2((2) \otimes (2))= (2) \otimes (3|2|3|2) + (2|3|2) \otimes (3|2)
-(2|3) \otimes (2|3|2) - (2|3|2|3) \otimes (2),
\end{equation*}
\begin{equation*}
h_n((2) \otimes (2))=
\begin{cases}
(2) \otimes (2|3|2) - (2|3|2) \otimes (2) & \text{ if } n=3,5, \ldots \\
(2) \otimes (2|3|2) + (2|3|2) \otimes (2) & \text{ if } n=4,6, \ldots
\end{cases} .
\end{equation*}

Next, tensoring the complex
\begin{equation*}
\xymatrix{
A(2) \otimes (2)C_{(2,1)} \langle 4 \rangle \ar[r]^{h_2}  
&
{\begin{matrix}
A(2) \otimes (3) C_{(2,1)} \langle 1 \rangle \\ \oplus \\ A(3) \otimes (2) C_{(2,1)} \langle 1 \rangle
\end{matrix} \ar[r]^{h_1}} 
&
{\begin{matrix}
A(2) \otimes (2)C_{(2,1)} \\ \oplus  \\ A(3) \otimes (3)C_{(2,1)} 
\end{matrix}} \\
A(2) \otimes (2)C_{(2,1)} \langle 6 \rangle \ar[u]^{h_3}
& \cdots \ar[l]_{\hspace{.3in} h_4}
&
}
\end{equation*}
%\begin{equation*}
%\cdots \longrightarrow 
%A(2) \otimes (2)C_{(2,1)} \langle 6 \rangle \stackrel{h_3}{\longrightarrow} 
%A(2) \otimes (2)C_{(2,1)} \langle 4 \rangle  \stackrel{h_2}{\longrightarrow}  
%\begin{matrix}
%A(2) \otimes (3) C_{(2,1)} \langle 1 \rangle \\ \oplus \\ A(3) \otimes (2) C_{(2,1)} \langle 1 \rangle
%\end{matrix} 
%\stackrel{h_1}{\longrightarrow}  
%\begin{matrix}
%A(2) \otimes (2)C_{(2,1)} \\ \oplus  \\ A(3) \otimes (3)C_{(2,1)} 
%\end{matrix}
%\end{equation*}
on the right over $C_{(2,1)}$ with $(2)A \oplus (3)A$ we get that $\hat{p}_{(2,1)}$ is quasi-isomorphic to
\begin{equation*}
\cdots \longrightarrow 
A(2) \otimes (2)A \langle 6 \rangle \stackrel{f_3}{\longrightarrow} 
A(2) \otimes (2)A \langle 4 \rangle  \stackrel{f_2}{\longrightarrow}  
\begin{matrix}
A(2) \otimes (3) A \langle 1 \rangle \\ \oplus \\ A(3) \otimes (2)A \langle 1 \rangle
\end{matrix} 
\stackrel{f_1}{\longrightarrow}  
\begin{matrix}
A(2) \otimes (2)A \\ \oplus  \\ A(3) \otimes (3)A
\end{matrix}
\end{equation*}
where
\begin{align*}
&f_1((2) \otimes (3))=(2) \otimes (2|3) - (2|3) \otimes (3)  \\
&f_1((3) \otimes (2))=(3|2) \otimes (2) - (3) \otimes (3|2)
\end{align*}
\begin{equation*}
f_2((2) \otimes (2))= (2) \otimes (3|2|3|2) + (2|3|2) \otimes (3|2)
-(2|3) \otimes (2|3|2) - (2|3|2|3) \otimes (2)
\end{equation*}

\begin{equation*}
f_n((2) \otimes (2))=
\begin{cases}
(2) \otimes (2|3|2) - (2|3|2) \otimes (2) & \text{ if } n=3,5, \ldots \\
(2) \otimes (2|3|2) + (2|3|2) \otimes (2) & \text{ if } n=4,6, \ldots 
\end{cases} .
\end{equation*}

\subsubsection{Functor $\hat{p}_{(1,2)}$}
Let $C_{(1,2)}=\End_A((1)A \oplus (3)A)$.
Define functors
\begin{equation*}
\hat{\pi}_{(1,2)} \colon \gmod-A \rightarrow \gmod-C_{(1,2)} \hspace{.5in} \hat{\pi}_{(1,2)}(M)=M \otimes_A (A(1) \oplus A(3))
\end{equation*}
and
\begin{equation*}
\hat{\iota}_{(1,2)} \colon D^\triangledown(\gmod-C_{(1,2)}) \rightarrow D^\triangledown(\gmod-A) \hspace{.3in}
\hat{\iota}_{(1,2)}(M)=M \otimes^{\mathbf{L}}_{C_{(1,2)}} ((1)A \oplus (3)A).
\end{equation*}
The functor categorifying the Jones-Wenzl projector $V_1^{\otimes 3} \rightarrow V_1 \otimes V_2 \rightarrow V_1^{\otimes 3}$  is then the composite
\begin{equation*}
\hat{p}_{(1,2)} \colon D^\triangledown(\gmod-A) \rightarrow D^\triangledown(\gmod-A) \hspace{.4in} 
\hat{p}_{(1,2)} =  \hat{\iota}_{(1,2)} \circ \hat{\pi}_{(1,2)}.
\end{equation*}

Just as in Section ~\ref{p2,1}, we construct an explicit complex of $(A,A)$-bimodules which is quasi-isomorphic to the functor $\hat{p}_{(1,2)}$:

\begin{equation*}
\cdots \longrightarrow 
\begin{matrix}
A(1) \otimes (1)A \langle 6 \rangle \\ \oplus \\ A(3) \otimes (3)A \langle 6 \rangle \\ \oplus \\ A(3) \otimes (1)A \langle 6 \rangle \\ \oplus \\ A(1) \otimes (3)A \langle 6 \rangle
\end{matrix} 
\stackrel{f_3}{\longrightarrow} 
\begin{matrix}
A(1) \otimes (1)A \langle 4 \rangle \\ \oplus \\ A(3) \otimes (3)A \langle 4 \rangle \\ \oplus \\ A(3) \otimes (1)A \langle 4 \rangle \\ \oplus \\ A(1) \otimes (3)A \langle 4 \rangle
\end{matrix}
\stackrel{f_2}{\longrightarrow}  
\begin{matrix}
A(3) \otimes (1) A \langle 2 \rangle \\ \oplus \\ A(1) \otimes (3)A \langle 2 \rangle \\ \oplus \\ A(3) \otimes (3)A \langle 2 \rangle
\end{matrix} 
\stackrel{f_1}{\longrightarrow}  
\begin{matrix}
A(1) \otimes (1)A \\ \oplus  \\ A(3) \otimes (3)A
\end{matrix}
\end{equation*}

where
\begin{align*}
&f_1((3) \otimes (1))=(3|2|1) \otimes (1) - (3) \otimes (3|2|1)  \\
&f_1((1) \otimes (3))=(1) \otimes (1|2|3) - (1|2|3) \otimes (3) \\
&f_1((3) \otimes (3))=(3|2|3) \otimes (3) - (3) \otimes (3|2|3) 
\end{align*}
\begin{align*}
&f_2((1) \otimes (1))= (1|2|3) \otimes (1) -(1) \otimes (3|2|1) \\
&f_2((3) \otimes (3))= (3) \otimes (1|2|3) -(3|2|1) \otimes (3)-(3|2|3) \otimes (3) - (3) \otimes (3|2|3) \\
&f_2((3) \otimes (1))= (3|2|3) \otimes (1) +(3) \otimes (3|2|1) \\
&f_2((1) \otimes (3))= (1) \otimes (3|2|3) -(1|2|3) \otimes (3) 
\end{align*}
and for $n \geq 3$
\begin{align*}
f_n((1) \otimes (1))= &(1|2|3) \otimes (1) + (1) \otimes (3|2|1) \\
f_n((3) \otimes (3))=&(-1)^{\lfloor \frac{n+4}{2} \rfloor } (3) \otimes (1|2|3) + (-1)^{\lfloor \frac{n+4}{2} \rfloor } (3|2|1) \otimes (3) +  (3) \otimes (3|2|3)\\ 
&+  (-1)^n    (3|2|3) \otimes (3) \\
f_n((3) \otimes (1))=& (3|2|3) \otimes (1) +  (-1)^{\lfloor \frac{n-1}{2} \rfloor}    (3) \otimes (3|2|1)   -(3|2|1) \otimes (1) \\
f_n((1) \otimes (3))= &(-1)^{n+1} (1) \otimes (3|2|3)    + (1) \otimes (1|2|3) + (-1)^{\lfloor \frac{n-1}{2} \rfloor}  (1|2|3) \otimes (3). 
\end{align*}
\begin{remark}{\rm 
By Theorem ~\ref{braid}, the categories $D^\triangledown(\gmod-C_{(2,1)})$ and $D^\triangledown(\gmod-C_{(1,2)})$ are equivalent.}
\end{remark}

\subsection{The unknot coloured by $V_2$} 
\begin{multicols}{2}
Consider the quiver $\Gamma$, where each unoriented edge represents two oriented edges in opposite directions. Then we may describe the graded category $ \mathcal{O}_{2}(\mathfrak{gl}_4)$ as the quotient of the path algebra of this quiver modulo the following relations:
\vfill\null
\columnbreak
\begin{minipage}[b]{4cm}
\begin{equation*}
\begin{tikzpicture}[scale=0.7]
\filldraw[black](4,2) circle (1pt);
\filldraw[black](1,2) circle (1pt);
\filldraw[black](2,2) circle (1pt);
\filldraw[black](3,2) circle (1pt);
\filldraw[black](2,3) circle (1pt);
\filldraw[black](2,1) circle (1pt);

\draw (4,2) -- (3,2)[thick];
\draw (1,2) -- (2,2)[thick];
\draw (2,2) -- (3,2)[thick];
\draw (1,2) -- (2,3)[thick];
\draw (2,3) -- (3,2)[thick];
\draw (1,2) -- (2,1)[thick];
\draw (2,1) -- (3,2)[thick];

\draw (4,1.7) node{$6$};
\draw (1,1.7) node{$2$};
\draw (2,1.7) node{$1$};
\draw (3,1.7) node{$5$};
\draw (2,0.7) node{$4$};
\draw (2,3.3) node{$3$};
\draw (0, 3.8) node{The quiver $\Gamma$:};
\end{tikzpicture}
\end{equation*}
\end{minipage}
\end{multicols}
\vspace{-1.5cm}
\begin{multicols}{2}
\[
(1|2|1) = 0 \quad \quad \quad (1|5|1) = 0 
\]
\[
(6|5|1) = 0 \quad \quad \quad (1|5|6) = 0
\]
\[
(3|5|3) = 0 \quad \quad \quad (4|5|4) = 0 
\]
\[
(1|2|3) = (1|5|3) \quad \quad  (1|2|4) = (1|5|4) 
\]
\[
(4|2|3) = (4|5|3) \quad \quad  (3|2|4) = (3|5|4) 
\]
\vfill\null
\columnbreak
\[
(2|4|5) + (2|1|5)+(2|3|5)=0 
\]
\[
(5|4|2) + (5|3|2)+(5|1|2)=0  
\]
\[
(5|6|5) -(5|3|5)-(5|4|5)=0 
\]
\[
(3|2|1) = (3|5|1) \quad  \quad (4|2|1) = (4|5|1)
\]
\[
(2|3|2) = (2|1|2) \quad  \quad (2|1|2) = (2|4|2) .
\]
\\
\end{multicols}
\vspace{-1cm}
Denote this algebra by $A$.
Let $C=\End_A((1)A \oplus (5)A \oplus (6)A)$ whose indecomposable projective modules are $C(1)$, $C(5)$, and $C(6)$.  
The category of $C$-modules is fully stratified.
Define the standard modules by
\begin{equation*}
\Delta(i) = C(i) / C(<i)
\end{equation*}
where $ C(<i)$ is the image of all maps from $C(j)$ to $C(i)$ with $j<i$.
In particular $\Delta(1)=C(1)$.
We give a basis for these standard modules:
\begin{align*}
\Delta(1) = \{ & (1), (5|1), (5|3|2|1), (6|5|3|2|1) \} \\
\Delta(5) = \{ & 
(5), (5|4|5), (5|3|5), (5|6|5|6|5), (6|5), (6|5|4|5), (6|5|6|5), (6|5|3|2|3|5) \} \\
\Delta(6) = \{ & (6) \}.
\end{align*}

Now we find projective resolutions of the $\Delta(i)$ in terms of the $C(j)$.

\begin{equation*}
\Delta(1) \cong C(1).
\end{equation*}

\begin{equation*}
\Delta(5) \cong 
\xymatrix{
{\begin{matrix} C(1) \langle 1 \rangle \\ \oplus \\  C(1) \langle 3 \rangle   \end{matrix}} \ar[r]^{c}  &C(5)  
},
\quad \quad
c = 
\begin{pmatrix}
(1|5) & (1|2|3|5)
\end{pmatrix}.
\end{equation*}
%where 
%\begin{equation*}
%c = 
%\begin{pmatrix}
%(1|5) & (1|2|3|5)
%\end{pmatrix}
%\end{equation*}

\begin{equation*}
\Delta(6) \cong 
\xymatrix{
C(1) \langle 2 \rangle \ar[r]^{b} & C(5) \langle 1 \rangle \ar[r]^{c}  & C(6)  
}, \quad \quad
b = 
\begin{pmatrix}
(1|5)
\end{pmatrix}
\hspace{.2in}
c = 
\begin{pmatrix}
(5|6) 
\end{pmatrix} .
\end{equation*}
%where 
%\begin{equation*}
%b = 
%\begin{pmatrix}
%(1|5)
%\end{pmatrix}
%\hspace{.2in}
%c = 
%\begin{pmatrix}
%(5|6) 
%\end{pmatrix}
%\end{equation*}

There are also proper standard modules $\bar{\Delta}(i)$ for $i=1,5,6$.
One finds
\begin{equation*}
\bar{\Delta}(1) = \Delta(1), \quad \quad
\bar{\Delta}(6) = \Delta(6), \quad \quad
\bar{\Delta}(5) = \Delta(5) / S
\end{equation*}
where $S$ is the submodule of $\Delta(5)$ generated by images of the radical of  $\End_C(\Delta(5))$.   It is actually easy to see that this submodule is the span of all elements except $(5)$ and $(6|5)$.
Thus 
\begin{equation*}
\bar{\Delta}(5)= \mathbb{C} \langle (5), (6|5) \rangle
\end{equation*}
where the only non-trivial action is $ (6|5).(5)=(6|5)$.
We may now find a filtration of $\Delta(5)$ whose subquotients are isomorphic to $\bar{\Delta}(5)$ up to shift.  Let
\begin{align*}
S_1 &= \mathbb{C} \langle (5|6|5|6|5), (6|5|3|2|3|5)   \rangle \\
S_2 &= \mathbb{C} \langle (5|6|5|6|5), (6|5|3|2|3|5), (5|4|5), (6|5|4|5)   \rangle \\
S_3 &= \mathbb{C} \langle (5|6|5|6|5), (6|5|3|2|3|5), (5|4|5), (6|5|4|5), (5|3|5), (6|5|6|5)   \rangle \\
S_4 &= \mathbb{C} \langle (5|6|5|6|5), (6|5|3|2|3|5), (5|4|5), (6|5|4|5), (5|3|5), (6|5|6|5), (5), (6|5)   \rangle .
\end{align*}
Then we obviously have $S_1 \subset S_2 \subset S_3 \subset S_4 = \Delta(5)$
and
\begin{equation*}
S_1  \cong \bar{\Delta}(5) \langle 4 \rangle, \quad
S_2 / S_1  \cong \bar{\Delta}(5) \langle 2 \rangle, \quad 
S_3 / S_2  \cong \bar{\Delta}(5) \langle 2 \rangle, \quad
S_4 / S_3  \cong \bar{\Delta}(5).
\end{equation*}

There is a resolution of the simple module $C$-module $L(1)$ (which is a quotient of $C(1)$ by proper standards:
\begin{equation*}
L^1 \cong
\xymatrix{
\bar{\Delta}(6) \langle 2 \rangle \ar[r]^{a} &
{\begin{matrix} \bar{\Delta}(5) \langle 1 \rangle \\ \oplus \\  \bar{\Delta}(5) \langle 3 \rangle   \end{matrix}} \ar[r]^{b}  & \bar{\Delta}(1)  
}
%\end{equation*}
\hspace{.02in}\text{where}\hspace{.03in}
%\begin{equation*}
a = 
\begin{pmatrix}
(6|5) \\
0 
\end{pmatrix},
\hspace{.03in}
b = 
\begin{pmatrix}
(5|1) & (5|3|2|1)
\end{pmatrix} .
\end{equation*}

The proper standard module $\bar{\Delta}(5)$ is quasi-isomorphic to the complex
\begin{displaymath}
\xymatrix{\cdots \ar[r] & B_3  \ar[r]^{F_3}  & B_2 \ar[r]^{F_2} & B_1 \ar[r]^{F_1} & B_0}, \quad \quad B_j = \underbrace{\Delta(5) \langle 2j \rangle \oplus \cdots \oplus \Delta(5) \langle 2j \rangle}_{j+1},
\end{displaymath}
%where
%\begin{equation*}
%B_j = \underbrace{\Delta(5) \langle 2j \rangle \oplus \cdots \oplus \Delta(5) \langle 2j \rangle}_{j+1}
%\end{equation*}
where $F_j$ is a matrix with entries give by:
\begin{equation*}
(F_j)_{kl} = 
\begin{cases}
(-1)^k(5|4|5)  &\text{ if }  k=l \text{ and } k=1, \ldots, j-1 \\
(5|3|5) &\text{ if }  l=k+1 \text{ and } k=1, \ldots, j-1.
\end{cases}
\end{equation*}
Now we construct a projective resolution of $L(1)$ in the category of $C$-modules.
\begin{equation*}
\xymatrix{
{\begin{matrix} q^7C(5) \\ \oplus  \\ q^8 C(1) \end{matrix}} \ar[d]^{\begin{pmatrix} (535)+(545)-\frac{1}{2}(515) \\ (1535) \end{pmatrix}} & & & D_5 \ar[lll]_{\begin{pmatrix} (545)-(535) \\ -(51) \end{pmatrix}} && D_6 \ar[ll] & \cdots \ar[l] \\
 q^5 C(5) \ar[rrr]_{\begin{pmatrix} \frac{1}{2} (5321) \\ (5456) - \frac{1}{2} (5656) \\ -\frac{1}{2}(51)  \end{pmatrix}}
& & &
 {\begin{matrix}
 q^2 C(1) \\
 \oplus \\
 q^2 C(6) \\
 \oplus \\
 q^4 C(1) \\
 \end{matrix}   } \ar[rr]_{\begin{pmatrix} (15)\\ (65) \\ (1235) \end{pmatrix}^T}
   & & qC(5) \ar[r]_{(51)} & C(1)
}
\end{equation*}
%\begin{equation*}
%\xymatrix{
%\cdots \ar[r] & D_6 \ar[r] & D_5 \ar[rr]^{(545)-(535)} \ar[rrd]^{-(51)} & &  q^7 C(5) \ar[rrr]^{(535)+(545)-\frac{1}{2}(515)}   && &  q^5 C(5) \ar[rr]^{\frac{1}{2}(5321)} \ar[rrd]^{(5456)  -\frac{1}{2}(5656)} \ar[rrdd]_{-\frac{1}{2}(51)} & & q^2 C(1) \ar[rr]^{(15)} & & q C(5) \ar[r]^{(51)} & C(1) \\
%&  && &  q^8 C(1) \ar[rrru]_{(1535)}   & &   & & &q^2 C(6) \ar[urr]^{(65)}  & & & \\
%& &   & &&  &  &  & &q^4 C(1) \ar[uurr]_{(1235)}  & & &
%}
%\end{equation*}
where
\begin{equation*}
D_{2n}=q^{4n-1}C(5) \oplus q^{4n} C(1) \hspace{.6in}
D_{2n+1}=q^{4n+1} C(5) \oplus q^{4n} C(1)
\end{equation*}
and
\begin{equation*}
\xymatrix{
 q^{4n+3} C(5)  \ar[rrr]^{(535)+(545)-\frac{1}{2}(515)}   \ar[rrrd]_{\hspace{.2in} -(5321)  } &&& q^{4n+1} C(5)   \ar[rrr]^{(545)-(535)} \ar[rrrd]_{\hspace{.2in} -(51)} &&& q^{4n-1} C(5)   \\ 
 q^{4n+4} C(1) \ar[rrru]^{ \hspace{.2in} (1535) }       &&& q^{4n} C(1) \ar[rrru]^{\hspace{.4in} (15)}	   &&& q^{4n} C(1) 
} .
\end{equation*}

\begin{corollary} \label{extcalforunknot}
In the category of $C$-modules, self extensions of $L(1)$ are given by:
\begin{align*}
& \Ext^0(L(1),L(1))  \cong \mathbb{C}, \hspace{.05in}
 \Ext^{-1}(L(1),L(1))  = 0 , \hspace{.05in}
 \Ext^{-2}(L(1),L(1))   \cong \mathbb{C} \langle -2 \rangle \oplus \mathbb{C} \langle -4 \rangle, \\
& \Ext^{-2n}(L(1),L(1))   \cong \mathbb{C} \langle -4n \rangle \hspace{.02in} \text{ if } \hspace{.03in} n \geq 2, \hspace{.03in}
 \Ext^{-(2n+1)}(L(1),L(1))   \cong \mathbb{C} \langle -4n \rangle \hspace{.02in} \text{ if } \hspace{.03in} n \geq 2.
\end{align*}
\end{corollary}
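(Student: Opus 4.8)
\emph{Proof proposal.} The corollary is the pure bookkeeping that results from applying $\Hom_C(-,L(1))$ to the explicit projective resolution of $L(1)$ in $\gmod$-$C$ constructed just above, and then passing to cohomology. The plan is as follows. First I would record that that resolution is \emph{minimal}: every entry of every differential matrix is a path of strictly positive length in $C$, hence lies in $\RAD C$. Consequently, after applying $\Hom_C(-,L(1))$ all induced differentials vanish, and so for each $m\geq 0$ one has $\Ext^{-m}(L(1),L(1))\cong\Hom_C(P_m,L(1))$, where $P_m$ denotes the $m$-th term of the resolution.

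Second, I would compute the Hom-spaces out of the indecomposable projectives $C(1)$, $C(5)$, $C(6)$. Since $L(1)$ is simple with projective cover $C(1)$ and head in internal degree zero, for any graded shift one has
\begin{equation*}
\Hom_C\big(C(i)\langle j\rangle,\,L(1)\big)\;\cong\;
\begin{cases}
\mathbb{C}\langle -j\rangle, & i=1,\\
0, & i\in\{5,6\}.
\end{cases}
\end{equation*}
Hence $\Ext^{-m}(L(1),L(1))$ is the direct sum, taken over the indecomposable summands of $P_m$ that are isomorphic to $C(1)\langle j\rangle$ for various $j$, of the corresponding copies $\mathbb{C}\langle -j\rangle$.

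Third, I would simply read off the terms of the resolution displayed above. One has $P_0=C(1)$, so $\Ext^0\cong\mathbb{C}$; $P_1=C(5)\langle 1\rangle$, with no summand of type $C(1)\langle j\rangle$, so $\Ext^{-1}=0$; $P_2=C(1)\langle 2\rangle\oplus C(6)\langle 2\rangle\oplus C(1)\langle 4\rangle$, so $\Ext^{-2}\cong\mathbb{C}\langle -2\rangle\oplus\mathbb{C}\langle -4\rangle$; $P_3=C(5)\langle 5\rangle$, so $\Ext^{-3}=0$; and for $n\geq 2$ the terms $D_{2n}=C(5)\langle 4n-1\rangle\oplus C(1)\langle 4n\rangle$ and $D_{2n+1}=C(5)\langle 4n+1\rangle\oplus C(1)\langle 4n\rangle$ each carry exactly one summand of type $C(1)\langle j\rangle$, namely $C(1)\langle 4n\rangle$, so $\Ext^{-2n}\cong\Ext^{-(2n+1)}\cong\mathbb{C}\langle -4n\rangle$. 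This is precisely the asserted list.

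The substantive part of the argument is not the corollary itself but the verification that the displayed complex is genuinely a (minimal) projective resolution of $L(1)$: that consecutive differentials compose to zero and that the complex is acyclic in negative homological degrees with $H_0\cong L(1)$. I expect this to be the main obstacle. It is carried out by splicing together the ingredients assembled just above — the projective resolutions of the standard modules $\Delta(i)$ in terms of the $C(j)$, the proper standard modules $\bar\Delta(i)$, the filtration of $\Delta(5)$ with shifted subquotients $\bar\Delta(5)$, the periodic complex of $\Delta(5)$'s quasi-isomorphic to $\bar\Delta(5)$, and the proper-standard resolution of $L(1)$ — while keeping careful track of all internal grading shifts and of the fully stratified structure of $\gmod$-$C$. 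Once that resolution is in hand, the corollary follows by the bookkeeping sketched above.
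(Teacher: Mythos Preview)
Your proposal is correct and matches the paper's approach exactly: the corollary is stated without proof in the paper, being an immediate consequence of the explicit minimal projective resolution of $L(1)$ displayed just before it, and your argument (minimality $\Rightarrow$ differentials vanish after $\Hom_C(-,L(1))$ $\Rightarrow$ read off the $C(1)$-summands) is precisely the intended justification. Your observation that $\Ext^{-3}=0$ (the case left implicit by the corollary's indexing) is also correct and consistent with the Poincar\'e series that follows.
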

The Poincare series of this bigraded vector space is:
\begin{equation}
1+(q^{-2}+q^{-4})t^{-2} + \frac{q^{-8}t^{-4}(1+t^{-1})}{1-t^{-2}q^{-4}} .
\end{equation}
Shifting this space by $q^2 t^2$ yields the cohomology of the unknot coloured by $V_2$:
\begin{equation} \label{homofunknot}
q^2 t^2+(1+q^{-2}) + \frac{q^{-6}t^{-2}(1+t^{-1})}{1-t^{-2}q^{-4}} .
\end{equation}
Under the transformation $t \mapsto T$, $ q \mapsto T^{-1} Q^{-1}$, 
the series in \eqref{homofunknot} becomes
\begin{equation} \label{dualunknot}
\frac{Q^{-2} +1 - Q^4 T^2 + Q^6 T^3 }{1-T^2 Q^4} .
\end{equation}
This transformation comes from applying a Koszul duality functor to the homological and internal grading shifts respectively.
The transformed Poincare series in \eqref{dualunknot} is precisely the homology of the unknot coloured by the $3$-dimensional representation in \cite[Section 4.3.1]{CoKr}.
The fact that our calculation is related to the calculation in \cite{CoKr} by Koszul duality also follows from \cite{SS} where it was shown that the categorified projector considered in this paper is related to Cooper and Krushkal's via Koszul duality.
The series in \eqref{dualunknot} also agrees with \cite[Example 3.2]{GOR} up to an overall factor.  

\section{Appendix: The (coloured) Reidemeister moves}
\begin{minipage}{0.25\textwidth}
\begin{equation}
\label{cr1} 
\scalebox{0.7}{
\xy {(10,0)*{};(12.4,4)*{}**\dir{-}}; {\ar (13.6,6)*{};(16,10)*{}**\dir{-}}; {\ar (16,0)*{};(10,10)*{}**\dir{-}};
{(22,0)*{};(22,10)*{}**\dir{-}}; {(10,10)*{};(10,20)*{}**\dir{-}}; (16,10)*{};(22,10)*{}**\crv{(17,14)&(21,14)};
(16,0)*{};(22,0)*{}**\crv{(17,-4)&(21,-4)}; {(10,0)*{};(10,-10)*{}**\dir{-}}; (16,-10)*{};(22,-10)*{}**\crv{(17,-6)&(21,-6)};
{(22,-10)*{};(22,-20)*{}**\dir{-}}; (16,-20)*{};(22,-20)*{}**\crv{(17,-24)&(21,-24)}; {(10,-20)*{};(10,-30)*{}**\dir{-}};
{(10,-20)*{};(16,-10)*{}**\dir{-}}; {(16,-20)*{};(13.6,-16)*{}**\dir{-}}; {(10,-10)*{};(12.4,-14)*{}**\dir{-}};
{(12,-5)*{{m}}};
\endxy}
\scalebox{1}{
\hspace{.05in} \xy {(10,-3)*{=}}; {(16,-3)*{{{\scriptstyle{m}}}}}; 
\endxy}
\scalebox{0.7}{
 \xy {\ar (0,-30)*{};(0,20)*{}**\dir{-}};
\endxy}
\end{equation}
\end{minipage}
\begin{minipage}{0.07\textwidth}
and
\end{minipage}
\begin{minipage}{0.35\textwidth}
\begin{equation*}
\label{cr7} 
\scalebox{0.8}{
\xy {\ar (10,-10)*{};(10,0)*{}**\dir{-}}; (16,-10)*{};(22,-10)*{}**\crv{(17,-6)&(21,-6)}; {(22,-10)*{};(22,-20)*{}**\dir{-}};
(16,-20)*{};(22,-20)*{}**\crv{(17,-24)&(21,-24)}; {(10,-20)*{};(10,-30)*{}**\dir{-}}; {(10,-20)*{};(16,-10)*{}**\dir{-}};
{(16,-20)*{};(13.6,-16)*{}**\dir{-}}; {(10,-10)*{};(12.4,-14)*{}**\dir{-}}; {(28,-5)*{}};
\endxy}
\xy {(0,-11)*{=}};
\endxy
\scalebox{0.8}{
\hspace{0.05in} \xy {\ar (10,-30)*{};(10,0)*{}**\dir{-}};
\endxy}
\hspace{0.05in} \xy {(-10,-11)*{=}};
\endxy
\scalebox{0.8}{
\hspace{0.1in} \xy {\ar (10,-10)*{};(10,0)*{}**\dir{-}}; (16,-10)*{};(22,-10)*{}**\crv{(17,-6)&(21,-6)}; {(22,-10)*{};(22,-20)*{}**\dir{-}};
(16,-20)*{};(22,-20)*{}**\crv{(17,-24)&(21,-24)}; {(10,-20)*{};(10,-30)*{}**\dir{-}}; {(10,-10)*{};(16,-20)*{}**\dir{-}};
{(10,-20)*{};(12.4,-16)*{}**\dir{-}}; {(13.6,-14)*{};(16,-10)*{}**\dir{-}}; {(28,-5)*{}};
\endxy}
\end{equation*}
\end{minipage}
\begin{minipage}{0.28\textwidth}
\begin{equation}
\label{cr1detailed} \tag{$61^*$}
\scalebox{0.8}{
\xy {(10,0)*{};(12.4,4)*{}**\dir{-}}; {\ar (13.6,6)*{};(16,10)*{}**\dir{-}}; {\ar (16,0)*{};(10,10)*{}**\dir{-}};
{(22,0)*{};(22,10)*{}**\dir{-}}; {(10,10)*{};(10,20)*{}**\dir{-}}; (16,10)*{};(22,10)*{}**\crv{(17,14)&(21,14)};
(16,0)*{};(22,0)*{}**\crv{(17,-4)&(21,-4)}; {(10,0)*{};(10,-10)*{}**\dir{-}}; (16,-10)*{};(22,-10)*{}**\crv{(17,-6)&(21,-6)};
{(22,-10)*{};(22,-20)*{}**\dir{-}}; (16,-20)*{};(22,-20)*{}**\crv{(17,-24)&(21,-24)}; {(10,-20)*{};(10,-30)*{}**\dir{-}};
{(10,-20)*{};(16,-10)*{}**\dir{-}}; {(16,-20)*{};(13.6,-16)*{}**\dir{-}}; {(10,-10)*{};(12.4,-14)*{}**\dir{-}};
{(12,-25)*{\scriptstyle{m}}};
{(10,-20)*{};(22,-20)*{}**\dir{--}};
{(10,-10)*{};(22,-10)*{}**\dir{--}};
{(10,-5)*{};(22,-5)*{}**\dir{--}};
{(10,0)*{};(22,0)*{}**\dir{--}};
{(10,10)*{};(22,10)*{}**\dir{--}};
{(6,-25)*{\text{$D_1$}}};
{(6,-15)*{\text{$D_2$}}};
{(6,-8)*{\text{$D_3$}}};
{(6,-3)*{\text{$D_4$}}};
{(6,5)*{\text{$D_5$}}};
{(6,13)*{\text{$D_6$}}};
\endxy}
\hspace{.05in} 
\xy {(10,-5)*{=}};
\endxy
\scalebox{0.8}{
%\hspace{0.1in} 
\xy {\ar (-30,-30)*{};(-30,20)*{}**\dir{-}}; {(-32,-25)*{\scriptstyle{m}}};
\endxy}
\end{equation}
\end{minipage}
\begin{equation}
\label{cr4} \xy {(10,0)*{};(12.4,4)*{}**\dir{-}}; { (13.6,6)*{};(16,10)*{}**\dir{-}}; { (16,0)*{};(10,10)*{}**\dir{-}}; {\ar
(22,10)*{};(22,0)*{}**\dir{-}}; (16,10)*{};(22,10)*{}**\crv{(17,14)&(21,14)}; {\ar (10,10)*{};(10,20)*{}**\dir{-}}; {(9,5)*{{m}}};
{(16,5)*{{n}}};
\endxy \hspace{.1in} \xy (10,0)*{}; (10,10)*{=};
\endxy \hspace{.1in} \xy {(16,0)*{};(22,10)*{}**\dir{-}}; {\ar (16,10)*{};(18.4,6)*{}**\dir{-}};
{(19.6,4)*{};(22,0)*{}**\dir{-}}; {(10,0)*{};(10,10)*{}**\dir{-}}; {\ar (22,10)*{};(22,20)*{}**\dir{-}};
(10,10)*{};(16,10)*{}**\crv{(11,14)&(15,14)}; {(8,5)*{{m}}}; {(16,5)*{{n}}}; \endxy \hspace{.5in} \xy
{(10,0)*{};(12.4,4)*{}**\dir{-}}; { (13.6,6)*{};(16,10)*{}**\dir{-}}; { (16,0)*{};(10,10)*{}**\dir{-}}; {\ar
(22,0)*{};(22,10)*{}**\dir{-}}; (16,10)*{};(22,10)*{}**\crv{(17,14)&(21,14)}; {\ar (10,10)*{};(10,20)*{}**\dir{-}}; {(9,5)*{{m}}};
{(16,5)*{{n}}};
\endxy
\hspace{.1in} \xy (10,0)*{}; (10,10)*{=};
\endxy
\hspace{.1in} \xy {(16,0)*{};(22,10)*{}**\dir{-}}; {(19.6,4)*{};(22,0)*{}**\dir{-}}; {(18.4,6)*{};(16,10)*{}**\dir{-}}; {\ar
(10,10)*{};(10,0)*{}**\dir{-}}; {\ar (22,10)*{};(22,20)*{}**\dir{-}}; (10,10)*{};(16,10)*{}**\crv{(11,14)&(15,14)}; {(8,5)*{{m}}};
{(16,5)*{{n}}};
\endxy
\end{equation}

\begin{equation}
\label{cr2} \xy {(10,0)*{};(12.4,4)*{}**\dir{-}}; { (13.6,6)*{};(16,10)*{}**\dir{-}}; { (16,0)*{};(10,10)*{}**\dir{-}}; {\ar
(10,10)*{};(16,20)*{}**\dir{-}}; {(16,10)*{};(13.6,14)*{}**\dir{-}}; {\ar (12.4,16)*{};(10,20)*{}**\dir{-}}; {(7,10)*{{m}}};
{(19,10)*{{n}}}; {(21,10)*{}};
\endxy
\hspace{0.1in} \xy {(10,0)*{}}; {(10,10)*{=}};
\endxy
\hspace{.1in} \xy {\ar (10,0)*{};(10,20)*{}**\dir{-}}; {(7,10)*{{m}}}; {\ar (16,0)*{};(16,20)*{}**\dir{-}}; {(19,10)*{{n}}};
\endxy
\hspace{.1in} \xy {(10,0)*{}}; {(10,10)*{=}};
\endxy
\hspace{.1in} \xy {(10,10)*{};(12.4,14)*{}**\dir{-}}; {\ar (13.6,16)*{};(16,20)*{}**\dir{-}}; {\ar (16,10)*{};(10,20)*{}**\dir{-}};
{(10,0)*{};(16,10)*{}**\dir{-}}; {(16,0)*{};(13.6,4)*{}**\dir{-}}; {(12.4,6)*{};(10,10)*{}**\dir{-}}; {(7,10)*{{m}}};
{(19,10)*{{n}}};
\endxy
\end{equation}

\begin{equation}
\label{cr3} \xy {(10,0)*{};(12.4,4)*{}**\dir{-}}; { (13.6,6)*{};(16,10)*{}**\dir{-}}; { (16,0)*{};(10,10)*{}**\dir{-}};
{(22,0)*{};(22,10)*{}**\dir{-}}; {(16,10)*{};(18.4,14)*{}**\dir{-}}; { (19.6,16)*{};(22,20)*{}**\dir{-}}; {
(22,10)*{};(16,20)*{}**\dir{-}}; {(10,10)*{};(10,20)*{}**\dir{-}}; {(10,20)*{};(12.4,24)*{}**\dir{-}}; {\ar
(13.6,26)*{};(16,30)*{}**\dir{-}}; {\ar (16,20)*{};(10,30)*{}**\dir{-}}; {\ar (22,20)*{};(22,30)*{}**\dir{-}}; {(8,18)*{{m}}};
{(15,18)*{{n}}}; {(24,18)*{{p}}};
\endxy
\hspace{0.1in} \xy {(10,0)*{}}; {(10,15)*{=}};
\endxy
\hspace{.1in} \xy {(16,0)*{};(18.4,4)*{}**\dir{-}}; { (19.6,6)*{};(22,10)*{}**\dir{-}}; { (22,0)*{};(16,10)*{}**\dir{-}};
{(10,0)*{};(10,10)*{}**\dir{-}}; {(16,20)*{};(18.4,24)*{}**\dir{-}}; {\ar (19.6,26)*{};(22,30)*{}**\dir{-}}; {\ar
(22,20)*{};(16,30)*{}**\dir{-}}; {\ar (10,20)*{};(10,30)*{}**\dir{-}}; {(10,10)*{};(12.4,14)*{}**\dir{-}}; {
(13.6,16)*{};(16,20)*{}**\dir{-}}; { (16,10)*{};(10,20)*{}**\dir{-}}; {(22,10)*{};(22,20)*{}**\dir{-}}; {(8,18)*{{{m}}}};
{(17,18)*{{n}}}; {(24,18)*{{p}}};
\endxy
\hspace{.5in} \xy {(10,0)*{};(16,10)*{}**\dir{-}}; {(16,0)*{};(13.6,4)*{}**\dir{-}}; {(12.4,6)*{};(10,10)*{}**\dir{-}};
{(22,0)*{};(22,10)*{}**\dir{-}}; {(16,10)*{};(22,20)*{}**\dir{-}}; {(22,10)*{};(19.6,14)*{}**\dir{-}}; {(18.4,16)*{};(16,20)*{}**\dir{-}};
{(10,10)*{};(10,20)*{}**\dir{-}}; {\ar (10,20)*{};(16,30)*{}**\dir{-}}; {(16,20)*{};(13.6,24)*{}**\dir{-}}; {\ar
(12.4,26)*{};(10,30)*{}**\dir{-}}; {\ar (22,20)*{};(22,30)*{}**\dir{-}}; {(8,18)*{{m}}}; {(15,18)*{{n}}}; {(24,18)*{{p}}};
\endxy
\hspace{0.1in} \xy {(10,0)*{}}; {(10,15)*{=}};
\endxy
\hspace{.1in} \xy {(16,0)*{};(22,10)*{}**\dir{-}}; {(22,0)*{};(19.6,4)*{}**\dir{-}}; {(18.4,6)*{};(16,10)*{}**\dir{-}};
{(10,0)*{};(10,10)*{}**\dir{-}}; {\ar (16,20)*{};(22,30)*{}**\dir{-}}; {(22,20)*{};(19.6,24)*{}**\dir{-}}; {\ar
(18.4,26)*{};(16,30)*{}**\dir{-}}; {\ar (10,20)*{};(10,30)*{}**\dir{-}}; {(10,10)*{};(16,20)*{}**\dir{-}}; {
(16,10)*{};(13.6,14)*{}**\dir{-}}; { (12.4,16)*{}; (10,20)*{}**\dir{-}}; {(22,10)*{};(22,20)*{}**\dir{-}}; {(8,18)*{{{m}}}};
{(17,18)*{{n}}}; {(24,18)*{{p}}};
\endxy
\end{equation}
%\begin{minipage}{1.0\textwidth}

%\begin{equation}
%\label{cr5} \xy {\ar(22,-20)*{};(22,-10)*{}**\dir{-}}; (16,-20)*{};(22,-20)*{}**\crv{(17,-24)&(21,-24)};
%{(10,-20)*{};(10,-30)*{}**\dir{-}}; (10,-20)*{};(16,-20)*{}**\crv{(11,-16)&(15,-16)}; {(12,-25)*{{m}}};
%\endxy
%\hspace{.1in} \xy {(10,-20)*{=}};
%\endxy
%\hspace{0.1in} \xy {\ar (10,-30)*{};(10,-10)*{}**\dir{-}}; {(12,-25)*{{m}}};
%\endxy
%\hspace{.1in} \xy {(10,-20)*{=}};
%\endxy
%\hspace{0.1in} \xy {\ar(10,-20)*{};(10,-10)*{}**\dir{-}}; (10,-20)*{};(16,-20)*{}**\crv{(11,-24)&(15,-24)};
%{(22,-20)*{};(22,-30)*{}**\dir{-}}; (16,-20)*{};(22,-20)*{}**\crv{(17,-16)&(21,-16)}; {(20,-25)*{{m}}};
%\endxy
%\hspace{.5in} \xy {(22,-10)*{};(22,-20)*{}**\dir{-}}; (16,-20)*{};(22,-20)*{}**\crv{(17,-24)&(21,-24)};
%{\ar(10,-20)*{};(10,-30)*{}**\dir{-}}; (10,-20)*{};(16,-20)*{}**\crv{(11,-16)&(15,-16)}; {(12,-25)*{{m}}};
%\endxy
%\hspace{.1in} \xy {(10,-20)*{=}};
%\endxy
%\hspace{0.1in} \xy {\ar (10,-10)*{};(10,-30)*{}**\dir{-}}; {(12,-25)*{{m}}};
%\endxy
%\hspace{.1in} \xy {(10,-20)*{=}};
%\endxy
%\hspace{0.1in} \xy {(10,-10)*{};(10,-20)*{}**\dir{-}}; (10,-20)*{};(16,-20)*{}**\crv{(11,-24)&(15,-24)}; {\ar
%(22,-20)*{};(22,-30)*{}**\dir{-}}; (16,-20)*{};(22,-20)*{}**\crv{(17,-16)&(21,-16)}; {(20,-25)*{{m}}};
%\endxy
%\end{equation}
%\end{minipage}
\begin{minipage}{1.0\textwidth}

\begin{equation}
\label{cr5} \xy {\ar(22,0)*{};(22,10)*{}**\dir{-}}; (16,0)*{};(22,0)*{}**\crv{(17,-4)&(21,-4)};
{(10,0)*{};(10,-10)*{}**\dir{-}}; (10,0)*{};(16,0)*{}**\crv{(11,4)&(15,4)}; {(12,-5)*{{m}}};
\endxy
\hspace{.05in} \xy {(10,0)*{=}};
\endxy
\hspace{0.05in} \xy {\ar (10,-10)*{};(10,10)*{}**\dir{-}}; {(12,-5)*{{m}}};
\endxy
\hspace{.0in} \xy {(10,0)*{=}};
\endxy
\hspace{0.05in} \xy {\ar(10,0)*{};(10,10)*{}**\dir{-}}; (10,0)*{};(16,0)*{}**\crv{(11,-4)&(15,-4)};
{(22,0)*{};(22,-10)*{}**\dir{-}}; (16,0)*{};(22,0)*{}**\crv{(17,4)&(21,4)}; {(20,-5)*{{m}}};
\endxy
\hspace{.35in} \xy {(22,10)*{};(22,0)*{}**\dir{-}}; (16,0)*{};(22,0)*{}**\crv{(17,-4)&(21,-4)};
{\ar(10,0)*{};(10,-10)*{}**\dir{-}}; (10,0)*{};(16,0)*{}**\crv{(11,4)&(15,4)}; {(12,-5)*{{m}}};
\endxy
\hspace{.05in} \xy {(10,0)*{=}};
\endxy
\hspace{0.05in} \xy {\ar (10,10)*{};(10,-10)*{}**\dir{-}}; {(12,-5)*{{m}}};
\endxy
\hspace{.0in} \xy {(10,0)*{=}};
\endxy
\hspace{0.05in} \xy {(10,10)*{};(10,0)*{}**\dir{-}}; (10,0)*{};(16,0)*{}**\crv{(11,-4)&(15,-4)}; {\ar
(22,0)*{};(22,-10)*{}**\dir{-}}; (16,0)*{};(22,0)*{}**\crv{(17,4)&(21,4)}; {(20,-5)*{{m}}};
\endxy
\end{equation}
\end{minipage}

\begin{equation}
\label{cr6} \xy {(10,-30)*{};(10,-20)*{}**\dir{-}}; {\ar (16,-20)*{};(16,-30)*{}**\dir{-}};
(22,-20)*{};(40,-20)*{}**\crv{(23,-29)&(39,-29)}; (28,-20)*{};(34,-20)*{}**\crv{(29,-24)&(33,-24)}; {(10,-20)*{};(10,-10)*{}**\dir{-}};
{(16,-20)*{};(16,-10)*{}**\dir{-}}; {(34,-20)*{};(34,-10)*{}**\dir{-}}; {(40,-20)*{};(40,-10)*{}**\dir{-}};
{(22,-10)*{};(28,-20)*{}**\dir{-}}; {(22,-20)*{};(24.4,-16)*{}**\dir{-}}; {(25.6,-14)*{};(28,-10)*{}**\dir{-}};
{(34,-10)*{};(34,0)*{}**\dir{-}}; {(40,-10)*{};(40,0)*{}**\dir{-}}; (10,-10)*{};(28,-10)*{}**\crv{(11,-1)&(27,-1)};
(16,-10)*{};(22,-10)*{}**\crv{(17,-6)&(21,-6)}; (10,10)*{};(28,10)*{}**\crv{(11,1)&(27,1)}; (16,10)*{};(22,10)*{}**\crv{(17,6)&(21,6)};
{(34,0)*{};(34,10)*{}**\dir{-}}; {(40,0)*{};(40,10)*{}**\dir{-}}; {(10,10)*{};(10,20)*{}**\dir{-}}; {(16,10)*{};(16,20)*{}**\dir{-}};
{(22,10)*{};(28,20)*{}**\dir{-}}; {(28,10)*{};(25.6,14)*{}**\dir{-}}; {(22,20)*{};(24.4,16)*{}**\dir{-}};
{(34,10)*{};(34,20)*{}**\dir{-}}; {(40,10)*{};(40,20)*{}**\dir{-}}; {\ar (10,20)*{};(10,30)*{}**\dir{-}};
{(16,30)*{};(16,20)*{}**\dir{-}}; (22,20)*{};(40,20)*{}**\crv{(23,29)&(39,29)}; (28,20)*{};(34,20)*{}**\crv{(29,24)&(33,24)};
{(12,-25)*{{n}}}; {(18,-25)*{{m}}};
\endxy
\hspace{.1in} \xy {(10,0)*{=}};\endxy \hspace{.1in} \xy {\ar (10,-30)*{};(10,30)*{}**\dir{-}}; {\ar (16,30)*{};(16,-30)*{}**\dir{-}};
{(12,-25)*{{n}}}; {(18,-25)*{{m}}};\endxy \hspace{.1in} \xy {(10,0)*{=}};\endxy \hspace{.1in} \xy
{(10,-30)*{};(10,-20)*{}**\dir{-}}; {\ar (16,-20)*{};(16,-30)*{}**\dir{-}}; (22,-20)*{};(40,-20)*{}**\crv{(23,-29)&(39,-29)};
(28,-20)*{};(34,-20)*{}**\crv{(29,-24)&(33,-24)}; {(10,-20)*{};(10,-10)*{}**\dir{-}}; {(16,-20)*{};(16,-10)*{}**\dir{-}};
{(34,-20)*{};(34,-10)*{}**\dir{-}}; {(40,-20)*{};(40,-10)*{}**\dir{-}}; {(25.6,-16)*{};(28,-20)*{}**\dir{-}};
{(22,-10)*{};(24.4,-14)*{}**\dir{-}}; {(22,-20)*{};(28,-10)*{}**\dir{-}}; {(34,-10)*{};(34,0)*{}**\dir{-}};
{(40,-10)*{};(40,0)*{}**\dir{-}}; (10,-10)*{};(28,-10)*{}**\crv{(11,-1)&(27,-1)}; (16,-10)*{};(22,-10)*{}**\crv{(17,-6)&(21,-6)};
(10,10)*{};(28,10)*{}**\crv{(11,1)&(27,1)}; (16,10)*{};(22,10)*{}**\crv{(17,6)&(21,6)}; {(34,0)*{};(34,10)*{}**\dir{-}};
{(40,0)*{};(40,10)*{}**\dir{-}}; {(22,10)*{};(24.4,14)*{}**\dir{-}}; {(28,20)*{};(25.6,16)*{}**\dir{-}}; {(16,10)*{};(16,20)*{}**\dir{-}};
{(28,10)*{};(22,20)*{}**\dir{-}}; {(34,10)*{};(34,20)*{}**\dir{-}}; {(40,10)*{};(40,20)*{}**\dir{-}}; {\ar
(10,20)*{};(10,30)*{}**\dir{-}}; {(10,10)*{};(10,20)*{}**\dir{-}}; {(16,20)*{};(16,30)*{}**\dir{-}};
(22,20)*{};(40,20)*{}**\crv{(23,29)&(39,29)}; (28,20)*{};(34,20)*{}**\crv{(29,24)&(33,24)}; {(12,-25)*{{n}}}; {(18,-25)*{{m}}};
\endxy
\end{equation}

\vspace{1cm}

C.S.\\ Department of Mathematics, Endenicher Allee 60, 53115 Bonn  (Germany).\\ email: \email{stroppel@math.uni-bonn.de}

J.S.\\ Department of Mathematics, 1650 Bedford Avenue, 11225 Brooklyn (US). \\ email: \email{jsussan@mec.cuny.edu}  \\
Mathematics Program, The Graduate Center, CUNY, 365 Fifth Avenue,\\ 10016 New York (US). \\ email:  \email{jsussan@gc.cuny.edu}

\end{document}